\begin{document}
	
\theoremstyle{plain}	
\newtheorem{thm}{Theorem}[section]
\newtheorem{lemma}[thm]{Lemma}
\newtheorem{proposition}[thm]{Proposition}
\newtheorem{corollary}[thm]{Corollary} 

\theoremstyle{definition}
\newtheorem{defn}{Definition}[section]

\theoremstyle{remark}
\newtheorem{remark}{Remark}[section]
\newtheorem{remarks}{Remarks}[section]
\newtheorem*{notation}{Main notation}

\newcommand{\arctg}{\operatorname{arctg}}
\newcommand{\sech}{\operatorname{sech}}
\newcommand{\ve}{\varepsilon}
\newcommand{\Ltau}{L^2_{\tau}(\mathbb{R})}
\newcommand{\Lltau}{L^1_{\tau}(\mathbb{R})}
\newcommand{\lu}{\tilde{u}}
\newcommand{\lv}{\tilde{v}}
\newcommand{\dt}{\lVert t \rVert}
\newcommand{\dtau}{\lVert \tau \rVert}
\newcommand{\supp}{\operatorname{supp}}

\numberwithin{equation}{section}

\title[Variational construction of positive entropy measures]{Variational construction of positive entropy invariant measures of Lagrangian systems and Arnold diffusion}

\author{Sini\v{s}a Slijep\v{c}evi\'{c}, Zagreb}

\date{21 June, 2016}

\begin{abstract} We develop a variational method for constructing positive entropy invariant measures of Lagrangian systems without assuming transversal intersections of stable and unstable manifolds, and without restrictions to the size of non-integrable perturbations. We apply it to a family of two and a half degrees of freedom a-priori unstable Lagrangians, and show that if we assume that there is no topological obstruction to diffusion (precisely formulated in terms of topological non-degeneracy of minima of the Peierl's barrier function), then there exists a vast family of "horsheshoes", such as "shadowing" ergodic positive entropy measures having precisely any closed set of invariant tori in its support. Furthermore, we give bounds on the topological entropy and the "drift acceleration" in any part of a region of instability in terms of a certain extremal value of the Fr\'{e}chet derivative of the action functional, generalizing the angle of splitting of separatrices. 
The method of construction is new, and relies on study of formally gradient dynamics of the action (coupled parabolic semilinear partial differential equations on unbounded domains). We apply recently developed techniques of precise control of the local evolution of energy (in this case the Lagrangian action), energy dissipation and flux. In Part II of the paper we will apply the theory to obtain sharp bounds for topological entropy and drift acceleration for the same class of equations in the case of small perturbations.
\end{abstract}
 
\maketitle

	
\keywords{Keywords: 
   Hamiltonian dynamics, Arnold diffusion, entropy, variational methods, instability, invariant sets, Lyapunov exponents}
\vspace{2ex}

\subjclass{MSC2010: Primary 37J40, 37J45; Secondary: 37L45, 37L15, 34C28, 37A35, 37D25}

\section{Introduction}

\vspace{5ex}

Consider a $C^2$, Tonelli Lagrangian $L : \mathbb{T}^N \times \mathbb{R}^N \times {\mathbb{T}} \rightarrow \mathbb{R}$ (see \cite{Sorrentino10} for definitions). The deep motivation for the paper is the Birkhoff ergodicity hypothesis on equivalence of space and time averages for Hamiltonian systems (see \cite{Avila:15} for an overview and recent results). Related to that, it is important to know if the "size" (i.e. the natural measure) of the "chaotic", or "unstable" part of the phase space for a typical Hamiltonian is non-zero, which is essentially open even in the simplest case of area-preserving twist diffeomorphisms \cite{Knill:99}.

A more focused approach to investigate instability of Hamiltonian systems is to consider existence of orbits which "drift" in phase space, or in other words the existence of Arnold diffusion, following Arnold's construction \cite{Arnold:64} in the case of a weakly coupled rotator and pendulum with a weak periodic forcing. Typical considered questions are on existence of such orbits in specific examples \cite{Arnold:64,Bernard:96,Bernard:08,Bessi:96,Cherchia:94}, on genericity of existence of such orbits \cite{Bernard:11,Cheng:04,Cheng:09,Cheng10,Kaloshin:12,Kaloshin:15,Mather:93,Mather:12,Treschev:04,Treschev:12}, and on the fastest possible drift \cite{Berti:02,Berti:03,Zhang:11} (the references include only a small sample of the relevant results). Two typical approaches to construction of Arnold diffusion orbits are "geometric" and "variational" (see \cite{Bernard:10,Llave06} for an introduction and further references). The geometric approach essentially relies on finding a normally hyperbolic "scaffolding" of a perturbed integrable ("a-priori stable"), or integrable weakly coupled with an "a-priori unstable" (e.g. a pendulum, or a kicked pendulum) Hamiltonian. Futhermore, the geometric method typically relies on transversal intersections of stable and unstable manifolds of the "scaffolding" (e.g. the remaining KAM tori, or a normally hyperbolic cylinder), and construct an orbit which shadows it typically by an application of the implicit function theorem. On the other hand, the variational approach typically relies on minimizing the action under carefully constructed constraints. The variational approach is frequently complemented by leveraging the weak-KAM theory \cite{Fathi}, or the description of the action-minimizing Mather's sets and invariant measures and their extensions \cite{Mather:93,Sorrentino10}, which can result with insightful descriptions of the regions of instability \cite{Bernard:08,Cheng:09}.

Our aim is to propose an alternative technique for construction of Arnold diffusion, enabling in addition to construction of individual orbits, also a construction of "shadowing" invariant measures in an a-priori specified region of the phase space. Specifically, we construct a rich family of positive entropy ergodic measures, and are able to estimate their metric entropy. As a result, we can relate the speed of drift in the phase space to the average of locally the largest Lyapunov exponents along a drift trajectory. We thus describe dynamics in a significant region of the phase space (though still most likely of the measure zero with respect to the natural measure of the manifold). Importantly, for our construction to hold, it suffices that there is no topological obstacle to diffusion, and we require no transversal intersections of stable and unstable manifolds. Even though our construction is essentially variational, it is precise enough to incorporate "geometric" information if available.

To introduce the method, we recall an alternative construction of shadowing orbits constructed by Mather \cite{Mather:91} in the case of area-preserving twist diffeomorphisms on the cylinder, or equivalently of 1 1/2 degrees of freedom Tonelli Lagrangians on the torus. In \cite{Slijepcevic:99}, we considered formally gradient dynamics of the action (see Section \ref{s:existence} and the equations (\ref{r:Grad}) for details) for that system. This is an extension of the variational approach, where we consider evolution of approximate orbits along the gradient of the action, until they "relax" to an equilibrium, which is by LaSalle principle the actual solution of the Euler-Lagrange equations. One of the novelties in \cite{Slijepcevic:99} is that the evolution dynamics is considered on an unbounded domain, i.e. for all times $t \in \mathbb{R}$, when the dynamics is not gradient-like any more (see Remark \ref{r:extended} for details). The construction in  \cite{Slijepcevic:99} is simple: heteroclinic orbits are constructed whenever there is no obstruction to diffusion (in that case homotopically non-trivial invariant circles, or KAM-circles). We simply let any function $q(t)$ asymptotic to two Mather's sets at $t \rightarrow -\infty$, respectively $t \rightarrow \infty$, in the same region of instability evolve (or relax) along the formal gradient of the action, denoting the relaxation time by $s$. As there are no invariant KAM circles between these two Mather's sets, we show that the $s$-evolution of $q$ must asymptotically stop after a finite distance (we show that otherwise there would be a KAM-circle in the limit set - a contradiction), so the "tails" must remain asymptotic to either Mather's sets. The configuration $\lim_{s \rightarrow \infty} q(s)$ is the required heteroclinic orbit crossing an arbitrarily large part of a region of instability. 

The main tool in \cite{Slijepcevic:99} - the order-preserving property of the dynamics - does not extend to higher dimensions. The techniques of study of formally gradient systems (called also extended gradient systems), introduced in \cite{Gallay:01}, have recently matured enough \cite{Gallay:12,Gallay:14,Gallay:15}, so that we can extend the approach to more degrees of freedom. We are now able to replace the monotonicity techniques by "energy methods", or specifically, by considering local interplay of energy, energy dissipation and energy flux, energy being in this case the Lagrangian action.

To explain it, we compare the approach with the variational method introduced by Bessi \cite{Bessi:96} in the Arnold's example. Bessi somewhat implicitly considered gradient dynamics of the action on a large, but still finite domain. He was then able to "control" evolution of an approximate shadowing orbit, by showing that the {\it total available action} along the entire constructed (finite, but very long) orbit is less than what is needed for every single section between two "jumps" (corresponding to one heteroclinic orbit in a diffusion "chain") to significantly move and "escape" from the desired region of the phase space. With this method it is difficult to construct orbits with "infinitely many jumps" (as the total "available action" is infinite), and the "control" decays proportionally to the length of the considered orbit. 

Thierry Gallay and the author recently developed techniques establishing stability results for dissipative partial differential equations independent of the size of the domain. For example, in \cite{Gallay:14,Gallay:15}, we established a-priori bounds for relaxation of unforced Navier-Stokes equations on a strip, independent of the domain size, thus holding for the equations on the unbounded domain. Applying and extending these ideas to formally gradient dynamics of the action, we are thus able to construct orbits of infinite length and invariant measures. We are also able to obtain sharp estimates on the drift acceleration, matching (the case of non-degenerate Melnikov function and small perturbation), or improving (the case of degenerate Melnikov function) the results obtained by the geometric or an alternative approach (further details will be reported in the Part II \cite{Slijepcevic:16}).

After introducing the general method of constructing shadowing invariant measures, we apply it here to a family of 2 1/2-degrees of freedom a-priori unstable Lagrangians. We, however, believe that the approach can eventually be extended to more general Tonelli Lagrangians, as long as there is a rich family of partially hyperbolic Mather's sets (not necessarily invariant tori), and as long as there is no topological barrier to diffusion, expressed as a certain topological non-degeneracy of the Peierl's barrier function, or of weak KAM solutions.

\subsection{Statements of the main results} We first develop new tools for construction of orbits and invariant measures for Lagrangians of the type $L(q,q_t,t) = \frac{1}{2}q_t^2 + V(q,t)$ (in Remark \ref{rem:Tonelli} we explain how the tools can be applied to the entire class of Tonelli Lagrangians).
We then apply the general theory to a family of a-priori unstable Lagrangians with 2 1/2 degrees of freedom, already considered in e.g. \cite{Arnold:64,Bernard:96,Berti:02,Berti:03,Treschev:04}, given with
\begin{equation}
\begin{split}
L(u,v,u_t,v_t,t) &=\frac{1}{2}u_t^{2}+\frac{1}{2}v_t^{2}+V(u,v,t),  \\
V(u,v,t)&=\ve\left( 1-\cos u \right) \left( 1-\mu f(u,v,t)\right),
\end{split} \label{d:unstable}
\end{equation}
where $(u,v,u_t,v_t,t) \in \mathbb{T}^2 \times \mathbb{R}^2 \times \mathbb{T}$, $\mathbb{T}$ is parametrized with $[0,2\pi)$, $\ve, \mu \geq 0$ are parameters, and $f$ is $2 \pi$-periodic in all the variables. Our standing assumptions are as follows:
\begin{description}
	\item[(A1)] $f$ is $C^{4+\gamma}(\mathbb{R}^3)$, $\gamma>0$ and $|f| \leq 1$, $|f_v| \leq 1$, 
	
	\vspace{1ex}
	
	\item[(A2)] $0 \leq 16 \mu \leq \ve \leq 1$. 
\end{description}
Note that the bounds on $f$ in (A1) are not an essential restriction, as we can always scale $f$ and adjust $\mu$ for this to hold. The restricted range of parameters is also used mainly for convenience in the calculations.  In any case, (A2) allows the case of "a-priori unstable" small perturbations in $\mu$, as well as other physically relevant cases such as those considered in \cite{Simo:00}.

The main tool, but also an object of study, is the formally gradient dynamics associated to (\ref{d:unstable}), given by the equations
\begin{subequations} \label{r:grad}
	\begin{align}
	u_s &=  u_{tt}-\partial_{u}V(u,v,t),  \label{r:gradu} \\ 
	v_s &= v_{tt}-\partial_{v}V(u,v,t),  \label{r:gradv} \\ 
	u(0,t)& = u^{0}(t). \nonumber
	\end{align} 
\end{subequations}
The techniques we develop here can also be interpreted as new results on uniformly local stability of the equation (\ref{r:grad}) and similar equations on unbounded domains. We hope to make it more explicit in future research.

Consider solutions of the Euler-Lagrange flow induced by the Lagrangian (\ref{d:unstable})
\begin{subequations}
	\begin{align}
	u_{tt} & = V_u = \ve \sin u (1-\mu f(u,v,t))) + \ve \mu (1-\cos u)f_u(u,v,t), \label{r:ELu} \\
	v_{tt} & = V_v = \ve \mu (1-\cos u)f_v(u,v,t). \label{r:ELv}
	\end{align}
	\label{r:EL}
\end{subequations}
Let $\phi$ be the non-autonomous flow induced by (\ref{r:EL}) on $\mathbb{T}^2 \times \mathbb{R}^2 \times \mathbb{T}$. We use the notation $(u,v,u_t,v_t,t) \in \mathbb{T}^2 \times \mathbb{R}^2 \times \mathbb{T}$, and always parametrize tori with $[0,2\pi)$. We denote by $\sigma = \phi^{2\pi}$ the time-$2 \pi$ map, and then $\sigma$ is a diffeomorphism of $\mathbb{T}^2 \times \mathbb{R}^2$. As in the classical Arnold's example \cite{Arnold:64}, for each $\ve,\mu \in \mathbb{R}$, and for each "speed of the rotator" $\omega \in \mathbb{R}$, the invariant tori $\mathbb{T}_{\omega}:=\lbrace (0,v,0,\omega), v\in \mathbb{R} \rbrace $ are $\sigma$-invariant (and the sets $(0,v,0,\omega,t),\: (v,t)\in \mathbb{R} \times \mathbb{T})$ are $\phi$-invariant), i.e. these invariant tori persist for all perturbations.

We define regions of instability, generalizing an analogous notion for area-preserving twist maps \cite{Mather:91} as follows. Given $\omega \in \mathbb{R}$, let $S_{\omega}$ be the Peierl's barrier function (closely related to weak-KAM solutions of Hamilton-Jacobi equations \cite{Fathi,Sorrentino10}), defined on $\mathbb{R}^2$ with
\[
S_{\omega}(t_0,v_0)=\inf \left\lbrace \int_{-\infty}^{\infty}L_{\omega}(q,q_t,t)dt, \: q=(u,v) \in H^1_{\text{loc}}(\mathbb{R})^2, \: q(t_0)=(\pi,v_0), \: \lim_{t \rightarrow -\infty}u(t)=0, \: \lim_{t \rightarrow \infty}u(t)=2 \pi \right\rbrace,
\]
where $L_{\omega}(q,q_t,t)=u_t^2/2+(v_t-\omega)^2/2+V(u(t),v(t),t)$ is the adjusted Lagrangian. It is well-known, and recalled in Section \ref{s:five}, that $S_{\omega}$ is continuous and $2\pi$-periodic in each variable, that for each $(t_0,v_0)$ there exist $q \in H^1(\mathbb{R})^2$ for which the infimum is attained, and that for $(t_0,v_0)$ which are critical points of $S_{\omega}$, these $q$ correspond to solutions of (\ref{r:EL}) homoclinic to $\mathbb{T}_{\omega}$. We say that $\omega \in \mathbb{R}$ is non-degenerate, if each connected component of its set of global minima of $S_{\omega}$ is bounded. We interpret it as "no topological obstacle to diffusion"$^1$. The set of non-degenerate $\omega$ is open (see Remark \ref{rem:instability}), and we call each its connected component a region of instability. We frequently restrict attention to a closed subset $[\omega^-,\omega^+]$ of a single region of instability. We will require an additional, technical assumption that the bounded components of the global minima of $S_{\omega}$ are not too large, i.e. we assume the following:
\begin{itemize}
	\item[(S1)] For each $\omega \in [\omega^-,\omega^+]$ and for each global minimum $(t_0,v_0)$ of $S_{\omega}$, there exists a closed neighbourhood $\mathcal{N}$ of $(t_0,v_0)$ such that for each $(t_1,v_1) \in \partial \mathcal{N}$,
	\begin{equation}
	  S_{\omega}(t_1,v_1) - S_{\omega}(t_0,v_0) \geq 3 \Delta_0,  \label{r:DDelta0}
	\end{equation} 
	where $\Delta_0> 0$ is an uniform constant over $[\omega^-,\omega^+]$, and the diametar of $\mathcal{N}$ is not greater than $R$, such that
	\begin{equation}
	R \sqrt{\ve} \leq 1/144. \label{r:Rcond}
	\end{equation}
\end{itemize}
Apart from the technical restriction$^2$ (\ref{r:Rcond}), the condition (S1) is equivalent to the definition of the region of instability (see Remark \ref{rem:instability}). Our definition of the region of instability is closely related to the results of Cheng$^3$ \cite{Cheng:04,Cheng:09,Cheng10} and Bernard$^4$ \cite{Bernard:08,Bernard:10}. We note that the quantity $3\Delta_0$ in (\ref{r:DDelta0}) is analogous to the quantity $\Delta W$ quantifying transport in the case of area-preserving twist maps$^5$ \cite{MacKay:88}. 

Importantly, we do not require any non-degeneracy of the minima of $S_{\omega}$, for example we do not require that the second derivative of $S_{\omega}$ at these minima is positive definite. As perhaps noted first by Angenent \cite{Angenent:93}, such non-degeneracy would be equivalent to requiring that the stable and unstable manifolds (the "whiskers") of $\mathbb{T}_{\omega}$ intersect transversally, and would lead to the "geometric" approach to diffusion phenomena (\cite{Llave06} and references therein). On the contrary, we allow the whiskers to intersect non-transversally, and construct possibly non-uniformly hyperbolic invariant sets. The main result is now a construction of a large number of "horsheshoes", i.e. of ergodic positive entropy measures roughly contained in an arbitrary part of a region of instability:
\begin{thm} \label{t:main1}
Assume $[\omega^-,\omega^+]$ satisfies (S1). Then for each closed subset $\mathcal{O}$ of $[\omega^-,\omega^+]$, there exist an ergodic, $\phi$-invariant, positive entropy Borel probability measure $\mu$ on $\mathbb{T}^2 \times \mathbb{R}^2 \times \mathbb{R}$, such that 
\begin{equation}
 \cup_{\omega \in \mathcal{O}}\mathbb{T}_{\omega} \subset \supp \mu, \quad \cup_{\omega \in  \mathbb{R}-\mathcal{O}}\mathbb{T}_{\omega} \subset (\supp \mu)^c. \label{r:shadow}
\end{equation}
\end{thm}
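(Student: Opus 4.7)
The plan is to construct, for each finite subset $\{\omega_1,\ldots,\omega_K\}\subset\mathcal{O}$, a shift-equivariant Borel map from the full shift $\Sigma_K=\{\omega_1,\ldots,\omega_K\}^{\mathbb{Z}}$ into the set of $\sigma$-orbits, push forward a Bernoulli measure, and extract an ergodic component of positive entropy. The central tool is the formally gradient dynamics (\ref{r:grad}). Given $\alpha=(\alpha_n)_{n\in\mathbb{Z}}\in\Sigma_K$ and a uniform transition time $T$, I would first build an approximate shadowing configuration $q^0(t)$ which, on each interval around $t=nT$, approximates a minimizer of the $\alpha_n$-adjusted Lagrangian action connecting neighbourhoods of $\mathbb{T}_{\alpha_{n-1}}$ and $\mathbb{T}_{\alpha_{n+1}}$; such minimizers exist because consecutive $\omega\in\mathcal{O}$ lie in a connected region of instability and the global minima of $S_{\alpha_n}$ correspond to homoclinic connections of $\mathbb{T}_{\alpha_n}$. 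I would then evolve $q^0$ under (\ref{r:grad}) and pass to a limit $q_\alpha$ as $s\to\infty$.

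The main obstacle, and the step requiring the extended uniformly local machinery highlighted in the introduction, is to prove that during the entire $s$-evolution the piece of $q(s,\cdot)$ near $t=nT$ keeps crossing the section $u=\pi$ at a $(t,v)$-point lying inside the neighbourhood $\mathcal{N}$ of a global minimum of $S_{\alpha_n}$ furnished by (S1). On a finite domain this is the content of the Bessi-type argument comparing total available action to the cost of an excursion to $\partial\mathcal{N}$; but here the domain is all of $\mathbb{R}$ and the total action is infinite, so that argument fails. Instead, I would run a sliding-window balance of Lagrangian action, action-dissipation and action-flux in the spirit of \cite{Gallay:14,Gallay:15}: the barrier $3\Delta_0$ of (\ref{r:DDelta0}) together with the diameter bound (\ref{r:Rcond}) makes the local action cost of an escape from $\mathcal{N}$ exceed the maximum possible influx from the two neighbouring windows over any finite $s$-interval, contradicting the formal gradient structure. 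A LaSalle invariance principle in the uniformly local topology then yields a $C^2$ limit $q_\alpha$ solving (\ref{r:EL}) and still shadowing $\alpha$.

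Because distinct sequences force distinct crossings at some window, the map $\alpha\mapsto(q_\alpha(0),q_\alpha'(0))$ is a Borel, shift-equivariant injection from $\Sigma_K$ into the domain of $\sigma^N$, for an integer $N$ close to $T/(2\pi)$. Pushing forward the uniform Bernoulli measure on $\Sigma_K$ and averaging over the $N$ iterates of $\sigma$ in one symbolic step yields a $\sigma$-invariant Borel probability of metric entropy at least $(\log K)/N>0$; ergodic decomposition produces an ergodic component $\mu_\sigma$ with positive entropy, and suspending along $\phi$ gives the ergodic, $\phi$-invariant positive-entropy Borel probability $\mu$ claimed in the theorem.

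To verify (\ref{r:shadow}), I would apply the construction to an increasing sequence of finite $\{\omega_1,\ldots,\omega_K\}$ whose union is dense in $\mathcal{O}$, and either take a weak-$*$ accumulation point of the resulting ergodic measures or code a countable dense $\mathcal{O}_0\subset\mathcal{O}$ directly in one diagonal construction; density plus the fact that $\mu_\sigma$-almost every $\alpha$ visits every symbol ensures $\cup_{\omega\in\mathcal{O}}\mathbb{T}_\omega\subset\supp\mu$. Conversely, the windowwise localization shows that the orbits $q_\alpha$ avoid any $\mathbb{T}_\omega$ with $\omega\notin\mathcal{O}$ by a positive margin (using openness of $\mathbb{R}\setminus\mathcal{O}$ and the uniform bound (\ref{r:Rcond}) on the $\mathcal{N}_{\omega_k}$), giving $\cup_{\omega\notin\mathcal{O}}\mathbb{T}_\omega\subset(\supp\mu)^c$.
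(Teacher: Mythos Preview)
Your overall architecture matches the paper's: build approximate shadowing configurations indexed by a symbolic sequence, control them under the formally gradient flow (\ref{r:grad}) via a local action/dissipation/flux balance (this is exactly Sections \ref{s:dissipation}--\ref{s:invariant}), and then extract a positive-entropy invariant measure by pushing forward a Bernoulli measure. The energy-window argument you sketch is the right replacement for the Bessi method and is what the paper does.

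The genuine gap is the sentence ``A LaSalle invariance principle in the uniformly local topology then yields a $C^2$ limit $q_\alpha$ solving (\ref{r:EL}) and still shadowing $\alpha$.'' On the unbounded domain the semiflow $\xi$ is only \emph{formally} gradient: as the paper stresses in Remark~\ref{r:extended}, $\omega$-limit sets of individual orbits can contain non-equilibria, so $\lim_{s\to\infty}q(s)$ need not exist. Theorem~\ref{t:contains} only guarantees \emph{some} equilibrium in the closure of the invariant set $\mathcal{B}_\alpha$, and a choice of such an equilibrium for each $\alpha$ has no reason to be unique, Borel measurable, or shift-equivariant. Hence your map $\alpha\mapsto(q_\alpha(0),q_\alpha'(0))$ is not well defined, and the push-forward step collapses.

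The paper circumvents this by never constructing pointwise limits. It first embeds the Bernoulli shift as a measure $\mu$ on $\hat{\mathcal{X}}$ supported on the approximate configurations, and then evolves the \emph{measure} under the induced semiflow $\hat{\xi}^*$. The key observation (Lemma~\ref{l:gradient}) is that $\hat{\xi}^*$ \emph{is} genuinely gradient-like on $\hat{S}$-invariant probability measures, with Lyapunov function $\hat{\mathcal{L}}^*$; one can therefore take a weak$^*$ limit $\nu$ of $\mu(s)$ and show it is supported on $\hat{\mathcal{E}}$ (Lemma~\ref{l:limmeasure}). The shadowing of $\mu$ by $\nu$, and in particular preservation of entropy and of the symbolic information needed for (\ref{r:shadow}), is then secured by the $\sigma$-subalgebra machinery (M1)--(M4) of Theorem~\ref{t:variational}, using that the sets $\mathcal{B}_\chi$ are $\xi$-invariant and closed. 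Your proposal to recover the full support (\ref{r:shadow}) by taking weak$^*$ limits of a sequence of ergodic measures is also problematic, since such limits can lose both ergodicity and positive entropy; the paper instead encodes a countable dense subset of $\mathcal{O}$ into a \emph{single} Bernoulli (or finite-type) coding from the outset.
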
	
We call the invariant measures described by (\ref{r:shadow}) the shadowing measures, as they indeed "shadow" an arbitrary closed set of invariant tori. A direct corollary of the Theorem \ref{t:main1} is that we can find a single, "dense" orbit whose closure contains the entire $\cup_{\omega \in [\omega_1,\omega_2]} \mathbb{T}_{\omega}$ (we choose $\mathcal{O}=[\omega^-,\omega^+]$, and by ergodicity we find an orbit which is dense in the support of the measure $\mu$). We actually by our method also give a direct proof of existence of such and more elaborated shadowing orbits in Theorem \ref{t:shadowing}. One can apply it to construct other complex structures in a vicinity of a chosen $\mathbb{T}_{\omega}$ for non-degenerate $\omega$, such as an analogoue of the Mather's construction in the case of twist maps$^6$ \cite{Mather:85}.

Importantly, we are also able to obtain estimates of the metric entropy of constructed measures, topological entropy of $\sigma$ and $\phi$ (by the variational principle for metric and topological entropy), and of the drift acceleration (or, using a less precise term, the speed of diffusion). Specifically, we can associate to each closed interval $[\omega^-,\omega^+]$ a value $\Delta_1$, expressed as a certain extremal value of the norm of the Fr\'{e}chet derivative of the action, and defined precisely in Proposition \ref{p:dissipation}. The value $\Delta_1$ can be understood as a generalization of the lower bound on the angle of splitting of separatrices$^7$. Importantly, $\Delta_1 >0$ whenever (S1) holds. Let $\varpi=\max \lbrace |\omega^-|,|\omega^+|,1 \rbrace$. To avoid repetition in the statements, we say that the topological entropy on $[\omega^-,\omega^+]$ is $O(h)$, if there exists an absolute constant $1 \geq c_0 > 0$ such that the the topological entropy of $\phi$ and $\sigma$ restricted to an invariant subset of 
\begin{equation} \label{r:smallset}
\begin{split}
|u_t| & \leq c_0 \varpi \sqrt{\ve}, \\ v_t & \in [\omega^- - c_0 \varpi \sqrt{\ve}, \omega^+ + c_0 \varpi \sqrt{\ve}]
\end{split}
\end{equation}
 is at least $c_0 h$. We say that the {\it drift acceleration} on $[\omega^-,\omega^+]$ is $O(d)$, if for each $\delta>0$ there exists an absolute constant $1 \geq c(\delta) > 0$ (depending only on $\delta$), a solution $q=(u,v)$ of (\ref{r:EL}) satisfying (\ref{r:smallset}) for all $t \in \mathbb{R}$, and the times $t^- < t^+$ such that $|v_t(t^-)-\omega^- | \leq \delta$, $|v_t(t^+)-\omega^+ | \leq \delta$, and such that
 \begin{equation*}
   d \geq c(\delta) \frac{|\omega^+-\omega^-|}{|t^+-t^-|}.
 \end{equation*}

\begin{thm} \label{t:main2}
	Assume $\omega^- \leq \omega^+$ such that $[\omega^-,\omega^+]$ satisfies (S1), and let $\Delta_1>0$ be as is defined in Proposition \ref{p:dissipation}. Then: 
	\begin{itemize}
		\item[(i)] The topological entropy of $\phi$ and $\sigma$ on $[\omega^-,\omega^+]$ is 
		$$
		O \left( \frac{\Delta_1}{\varpi^5 |\log \Delta_1|} \right),
		$$
		
		\item[(ii)] The drift acceleration on $[\omega^-,\omega^+]$ is $$O\left( \frac{\Delta_0 \Delta_1}{\varpi^6 (R \vee \mu) |\log \Delta_1|} \right).$$
	\end{itemize}
\end{thm}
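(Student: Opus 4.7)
The plan is to extract both bounds from a quantitative refinement of Theorem~\ref{t:main1}, using the dissipation estimate of Proposition~\ref{p:dissipation} to control the temporal size of each heteroclinic ``jump'' in the shadowing construction. The central intermediate step is to prove that, along any gradient relaxation of (\ref{r:grad}) originating from an $H^1_{\mathrm{loc}}$-datum with prescribed asymptotic tori $\mathbb{T}_{\omega_1},\mathbb{T}_{\omega_2}$ at $t\to \pm\infty$, the resulting equilibrium of (\ref{r:grad}) -- a solution of (\ref{r:EL}) -- passes from a definite neighbourhood of $\mathbb{T}_{\omega_1}$ to one of $\mathbb{T}_{\omega_2}$ within a $t$-window of length
\[ T \;=\; O\!\left( \frac{\varpi^5 |\log \Delta_1|}{\Delta_1} \right). \]
The $|\log \Delta_1|$ factor comes from the logarithmically long pendulum separatrix crossing; the $\varpi$-powers come from rescaling $t$- and $v$-norms by the rotator speed; and $\Delta_1$ enters as the minimal Fr\'echet slope of the action away from the minimizer manifold, so that the relaxation rate in $s$ is at least of order $\Delta_1^2$.

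For part (i), I would fix two frequencies $\omega_1 \neq \omega_2$ in $[\omega^-,\omega^+]$ separated by a definite amount, and for every bi-infinite binary sequence $(a_n) \in \{1,2\}^{\mathbb{Z}}$ construct, by the variational argument behind Theorem~\ref{t:main1} applied to the itinerary $(\mathbb{T}_{\omega_{a_n}})$ with time-spacing $T$, an orbit of $\phi$ lying in the slab (\ref{r:smallset}) that visits a definite neighbourhood of $\mathbb{T}_{\omega_{a_n}}$ during $[nT,(n+1)T]$. Distinct sequences give distinct orbits, so the coding $(a_n)\mapsto \text{orbit}$ embeds the full binary shift into the time-$T$ map $\phi^T$, yielding topological entropy at least $(\log 2)/T$, which is (i). For (ii) I would apply the same construction to a single monotone itinerary $\omega^-=\omega_0<\omega_1<\cdots<\omega_N=\omega^+$ with consecutive gaps of order $R\vee \mu$ (the maximal $v$-displacement a single heteroclinic can cover, bounded by the diameter $R$ from (S1) and the $\mu$-scaling of (\ref{r:ELv})); the resulting shadowing orbit crosses from $\omega^-$ to $\omega^+$ in total time $\asymp NT'$, where $T'$ is the transit time for a jump genuinely transporting $v$ by $R\vee\mu$, which the same dissipation estimate forces to be larger than $T$ by a factor of order $\varpi (R\vee\mu)/\Delta_0$, producing the claimed drift rate.

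The main obstacle will be establishing the sharp time estimate on $T$. Proposition~\ref{p:dissipation} supplies the $L^2$-type lower bound $\Delta_1$ on the Fr\'echet derivative of the action away from the minimizer manifold, but translating this into the asserted $|\log \Delta_1|$-factor bound on the transit time of the constructed equilibrium requires the uniformly local energy/dissipation/flux techniques of \cite{Gallay:14,Gallay:15}, carefully adapted to the non-autonomous, weakly hyperbolic coupled system (\ref{r:grad}) on the unbounded $t$-line. The logarithmic factor reflects a balance between the $\Delta_1^2$ dissipation rate in $s$ and the exponentially slow escape from the pendulum fixed points $u=0,2\pi$, a balance which must be controlled uniformly over every window of the shadowing itinerary -- this is precisely what prevents a naive ``total-action'' argument of Bessi type from closing, and why a local energy/flux argument is needed.
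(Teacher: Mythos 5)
Your overall shape — embed a shift with transit time $T \asymp \varpi^5|\log\Delta_1|/\Delta_1$, then divide by the number of jumps for the drift — matches the paper, and your argument for part (ii) is essentially the paper's (they take $\omega_1=\omega^-,\dots,\omega_n=\omega^+$ with gaps $\leq \Delta_0/(4c_4(R\vee\mu)\varpi)$, each jump costing time $L$, and compute $(\omega^+-\omega^-)/(nL)$). But for part (i) there are two substantive differences, one a genuine gap.

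First, a difference in construction: the paper fixes a \emph{single} frequency $\omega_k\equiv\omega$ and encodes the Bernoulli symbol $\chi_n\in\{0,1\}$ in whether or not a homoclinic "jump" occurs in the window around $4nL$. Your proposal instead fixes two frequencies $\omega_1,\omega_2$ and encodes the symbol in which torus is visited. This can be made to work if $|\omega_1-\omega_2|$ is small enough to satisfy (\ref{r:assumec}) — otherwise direct heteroclinics do not exist and you would need multi-jump interpolations, which destroys the clean one-symbol-per-window coding — but it buys you nothing over the homoclinic version and raises the issue of needing heteroclinics in both directions. The paper's choice is cleaner.

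Second, and more importantly, your passage from "distinct sequences give distinct orbits" to "topological entropy at least $(\log 2)/T$" is a gap. In this variational framework the orbit associated to a given itinerary is produced by gradient relaxation and then a \emph{subsequential} limit (Theorem \ref{t:contains}); the map from $\{1,2\}^{\mathbb{Z}}$ to orbits is therefore not canonically defined, let alone continuous, and injectivity alone gives no entropy lower bound. The paper resolves exactly this difficulty with the machinery of Section \ref{s:measure}: it constructs a $\hat S$-invariant probability measure on the configuration space that factors onto the Bernoulli measure while being "shadowed" (conditions (M1)–(M4) and Theorem \ref{t:variational}), so that Lemma \ref{l:entropy} transfers the metric entropy $\log 2$ of the Bernoulli shift to $\nu$, and only then does the variational principle yield a topological entropy bound. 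You flag the transit-time estimate $T$ as the main obstacle, but that is actually established earlier (Propositions \ref{p:dissipation}, \ref{p:b6}, \ref{p:invariant}); the obstacle specific to Theorem \ref{t:main2}(i) is precisely this measure-theoretic shadowing step, which your outline omits.

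A minor misattribution: the $|\log\Delta_1|$ factor does not come from the logarithmically slow passage along the pendulum separatrix (that gives the condition $L\gtrsim|\log\mu|/\sqrt\ve$, which the paper shows is dominated), but from the energy-balance comparison in Section \ref{s:upper}: the boundary flux through $t=\tilde T_k\pm L$ is $\asymp(\log L)/L$, and requiring it to be beaten by the dissipation $\Delta_1$ forces $L\gtrsim|\log\Delta_1|/\Delta_1$.
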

Our estimates are consistent with upper bounds on the drift acceleration in the cases considered by Nekhoroshev$^8$ \cite{Nekhoroshev:77}, as well as on upper bounds in \cite{Berti:03} (see below).

The emerging picture of the Arnold diffusion is actually more subtle. We can set $\omega^-=\omega^+$, in Theorem \ref{t:main2}, and find $\Delta_1=\Delta_1(\omega)$ and an ergodic positive entropy measure $\mu_{\omega}$ in a neighborhood of $\mathbb{T}_{\omega}$ with the locally maximal metric entropy as in Theorem \ref{t:main2}, (ii). By the Margulis-Ruelle inequality, there is a positive Lyapunov exponent with respect to $\mu_{\omega}$, which is at least $\sim \Delta_1(\omega)/|\log \Delta_1(\omega)|$. The drift acceleration seems to be proportional to the integral of these Lyapunov exponents with respect to $\omega$ along a diffusion path. This picture somewhat explains, and provides tools to study the dynamics of various "time-scales", i.e. the observed diffusion in Hamiltonian systems with transport speed substantially varying in different regions of the phase space (see e.g. \cite{Berti:02b,Gelfreich:14} and references therein). 

Consider now the case of small perturbations (i.e. small $\mu >0$). Recall the definition of the Melnikov primitive $M_{\omega}:\mathbb{R}^2 \rightarrow \mathbb{R}$,
\begin{equation}
M_{\omega}(t_0,v_0)= - \ve \int_{-\infty}^{\infty} (1-\cos (u^{\ve}(t-t_0))f(u^{\ve}(t-t_0),v_0+\omega(t-t_0),t)dt,
\end{equation}
where $u^{\ve}=4 \arctg e^{\sqrt{\ve}\: t}$ is the separatrix of the unperturbed system, i.e. the case $\mu=0$ when (\ref{r:EL}) reduces to an uncoupled pendulum and a rotator. It is well-known that for sufficiently small $\mu$ and fixed $\omega$, the oscillations of $M_{\omega}$ approximate well the oscillations of $S_{\omega}$, the minima of $M_{\omega}$ approximate the minima of $S_{\omega}$, and non-degeneracy of the minima of $M_{\omega}$ implies non-degeneracy of minima of $S_{\omega}$ (i.e. transversal intersection of "whiskers"). To get the conclusions of Theorems \ref{t:main1} and \ref{t:main2}, it thus suffices to assume an analogue of (S1) for $M_{\omega}$:
\begin{itemize}
	\item[(S2)] For each $\omega \in [\omega^-,\omega^+]$, and for each global minimum $(t_0,v_0)$ of $M_{\omega}$, there exists a closed neighbourhood $\mathcal{N}$ of $(t_0,v_0)$ such that for each $(t_1,v_1) \in \partial \mathcal{N}$,
	\begin{equation}
	M_{\omega}(t_1,v_1) - M_{\omega}(t_0,v_0) \geq 4 \tilde{\Delta}_0,  \label{r:DDDelta0}
	\end{equation} 
	where $\tilde{\Delta}_0> 0$ is an uniform constant over $[\omega^-,\omega^+]$, and the diametar of $\mathcal{N}$ is not greater than $R$, where $R$ satisfies (\ref{r:Rcond}).
\end{itemize}
\begin{thm} \label{t:main3} Assume that (S2) holds. Then there exists $\mu_0 >0$, such that for for each $0 < \mu \leq \mu_0$, the conclusions of Theorem \ref{t:main1} and \ref{t:main2} hold, with $\Delta_0=\mu \tilde{\Delta}_0$.
\end{thm}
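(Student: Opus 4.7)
The plan is to reduce (S2) to (S1) with $\Delta_0 := \mu \tilde{\Delta}_0$, and then invoke Theorems \ref{t:main1} and \ref{t:main2} verbatim. The reduction is classical Melnikov perturbation theory applied to the Peierl's barrier function, already flagged as ``well-known'' in the paragraph preceding the statement.

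First I would establish the uniform Melnikov expansion
\begin{equation*}
S_\omega(t_0, v_0) = 8\sqrt{\ve} + \mu M_\omega(t_0, v_0) + O(\mu^2),
\end{equation*}
with the $O(\mu^2)$ remainder uniform in $(t_0, v_0) \in \mathbb{R}^2$ and in $\omega \in [\omega^-, \omega^+]$. The upper bound follows by inserting the unperturbed trial configuration $(u^\ve(\cdot - t_0),\, v_0 + \omega(\cdot - t_0))$ into the adjusted Lagrangian $L_\omega$: the $\mu^0$ and $\mu^1$ contributions are exactly $8\sqrt{\ve}$ and $\mu M_\omega(t_0, v_0)$. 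For the matching lower bound I would use a Lyapunov--Schmidt reduction transverse to the one-parameter family of time translations of $u^\ve$: the second variation of the unperturbed action at $u^\ve$ is positive definite with a uniform spectral gap on the $L^2$-complement of $(u^\ve)_t$, so the perturbed minimizer sits within $O(\mu)$ of some time translate of $u^\ve$ in $H^1$, yielding the lower bound. Uniformity in $(t_0, v_0, \omega)$ is automatic because the unperturbed problem is translation-invariant in $t$ and $v$, and $u^\ve$ does not depend on $\omega$; the $C^{4+\gamma}$ regularity of $f$ gives ample smoothness for the remainder estimates.

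Second I would transfer (S2) to (S1). Fix $\omega \in [\omega^-, \omega^+]$ and let $(t_0', v_0')$ be any global minimum of $S_\omega$. By the expansion, $M_\omega(t_0', v_0') \leq \min M_\omega + O(\mu)$, so by uniform continuity of $M_\omega$ (uniform also in $\omega$) the point $(t_0', v_0')$ is $o_\mu(1)$-close to some global minimum $(t_0^*, v_0^*)$ of $M_\omega$. Let $\mathcal{N}^*$ be the neighborhood from (S2) around $(t_0^*, v_0^*)$ and set $\mathcal{N}' := \mathcal{N}^* + (t_0' - t_0^*,\, v_0' - v_0^*)$, a closed neighborhood of $(t_0', v_0')$ of the same diameter $\leq R$, so (\ref{r:Rcond}) is preserved. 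For $(t_1, v_1) \in \partial \mathcal{N}'$, writing $(t_1^*, v_1^*) \in \partial \mathcal{N}^*$ for the corresponding translate, the Melnikov expansion, (S2), and uniform continuity of $M_\omega$ together give
\begin{equation*}
S_\omega(t_1, v_1) - S_\omega(t_0', v_0') = \mu \bigl[M_\omega(t_1^*, v_1^*) - M_\omega(t_0^*, v_0^*)\bigr] + o(\mu) \geq 4 \mu \tilde{\Delta}_0 - o(\mu),
\end{equation*}
which exceeds $3 \mu \tilde{\Delta}_0 = 3 \Delta_0$ for all sufficiently small $\mu$. Hence (S1) holds on $[\omega^-, \omega^+]$ with $\Delta_0 = \mu \tilde{\Delta}_0$, and Theorems \ref{t:main1} and \ref{t:main2} apply, the value $\Delta_1$ being the one associated to the perturbed Lagrangian by Proposition \ref{p:dissipation}.

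The main obstacle is the lower bound in the Melnikov expansion: the unperturbed separatrix is not an isolated minimizer but a one-parameter family of time translates, so no direct quantitative stability result applies. One must carry out the transverse non-degeneracy argument above with uniform constants in $\omega$ and in the base point $(t_0, v_0)$, and with quantitative rather than merely qualitative remainder control. This is classical but it is the technical heart of the proof; once it is in place, the rest of the theorem is a short book-keeping exercise.
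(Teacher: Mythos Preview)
Your reduction strategy---show that (S2) forces (S1) with $\Delta_0=\mu\tilde{\Delta}_0$, then invoke Theorems~\ref{t:main1} and~\ref{t:main2}---matches the paper exactly. The difference lies entirely in how the Melnikov approximation for $S_\omega$ is obtained.

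You propose a Lyapunov--Schmidt reduction: exploit the spectral gap of the second variation of the unperturbed action at $u^{(\ve)}$ on the complement of the translation mode, deduce that the perturbed minimizer is $O(\mu)$-close to a translate of $u^{(\ve)}$, and obtain $S_\omega=8\sqrt{\ve}+\mu M_\omega+O(\mu^2)$ pointwise. The paper avoids this functional-analytic machinery altogether. Instead it constructs, in Lemma~\ref{l:subsuper2}, an explicit pair of stationary sub- and super-solutions $\tilde z^\pm$ of the parabolic equation (\ref{r:gradu}) that sandwich the true minimizer and satisfy $|\tilde z^\pm-u^{(\ve)}|\le c\sqrt{\ve\mu}$. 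Combined with the already available $z^\pm$ from Lemma~\ref{l:subsuper}, this gives directly $|u(t)-u^{(\ve)}(t-t_0)|\ll\sqrt{\ve\mu}$ for any two-sided minimizer (Lemma~\ref{l:approximate}(i)), and then a Mean-Value estimate on the integrand yields
\[
\bigl|S_\omega(t_1,v_1)-S_\omega(t_0,v_0)-\mu\bigl(M_\omega(t_1,v_1)-M_\omega(t_0,v_0)\bigr)\bigr|=O_f(\ve\mu^{3/2}),
\]
which is (\ref{r:3over2}). Note that the paper only controls \emph{differences} of $S_\omega$, with the weaker remainder $O(\mu^{3/2})$ rather than your $O(\mu^2)$; this still suffices since one only needs the error to be $\le \mu\tilde\Delta_0$ for small $\mu$.

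What each approach buys: the paper's comparison argument is entirely elementary, reuses tools already built for Sections~\ref{s:five}--\ref{s:seven}, and sidesteps precisely the obstacle you flag as ``the technical heart of the proof''---no spectral gap, no transverse reduction, no tracking of constants through an implicit-function argument. Your route is the textbook perturbative one and would yield a sharper remainder and more information about the minimizer, but at the cost of machinery that the paper nowhere needs. In particular, your concern about the one-parameter family of translates is a non-issue here: the constraint $u(t_0)=\pi$ already kills the translation freedom, and the sub/super-solution sandwich pins the minimizer without ever invoking non-degeneracy of the second variation.
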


In the Part II of the paper \cite{Slijepcevic:16}, we will explicitly estimate $\Delta_0$, $\Delta_1$, the topological entropy and drift acceleration for small $\mu$ under different assumptions on the Melnikov primitive. We will show that the approach seems to give optimal estimates as compared to known results for (\ref{d:unstable}). For example, we will obtain the drift acceleration $O(\mu/|\log \mu|)$ and topological entropy $O(1/|\log \mu|)$ which is optimal$^9$, in the case of Melnikov primitive with non-degenerate minima and small $\mu$. We will strengthen known results, and also show that for such fast drift acceleration, it suffices that $||D^2M_{\omega}(t_0(\omega),v_0(\omega)||^{-1}$ (where $(t_0(\omega),v_0(\omega))$ minimize $M_{\omega}$) is integrable along a diffusion path (equivalently, that the inverse of the splitting angles of separatrices is integrable), as long as (S2) holds. Furthermore, we will show that for small $\mu \leq \mu_0$, (\ref{r:Rcond}) is not needed (however, $\mu_0$ is then inverse proportional to $R$), and will obtain new estimates for topological entropy and drift acceleration for $M_{\omega}$ with degenerate minima as a function of the leading term in the Taylor expansion of $M_{\omega}$ at the minimum.

Finally, we remark that all the results hold in the classical Arnold's example \cite{Arnold:64} with $f(u,v,t)=\cos v+\cos t$. In that case, the Melnikov primitive $M_{\omega}$ can be explicitly calculated \cite{Arnold:64,Bessi:96}, and it satisfies (S2) with the regions of instability $(-\infty,0)$ and $(0,\infty)$. We can thus obtain diffusion orbits and shadowing measures in the Arnold's example for $\ve \leq 1$ and sufficiently small$^{10}$ $\mu>0$, without restrictions to $\omega$, as long as the sign of $\omega$ does not change. One can tailor the argument to also obtain accelerating orbits, i.e. orbits with unbounded $\omega$ in that case (and any case when $f$ does not depend on $u$).

\vspace{1ex}

{ \fontsize{8}{10}
\begin{remarks}
\noindent (1) Consider the stable and unstable manifolds of $\mathbb{T}_{\omega}$ of the time-$2\pi$ map $\sigma$. Then an unbounded family of global minima of $S_{\omega}$ corresponds to an unbounded, connected family of homoclinic orbits of $\sigma$, which can not be "crossed" by other orbits on 2-dimensional stable and unstable manifolds of $\sigma$. This would prevent drift and the complex dynamics we describe in Theorem \ref{t:main1} in that region of the phase space.

\vspace{1ex}

\noindent (2) The restriction on $R$ is used only in the proof of Proposition \ref{p:invariant} at the end of Section \ref{s:invariant}, to assist in the cases of topologically complex $\mathcal{N}$ in (S1). We will show in \cite{Slijepcevic:16} that this is not needed in the case of small perturbations (sufficiently small $\mu$). Alternatively, one can instead assume that $\mathcal{N}$ is convex, and bound the interval $[\omega^-,\omega^+]$ away from zero. Note that Bernard in \cite{Bernard:96} considered the same equation for sufficiently small $\mu$ with an  assumption analogous to Theorem \ref{t:main3}, but assuming $\mathcal{N}$ to be rectangular. Bernard used the method of Bessi \cite{Bessi:96} (see the earlier discussion on the Bessi method for a comparison), and constructed diffusion orbits of finite length in a restricted range of $\omega$.

\vspace{1ex}
\noindent (3) The definition of the region of instability by Cheng and Yan \cite{Cheng:04,Cheng:09,Cheng10} seems to be essentially equivalent to ours (they give it in a more general and abstract setting). Our understanding is that the method in these papers does not result with diffusion orbits of infinite length, thus does not imply existence of invariant measures, and that it does not immediately give estimates of the drift acceleration, topological entropy and Lyapunov exponents.

\vspace{1ex}
\noindent (4) Our results in Theorem \ref{t:main2} can be interpreted in the sense of Bernard's {\it forcing relation} of cohomology classes \cite{Bernard:08}: if $\omega$ and $\tilde{\omega}$ are in the same region of instability, then $(0,\omega),(0,\tilde{\omega}) \in H^1(\mathbb{T}^2,\mathbb{R})$ are related.

\vspace{1ex} 
\noindent (5) Assume we take $\omega^-=\omega^+$, and find the largest $3\Delta_0$ such that (S1) holds. Then $3\Delta_0$ is the difference of actions of a minimizing and a "minimax" (in this case a "saddle") homoclinic orbit. In the case of area-preserving twist maps, one would analogously obtain exactly the quantity $\Delta W$ quantifying transport through gaps in Cantori \cite{MacKay:88}.

\vspace{1ex}
\noindent (6) Mather in \cite{Mather:85} constructed uncountably many minimal sets of twist maps with the same irrational angular rotation $\omega$. One can adapt our construction in the proof of Theorem \ref{t:main2} to construct uncountably many minimal sets for irrational $\omega$, supported on the set $\lim_{T \rightarrow \infty}(v(T)-v(-T))/2=\omega$, by essentially constructing orbits shadowing a couple of orbits homoclinic to $\mathbb{T}_{\omega}$ (jumping "forward" and "backward") with the time between "jumps" in the set $nL$, $n$ in a fixed subset of $\mathbb{N}$. We intend to provide details separately. 

\vspace{1ex}
\noindent (7) We explain it by analogy. Consider a sufficiently smooth function $S: \mathbb{R}^n \rightarrow \mathbb{R}$ with a local minimum $x=0$. If the minimum is nondegenerate and $(D^2S(0)x,x) \geq A||x||^2$, for small enough $\delta >0$ on the level sets $S(x)=A \delta^2$ in a neighborhood of $0$ we trivially have that $||DS(x)|| \geq 2A \delta  + O(\delta^2)$. If, however, we merely assume that the set of local minima of $S$ is bounded, we can by Morse-Sard theorem find level sets of $S$ arbitrarily close to the set of minima, such that $||DS(x)|| \neq 0$ on that level set, and by compactness of level sets close enough to the bounded set of minima and smoothness of $S$, we can find a lower bound $||DS(x)|| \geq \Delta_1 >0$ on any such level set. Now we take $S_{\omega}$ instead of $S$, acting on a suitable Banach space (see \cite{Angenent:93} or Section \ref{s:dissipation}). If the stable and unstable manifolds of $\mathbb{T}_{\omega}$ intersect transversally, $D^2S_{\omega}$ is in a certain sense non-degenerate \cite{Angenent:93}, the constant $A$ can be interpreted as the angle of splitting, and $\Delta_1 \sim $ the lower bound on the norm of the Fr\'{e}chet derivative of $S_{\omega}$ on a level set of $S_{\omega}$ is proportional to $A$ (we actually take the square of the norm and find a level set which maximizes $\Delta_1$). If, however, we only assume that the set of minima of $S_{\omega}$ is bounded, we analogously to the finite-dimensional case obtain $\Delta_1>0$ by an application of an infinite dimensional analogoue of the Morse-Sard theorem (see the proof of Proposition \ref{p:dissipation}, and Part II for further discussion and examples \cite{Slijepcevic:16}).

\vspace{1ex} 
\noindent (8) In the cases considered by Nekhoroshev such as the Arnold's example \cite{Arnold:64}, both $\Delta_0$ and $\Delta_1$ are exponentially small in $\ve$, which is consistent with \cite{Nekhoroshev:77}. They are, however, polynomial in $\mu$ - see below.

\vspace{1ex} 
\noindent (9) The "fast diffusion" (with respect to the perturbation $\mu$) for sufficiently small $\mu$ has the drift acceleration $O(\mu/|\log \mu|)$, as conjectured by Lochak, and proved for a class of a-priori unstable systems similar to (\ref{d:unstable}) (also allowing the dimension of the rotator variable $v$ to be $\geq 1$) with non-transveral intersection of whiskers by Berti, Biasco, Bolle, and Treschev \cite{Berti:03,Treschev:04,Treschev:12}. In \cite{Berti:03}, it was established that this is under certain assumptions the largest possible drift acceleration. If we only assume (S2), then the drift acceleration is $O(\mu^2)$, as shown in \cite{Bernard:96,Bessi:96}.

\vspace{1ex}
\noindent (10) One could extend the results in the Arnold's example to arbitrary $\mu$ (and other cases of (\ref{d:unstable})), by developing a computer-assisted proof verifying (S1) in the spirit of \cite{Simo:00}, by using all the a-priori bounds we develop here.
\end{remarks}

}
	
\subsection{The structure of the proof and notation}

In Sections 2-4, we consider the Lagrangian of the type $L(q,q_t,t)=\frac{1}{2}q_t^2+V(q,t)$ on $\mathbb{T}^N \times \mathbb{R}^N \times \mathbb{T}$, and develop general tools for constructing solutions and "shadowing" invariant measures of Euler-Lagrange equations. Specifically, in Section \ref{s:existence} we prove existence of solutions of the formally gradient dynamics of the action on unbounded domains, on function spaces large enough to contain solutions of Euler-Lagrange equations with merely a bounded momentum. We then in Section \ref{s:equilibrium} show that invariant sets with respect to the formally gradient dynamics bounded in norm contain in its closure the Euler-Lagrange equations. The key tool is then developed in Section \ref{s:measure}, where we extend these ideas to construction of shadowing invariant measures.

In Sections \ref{s:five}-\ref{s:invariant}, we then focus on the a-priori unstable Lagrangian (\ref{d:unstable}), and construct invariant sets of the formally gradient flow as required by the general setting. The construction is based on the following simple idea. Assume for the moment that $\xi$ is an abstract continuous semiflow on a separable metric space $\mathcal{X}$, and let $\mathcal{A}$, $\tilde{\mathcal{B}}$ be subsets of $\mathcal{X}$. Assume they satisfy the following:

\begin{description}
	\item[(B1)] $\tilde{\mathcal{B}}$ is $\mathcal{A}$-relatively $\xi-$invariant set. That means if $q(s_0) \in \tilde{\mathcal{B}}$, and if there exists $s_1 > s_0$ such that for all $s \in [s_0,s_1]$, $\xi^{s-s_0}(q)\in  \mathcal{A}$, then for all $s \in [s_0,s_1]$, $\xi^{s-s_0}(q)\in  \tilde{\mathcal{B}}$.
	
	\item[(B2)] There exists $\lambda >0$ such that, if $q(s_0) \in \mathcal{A} \cap \tilde{\mathcal{B}}$, then for all $s \in [s_0,s_0+\lambda]$,  $q(s) \in \mathcal{A}$.
\end{description}

\begin{lemma} \label{l:combine}
	Assume $\mathcal{A}$, $\tilde{\mathcal{B}}$ are subsets of a separable metric space $\mathcal{X}$ satisfying (B1), (B2) with respect to a continuous semiflow $\xi$ on $\mathcal{X}$. Then $\mathcal{B}=\mathcal{A} \cap \tilde{\mathcal{B}}$ is $\xi$-invariant.
\end{lemma}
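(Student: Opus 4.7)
The plan is a simple bootstrap/induction argument directly from the two hypotheses, and neither continuity of $\xi$ nor separability of $\mathcal{X}$ is actually needed at this stage (they are presumably in force because they are standing assumptions for the intended applications). Fix any initial point $q_0 \in \mathcal{B} = \mathcal{A} \cap \tilde{\mathcal{B}}$ and, writing $q(s) = \xi^s(q_0)$, I will show by induction on $n \in \mathbb{N}$ that $q(s) \in \mathcal{A} \cap \tilde{\mathcal{B}}$ for every $s \in [0, n\lambda]$, which immediately yields $\xi$-invariance of $\mathcal{B}$.

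For the base case $n = 1$: since $q(0) \in \mathcal{A} \cap \tilde{\mathcal{B}}$, condition (B2) applied at $s_0 = 0$ gives $q(s) \in \mathcal{A}$ for all $s \in [0, \lambda]$. Now I feed this into (B1) with $s_0 = 0$ and $s_1 = \lambda$: we have $q(0) \in \tilde{\mathcal{B}}$ and the orbit segment lies in $\mathcal{A}$ throughout $[0, \lambda]$, so (B1) yields $q(s) \in \tilde{\mathcal{B}}$ on the same interval. Intersecting the two conclusions, $q(s) \in \mathcal{A} \cap \tilde{\mathcal{B}}$ for $s \in [0, \lambda]$.

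For the inductive step, assume $q(s) \in \mathcal{A} \cap \tilde{\mathcal{B}}$ for $s \in [0, n\lambda]$. In particular $q(n\lambda) \in \mathcal{A} \cap \tilde{\mathcal{B}}$, so the base argument applied at $s_0 = n\lambda$ (using the semigroup property of $\xi$) gives $q(s) \in \mathcal{A} \cap \tilde{\mathcal{B}}$ for $s \in [n\lambda, (n+1)\lambda]$. Combining with the inductive hypothesis closes the induction, and since $n$ is arbitrary we conclude $q(s) \in \mathcal{B}$ for all $s \geq 0$, i.e.\ $\mathcal{B}$ is forward $\xi$-invariant.

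There is really no serious obstacle here; the only thing to be careful about is the order in which (B2) and (B1) are invoked within each $\lambda$-step (first (B2) to produce an $\mathcal{A}$-segment, then (B1) to upgrade it to a $\tilde{\mathcal{B}}$-segment), and the verification that after each step one lands again in $\mathcal{A} \cap \tilde{\mathcal{B}}$ so that the procedure can be iterated. The lemma is essentially a clean packaging of this alternating bootstrap, and its real content will be in the subsequent sections where the sets $\mathcal{A}$ and $\tilde{\mathcal{B}}$ are chosen so that (B1) and (B2) encode the energy/action estimates of the formally gradient dynamics.
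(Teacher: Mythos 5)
Your proof is correct and uses essentially the same idea as the paper's: the crucial step is the alternating application of (B2) then (B1) to extend the good time interval by $\lambda$. The paper packages the iteration as a proof by contradiction (taking the supremum $s_2$ of times up to which the orbit stays in $\mathcal{A}\cap\tilde{\mathcal{B}}$, then applying the (B2)/(B1) step at $s_3 = \max\{s_0, s_2 - \lambda/2\}$ to push past $s_2$), whereas you present it as a direct forward induction in steps of length $\lambda$; the two are logically equivalent, and your version is arguably a cleaner write-up of the identical argument.
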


\begin{proof}
	Assume the contrary and find a semi-orbit $q(s)$ of $\xi$, $s \geq s_0$, $q(s_0) \in \mathcal{A}\cap \tilde{\mathcal{B}}$ which violates the conclusion of the Lemma. Let 
	$$ s_2 = \sup \left\lbrace s_1\geq 0, \: q(s)\in \mathcal{A}\cap \tilde{\mathcal{B}}
	\text{ for all } s\in [0,s_1] \right\rbrace .$$
	Then if $s_3 = \max \lbrace s_0, s_2-\lambda/2 \rbrace$, by construction $q(s_3) \in \mathcal{A}\cap \tilde{\mathcal{B}}$. Now by (B2), for all $s \in [s_3,s_3 + \lambda]$, we have $q(s) \in \mathcal{A}$, and by (B1), for all $s \in [s_3,s_3 + \lambda]$ we obtain $q(s) \in \tilde{\mathcal{B}}$. But $s_3 + \lambda > s_2$, which is a contradiction. 
\end{proof}
We construct the sets $\mathcal{A},\tilde{\mathcal{B}}$ as follows. Let $\xi$ be the "formally gradient semiflow" introduced in Section \ref{s:existence}. We define the set $\mathcal{A}$ in Section \ref{s:six} by very roughly fixing the times of "jumps" between invariant tori $\mathbb{T}_{\omega}$. The set $\tilde{\mathcal{B}}$ satisfying the conditions (B1), (B2) is then built using the "Russian doll" approach. We construct a decreasing sequence of sets $\mathcal{B}_1 \supset \mathcal{B}_2 \supset ... \supset \mathcal{B}_6$, showing inductively in each step that they are $\mathcal{A}$-relatively $\xi$-invariant. Finally, in $\mathcal{B}_6$ we establish sufficient control to also prove by an energy method the condition (B2), as required by Lemma \ref{l:combine}. Specifically, in Sections \ref{s:five}, \ref{s:hetero} we recall the known results on homoclinic, heteroclinic orbits and the Peierl's barrier, and prove a-priori bounds on minimizing homoclinics and heteroclinics. In Section \ref{s:six}, we fix a sequence of tori $\mathbb{T}_{\omega_k}$, $k \in \mathbb{Z}$, and construct a rough approximation of a shadowing orbit. In our method, it is not required that this approximation is very precise. We then in Sections \ref{s:seven} and \ref{s:eight} construct sets $\mathcal{B}_1$, $\mathcal{B}_2$, $\mathcal{B}_3$, $\mathcal{B}_4$, by establishing $L^{\infty}$-bounds, as well as weighted $L^2$-bounds on the first, second and third derivatives. The core of the argument is then in Sections \ref{s:dissipation} and \ref{s:upper}, where we establish local control of the dynamics $\xi$ between two "jumps" roughly independently of the behavior away from these jumps. The argument relies on precise control of the local "energy", "energy dissipation" and "energy flux" with respect to $\xi$, where "energy" is in this case the Lagrangian action.  The approach is inspired by the results from \cite{Gallay:01,Gallay:12,Gallay:14,Gallay:15}. One of the novelties is the use of an infinite-dimensional version of the Morse-Sard Theorem, enabling us to establish lower $>0$ bounds on the dissipation on certain "action" levels arbitrarily close to the minimal action along a heteroclinic orbit. We then show that these action levels can not be crossed by $\xi$, as the action dissipation is larger than the action flux through the boundary of the considered interval $t\in [\tilde{T}_k-L,\tilde{T}_k+L]$, where $\tilde{T}_k$ is the approximate time of a "jump" and $L$ the minimal time between jumps, for $L$ large enough.

We thus establish uniformly local control of the dynamics $\xi$, enabling us to construct invariant sets independently of the number of "jumps" between invariant tori. This allows the number of jumps to be infinite, and establishes "variational" control for all $t \in \mathbb{R}$. We complete the construction of an invariant set $\mathcal{B}$, as a function of a given sequence of tori to be shadowed, in Section \ref{s:invariant}. The last step of the construction is somewhat subtle (the set $\mathcal{B}_6$), and uses in a topological way the existence of the semiflow $\xi$.

In Sections 13 and 14, we then focus on proving Theorems \ref{t:main1}-\ref{t:main3}. Several technical results needed throughout the paper are given in Appendices A-D at the end of the paper. 

\begin{notation} We denote by $\mathcal{X}$ the phase space on which we consider the formally gradient semiflow $\xi$, introduced in Section \ref{s:existence}. The elements of $\mathcal{X}$ are always denoted by $q$, and in the case $N=2$ consistently with $q=(u,v)$. By $\mathcal{E}$ we denote the set of equilibria of $\xi$, which by correspondences $\pi_t$ and $\pi$ specified in Section \ref{s:equilibrium} are the solutions of the Euler-Lagrange equations. The fixed velocities $v_t$ specifying the invariant tori $\mathbb{T}_{\omega}$ are denoted by $\omega$. 
The sequence of "jump" times is given with $\tilde{T}_k$, and $T_k = \tilde{T}_k \mod 2\pi$, $T_k \in [0, 2\pi)$. An approximate shadowing orbit is denoted by $q^0$, and defined in Section \ref{s:six}. Various constants are fixed throughout the proof: the constants $L$ (minimal length of time $t$ between the "jumps"), $M$ (maximal oscillations of the "rotator" variable $v$) depend on the particular choices of $f$, and the sequence of tori to be shadowed. The constants $R$ (introduced in (S1)) and $\varpi=\max \lbrace |\omega^-|,|\omega^+|,1 \rbrace$ depend on the choice of $f$ and the segment $[\omega^-,\omega^+]$ in a region of instability. We denote by $c_1,c_2,...$ fixed absolute constants, though they may change within the proof when introduced.	The symbol $g \ll h$ always means $g \leq c \cdot h$ for some absolute constant $c$. If the constant depends on $\ve$ or $f$, we write $g \ll_{\ve} h$ or $g \ll_{f} h$. We use $g \wedge h$ instead of $\min \lbrace g,h \rbrace$ and $g \vee h$ instead of $\max \lbrace g,h \rbrace$. Given two measurable spaces $(\Omega_1,\mathcal{F}_1)$ and $(\Omega_2,\mathcal{F}_2)$, where $\mathcal{F}_1$ and $\mathcal{F}_2$ are $\sigma$-algebras, if $\mu$ is a probability measure on $(\Omega_1,\mathcal{F}_1)$ and $g : \Omega_1 \rightarrow \Omega_2$ measurable, we denote by $f^*\mu$ the pulled measure on $(\Omega_2,\mathcal{F}_2)$ given with $f^*\mu(\mathcal{C})=\mu(f^{-1}(\mathcal{C}))$.
\end{notation}

\vspace{2ex}

\vspace{2ex}

\centerline{I: VARIATIONAL CONSTRUCTION OF ORBITS AND INVARIANT MEASURES}

\vspace{2ex}

\section{Existence of solutions and the function spaces} \label{s:existence}

In this and the next two sections, we consider the Lagrangian $L(q,q_t,t)=\frac{1}{2}q_t^2+V(q,t)$, $L : \mathbb{T}^N \times \mathbb{R}^N \times \mathbb{T} \rightarrow \mathbb{R}$, where $\mathbb{T}$ is always parametrized with $[0,2\pi)$, and $V$ is $C^2$, $2\pi$-periodic in all coordinates. Here we prove existence of solutions of the formally gradient dynamics of the action, given with:
\begin{equation}
q_s = q_{tt} - \frac{\partial}{\partial q}V(q,t). \label{r:Grad}
\end{equation}
We consider (\ref{r:Grad}) on the Banach space $\mathcal{X}$ of all  $q=(u,v) \in H^2_{\text{loc}}(\mathbb{R})^N$, such that $q_t \in H^1_{\text{ul}}(\mathbb{R})^N$ (in Appendix A we recall the definition of the uniformly local spaces $H^k_{\text{ul}}(\mathbb{R})^N$, the associated norms and their properties). The norm on $\mathcal{X}$ is given with
$$ 
||q||_{\mathcal{X}_{\text{ul}}}=\left( q(0)^2+||q_t||^2_{H^1_{\text{ul}}(\mathbb{R})^N} \right)^{1/2}.
$$

We will frequently require an alternative, weaker, localized topology on $\mathcal{X}$, induced by the $H^1_{\text{loc}}(\mathbb{R})^N$ topology on $\mathcal{X}$. We use the notation $\mathcal{X}_{\text{ul}}$ and $\mathcal{X}_{\text{loc}}$ respectively to distinguish the topologies on the same set $\mathcal{X}$. Now $\mathcal{X}_{\text{loc}}$ is a normed (but not complete) space with the norm
$$
||q||_{\mathcal{X}_{\text{loc}}}=\left( \int_{-\infty}^{\infty} e^{-|t|} \left( q^2(t)+q_t^2(t) \right) dt \right)^{1/2}.
$$
Denote by $\varphi^yq(t)=q(y+t)$ the translation (corresponding to the time evolution of the Euler-Lagrange equations, as discussed in detail in the next section). By definition of uniformly local spaces, $\varphi$ is a continuous flow on $\mathcal{X}_{\text{ul}}$, and by definition of the topology, also on $\mathcal{X}_{\text{loc}}$. The existence of solutions of (\ref{r:Grad}) is given with:

\begin{thm} \label{t:exist} Assume $q^0 \in \mathcal{X}$ at $s_0$ is the initial condition. Then:
	
	(i) There exists unique $q(s)\in \mathcal{X}$ for all $s \in [ s_0,\infty )$, $q(s_0)=q^0$, so that
	\[
	q-q^0 \in C^0(\left[ s_0,\infty \right),H^2_{\text{ul}}(\mathbb{R})^N) \cap  C^1(\left( s_0,\infty \right),H^2_{\text{ul}}(\mathbb{R})^N),
	\]
	 and so that for all $s>s_0$, $q$ is a solution of (\ref{r:Grad}).
	
	(ii) The system (\ref{r:Grad}) generates a continuous semiflow $\xi$ on $\mathcal{X}_{\text{loc}}$ and $\mathcal{X}_{\text{ul}}$.
	
	(iii) The semiflow $\xi$ and the flow $\varphi$ commute.
	
	(iv) If $V \in H^k(\mathbb{T}^{N+1})$, $k \geq 1$, then for all $s > s_0$ we have that $q(s) \in H^k_{\text{ul}}(\mathbb{R})^N$.
 \end{thm}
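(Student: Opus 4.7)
The plan is to treat (\ref{r:Grad}) as a semilinear parabolic equation in the scale of uniformly local Sobolev spaces, and solve it via Banach fixed point on the Duhamel formulation, following the framework developed for related formally gradient PDEs in \cite{Gallay:01,Gallay:14,Gallay:15}. The central observation is that although elements of $\mathcal{X}$ are not themselves in $H^k_{\text{ul}}(\mathbb{R})^N$ (they can grow linearly, as for drifting trajectories $v(t)\sim \omega t$), the potential $V:\mathbb{T}^N\times\mathbb{T}\to\mathbb{R}$ is $C^2$ on a compact set, so $V_q$ and $V_{qq}$ are uniformly bounded in $(q,t)\in\mathbb{R}^N\times\mathbb{R}$ by periodicity. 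Hence the natural move is to fix the initial datum $q^0$ and solve for the difference $w:=q-q^0$.

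Setting $w=q-q^0$, equation (\ref{r:Grad}) becomes
\begin{equation*}
  w_s = w_{tt} + F(w,t), \qquad F(w,t):=q^0_{tt}(t) - V_q(q^0(t)+w,t), \qquad w(s_0)=0,
\end{equation*}
where the forcing lives in $L^2_{\text{ul}}(\mathbb{R})^N$: $q^0_{tt}\in L^2_{\text{ul}}$ because $q^0_t\in H^1_{\text{ul}}$ by hypothesis, and $V_q$ is uniformly bounded. The map $w\mapsto V_q(q^0+w,\cdot)$ is globally Lipschitz from $L^\infty$ to $L^\infty$ with constant $\|V_{qq}\|_\infty<\infty$, hence from $H^1_{\text{ul}}$ to $L^2_{\text{ul}}$ via the one-dimensional Sobolev embedding $H^1_{\text{ul}}\hookrightarrow L^\infty$. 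Next I would invoke the fact, standard for parabolic equations on uniformly local spaces, that $\partial_{tt}$ generates an analytic semigroup $T(s)=e^{s\partial_{tt}}$ on each $H^k_{\text{ul}}(\mathbb{R})^N$ with the usual smoothing $\|T(s)f\|_{H^{k+\alpha}_{\text{ul}}}\lesssim s^{-\alpha/2}\|f\|_{H^k_{\text{ul}}}$ for $0\leq\alpha\leq 2$ and $s\in(0,1]$. A contraction mapping argument on $C([s_0,s_0+\delta];H^1_{\text{ul}})$ applied to the mild equation $w(s)=\int_{s_0}^s T(s-\tau)F(w(\tau),\cdot)\,d\tau$ then gives local existence and uniqueness. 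Because $\|V_q\|_\infty+\|q^0_{tt}\|_{L^2_{\text{ul}}}$ is finite and independent of $w$, a Gronwall estimate on $\|w(s)\|_{H^1_{\text{ul}}}$ rules out blow-up, extending the solution to $[s_0,\infty)$; parabolic smoothing then upgrades $w(s)$ to $H^2_{\text{ul}}$ for $s>s_0$ and produces the claimed $C^0/C^1$ regularity in $s$, giving (i).

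For (ii), $\mathcal{X}_{\text{ul}}$-continuity of $\xi$ is the standard Lipschitz dependence of the fixed point on data, while $\mathcal{X}_{\text{loc}}$-continuity follows because the Gronwall bound is uniform on $\mathcal{X}_{\text{ul}}$-bounded sets: if $q^0_n\to q^0$ in $\mathcal{X}_{\text{loc}}$ with a uniform $\mathcal{X}_{\text{ul}}$ bound, then the forcings $F(w_n(\tau),\cdot)$ converge to $F(w(\tau),\cdot)$ in $L^2_{\text{loc}}$, and this propagates through Duhamel because the Gaussian heat kernel of $T(s)$ is essentially local. For (iii), since $V$ is $2\pi$-periodic in $t$, the PDE is invariant under the translation $\varphi^y$ at least for $y\in 2\pi\mathbb{Z}$, and uniqueness of the mild solution forces $\xi^s\varphi^y q^0=\varphi^y\xi^s q^0$; the claim for general $y$ is read in the extended autonomous framework of Section~\ref{s:equilibrium} where $t$ is included in the configuration. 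For (iv), assuming $V\in H^k(\mathbb{T}^{N+1})$, I would bootstrap by differentiating (\ref{r:Grad}) in $t$ and reapplying the same fixed point scheme at higher regularity, using that $H^{k-1}_{\text{ul}}$ is a Banach algebra for $k\geq 2$ to handle the composed nonlinearity $V_q(q^0+w,t)$.

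The main obstacle is technical rather than conceptual: standard analytic-semigroup and smoothing theory on $L^p$-based Sobolev spaces must be transferred to uniformly local spaces, on which the heat semigroup is analytic but not strongly continuous at $s=0$ in the usual sense, forcing some care with the function-space choices and the precise meaning of the mild formulation. A second, paper-specific point is that because elements of $\mathcal{X}$ can be unbounded in $t$ (drifting orbits), one cannot solve directly for $q$ in any $H^k_{\text{ul}}$ scale; the reduction to $w=q-q^0$ together with the $q$-periodicity of $V$ is exactly what makes the argument uniform along diffusion trajectories, which is precisely the feature needed for the constructions in the rest of the paper.
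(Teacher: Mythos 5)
Your proposal follows essentially the same route as the paper's Appendix~A: reduce to $w=q-q^0$, exploit the $q$- and $t$-periodicity of $V$ to get uniform bounds on the nonlinearity, and run standard semilinear parabolic theory on the scale $H^k_{\text{ul}}(\mathbb{R})^N$ (the paper simply invokes Henry's Theorem~3.3.3 and Corollary~3.3.5 rather than spelling out the contraction, and for (iv) treats the $t$-derivatives of the nonlinearity as a known forcing instead of appealing to the Banach algebra property, but these are cosmetic differences). Regarding the subtlety you flag at the end: the paper's definition of $L^2_{\text{ul}}$ already builds in the translation-continuity condition $\lim_{y\to 0}\|\varphi^y q - q\|_{L^2_{\text{ul}}(\mathbb{R})^N}=0$, which is exactly what makes the heat semigroup strongly continuous on this space, so the mild formulation behaves as in the classical setting and no additional care is needed.
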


The proof of Theorem \ref{t:exist} follows the standard approach for semilinear parabolic equations \cite{Henry:81}, only on a larger space than usual, and is given in the Appendix A.

\begin{remark} \label{rem:Tonelli}
	All the results of the Part I hold if we consider a more general, $C^2$ Tonelli Lagrangian $L(q,q_t,t)$ on $\mathbb{T}^N \times \mathbb{R}^N \times \mathbb{T}$ (see e.g. \cite{Mather:93,Sorrentino10} for background and definitions). In that case, instead of (\ref{r:Grad}), we consider
	\begin{equation}
	q_s = q_{tt} + \left( \frac{\partial^2}{\partial q_t^2}L(q,q_t,t) \right)^{-1} \left(   \frac{\partial^2}{\partial q \partial q_t}L(q,q_t,t) +  \frac{\partial^2}{\partial t \partial q_t}L(q,q_t,t) - \frac{\partial}{\partial q}L(q,q_t,t)  \right). \label{r:Tonelli}
	\end{equation}
For example, stationary points of (\ref{r:Tonelli}) are indeed solutions of the Euler-Lagrange equations. One can in particular verify that (\ref{r:Tonelli}) on $\mathcal{X}$ is an extended gradient system in the sense of \cite{Gallay:01,Gallay:12}, and that the proofs of Theorems \ref{t:contains} and \ref{t:variational} can be generalized to hold. We develop the theory in a simpler case for clarity of the introduced ideas.
\end{remark}

\section{Existence of Euler-Lagrange orbits in invariant sets} \label{s:equilibrium}

Consider equilibria (or stationary solutions) of (\ref{r:Grad}), i.e. the solutions of
\begin{equation}
  q_{tt} = \frac{\partial}{\partial q}V(q,t). \label{r:ELN}
\end{equation}
We denote by $\mathcal{E}$ the set of all $q \in \mathcal{X}$ satisfying (\ref{r:ELN}). Let $\pi_t : \mathcal{E} \rightarrow \mathbb{T}^N \times \mathbb{R}^N \times \mathbb{T}$, $\pi_t(q)=(q(t) \mod 2\pi, q_t(t),t \mod 2\pi)$. Then by the continuous dependence on initial conditions of (\ref{r:ELN}), $\pi_t$ is continuous in both of the topologies $\mathcal{X}_{\text{ul}}$, $\mathcal{X}_{\text{loc}}$ induced on $\mathcal{E}$. Furthermore, we have that for any $t_1,t_2 \in \mathbb{R}$, $\pi_{t_1+t_2} \circ \varphi^{t_1}=\phi^{t_2} \circ \pi_{t_1}$, i.e. $\mathcal{E}$ correspond to the solutions of (\ref{r:Grad}), and the $t$-translation on $\mathcal{E}$ corresponds to the $t$-evolution of a solution of (\ref{r:ELN}). Similarly, if $\pi : \mathcal{E} \rightarrow \mathbb{T}^N \times \mathbb{R}^N $ is the projection in the zero-coordinate, $\pi(q)=(q(0) \mod 2\pi, q_t(0))$, and $S = \varphi^{2 \pi}$ is the $t$-translation for one period, then $\pi \circ S = \sigma \circ \pi$.

The space $\mathcal{X}$ is large enough so that the projection of $\mathcal{E}$ to $\mathbb{T}^N \times \mathbb{R}^N$ contains a rich set of orbits. Specifically, we show in Lemma \ref{l:Xcontain} in the Appendix A that if a solution of (\ref{r:ELN}) satisfies $||q_t||_{L^{\infty}(\mathbb{R})^N} < \infty$, then $q \in \mathcal{X}$, thus $q \in \mathcal{E}$.

The result of this section is that, to construct elements of $\mathcal{E}$, i.e. to find solutions of (\ref{r:ELN}), it suffices to find an invariant set of (\ref{r:Grad}):

\begin{thm} \label{t:contains}
	Assume $\mathcal{B}$ is a non-empty, $\xi$-invariant subset of $\mathcal{X}$, bounded in the $\mathcal{X}_{ul}$-norm. Then there is a $q\in \mathcal{E}$ in the closure of $\mathcal{B}$ in $\mathcal{X}_{\text{loc}}$.
\end{thm}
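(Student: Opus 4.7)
The plan is a LaSalle-type argument in the $\mathcal{X}_{\text{loc}}$-topology, adapted to the formally-gradient framework of \cite{Gallay:01,Gallay:12}. Fix any $q^0\in\mathcal{B}$; by $\xi$-invariance, the semi-orbit $q(s)=\xi^s q^0$ stays in $\mathcal{B}$ and so is uniformly bounded in $\mathcal{X}_{ul}$. Parabolic regularization for (\ref{r:Grad}) (Theorem~\ref{t:exist}(iv) together with standard Henry-type estimates, valid since $V\in C^2(\mathbb{T}^{N+1})$) yields uniform bounds of $q(s)$ in $H^k_{ul}(\mathbb{R})^N$ for every $k$ and all $s\ge 1$. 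Since $H^2_{ul}$-bounded sets are precompact in $\mathcal{X}_{\text{loc}}$ (local Rellich combined with the exponential weight in the $\mathcal{X}_{\text{loc}}$-norm), the $\omega$-limit set
\[
\omega(q^0):=\bigcap_{s\ge 1}\overline{\{q(\sigma):\sigma\ge s\}}^{\,\mathcal{X}_{\text{loc}}}
\]
is non-empty, compact in $\mathcal{X}_{\text{loc}}$, $\xi$-invariant by continuity of $\xi$ on $\mathcal{X}_{\text{loc}}$ (Theorem~\ref{t:exist}(ii)), and contained in $\overline{\mathcal{B}}^{\,\mathcal{X}_{\text{loc}}}$.

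The variational mechanism: for $\eta\in C_c^\infty(\mathbb{R})$ with $\eta\ge 0$, the localized action $\Phi_\eta(q)=\int\eta(t)[\tfrac12 q_t^2+V(q,t)]\,dt$ is continuous on $\mathcal{X}_{\text{loc}}$ and a direct integration by parts along (\ref{r:Grad}) gives the energy--dissipation--flux identity
\[
\tfrac{d}{ds}\Phi_\eta(q(s))=-\int\eta\,q_s^2\,dt-\int\eta'\,q_t q_s\,dt,
\]
in which the flux term has no definite sign; this is the central obstacle, since $\Phi_\eta$ is not a Lyapunov function on the unbounded domain. To bypass it, I would exploit the commutation of $\xi$ with $\Sigma:=\varphi^{2\pi}$ (Theorem~\ref{t:exist}(iii)), the fact that $\Sigma$ is an isometry of $\mathcal{X}_{ul}$, and the $2\pi$-periodicity of $V$ in $t$. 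On the compact $(\xi,\Sigma)$-invariant set $\mathcal{K}:=\overline{\bigcup_{k\in\mathbb{Z}}\Sigma^k\omega(q^0)}^{\,\mathcal{X}_{\text{loc}}}$, a Ces\`aro--Krylov--Bogolyubov argument applied to the commuting abelian pair $(\xi^s,\Sigma^k)$ produces a Borel probability measure $\mu$ on $\mathcal{K}$ invariant under both. Averaging the identity above against $\mu$ and using $\xi$-invariance to kill the $s$-derivative yields
\[
\int\eta(t)\,\rho_2(t)\,dt+\int\eta'(t)\,\rho_1(t)\,dt=0,
\qquad \rho_2(t)=\int q_s(t)^2\,d\mu\ge 0,\quad \rho_1(t)=\int q_t q_s\,d\mu,
\]
so that $\rho_2=\rho_1'$ in $\mathcal{D}'(\mathbb{R})$. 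Uniform $L^\infty$-bounds on $q_t,q_s$ (from $\mathcal{X}_{ul}$-boundedness and parabolic smoothing) make $\rho_1$ bounded, whence $\rho_1$ is monotone and $\rho_2\in L^1(\mathbb{R})$; the $\Sigma$-invariance of $\mu$ together with $2\pi$-periodicity of $V$ in $t$ forces $\rho_2$ also to be $2\pi$-periodic, and a non-negative $2\pi$-periodic $L^1(\mathbb{R})$ function vanishes identically. Therefore $q_s\equiv 0$ for $\mu$-a.e.\ $q$, i.e.\ $\mu(\mathcal{E})=1$, and in particular $\mathcal{E}\cap\mathcal{K}\neq\emptyset$.

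It remains to descend from $\mathcal{K}$ to $\overline{\mathcal{B}}^{\,\mathcal{X}_{\text{loc}}}$: writing an equilibrium $q^*\in\mathcal{E}\cap\mathrm{supp}(\mu)$ as the $\mathcal{X}_{\text{loc}}$-limit of $\Sigma^{k_n}\tilde q^{(n)}$ with $\tilde q^{(n)}\in\omega(q^0)$, extract a convergent subsequence of the $\tilde q^{(n)}$ (possible because $\omega(q^0)$ is $\mathcal{X}_{ul}$-bounded and hence precompact in $\mathcal{X}_{\text{loc}}$); combining the $\Sigma$-invariance of $\mathcal{E}$ (from $2\pi$-periodicity of $V$ in $t$) with closedness of $\mathcal{E}$ in $\mathcal{X}_{\text{loc}}$ (continuous dependence on initial data for the Euler--Lagrange flow) then produces an element of $\mathcal{E}\cap\omega(q^0)\subset\mathcal{E}\cap\overline{\mathcal{B}}^{\,\mathcal{X}_{\text{loc}}}$, completing the proof. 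The hard step is the previous paragraph: the unsigned flux forces one to work with a jointly $(\xi,\Sigma)$-invariant measure, which converts the non-monotone variational identity into a balance on a compact time-circle where non-negativity, $L^1$-integrability and periodicity of $\rho_2$ compel the dissipation to vanish.
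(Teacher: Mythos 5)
Your overall strategy --- write a local action--dissipation--flux balance along the semiflow and try to kill the flux by averaging --- is in the right spirit, and the algebra of the identity
\[
\tfrac{d}{ds}\Phi_\eta(q(s))=-\int\eta\,q_s^2\,dt-\int\eta'\,q_t q_s\,dt
\]
is correct. This is genuinely different from the paper's argument, which invokes no invariant measures and no $\Sigma$-averaging at this stage; instead it works directly with the exponentially weighted action $E_\delta(q)=\int e^{-\delta|t|}L(q,q_t,t)\,dt$, absorbs the flux into the dissipation via Young's inequality to obtain $\frac{d}{ds}E_\delta\le\delta^2E_\delta-\tfrac12 D_\delta$, and then by Gronwall extracts a sequence of times $s_n$ with $\int_{-n}^n q_s(t,s_n)^2\,dt\to 0$. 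As written, your route has two real gaps.

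First, the set $\mathcal{K}=\overline{\bigcup_{k\in\mathbb{Z}}\Sigma^k\omega(q^0)}^{\mathcal{X}_{\text{loc}}}$ need not be compact. Elements of $\mathcal{X}$ take values in $\mathbb{R}^N$, not $\mathbb{T}^N$, and $\Sigma$ is not an isometry of $\mathcal{X}_{\text{ul}}$: the norm carries the term $q(0)^2$, and $|q(2\pi k)-q(0)|\le 2\pi |k|\,\|q_t\|_{L^\infty}$, so $\|\Sigma^k q\|_{\mathcal{X}_{\text{ul}}}$ can grow linearly in $k$. Hence $\bigcup_k\Sigma^k\omega(q^0)$ is not bounded in $\mathcal{X}_{\text{ul}}$, the precompactness argument (local Rellich plus diagonalization) fails for $\mathcal{K}$, and Krylov--Bogolyubov does not apply. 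This can be repaired by passing to the quotient $\hat{\mathcal{X}}$ the paper introduces in Section \ref{s:measure}, where $\hat q(0)$ lives in $\mathbb{T}^N$ and $\hat\Sigma$ preserves the compact ball $\{\hat q:\|\hat q_t\|_{H^1_{\text{ul}}}\le A\}$.

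Second, and more seriously, even after that repair the final descent step does not close. Your invariant measure $\mu$ lives on $\hat{\mathcal{K}}$, not on $\hat\omega(q^0)$; the conclusion $\supp\mu\subset\hat{\mathcal{E}}$ gives an equilibrium class $\hat q^*\in\hat{\mathcal{E}}\cap\hat{\mathcal{K}}$, but nothing forces this class to lie over $\overline{\mathcal{B}}^{\mathcal{X}_{\text{loc}}}$. The proposed subsequence argument --- write $q^*=\lim_n\Sigma^{k_n}\tilde q^{(n)}$, extract a limit $\tilde q$ of $\tilde q^{(n)}$ in $\omega(q^0)$, and argue $\tilde q\in\mathcal{E}$ --- breaks whenever $k_n$ is unbounded: $\mathcal{X}_{\text{loc}}$-convergence of $\tilde q^{(n)}$ near $t=0$ controls nothing about $\tilde q^{(n)}$ near $t=2\pi k_n$, which is precisely what $\Sigma^{k_n}\tilde q^{(n)}$ records near $t=0$. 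Since $\mathcal{B}$ is $\xi$-invariant but not assumed $\Sigma$-invariant, the translates $\Sigma^k\mathcal{B}$ may be disjoint from $\mathcal{B}$, and the equilibrium you produce may simply lie outside $\overline{\mathcal{B}}^{\mathcal{X}_{\text{loc}}}$.

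If you want to retain the measure-theoretic perspective, the clean fix is to take only a $\xi$-invariant Krylov--Bogolyubov measure $\mu$ on the compact set $\omega(q^0)$, with no $\Sigma$-averaging, and kill the flux the way the paper does: integrating the Gronwall bound $\int_{s_0}^{s_0+1/\delta^2}e^{-(s-s_0)\delta^2}D_\delta(q(s))\,ds\le 2E_\delta(q(s_0))$ against $\mu$, and using $\xi$-invariance, yields $\int D_\delta\,d\mu\lesssim \delta(A^2+1)$; since $D_\delta(q)\ge e^{-\delta T}\int_{-T}^T q_s^2\,dt$, letting $\delta\to 0$ gives $\int\int_{-T}^T q_s^2\,dt\,d\mu=0$ for every $T$, hence $\mu(\mathcal{E})=1$, and $\supp\mu\subset\omega(q^0)\subset\overline{\mathcal{B}}^{\mathcal{X}_{\text{loc}}}$, with no descent to perform. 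The $\Sigma$-averaging device you invoked is exactly the right tool in Section \ref{s:measure} (Lemma \ref{l:gradient}), where $\hat{S}$-invariance of the measure is assumed; here it manufactures rather than solves the difficulty. The exponential weight is the mechanism that absorbs the unsigned flux without leaving the closure of $\mathcal{B}$.
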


\begin{remark} \label{r:extended}
We comment why Theorem \ref{t:contains} is not entirely straightforward. The system (\ref{r:Grad}) belongs to a class of extended gradient systems (or formally gradient systems), introduced in a general setting in \cite{Gallay:01,Gallay:12}. These are dynamical systems which, when considered on bounded domains, are gradient-like; but on an unbounded domain may behave differently. For example, for the system of equations $q_{tt}= \Delta q - \partial_q V(q)$, if we require that $q$ decays fast enough at $\infty$, the system is gradient-like, and $\omega$-limit sets (considered in a sufficiently weak topology so that orbits uniformly bounded in norm are relatively compact) by LaSalle principle consist of equilibria. If $q$ is merely bounded, and we consider solutions $q : \mathbb{R}^M \rightarrow \mathbb{R}^N$, then for $M=1,2$, $\omega$-limit sets may contain non-equilibria, but always contain at least one equilibrium, and for $M \geq 3$, there are examples of $\omega$-limit sets without equilibria at all \cite{Gallay:01,Gallay:12}. As in (\ref{r:Grad}), $M=1$ (the dimension of the variable $t$), Theorem \ref{t:contains} holds. We adapt the proof from \cite{Gallay:01,Gallay:12} to our setting. 
\end{remark}
In the first lemma below, we establish a bound on the action dissipation, then we establish relative compactness of the required set, and finally construct $q \in \mathcal{E}$ by a variational argument. 

\begin{lemma} \label{l:41}
	Assume $q(s)\in \mathcal{B}$, $s \geq s_0$ is an orbit of $\xi$. There exists an absolute constant $c_1>0$ and a sequence of relaxation times $s_n$, $n\in \mathbb{Z}$, so that
	 $$ \int_{-n}^n q_s(t,s_n)^2 dt \leq \frac{c_1
	 	}{n} \left( ||q(s_0)||^2_{\mathcal{X}_{\text{ul}}} + 1 \right).$$
\end{lemma}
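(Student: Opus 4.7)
My plan is to derive the estimate from a localized action-dissipation identity using a carefully chosen weight $\phi$ to control boundary flux, and then extract $s_n$ by a pigeonhole argument in $s$. The key observation is that although the system (2.1) is only \emph{formally} gradient on the unbounded line (the global action is infinite and no monotone Lyapunov functional exists), on any test-weighted patch the action dissipation essentially dominates, provided the weight is chosen to satisfy a coercive relation between $\phi$ and $\phi'$.

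Concretely, I would fix $N = 2n$ and take the quadratic tent $\phi(t) = ((N-|t|)_+)^2/N^2$, which has $\supp\phi \subset [-N,N]$, $\phi \leq 1$, $\phi \geq 1/4$ on $[-n,n]$, and crucially the bound $(\phi'(t))^2 \leq (4/N^2)\,\phi(t)$. Setting
$$\mathcal{J}_\phi(s) = \int_{\mathbb{R}} \phi(t)\bigl(\tfrac{1}{2}q_t(t,s)^2 + V(q(t,s),t)\bigr)\,dt,$$
I would differentiate in $s$, integrate by parts in $t$ (no boundary contribution since $\phi$ is compactly supported), and substitute $q_s = q_{tt} - \partial_q V$ from (2.1) to arrive at the local identity
$$\int \phi\,q_s^2\,dt = -\frac{d}{ds}\mathcal{J}_\phi(s) - \int \phi'\,q_t\,q_s\,dt.$$
The flux term is then controlled by weighted Young:
$$\Bigl|\int \phi' q_t q_s\,dt\Bigr| \leq \frac{1}{2}\int \phi\,q_s^2\,dt + \frac{1}{2}\int \frac{(\phi')^2}{\phi}\,q_t^2\,dt \leq \frac{1}{2}\int \phi\,q_s^2\,dt + \frac{2}{N^2}\int_{|t|\leq N} q_t^2\,dt,$$
and absorbing the first term on the right into the left closes the inequality with forcing purely in the controlled quantity $\|q_t\|^2_{L^2_{\text{ul}}}$.

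From here the proof is mechanical. The boundedness of $\mathcal{B}$ in $\mathcal{X}_{\text{ul}}$ together with $|V|\leq \|V\|_\infty$ yields
$$|\mathcal{J}_\phi(s)| \ll N\bigl(\|q(s_0)\|^2_{\mathcal{X}_{\text{ul}}}+1\bigr), \qquad \int_{|t|\leq N} q_t^2\,dt \ll N\,\|q(s_0)\|^2_{\mathcal{X}_{\text{ul}}},$$
so integrating the differential inequality over $s\in[s_0,s_0+T]$ gives
$$\int_{s_0}^{s_0+T}\!\!\int \phi\,q_s^2\,dt\,ds \ll N\bigl(\|q(s_0)\|^2_{\mathcal{X}_{\text{ul}}}+1\bigr) + \frac{T}{N}\,\|q(s_0)\|^2_{\mathcal{X}_{\text{ul}}}.$$
Choosing $T = N^2 = 4n^2$ balances the two terms at order $n\bigl(\|q(s_0)\|^2_{\mathcal{X}_{\text{ul}}}+1\bigr)$. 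The mean value theorem in $s$ then produces $s_n \in [s_0,s_0+T]$ with $\int \phi\,q_s^2(t,s_n)\,dt \ll (1/n)\bigl(\|q(s_0)\|^2_{\mathcal{X}_{\text{ul}}}+1\bigr)$, and since $\phi \geq 1/4$ on $[-n,n]$ this gives the claimed bound after adjusting the absolute constant.

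The main obstacle to watch is the lack of a decreasing global action: without a weight one is stuck with an uncontrolled flux at $t=\pm\infty$, and with a linear tent $(\phi')^2/\phi$ is unbounded near the support boundary, so the Young step fails. The quadratic tent is precisely the simplest weight for which $(\phi')^2 \ll \phi/N^2$ holds uniformly, and this is what converts the formally gradient structure into a usable local dissipation estimate. A secondary technical issue is to justify the differentiation under the integral and the integration by parts; this is supplied by Theorem 2.1(i),(iv), which for $s>s_0$ places $q(s)$ in $H^k_{\text{ul}}$ for any $k$ and makes $q_s$ a classical solution of (2.1), so that $\phi'q_tq_s$ is integrable and all manipulations are legitimate.
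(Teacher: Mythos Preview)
Your proof is correct and close in spirit to the paper's, but the two differ in the choice of weight and in how the flux term is absorbed. The paper uses the exponential weight $e^{-\delta|t|}$ with $\delta=1/n$; the identity $|\phi'|=\delta\phi$ lets them write the flux as $\delta\int e^{-\delta|t|}|q_t q_s|\,dt$, Young it into $\tfrac{\delta^2}{2}\int e^{-\delta|t|}q_t^2\,dt+\tfrac12 D_\delta$, and then bound the first piece by $\delta^2 E_\delta$ using $\tfrac12 q_t^2\le L$. This produces the linear differential inequality $\tfrac{d}{ds}E_\delta\le\delta^2 E_\delta-\tfrac12 D_\delta$, closed by Gronwall over $[s_0,s_0+1/\delta^2]$; the only inputs are $E_\delta(q(s_0))\ll\delta^{-1}(\|q(s_0)\|_{\mathcal X_{\mathrm{ul}}}^2+1)$ and the positivity $E_\delta\ge0$ at the final time. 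In particular the paper's argument does not use the boundedness of $\mathcal B$ at intermediate times at all.

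Your compactly supported quadratic tent trades the Gronwall feedback for an external forcing: the identity $(\phi')^2=(4/N^2)\phi$ makes the flux produce $\tfrac{2}{N^2}\int_{|t|\le N}q_t^2\,dt$, which you then bound using $q(s)\in\mathcal B$ for all $s$. This is perfectly legitimate given the hypotheses, though note that strictly speaking your bound is in terms of $\sup_{s}\|q_t(s)\|_{L^2_{\mathrm{ul}}}^2$ rather than $\|q(s_0)\|_{\mathcal X_{\mathrm{ul}}}^2$; for the application to Theorem~3.1 (where all that matters is that the quantity tends to zero) this distinction is immaterial. Your choice of weight has the practical advantage that the compact support makes all integrations by parts and differentiations under the integral immediate, whereas the exponential weight requires the $H^1_{\mathrm{ul}}$ framework to justify convergence at infinity; conversely, the paper's exponential weight meshes naturally with the uniformly local norms already in play.
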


\begin{proof}
Let $\delta > 0$, and denote by $E_{\delta}$, $D_{\delta}$ the weighted action and action dissipation,
\begin{equation*}
E_{\delta}(q)  =  \int_{-\infty}^{\infty} e^{-\delta|t|}L(q(t),q_t(t),t)dt, \hspace{60pt}
D_{\delta}(q)  =  \int_{-\infty}^{\infty} e^{-\delta|t|} q_s(t)^2 dt.
\end{equation*}
It is straightforward to verify that $E_{\delta}$, $D_{\delta}$ are well-defined on $\mathcal{X}$ (the integrals are absolutely integrable). Furthermore, we can differentiate with respect to $s$, by calculating on a dense subset and then extending the final result to the entire $\mathcal{X}$ by continuity. We partially integrate and apply the Young's inequality in the second line below:
\begin{align*}
\frac{d}{ds} E_{\delta}(q(s)) & =  \int_{-\infty}^{\infty} e^{-\delta|t|} \left( q_tq_{ts}+\frac{\partial}{\partial q}V(q,t)q_s \right) dt 
 \leq  \delta \int_{-\infty}^{\infty} e^{-\delta|t|}|q_tq_s| dt - \int_{-\infty}^{\infty}e^{-\delta|t|} q_s^2 dt  \\
&\leq  \frac{\delta^2}{2}  \int_{-\infty}^{\infty}e^{-\delta|t|}q_t^2 dt +\frac{1}{2}D_{\delta}(q(s))-D_{\delta}(q(s))  \\
&\leq  \delta^2 E_{\delta}(q(s)) - \frac{1}{2}D_{\delta}(q(s)). \nonumber
\end{align*}
Now by the Gronwall Lemma, integrating it over $[s_0,s_0+1/\delta^2 ]$, we have
\begin{equation}
e^{-1}E_{\delta}(q(s_0+1/\delta^2)) + \frac{1}{2}\int_{s_0}^{s_0+1/\delta^2}e^{-(s-s_0)\delta^2}D_{\delta}(q(s)) ds \leq E_{\delta}(q(s_0)). \label{r:grnwll}
\end{equation}
It is easy to see that $E_\delta(q)$ can be bounded by $O\left( \left(||q_t||^2_{L^2_{\text{ul}}(\mathbb{R})^2}+1 \right) /\delta \right)$, thus by definition of the $\mathcal{X}_{\text{ul}}$-norm,
$$ E_\delta(q) \ll \frac{1}{\delta} \left( ||q||^2_{\mathcal{X}_{\text{ul}}} +1 \right). $$
Also by definition, $L(q,q_t,t) \geq 0$, thus $E_\delta(q) \geq 0$.
Inserting it in (\ref{r:grnwll}) we otain
$$ \int_{s_0}^{s_0+1/\delta^2}D_{\delta}(q(s)) ds \ll \frac{1}{\delta} \left( ||q(s_0)||^2_{\mathcal{X}_{\text{ul}}} + 1 \right),$$
thus by definition of $D_{\delta}$,
\begin{equation}
 \int_{s_0}^{s_0+1/\delta^2}\left( \int_{-1/\delta}^{1/\delta}q_s(t)^2dt \right) ds \ll \frac{1}{\delta} \left( ||q(s_0)||^2_{\mathcal{X}_{\text{ul}}} + 1 \right). \label{r:Dirichlet}
\end{equation}
Now set $\delta = 1/n$. From (\ref{r:Dirichlet}) it follows immediately that there exists the required $s_n$, $s_0 \leq s_n \leq s_0+n^2$, so that the claim holds with $c_1$ being the absolute constant in (\ref{r:Dirichlet}).	
\end{proof}

\begin{lemma} \label{l:42}
	If $\mathcal{B}$ is bounded in $\mathcal{X}_{ul}$, then it is relatively compact in the closure $\bar{\mathcal{X}}_{\text{loc}}$ of $\mathcal{X}_{\text{loc}}$ in $H^1_{\text{loc}}(\mathbb{R})^N$.
\end{lemma}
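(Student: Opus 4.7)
The plan is to combine uniform $H^2$-bounds on bounded intervals with the Rellich compact embedding and a Cantor diagonal extraction. The whole scheme is standard once the $\mathcal{X}_{\text{ul}}$-boundedness is translated into uniform $H^2$-bounds on each compact interval, and this translation is the only substantive point to check.

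First I would unpack the hypothesis. For any $q\in \mathcal{B}$ with $\|q\|_{\mathcal{X}_{\text{ul}}} \leq M$, the definition of the norm gives $|q(0)|\leq M$ and $\|q_t\|_{H^1_{\text{ul}}(\mathbb{R})^N} \leq M$, and by the definition of the uniformly local spaces from Appendix A this yields $\sup_y \|q_t\|_{H^1([y,y+1])^N} \ll M$. Writing $q(t) = q(0) + \int_0^t q_\tau\, d\tau$ and using the Sobolev embedding $H^1([y,y+1]) \hookrightarrow L^{\infty}([y,y+1])$ produces the pointwise bounds $|q(t)| \ll M(1+|t|)$ and $|q_t(t)| \ll M$. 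Consequently, for every $k\in \mathbb{N}$, the sequence $(q_n) \subset \mathcal{B}$ is uniformly bounded in $H^2([-k,k])^N$ by a constant depending only on $M$ and $k$.

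Second, the compact embedding $H^2([-k,k]) \hookrightarrow H^1([-k,k])$ (Rellich--Kondrachov) yields for each fixed $k$ a subsequence converging in $H^1([-k,k])^N$. A Cantor diagonal argument then produces a single subsequence (not relabeled) converging in $H^1([-k,k])^N$ for every $k$, i.e.\ in $H^1_{\text{loc}}(\mathbb{R})^N$. Denote the limit by $q_*$. Since each $q_n$ lies in $\mathcal{X}=\mathcal{X}_{\text{loc}}$ and the convergence takes place in $H^1_{\text{loc}}$, the inclusion $q_*\in \bar{\mathcal{X}}_{\text{loc}}$ follows from the very definition of the closure.

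Finally, to conclude with convergence in the weighted norm of $\mathcal{X}_{\text{loc}}$ (rather than merely in the Fréchet topology of $H^1_{\text{loc}}$), I would combine the $H^1_{\text{loc}}$-convergence with the uniform bounds from the first step. The integrand $e^{-|t|}(q_n^2+(q_n)_t^2)$ is dominated by the integrable function $e^{-|t|}\bigl(M^2(1+|t|)^2 + M^2\bigr)$, so dominated convergence promotes $H^1_{\text{loc}}$-convergence to convergence in $\mathcal{X}_{\text{loc}}$. The only mildly subtle step is the first one, namely leveraging the uniformly local character of the norm together with the single pointwise value $q(0)$ to obtain compact-interval $H^2$-bounds; everything afterwards is a routine Rellich-plus-diagonal argument.
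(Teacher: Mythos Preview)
Your proof is correct and follows essentially the same route as the paper: uniform $H^2$-bounds on compact intervals from the $\mathcal{X}_{\text{ul}}$-norm, Rellich embedding, and a diagonal extraction. Your final dominated-convergence step, upgrading $H^1_{\text{loc}}$-convergence to convergence in the weighted $\mathcal{X}_{\text{loc}}$-norm, is a point the paper handles in one clause (``which induces the $\mathcal{X}_{\text{loc}}$ topology by definition''); your version makes this explicit and is arguably more careful, since the two topologies only agree on $\mathcal{X}_{\text{ul}}$-bounded sets.
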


\begin{proof}
	It is easy to check from the definition of the $\mathcal{X}_{\text{ul}}$-norm, that boundedness of $\mathcal{B}$ in $\mathcal{X}_{ul}$ implies boundedness of $q|_{[-n,n]}$ in $H^2([-n,n])^N$, uniformly for $q \in \mathcal{B}$, for any $n>0$. Thus for any sequence $q^{(j)}$ in $\mathcal{B}$, by compact embedding we can find a subsequence (again denoted by $q^{(j)}$) so that $q^{(j)}|_{[-n,n]}$ converges in $H^1([-n,n])^N$; and by diagonalization a further subsequence converging in $H^1_{\text{loc}}(\mathbb{R})^N$ (which induces the $\mathcal{X}_{\text{loc}}$ topology by definition).
\end{proof} 

\begin{lemma} \label{l:43} Assume $s_n \rightarrow \infty$ as $n \rightarrow \infty$ is a sequence of times such that 
$$ \lim_{n \rightarrow \infty}	\int_{-n}^n q_s(t,s_n)^2 dt \rightarrow 0.$$
Then any limit point of $q(s_n)$ in $\bar{\mathcal{X}}_{\text{loc}}$ is in $\mathcal{E}$.
\end{lemma}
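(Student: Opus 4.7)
The plan is to pass to the limit in the gradient equation (\ref{r:Grad}) along $s=s_n$: by hypothesis the nonstationary term $q_s$ vanishes locally in $L^2$, so any limit $q^*$ of $q(s_n)$ should satisfy $0 = q^*_{tt} - \partial_q V(q^*,t)$ in a weak sense, and Lemma \ref{l:Xcontain} should then place it inside $\mathcal{X}$, hence in $\mathcal{E}$.

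First, extract a subsequence (still indexed by $n$) with $q(s_n) \to q^*$ in $H^1_{\text{loc}}(\mathbb{R})^N$. The hypothesis at once gives $q_s(\cdot,s_n) \to 0$ in $L^2_{\text{loc}}$, since for any fixed $N_0$ and every $n \geq N_0$,
\[
\int_{-N_0}^{N_0} q_s(t,s_n)^2\,dt \;\leq\; \int_{-n}^{n} q_s(t,s_n)^2\,dt \;\longrightarrow\; 0.
\]
Since $V$ is $C^2$ with bounded $\partial_q V$ (by periodicity) and the embedding $H^1_{\text{loc}}(\mathbb{R}) \hookrightarrow C^0_{\text{loc}}(\mathbb{R})$ is compact, $\partial_q V(q(s_n),\cdot) \to \partial_q V(q^*,\cdot)$ locally uniformly, hence in $L^2_{\text{loc}}$. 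Rewriting (\ref{r:Grad}) as $q_{tt}(\cdot,s_n) = q_s(\cdot,s_n) + \partial_q V(q(\cdot,s_n),\cdot)$, the right-hand side then converges in $L^2_{\text{loc}}$ to $\partial_q V(q^*,\cdot)$; on the other hand, the $H^1_{\text{loc}}$-convergence $q(s_n) \to q^*$ forces $q_{tt}(s_n) \to q^*_{tt}$ in the distributional sense. Uniqueness of distributional limits therefore gives
\[
q^*_{tt} \;=\; \tfrac{\partial}{\partial q}V(q^*,t)
\]
as an equality in $L^2_{\text{loc}}$. Boundedness of $\partial_q V$ then upgrades it to a classical identity with $q^* \in C^2_{\text{loc}}$, i.e.\ $q^*$ solves (\ref{r:ELN}).

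It remains to verify $q^* \in \mathcal{X}$, i.e.\ that $q^*_t \in H^1_{\text{ul}}(\mathbb{R})^N$. This is the only step that is not purely formal: the topology on $\bar{\mathcal{X}}_{\text{loc}}$ is too weak on its own to transmit any global bound. I use the context in which the lemma is applied, namely the proof of Theorem \ref{t:contains}, where the orbit $q(s)$ stays in a set $\mathcal{B}$ bounded in $\mathcal{X}_{\text{ul}}$. Uniformly in $n$ this gives $\|q_t(s_n)\|_{H^1_{\text{ul}}} \leq C$ and hence $\|q_t(s_n)\|_{L^\infty} \leq C'$ via the Sobolev embedding $H^1_{\text{ul}} \hookrightarrow L^\infty$; passing along a further subsequence to the pointwise almost-everywhere limit, $\|q^*_t\|_{L^\infty} < \infty$. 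Lemma \ref{l:Xcontain} now yields $q^* \in \mathcal{X}$, so $q^* \in \mathcal{E}$. The main (mild) obstacle is precisely this last transfer of the ambient global bound to the limit; the rest is a routine compactness and passage-to-limit argument in the semilinear equation.
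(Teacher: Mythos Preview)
Your argument is correct and follows essentially the same route as the paper. The paper tests against $g\in H^1_0([-m,m])$ and passes to the limit in the weak (variational) identity $\int q_t g_t + V_q(q)g = -\int q_s g$, whereas you rewrite the equation in strong form $q_{tt}=q_s+V_q$ and match the $L^2_{\text{loc}}$ limit of the right-hand side with the distributional limit of the left; after one integration by parts these are the same computation. Your explicit acknowledgment that the step $q^*\in\mathcal{X}$ imports the ambient $\mathcal{X}_{\text{ul}}$-bound from Theorem~\ref{t:contains} matches the paper's terse ``by construction, $q_t\in L^2_{\text{ul}}$'', and both then conclude via Lemma~\ref{l:Xcontain}.
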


\begin{proof}
	Fix $m \in \mathbb{Z}$, and choose a test function $g \in H^1_0([-m,m])^2$ (that is, vanishing at $t=-m,m$). Now by partial integration and Cauchy-Schwartz,
	\begin{eqnarray}
	 \int_{-m}^{m}\left( q_t(t,s_n)g_t(t)+\frac{\partial V}{\partial q}(q(t,s_n),t)g(t) \right) dt
	&=& \int_{-m}^{m} \left( -q_s(t,s_n)g(t) \right)dt \nonumber \\
	& \leq & \left( \int_{-m}^{m}q_s(t,s_n)^2dt \right)^{1/2} ||g||_{L^2([-m,m])^2}. \nonumber
	\end{eqnarray} 
Now if $q(s_n,.)$ converges to some $q^0$ in $\bar{\mathcal{X}}_{\text{loc}}$, their restrictions to $[-m,m]$ converge in $H^1([-m,m])^2$. We deduce that
$$ \int_{-m}^{m}\left( q^0_t(t)g_t(t)+\frac{\partial V}{\partial q}(q^0(t),t)g(t)dt \right) dt = 0.$$
As it holds for an arbitrary test function $g$, we conclude that the variation of the action at $q^0$ is 0, so $q^0$ is a solution of (\ref{r:ELN}). By construction, $q_t \in L^2_{\text{ul}}(\mathbb{R})^N$, thus by Lemma \ref{l:Xcontain} we have that $q \in \mathcal{E}$.
\end{proof}

Theorem \ref{t:contains} follows by combining Lemmas \ref{l:41}, \ref{l:42} and \ref{l:43}.

\section{Construction of shadowing invariant measures} \label{s:measure}

In Section \ref{s:equilibrium}, we showed how to construct a solution of (\ref{r:ELN}), given an invariant set with respect to the semiflow $\xi$. In this section we develop a measure-theoretical analogue to that. In the first subsection, we propose an abstract notion of a shadowing measure and derive its properties. In the second subsection, we prove existence of such measures, if a certain sub-algebra of Borel sets with certain invariance property with respect to the semiflow $\xi$ is given.

\subsection{Shadowing of invariant measures in an abstract setting} \label{ss:shadowing}

We propose an abstract definition of a shadowing invariant measure as follows. In this subsection we will always consider a measurable space $(\Omega,\mathcal{F})$, where $\Omega$ is a compact metric space and $\mathcal{F}$ the Borel $\sigma$-algebra. Let $S$ be a homeomorphism on $\Omega$ and $\mu$ a $S$-invariant probability measure on $(\Omega,\mathcal{F})$. Recall that $\mu$ is a factor of a $S$-invariant probability measure $\nu$ on the same space $(\Omega,\mathcal{F})$, if there exist two Borel-measurable sets $\mathcal{M}_1$, $\mathcal{M}_2$ such that $\mu(\mathcal{M}_1)=1$, $\nu(\mathcal{M}_2)=1$, and a measurable map $\theta : \mathcal{M}_2 \rightarrow \mathcal{M}_1$, such that $\theta \circ S|_{\mathcal{M}_2} = S \circ \theta|_{\mathcal{M}_1}$, and such that $\theta$ pulls the measure $\nu$ into $\mu$, i.e. for any set $\mathcal{D} \in \mathcal{F}$, $\nu(\theta^{-1}(\mathcal{D}))=\mu(\mathcal{D}))$ (where we extended $\theta$ to a measurable function on the entire $\Omega$ in an arbitrary way).

\begin{defn} Let $\mathcal{G}$ be a $\sigma$-subalgebra of $\mathcal{F}$. We say that a $S$-invariant Borel-probability measure $\nu$ $\mathcal{G}$-shadows a $S$-invariant probability measure $\mu$ on $(\Omega,\mathcal{F})$, if $\mu$ is a factor of $\nu$, and if for each $\mathcal{D} \in \mathcal{G}$, we have $\mu(\mathcal{D})=\nu(\mathcal{D})$. 
\end{defn}
	
We now in several lemmas show relation of the shadowing property to the support of a measure, ergodicity and entropy. To control certain topological properties of the shadowing measure, we introduce the notion of the conditional support of a probability measure $\mu$ with respect to a $\sigma$-subalgebra of Borel sets $\mathcal{G}$. We denote it by $\supp(\mu | \mathcal{G})$, and define it as the set of all $x \in \Omega$ such that there exists a sequence of closed sets $\mathcal{D}_j \in \mathcal{G}$, $j \in \mathbb{N}$, $\mu(\mathcal{D}_j)>0$ such that $\cap_{j \in \mathbb{N}}\mathcal{D}_j  = \lbrace x \rbrace$. Furthermore, let $\supp^c(\mu | \mathcal{G})$ be the complement-conditional support, defined as the set of all $x \in \Omega$ for which there exists an open $\mathcal{D} \in \mathcal{G}$ such that $\mu(\mathcal{D})=0$ and $x \in \mathcal{D}$. Clearly, if $\mathcal{G}=\mathcal{F}$, we have $\supp(\mu | \mathcal{F})=\supp(\mu)$, and $\supp^c(\mu | \mathcal{F})=\supp(\mu)^c$. In general, it is easy to deduce from the definition of the support of a measure that we have
\begin{equation}
	\supp(\mu | \mathcal{G}) \subseteq \supp(\mu) \subseteq \supp^c(\mu | \mathcal{G})^c.
\end{equation} \label{l:support}
The following Lemma follows directly from the definitions:
\begin{lemma}
	Assume that $\nu$ $\mathcal{G}$-shadows $\mu$. Then $\supp(\nu | \mathcal{G})=\supp(\mu | \mathcal{G})$ and $\supp^c(\nu | \mathcal{G})=\supp^c(\mu | \mathcal{G})$. 
\end{lemma}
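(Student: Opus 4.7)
The plan is short because both conditional supports are defined purely in terms of whether certain $\mathcal{G}$-measurable sets have positive or zero measure, and the $\mathcal{G}$-shadowing hypothesis precisely states that $\mu$ and $\nu$ assign identical values to every set in $\mathcal{G}$. So the two quantities in the definitions, $\mu(\mathcal{D}) > 0$ versus $\mu(\mathcal{D}) = 0$ for $\mathcal{D} \in \mathcal{G}$, are the same for $\mu$ and for $\nu$.

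First I would unfold the definition of $\supp(\mu|\mathcal{G})$. Given $x \in \supp(\mu|\mathcal{G})$, pick a witnessing sequence of closed sets $\mathcal{D}_j \in \mathcal{G}$ with $\mu(\mathcal{D}_j) > 0$ and $\bigcap_{j \in \mathbb{N}} \mathcal{D}_j = \{x\}$. Since $\mathcal{D}_j \in \mathcal{G}$, the shadowing hypothesis gives $\nu(\mathcal{D}_j) = \mu(\mathcal{D}_j) > 0$, so the same sequence $\mathcal{D}_j$ shows $x \in \supp(\nu|\mathcal{G})$. The reverse inclusion is obtained symmetrically by swapping the roles of $\mu$ and $\nu$, yielding $\supp(\mu|\mathcal{G}) = \supp(\nu|\mathcal{G})$.

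For the complement-conditional support, the argument is identical in structure: $x \in \supp^c(\mu|\mathcal{G})$ iff there exists an open $\mathcal{D} \in \mathcal{G}$ with $x \in \mathcal{D}$ and $\mu(\mathcal{D}) = 0$. Again $\mathcal{D} \in \mathcal{G}$ implies $\nu(\mathcal{D}) = \mu(\mathcal{D}) = 0$, so the same $\mathcal{D}$ witnesses $x \in \supp^c(\nu|\mathcal{G})$, and conversely. This gives $\supp^c(\mu|\mathcal{G}) = \supp^c(\nu|\mathcal{G})$.

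Honestly, there is no real obstacle in this statement; it is an immediate bookkeeping check that the defining conditions depend only on the restriction of the measure to $\mathcal{G}$, and that $\mu|_{\mathcal{G}} = \nu|_{\mathcal{G}}$ by the definition of shadowing. The lemma is really a sanity check used to justify calling $\nu$ a ``shadowing'' measure, confirming that $\nu$ inherits the $\mathcal{G}$-level topological location of its mass from $\mu$.
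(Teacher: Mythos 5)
Your proof is correct and is precisely what the paper has in mind: the paper itself offers no proof, stating only that the Lemma ``follows directly from the definitions,'' and your argument is exactly that unfolding — both $\supp(\cdot\,|\,\mathcal{G})$ and $\supp^c(\cdot\,|\,\mathcal{G})$ are determined by the values of the measure on sets in $\mathcal{G}$, and the $\mathcal{G}$-shadowing hypothesis forces $\mu$ and $\nu$ to agree there.
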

The relation of shadowing to ergodicity is important and somewhat more involved:
\begin{lemma} \label{l:muergodic}
	Assume that $\nu$ $\mathcal{G}$-shadows $\mu$, that $\mu$ is $S$-ergodic, and that $\mathcal{G}$ satisfies the following: for each $\mathcal{D} \in \mathcal{G}$, $\theta^{-1}(\mathcal{M}_1 \cap \mathcal{D}) \subset \mathcal{D}$. Then almost every measure in the ergodic decomposition of $\nu$ $\mathcal{G}$-shadows $\mu$.  
\end{lemma}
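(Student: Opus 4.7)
The strategy is to invoke the ergodic decomposition $\nu = \int \nu_x\,d\nu(x)$ and verify that $\nu$-almost every component $\nu_x$ already $\mathcal{G}$-shadows $\mu$, using the hypothesis on $\mathcal{G}$ only at the very last step. First I would transport the factor structure down to each component. Since $\nu(\mathcal{M}_2)=1$, the identity $\nu = \int \nu_x\,d\nu(x)$ forces $\nu_x(\mathcal{M}_2)=1$ for $\nu$-a.e.\ $x$, so for such $x$ the pushforward $\theta_*\nu_x$ is a well-defined Borel probability measure on $(\Omega,\mathcal{F})$ concentrated on $\mathcal{M}_1$. The intertwining $\theta\circ S = S\circ \theta$ on $\mathcal{M}_2$ makes $\theta_*\nu_x$ invariant under $S$. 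Moreover, if $A\in\mathcal{F}$ is $S$-invariant, then $\theta^{-1}(A)\cap\mathcal{M}_2$ is $S$-invariant modulo $\nu_x$; ergodicity of $\nu_x$ forces $\nu_x(\theta^{-1}(A))\in\{0,1\}$ and hence $\theta_*\nu_x(A)\in\{0,1\}$, so $\theta_*\nu_x$ is itself ergodic.

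Next I would apply uniqueness of the ergodic decomposition. The factor identity $\mu=\theta_*\nu$ together with the decomposition of $\nu$ yields
\[
\mu \;=\; \int \theta_*\nu_x\,d\nu(x),
\]
exhibiting $\mu$ as an integral of ergodic $S$-invariant probability measures. Since $\mu$ is itself ergodic, its ergodic decomposition is the Dirac mass at $\mu$, and therefore $\theta_*\nu_x=\mu$ for $\nu$-a.e.\ $x$. This already establishes that $\mu$ is a factor of $\nu_x$ via the same map $\theta$ (restricted to the $\nu_x$-full set $\mathcal{M}_2$).

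Finally I would check the shadowing equality on $\mathcal{G}$ for any $x$ in the full-measure set produced above. For an arbitrary $\mathcal{D}\in\mathcal{G}$, the hypothesis $\theta^{-1}(\mathcal{M}_1\cap\mathcal{D})\subset \mathcal{D}$ together with $\theta_*\nu_x=\mu$ and $\mu(\mathcal{M}_1)=1$ gives
\[
\nu_x(\mathcal{D}) \;\geq\; \nu_x(\theta^{-1}(\mathcal{M}_1\cap\mathcal{D})) \;=\; \theta_*\nu_x(\mathcal{M}_1\cap\mathcal{D}) \;=\; \mu(\mathcal{D}).
\]
Because $\mathcal{G}$ is a $\sigma$-algebra, applying the same inequality to $\mathcal{D}^c\in\mathcal{G}$ yields the reverse inequality, hence $\nu_x(\mathcal{D})=\mu(\mathcal{D})$ simultaneously for every $\mathcal{D}\in\mathcal{G}$. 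Combined with the factor property, this shows that $\nu_x$ $\mathcal{G}$-shadows $\mu$, as desired.

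The main delicate point is the ergodicity-transfer step $\nu_x\text{ ergodic}\Rightarrow\theta_*\nu_x\text{ ergodic}$, which must be carried out on the full-measure set $\mathcal{M}_2$ so as to respect the semiconjugacy $\theta\circ S = S\circ \theta$; once this is in hand, the uniqueness of the ergodic decomposition is standard on a compact metric space, and the closure of $\mathcal{G}$ under complements is exactly what lets the final inequality be upgraded to an equality uniformly on $\mathcal{G}$ without any appeal to countable generation.
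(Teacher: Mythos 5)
Your proof is correct and follows essentially the same route as the paper's: pass to the ergodic decomposition of $\nu$, invoke uniqueness of the ergodic decomposition of the ergodic $\mu$ to conclude that almost every component pushes forward under $\theta$ to $\mu$, and finish with the complement trick on $\mathcal{G}$. The paper leaves the verification that each pushed component is ergodic to the reader; your explicit ergodicity-transfer step supplies that detail but does not change the argument.
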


\begin{proof} Consider the ergodic decomposition of $\nu$, i.e. a Borel-probability measure $\chi$ on the compact, metrizable space of probability measures $\mathcal{M}(\Omega)$ (equipped with the weak$^*$-topology), such that $\chi$-a.e. measure is $S$-invariant and ergodic, and such that the usual representation formula for $\nu$ in terms of $\chi$ holds \cite{Walters}. Then it is straightforward to check by verifying the definition of the ergodic decomposition \cite{Walters} that $(\theta^*)^*\chi$ is the ergodic decomposition of $\mu$, where $(\theta^*)^*$ is the double pull defined in a natural way. However, the ergodic decomposition is unique, and as $\mu$ is ergodic, $(\theta^*)^*\chi$ must be concetrated on $\mu$. That means that for $\chi$-a.e. measure $\tilde{\nu}$ (i.e. almost every measure in the ergodic decomposition of $\nu$), we have $\theta^*(\tilde{\nu})=\mu$. By construction, $\mu$ is then a factor of $\tilde{\nu}$.
	
It remains to show the shadowing property. As $\mu(\mathcal{M}_1)=1$, we have
	\begin{equation}
		\tilde{\nu}(\mathcal{\mathcal{D}}) \geq \tilde{\nu}(\theta^{-1}(\mathcal{M}_1 \cap \mathcal{D})) = \mu ( \mathcal{M}_1 \cap \mathcal{D}) = \mu (\mathcal{D}), \label{r:factor}
	\end{equation}
	and analogously $\tilde{\nu}(\mathcal{D}^c)\geq \mu (\mathcal{D}^c)$. However, $1 = \mu (\mathcal{D})+\mu (\mathcal{D}^c)= \tilde{\nu}(\mathcal{\mathcal{D}}) +\tilde{\nu}(\mathcal{D}^c)=1$. We conclude that the equality in (\ref{r:factor}) must hold.
\end{proof}
Finally, we establish relation of shadowing to the metric (or Kolmogorov-Sinai) entropy $h_{\mu}(S)$ of a measure $\mu$.

\begin{lemma} If $\nu$ $\mathcal{G}$-shadows $\mu$, then $h_{\nu}(S) \geq h_{\mu}(S)$.
	\label{l:entropy}
\end{lemma}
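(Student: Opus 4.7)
The plan is to invoke the standard fact from ergodic theory that the Kolmogorov--Sinai entropy of a measure-theoretic factor is bounded above by the entropy of the extension. By the definition of $\mathcal{G}$-shadowing, $\mu$ is already a factor of $\nu$ (via the map $\theta : \mathcal{M}_2 \to \mathcal{M}_1$ with $\theta \circ S = S \circ \theta$ on $\mathcal{M}_2$ and $\theta^*\nu = \mu$); the additional agreement of $\nu$ and $\mu$ on the sub-$\sigma$-algebra $\mathcal{G}$ plays no role in the entropy comparison, so it can be ignored here.

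Concretely, I would first take an arbitrary finite Borel partition $\mathcal{P} = \{P_1,\dots,P_k\}$ of $\Omega$, and consider its pullback $\theta^{-1}(\mathcal{P}) = \{\theta^{-1}(P_i)\}$, which is a finite measurable partition of $\mathcal{M}_2$; extending each piece outside $\mathcal{M}_2$ in any measurable way (for instance, assigning the complement $\Omega\setminus \mathcal{M}_2$ to $\theta^{-1}(P_1)$) yields a finite Borel partition $\mathcal{Q}$ of $\Omega$. Since $\nu(\mathcal{M}_2)=1$, these extensions do not affect the measure of any piece, so $\nu(\mathcal{Q}_{i}) = \nu(\theta^{-1}(P_i)) = \mu(P_i)$ for each $i$, and the same identity propagates to every refinement $\bigvee_{j=0}^{n-1} S^{-j}\mathcal{Q}$ because $\theta$ intertwines $S$ on a set of full $\nu$-measure. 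Consequently the partition entropies agree at every level $n$,
\begin{equation*}
H_\nu\!\Bigl(\bigvee_{j=0}^{n-1} S^{-j}\mathcal{Q}\Bigr) \;=\; H_\mu\!\Bigl(\bigvee_{j=0}^{n-1} S^{-j}\mathcal{P}\Bigr),
\end{equation*}
and dividing by $n$ and letting $n \to \infty$ gives $h_\nu(S,\mathcal{Q}) = h_\mu(S,\mathcal{P})$. Hence $h_\mu(S,\mathcal{P}) \leq h_\nu(S)$, and taking the supremum over $\mathcal{P}$ yields $h_\mu(S) \leq h_\nu(S)$.

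There is essentially no obstacle: the only mild bookkeeping point is to handle the fact that $\theta$ is only defined on the full-$\nu$-measure set $\mathcal{M}_2$ (rather than on all of $\Omega$) when constructing the partition $\mathcal{Q}$, which is resolved by the arbitrary measurable extension above. Everything else is the classical Abramov--Rokhlin style argument that factors cannot increase entropy, and I would simply cite this (e.g.\ from Walters) rather than reprove it in detail.
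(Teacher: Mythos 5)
Your proposal is correct and takes essentially the same route as the paper, which also just invokes the standard fact that entropy is non-increasing under factor maps (citing Walters) together with the observation that $\mathcal{G}$-shadowing makes $\mu$ a factor of $\nu$. You simply unpack the classical argument in more detail, including the minor bookkeeping about extending $\theta^{-1}(\mathcal{P})$ outside $\mathcal{M}_2$, which the paper leaves implicit.
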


\begin{proof} This holds, as entropy is always non-increasing under factor maps and $\mu$ is a factor of $\nu$ \cite{Walters}.
\end{proof}

\subsection{Variational construction of shadowing measures} In this subsection $\mathcal{X}$ will always be equipped with the topology $\mathcal{X}_{\text{loc}}$. Prior to the variational construction of measures, we introduce the required spaces and projections. Recall the projections $\pi : \mathcal{X} \rightarrow \mathbb{T}^N \times \mathbb{R}^N$, given with $\pi(q)=(q \mod 2\pi,q_t)$. Let $\hat{\mathcal{X}}$ be the quotient space induced by the relation of equivalence: $q \sim \tilde{q}$ whenever there is $k \in \mathbb{Z}$, such that $q-\tilde{q}=2k \pi$, and with the induced topology. Let $\iota : \mathcal{X} \rightarrow \hat{\mathcal{X}}$ be the canonical projection, 
and let $\hat{\mathbb{\xi}}$, $\hat{\varphi}$ be the induced semi-flow $\xi$ and flow $\varphi$ on $\hat{\mathcal{X}}$. By (\ref{r:grad}) and by definition, $\hat{\xi}$ and $\hat{\varphi}$ are well-defined. If $\hat{\mathcal{E}}=\iota(\mathcal{E})$, $S = \varphi^{2 \pi}$, $\hat{S}=\hat{\varphi}^{2 \pi}$ are the 
$2 \pi$-shifts in the variable $t$, and $\hat{\pi} : \hat{\mathcal{X}} \rightarrow \mathbb{T}^N \times \mathbb{R}^N$ is defined with $\hat{\pi}(\hat{q}) = (\hat{q},\hat{q}_t)$, then the following commutative diagrams hold:

$$
\begin{CD}
\mathcal{X} @>i>> \hat{\mathcal{X}} \\
@VV \xi   V @VV \hat{\xi}  V \\
\mathcal{X} @>i>> \hat{\mathcal{X}}
\end{CD} \hspace{50pt}
\begin{CD}
\mathcal{E} @>i>> \hat{\mathcal{E}} @>\hat{\pi}>>   \mathbb{T}^N \times \mathbb{R}^N\\
@VV S V @VV \hat{S} V @VV \sigma V \\
\mathcal{E} @>i>> \hat{\mathcal{E}} @>\hat{\pi}>>   \mathbb{T}^N \times \mathbb{R}^N
\end{CD}
$$
\vspace{1ex}

By the continuous dependence on  initial conditions of (\ref{r:EL}), $\pi|_{\mathcal{E}}$ and $\hat{\pi}|_{\hat{\mathcal{E}}}$ are continuous. As for notation, we will always denote the functions on the quotient set $\hat{\mathcal{X}}$ by $\hat{.}$\hspace{2pt}. To simplify the notation, the subsets and elements of $\mathcal{X}$ and $\hat{\mathcal{X}}$ will be denoted by the same symbol, as the meaning will always be clear from the context.

We now focus on constructing $\phi$-, or equivalently $\sigma$-invariant measures of (\ref{r:EL}) (we always implicitly assume that the measures are Borel probability measures). We denote by $\mathcal{M}(\mathcal{X})$, $\mathcal{M}(\hat{\mathcal{X}})$ and $\mathcal{M}(\mathbb{T}^N \times \mathbb{R}^N)$ the spaces of $S$-, $\hat{S}$-, respectively $\sigma$-invariant measures on these spaces, equipped with the weak$^*$-topology. Analogously we define $\mathcal{M}(\mathcal{E})$, $\mathcal{M}(\hat{\mathcal{E}})$. We always denote by $.^*$ the functions, flows and semi-flows pulled to these spaces of measures. By all the commutative relations established so far, it is straightforward to check that the objects below are well-defined, and that the following commutative diagrams hold:

$$
\begin{CD}
\mathcal{M}(\mathcal{X}) @>i^*>> \mathcal{M}(\hat{\mathcal{X}}) \\
@VV \xi^* V @VV \hat{\xi}^* V \\
\mathcal{M}(\mathcal{X}) @>i^*>> \mathcal{M}(\hat{\mathcal{X}})
\end{CD} \hspace{50pt}
\begin{CD}
\mathcal{M}(\mathcal{E}) @>i^*>> \mathcal{M}(\hat{\mathcal{E}}) @>\hat{\pi}^*>>   \mathcal{M}(\mathbb{T}^N \times \mathbb{R}^N)\\
@VV S^* V @VV \hat{S}^* V @VV \sigma^* V \\
\mathcal{M}(\mathcal{E}) @>i^*>> \mathcal{M}(\hat{\mathcal{E}}) @>\hat{\pi}^*>>   \mathcal{M}(\mathbb{T}^N \times \mathbb{R}^N)
\end{CD}
$$
\vspace{1ex}

\noindent (by definition, $S^*$, $\hat{S}^*$ and $\sigma^*$ are identities). Thus constructing invariant measures of (\ref{r:EL}), i.e. elements of $\mathcal{M}(\mathbb{T}^N \times \mathbb{R}^N)$, is equivalent to finding required objects in $\mathcal{M}(\hat{\mathcal{E}})$, i.e. fixed points of $\hat{\xi}^*$ on $\mathcal{M}(\hat{\mathcal{X}})$, or fixed points of $\xi^*$ on $\mathcal{M}(\mathcal{X})$. The approach to constructing such measures is as follows: we will construct an element $\mu \in \mathcal{M}(\hat{\mathcal{X}})$ (typically not supported on $\hat{\mathcal{E}}$), e.g. by embedding a Bernoulli shift. We will then find an element of $\nu \in \mathcal{M}(\hat{\mathcal{E}})$ which shadows $\mu$, as an element of the $\omega$-limit set of $\mu$ with respect to $\hat{\xi}^*$. To achieve the shadowing property, given a fixed $\mu \in \mathcal{M}(\hat{\mathcal{X}})$, we will require that a $\sigma$-subalgebra $\mathcal{G}$ of the $\sigma$-algebra of Borel sets on $\hat{\mathcal{X}}$ satisfies the following conditions:
\begin{itemize}
	\item[(M1)] {\bf The separation property.} There exists a Borel-measurable set $\mathcal{M}_1 \subset \hat{\mathcal{X}}$ such that $\mu(\mathcal{M}_1)=1$, and such that $\left\lbrace \mathcal{D} \cap \mathcal{M}_1, \: \mathcal{D} \in \mathcal{G} \right\rbrace$ generates all Borel-measurable sets on $\mathcal{M}_1$. Specifically, for each $q \in \mathcal{M}_1$, there exists $\mathcal{D}_q \in \mathcal{G}$ such that 
	if $q,\tilde{q} \in \mathcal{M}_1$, $q \neq \tilde{q}$, then
	$\mathcal{D}_q \cap \mathcal{D}_{\tilde{q}} = \emptyset$. Furthermore, for any $q \in \mathcal{M}_1$, $\mathcal{D}_{\hat{S}(q)}=\hat{S}(\mathcal{D}_q)$.
	
	\item[(M2)] {\bf The $\xi$-invariance.} For each $q \in \mathcal{M}_1$ and each $\mathcal{D} \in \mathcal{G}$, if $q \in \mathcal{D}$, then for all $s \geq 0$, $\hat{\xi}^s(q) \in \mathcal{D}$.
	
	\item[(M3)] {\bf Measurability.} If $\mathcal{M}_2 = \cup_{q \in \mathcal{M}_1}{\mathcal{D}_q}$, then the map $\hat{\theta} : \mathcal{M}_2 \rightarrow \mathcal{M}_1$ given with $\hat{\theta}(\mathcal{D}_q)=q$ is Borel-measurable. Specifically, $\mathcal{M}_2$ is Borel-measurable.
	
	\item[(M4)] {\bf The closed-sets property.} There exists a family $\mathcal{D}_i \in \mathcal{G}$ of closed sets, $i \in \mathcal{I}$, such that $\mathcal{G}$ is generated by this family (i.e. $\mathcal{G}$ is the smallest $\sigma$-algebra containing all $(\mathcal{D}_i)_{i \in \mathcal{I}}$). Furthermore, for each $i_1 \in \mathcal{I}$ there exists a sequence $i_n \in \mathcal{I}$, $n \in \mathbb{N}$ such that $\mathcal{D}_{i_n}$ are pairwise disjoint, and such that $\mu(\cup_{n=1}^{\infty}\mathcal{D}_{i_n})=1$.
\end{itemize}
In applications, (M1), (M3) and (M4) will follow relatively easily from the construction of $\mu$, and the focus will be on ensuring the $\xi$-invariance of the constructed $\sigma$-algebra $\mathcal{G}$. An important tool in the construction of the shadowing measure, already suggested in \cite{Slijepcevic:00}, is that $\xi^*$ and $\hat{\xi}^*$ are gradient-like semiflows with the Lyapunov function (given below for $\hat{\xi}^*$)
\begin{equation}
	\mathcal{\hat{L}}^*(\mu)=\int_{\hat{\mathcal{X}}} \int_0^{2\pi} L(q(t),q_t(t),t)dt \: d\mu(q)
\end{equation}
(see Lemma \ref{l:gradient} below). We fix $\mu \in \mathcal{M}(\hat{\mathcal{X}})$ and denote by $\mu(s)=\hat{\xi}^*(\mu,s)$ for $s \geq 0$, i.e. $\mu(0)=\mu$, and $\mu(s)$ is the pulled measure $\mu$ with respect to the map $\hat{\xi}^s$.

\begin{thm} \label{t:variational}{\bf Variational construction of shadowing measures.}
	Assume $\mu \in \mathcal{M}(\hat{\mathcal{X}})$ such that $||q_t(s)||_{H^1_{\text{ul}}(\mathbb{R})^N}$ is bounded on the support of $\mu(s)$, uniformly in $s \geq 0$. Assume $\mathcal{G}$ is a $\sigma$-subalgebra of Borel sets on $\hat{\mathcal{X}}$ satisfying (M1)-(M4). Then there exists $\nu \in \mathcal{M}(\hat{\mathcal{E}})$ which $\mathcal{G}$-shadows $\mu$.
	
	Furthermore, if $\mu$ is $\hat{S}$-ergodic, we can choose $\nu$ to be $\hat{S}$-ergodic.
\end{thm}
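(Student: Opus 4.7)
The plan is to extend the variational argument of Theorem \ref{t:contains} to the space of invariant measures $\mathcal{M}(\hat{\mathcal{X}})$ and produce $\nu$ as a weak$^*$-accumulation point of the orbit $\mu(s) = \hat{\xi}^{*s}\mu$; the shadowing identity will come from a direct application of (M2), while the factor structure will follow from (M1) and (M3). First, the uniform $H^1_{\text{ul}}$ bound on the supports of $\mu(s)$, combined with the compact embedding used in Lemma \ref{l:42}, gives tightness of $\{\mu(s)\}_{s \geq 0}$ on $\mathcal{X}_{\text{loc}}$, hence relative compactness in the weak$^*$-topology. As anticipated by the Lyapunov property stated in Lemma \ref{l:gradient}, integrating the pointwise identity
\[
  \tfrac{d}{ds}\int_0^{2\pi} L(q,q_t,t)\,dt \;=\; -\int_0^{2\pi} q_s^2\,dt
\]
(valid up to a boundary term in $t$ which vanishes after averaging, thanks to $\hat{S}$-invariance of $\mu(s)$) against $\mu(s)$ produces, exactly as in Lemma \ref{l:41}, a sequence $s_n \to \infty$ along which $\int_{\hat{\mathcal{X}}}\int_0^{2\pi} q_s(t,s_n)^2\,dt\,d\mu(s_n)(q) \to 0$.

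Let $\nu$ be a weak$^*$-limit of $\mu(s_n)$. Running the variational test of Lemma \ref{l:43} inside the integral (test against $g \in H^1_0([-m,m])^N$, apply Cauchy-Schwarz, integrate against $\mu(s_n)$, and pass to the limit) one concludes that for $\nu$-a.e.\ $q$ the first variation of the action vanishes, so $q \in \mathcal{E}$ and hence $\nu \in \mathcal{M}(\hat{\mathcal{E}})$; the $\hat{S}$-invariance survives the limit by continuity of $\hat{S}^*$, and $\hat{\xi}^*$-invariance is automatic since $\hat{\xi}$ fixes $\hat{\mathcal{E}}$. The shadowing identity is now a direct consequence of (M2): for any $\mathcal{D} \in \mathcal{G}$, (M2) gives $\mathcal{D} \cap \mathcal{M}_1 \subset (\hat{\xi}^s)^{-1}(\mathcal{D})$, so $\mu(s)(\mathcal{D}) \geq \mu(\mathcal{D})$, and applying the same reasoning to $\mathcal{D}^c \in \mathcal{G}$ forces equality $\mu(s)(\mathcal{D}) = \mu(\mathcal{D})$ for every $s \geq 0$. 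Restricting to the closed generators of (M4) and invoking the portmanteau theorem yields $\nu(\mathcal{D}_i) \geq \mu(\mathcal{D}_i)$; the countable pairwise disjoint family from (M4) with $\sum_n \mu(\mathcal{D}_{i_n}) = 1$ then forces $\nu(\mathcal{D}_{i_n}) = \mu(\mathcal{D}_{i_n})$, and a monotone-class argument extends this equality to every $\mathcal{D} \in \mathcal{G}$.

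The most delicate step, and the main obstacle, is realizing $\mu$ as a factor of $\nu$. After replacing $\mathcal{M}_1$ by the $\hat{S}$-invariant subset $\bigcap_{k \in \mathbb{Z}} \hat{S}^{-k}(\mathcal{M}_1)$ (still of full $\mu$-measure), I would define $\hat{\theta} : \mathcal{M}_2 \to \mathcal{M}_1$ via (M3), which is well-posed by the disjointness in (M1) and Borel-measurable by (M3). The intertwining $\hat{\theta} \circ \hat{S} = \hat{S} \circ \hat{\theta}$ is immediate from $\hat{S}(\mathcal{D}_q) = \mathcal{D}_{\hat{S}(q)}$ in (M1). To verify $\nu(\mathcal{M}_2) = 1$ one chooses a disjoint family from (M4) contained in $\mathcal{M}_2$ modulo null sets (possible because $\mathcal{M}_1 \subset \mathcal{M}_2$ has full $\mu$-measure), and the already-established shadowing identity transfers this to $\nu$. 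Finally, the pull-back identity $\hat{\theta}^*\nu = \mu$ reduces, on the Borel $\sigma$-algebra of $\mathcal{M}_1$ (which by (M1) is generated by the traces $\mathcal{D} \cap \mathcal{M}_1$, $\mathcal{D} \in \mathcal{G}$), to the shadowing identity $\nu(\mathcal{D}) = \mu(\mathcal{D})$, once one checks that $\hat{\theta}^{-1}(\mathcal{D} \cap \mathcal{M}_1)$ and $\mathcal{D}$ agree up to a $\nu$-null set; this compatibility is precisely the content of (M2), which makes the sets $\mathcal{D} \in \mathcal{G}$ unions of fibers $\mathcal{D}_q$ in a measure-theoretic sense. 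The ergodic refinement then follows by decomposing $\nu$ ergodically and invoking Lemma \ref{l:muergodic}, whose hypothesis $\hat{\theta}^{-1}(\mathcal{M}_1 \cap \mathcal{D}) \subset \mathcal{D}$ reduces to the same compatibility.
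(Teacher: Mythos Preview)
Your proposal is correct and follows essentially the same route as the paper: construct $\nu$ as a weak$^*$ accumulation point of $\mu(s)$ via the Lyapunov functional of Lemma~\ref{l:gradient}, show $\nu\in\mathcal{M}(\hat{\mathcal{E}})$ by the variational test (as in Lemma~\ref{l:limmeasure}), obtain $\nu(\mathcal{D})=\mu(\mathcal{D})$ on the closed generators via (M2) and portmanteau followed by the (M4) squeeze, realize the factor through $\hat\theta$ from (M3), and finish with Lemma~\ref{l:muergodic}. One small correction: the compatibility $\nu(\hat\theta^{-1}(\mathcal{D}_i\cap\mathcal{M}_1))=\nu(\mathcal{D}_i)$ in the factor step is not a consequence of (M2) but again of the (M4) disjoint-family squeeze (one has only the inclusion $\mathcal{D}_{i_n}\subset\hat\theta^{-1}(\mathcal{D}_{i_n}\cap\mathcal{M}_1)$, and equality of $\nu$-measures is forced by $\sum_n\nu(\mathcal{D}_{i_n})=1$).
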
	
To prove the theorem, we first construct a measure $\nu \in \mathcal{M}(\hat{\mathcal{E}})$ in two Lemmas, and then show that it is indeed the shadowing measure by using (M1)-(M4). 

\begin{lemma} \label{l:gradient} 
	The function $s \rightarrow \mathcal{\hat{L}}^*(\mu(s))$ is strictly decreasing, unless $\mu(0) \in \mathcal{M}(\hat{\mathcal{E}})$, in which case it is constant. Furthermore,
	\begin{equation}
		\frac{d}{ds}  \mathcal{\hat{L}}^*(\mu(s)) = - \int_{\tilde{\mathcal{X}}} \int_0^{2\pi} q_s(t)^2 dt \: d\mu(s)(q). \label{r:balancemeasure}
	\end{equation}
\end{lemma}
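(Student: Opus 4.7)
The plan is to derive (\ref{r:balancemeasure}) by differentiating under the measure integral and performing a single integration by parts in $t$, then to read off the monotonicity from nonpositivity of the resulting expression. First, for $q$ fixed in the support of $\mu(0)$, write $q(s,\cdot)=\hat{\xi}^s(q)$; by Theorem~\ref{t:exist} this is smooth in $(s,t)$ for $s>0$ and satisfies (\ref{r:Grad}). Differentiating $\int_0^{2\pi}L(q,q_t,t)\,dt$ yields $\int_0^{2\pi}(q_tq_{ts}+\partial_qV\,q_s)\,dt$; integrating the first summand by parts in $t$ and using (\ref{r:Grad}) to substitute $q_{tt}-\partial_qV=q_s$ converts this into
\[
-\int_0^{2\pi}q_s^2\,dt+\bigl(q_t(2\pi)q_s(2\pi)-q_t(0)q_s(0)\bigr).
\]

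Next I integrate the preceding identity against $d\mu(s)(q)$. The $\hat{S}$-invariance of $\mu(s)$ (inherited from $\mu\in\mathcal{M}(\hat{\mathcal{X}})$ via the commutative diagram $\hat{S}\circ\hat{\xi}^s=\hat{\xi}^s\circ\hat{S}$) together with the identities $(\hat{S}q)_t(0)=q_t(2\pi)$ and $(\hat{S}q)_s(0)=q_s(2\pi)$ gives
\[
\int_{\hat{\mathcal{X}}}q_t(2\pi)q_s(2\pi)\,d\mu(s)(q)=\int_{\hat{\mathcal{X}}}q_t(0)q_s(0)\,d\mu(s)(q),
\]
so the boundary term is annihilated. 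To justify interchanging $d/ds$ with the measure integral I would use the standing hypothesis that $\|q_t(s)\|_{H^1_{\text{ul}}(\mathbb{R})^N}$ is uniformly bounded on $\operatorname{supp}\mu(s)$: combined with parabolic smoothing from Theorem~\ref{t:exist} this gives uniform $L^2([0,2\pi])$ control of $q_t$, $q_{tt}$ and hence $q_s$, so dominated convergence applies and produces exactly (\ref{r:balancemeasure}).

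For the monotonicity, the right-hand side of (\ref{r:balancemeasure}) is manifestly $\le 0$. If $\mu(0)\in\mathcal{M}(\hat{\mathcal{E}})$ then $\hat{\xi}^s q=q$ for $\mu(0)$-a.e.\ $q$, so $\mu(s)=\mu(0)$ and $\mathcal{\hat{L}}^*(\mu(s))$ is constant. Conversely, if $(d/ds)\mathcal{\hat{L}}^*(\mu(s))=0$ at some $s_0$, then $q_s(s_0,t)=0$ for a.e.\ $t\in[0,2\pi]$ and $\mu(s_0)$-a.e.\ $q$; tiling $\mathbb{R}$ by the shifts $\hat{S}^k[0,2\pi]$ and invoking $\hat{S}$-invariance of $\mu(s_0)$ extends this to $q_s\equiv 0$ on $\mathbb{R}$, so $q(s_0,\cdot)\in\hat{\mathcal{E}}$ and hence $\mu(s_0)\in\mathcal{M}(\hat{\mathcal{E}})$. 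To pull this back to $\mu(0)$ I would invoke backward uniqueness for the analytic semigroup generated by (\ref{r:Grad}), which forces $(\hat{\xi}^{s_0})^{-1}(\hat{\mathcal{E}})=\hat{\mathcal{E}}$. The main obstacle I anticipate is the vanishing of the boundary term, which rests on choosing the window of integration matched to the period of $\hat{S}$ rather than using a decaying weight as in Lemma~\ref{l:41}; the dominated-convergence and backward-uniqueness steps are technical but standard for semilinear parabolic problems.
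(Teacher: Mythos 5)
Your overall route matches the paper's: differentiate $\int_0^{2\pi}L(q,q_t,t)\,dt$ in $s$, integrate by parts in $t$ to produce $-\int_0^{2\pi}q_s^2$ plus a boundary term, and cancel the boundary term by $\hat S$-invariance. But there is a genuine gap in how you handle the boundary term. To invoke $\hat S$-invariance you must already know that
\[
\int_{\hat{\mathcal{X}}}\int_{s_0}^{s_1} q_t(s,T)\,q_s(s,T)\,ds\,d\mu(q)
\]
is finite for the anchor point $T$ of your integration window; without this, the two sides of your claimed cancellation identity are not both defined, and the manipulation is formal. The uniform $H^1_{\mathrm{ul}}$ bound gives you $\int_{\hat{\mathcal{X}}}\int_{s_0}^{s_1}\int_0^{2\pi}|q_tq_s|\,dt\,ds\,d\mu \ll A\,s_1$, i.e.\ integrability averaged over $t\in[0,2\pi]$, \emph{not} pointwise at $t=0$. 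The paper makes exactly this point: it establishes the space–time–measure $L^1$ bound, then notes that by Fubini one may choose $T\in[0,2\pi)$ for which the boundary integral is finite, and anchors the period window at that $T$. The translate $q\mapsto\varphi^{T}q$ is still $\hat S$-equivariant, so the cancellation goes through; but the choice of $T$ is where the argument actually closes, and your proposal silently takes $T=0$. (The paper also integrates the pointwise identity over $[s_0,s_1]$ first and extends to $s_0=0$ by dominated convergence, rather than differentiating under $\int d\mu$ directly — either order is fine once the integrability is in hand.)

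A second, lesser issue: your last paragraph appeals to backward uniqueness of the parabolic semiflow to pull ``$\mu(s_0)\in\mathcal{M}(\hat{\mathcal{E}})$'' back to $s=0$. The paper does not prove the first sentence of the lemma at all (it only proves \eqref{r:balancemeasure}), and the natural reading of ``strictly decreasing unless $\mu(0)\in\mathcal{M}(\hat{\mathcal{E}})$'' is a forward statement: $\mu(0)\in\mathcal{M}(\hat{\mathcal{E}})$ implies $\mu(s)\equiv\mu(0)$, while otherwise $\frac{d}{ds}\mathcal{\hat L}^*(\mu(s))<0$ for $s$ small (since $\hat\xi^s$ has not yet reached $\mathcal{M}(\hat{\mathcal{E}})$) and then stays $\le 0$. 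Backward uniqueness on $\mathcal{X}$ with the nonlinearity $F(q)=\partial_q V(q,\cdot)$ is plausible but is additional work not required for the statement; I'd drop that detour.
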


\begin{proof} We first note that by the uniform bound on $||q_t(s)||_{H^1_{\text{ul}}(\mathbb{R})^N}$, we have that $\mathcal{\hat{L}}^*(\mu(s)) < \infty$ for all $s \geq 0$. By the smoothening property Theorem \ref{t:exist}, (iv), for any $s > 0$ and any $q \in \supp \mu(s)$, such $q$ is smooth enough so that we can differentiate as follows:
	\begin{align*}
		\frac{d}{ds} \int_0^{2\pi}L(q,q_t,t)dt & = \int_0^{2\pi}\left(\frac{d}{ds}L(q,q_t,t) \right)dt 
		= \int_0^{2\pi}\left( q_tq_{ts}+\frac{\partial}{\partial q}V(q,t)q_s \right) dt ds  \\
		& =  \int_0^{2\pi} \left(-q_{tt}q_s+\frac{\partial}{\partial q}V(q,t)q_s \right)dt + q_t(2\pi)q_s(2\pi) -  q_t(0)q_s(0) \\ & = -\int_0^{2\pi}q_s^2 dt + q_t(2\pi)q_s(2\pi) -  q_t(0)q_s(0).  
	\end{align*}
	Now for $0 < s_0 < s_1$, we have
	\begin{align}
		\int_0^{2\pi}L(q(s_0),q_t(s_0),t)dt - \int_0^{2\pi}L(q(s_1),q_t(s_1),t) & = - \int_{s_0}^{s_1}\int_0^{2\pi}q_s(s,t)^2 dt ds \notag \\ & \quad + \int_{s_0}^{s_1} \left(q_t(s,2\pi)q_s(s,2\pi) -  q_t(s,0)q_s(s,0) \right)ds. \label{r:Ldifference}
	\end{align}
	By the dominated convergence theorem, we can extend (\ref{r:Ldifference}) also to $0 \leq s_0 < s_1$.
	By the assumptions we have for any $q \in \supp \mu$,
	\begin{align*}
		\int_{\tilde{\mathcal{X}}} \int_{0}^{s_1} \int_{0}^{2\pi} |q_t(s,t)q_s(s,t)|dt 
		ds d\mu(q) & \ll  \int_{\tilde{\mathcal{X}}} \int_{0}^{s_1} \left( \int_{0}^{2\pi} (q_t(s,t)^2 + q_{tt}(s,t)^2 dt \right)^{1/2} ds d\mu(q) \\
		& \ll \int_{\tilde{\mathcal{X}}} \int_{0}^{s_1}  ||q_t(s)||_{H^1_{\text{ul}}(\mathbb{R})^N} ds d\mu(q) \ll A \cdot s_1,
	\end{align*}
	where $A$ is the uniform bound $||q_t(s)||_{H^1_{\text{ul}}(\mathbb{R})^N}$. Thus without loss of generality, we can assume that the function $\int_{0}^{s_1}q_t(s,0)q_s(s,0)ds$ is absolutely integrable with respect to $\mu$ (otherwise we choose some other $T \in [0,2\pi)$ instead of $T=0$ and repeat the argument over the interval $[T,T+2\pi]$). By the $\hat{S}$-invariance of $\mu$, we now have for any $s_0$, $0 \leq s_0 < s_1$,
	\begin{equation*}
		\int_{\tilde{\mathcal{X}}} \int_{s_0}^{s_1}q_t(s,0)q_s(s,0)ds d\mu(q) =  \int_{\tilde{\mathcal{X}}} \int_{s_0}^{s_1}q_t(s,2\pi)q_s(s,2\pi)ds d\mu(q).
	\end{equation*}
	Integrating (\ref{r:Ldifference}) with respect to $\mu$, we now get for any $0 \leq s_0 < s_1$,
	\begin{equation*}
		\mathcal{\hat{L}}^*(\mu(s_0))-\mathcal{\hat{L}}^*(\mu(s_1)) = - \int_{\tilde{\mathcal{X}}} \int_{s_0}^{s_1}\int_0^{2\pi}q_s(s,t)^2 dt ds d\mu(q).
	\end{equation*}	
	By the Fubini theorem, we can swap integrals over $ds$ and $d\mu$, which completes the proof.
\end{proof}

\begin{lemma} \label{l:limmeasure} There exists $\nu \in \mathcal{M}(\hat{\mathcal{E}})$ which is a weak$^*$-limit of a subsequence of $\mu(s)$, $s \geq 0$.
\end{lemma}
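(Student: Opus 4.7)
The plan is the measure-valued analogue of Theorem~\ref{t:contains}: use the Lyapunov function $\hat{\mathcal{L}}^*$ to select times $s_n \to \infty$ along which the average action dissipation over one period vanishes, combine this with a uniform compactness bound to extract a weak$^*$ limit $\mu(s_n) \rightharpoonup \nu$, and then argue via test functions that $\nu$ must charge only weak solutions of the Euler--Lagrange equation.

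Compactness is almost immediate. The uniform bound on $\|q_t(s)\|_{H^1_{\text{ul}}(\mathbb{R})^N}$ on $\supp\mu(s)$ together with Lemma~\ref{l:42} yields a single set $K\subset\hat{\mathcal{X}}$, compact in the $\mathcal{X}_{\text{loc}}$-topology, containing $\supp\mu(s)$ for every $s\geq 0$. Probability measures on $K$ are weak$^*$-compact and metrizable, so any sequence $\mu(s_n)$ has a convergent subsequence. The $\hat{S}$-invariance of each $\mu(s)$ is inherited from that of $\mu$ through the commutation of the semiflow $\hat{\xi}$ with the flow $\hat{\varphi}$, hence of $\hat{\xi}^s$ with $\hat{S}=\hat{\varphi}^{2\pi}$, and passes to any weak$^*$ limit.

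For the choice of $s_n$, Lemma~\ref{l:gradient} gives that $s\mapsto \hat{\mathcal{L}}^*(\mu(s))$ is non-increasing; since $L\geq 0$ it is bounded below by $0$, and integrating (\ref{r:balancemeasure}) in $s$ forces $s\mapsto \int_{\hat{\mathcal{X}}}\int_0^{2\pi} q_s^2\,dt\,d\mu(s)$ to be integrable on $[0,\infty)$. One may then pick $s_n\to\infty$ with $D_n:=\int_{\hat{\mathcal{X}}}\int_0^{2\pi} q_s^2\,dt\,d\mu(s_n)\to 0$ and refine to obtain $\mu(s_n)\rightharpoonup\nu$ in the weak$^*$-sense.

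The hard part is to show $\nu\in\mathcal{M}(\hat{\mathcal{E}})$. For $m\in\mathbb{N}$ and $g\in H^1_0([-m,m])^N$ define
\[
\Psi_g(q)\;=\;\int_{-m}^m\bigl(q_t\cdot g_t+\partial_q V(q,t)\cdot g\bigr)\,dt,
\]
which is bounded and $\mathcal{X}_{\text{loc}}$-continuous on $K$. For $q\in\supp\mu(s)$, the gradient equation (\ref{r:Grad}) and an integration by parts give $\Psi_g(q)=-\int_{-m}^m q_s\cdot g\,dt$; by Cauchy--Schwarz together with $\hat{S}$-invariance of $\mu(s_n)$ (which, by tiling $[-m,m]$ with translates of $[0,2\pi]$, controls $\int\int_{-m}^m q_s^2\,dt\,d\mu(s_n)$ by $C_m D_n$), one obtains $\int|\Psi_g|\,d\mu(s_n)\to 0$. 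Continuity of $|\Psi_g|$ on $K$ and weak$^*$-convergence then give $\int|\Psi_g|\,d\nu=0$, hence $\Psi_g=0$ $\nu$-a.e. Running $g$ through a countable dense family in $\bigcup_m H^1_0([-m,m])^N$ and removing a countable union of null sets, $\nu$-a.e.\ $q$ weakly solves (\ref{r:ELN}); by Lemma~\ref{l:Xcontain} and the uniform bound on $\|q_t\|_{H^1_{\text{ul}}}$ this $q$ lies in $\mathcal{E}$, so $\nu\in\mathcal{M}(\hat{\mathcal{E}})$. The delicate point is precisely the $\hat{S}$-invariance step promoting the one-period dissipation bound $D_n$ to a bound on an arbitrary long window $[-m,m]$.
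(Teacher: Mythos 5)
Your proof is correct and follows essentially the same route as the paper: establish weak$^*$ compactness via the uniform $H^1_{\text{ul}}$ bound and Lemma~\ref{l:42}, pick times along which the per-period dissipation vanishes (which the paper leaves slightly implicit but you make explicit by extracting a diagonal subsequence), and use $\hat{S}$-invariance to pass from a one-period bound to an $m$-dependent bound on $[-m,m]$ against a fixed test function, concluding $\nu$-a.e.\ $q$ is a weak Euler--Lagrange solution. One small slip worth noting: the compact set $K$ carrying all the $\supp\mu(s)$ must be taken in the completion $\hat{\mathcal{Y}}$ of $\hat{\mathcal{X}}$ (as the paper does) rather than in $\hat{\mathcal{X}}$ itself, since $\mathcal{X}_{\text{loc}}$ is not complete; Lemma~\ref{l:42} only gives relative compactness in that closure.
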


\begin{proof} As $\mathcal{X}$ equipped with the localized topology is not complete, to establish compactness required for the construction of $\nu$, we need to consider its closure in $H^1_{\text{loc}}(\mathbb{R})^N$, denoted by $\mathcal{Y}$. Let $\hat{\mathcal{Y}}$ be the quotient set with the same relation of equivalence $\sim$ and the induced topology, and 
	$\hat{\mathcal{X}} \hookrightarrow \hat{\mathcal{Y}}$ the natural embedding. It is straightforward to check that the closure of the set of all $q$ satisfying $||q_t||_{H^1(\mathbb{R})} \leq A$ is compact in $\hat{\mathcal{Y}}$ (we choose representatives in $\mathcal{Y}$ such that $q(0)\in [0,2\pi]^N$ and find a convergent subsequence by diagonalization). Thus by the Banach-Alaoglu theorem, $\mu(s)$, $s\geq 0$ has a convergent subsequence $\mu(s_n)$ which converges to some measure $\nu$ on $\hat{\mathcal{Y}}$ in the weak$^*$ topology induced by the induced $H^1_{\text{loc}}(\mathbb{R})^N$ topology.
	
	It suffices to show that $\nu \in \mathcal{M}(\hat{\mathcal{E}})$. Choose $h \in H^1(\mathbb{R})^N$ with compact support, say in $[-2n\pi ,2n\pi]$, $n \in \mathbb{N}$. Then by the $\hat{S}$-invariance of $\mu$ in the third row below, we obtain
	\begin{align*} 
		\int_{\tilde{\mathcal{X}}} | \partial L(q,q_t,t)h| d\mu(s_n)(q) & = \int_{\tilde{\mathcal{X}}} \left| \int_{-2n\pi}^{2n\pi} \left(q_t(s_n,t)h_t(t)+DV(q(s_n,t),t)h(t)\right) dt \right| d\mu(q) \\
		& \leq ||h||_{L^2(\mathbb{R})^N} \left( \int_{\tilde{\mathcal{X}}} \int_{-2n\pi}^{2n\pi} q_s^2(s_n,t)dt d\mu(q) \right)^{1/2} \\
		& = (2n+1)^{-1/2} ||h||_{L^2(\mathbb{R})^N} \left( \int_{\tilde{\mathcal{X}}} \int_0^{2\pi} q_s^2(s_n,t)dt d\mu(q) \right)^{1/2},
	\end{align*}
	which by Lemma \ref{l:gradient} converges to zero. We thus have that for any $h \in H^1(\mathbb{R})^N$ with compact support, $\int_{\tilde{\mathcal{Y}}} | \partial L(q,q_t,t)h| d\nu(q)=0$. 
	By choosing such a countable, dense set of $h$, and by continuity, we conclude that $\nu$ is supported on the solutions of (\ref{r:EL}). By construction, $\nu$ is supported on $q$ such that $q_t \in L^2_{\text{ul}}(\mathbb{R})^N$, thus by Lemma \ref{l:Xcontain}, $\nu \in \mathcal{M}(\hat{\mathcal{E}})$.	
\end{proof}

\begin{proof}[Proof of Theorem \ref{t:variational}] Take $\nu$ constructed in Lemma \ref{l:limmeasure}. We apply results from subsection \ref{ss:shadowing} with $\Omega = \mathcal{Y}$, $\mathcal{Y}$ as in Lemma \ref{l:limmeasure}, thus compact and metrizable. As $\nu$ is a weak$^*$-limit of $\hat{S}$-invariant measures and $\hat{S}$ is continuous, $\nu$ is $\hat{S}$-invariant. It suffices to show that $\nu$ shadows $\mu$. We take as the factor the function $\hat{\theta}: \mathcal{M}_2 \rightarrow \mathcal{M}_1$ from the property (M3). Let $s_k >0$ be the sequence from Lemma \ref{l:limmeasure} such that $\nu$ is the weak$^*$ limit of $\mu(s_k)$.
	
	First we show that for each $\mathcal{D} \in \mathcal{G}$, $\nu(\mathcal{D})= \mu(\mathcal{D})$. It suffices to show it for the generator $\mathcal{D}_i$, $i \in \mathcal{I}$, from (M4). Choose $i_1 \in \mathcal{I}$, and find $i_n \in \mathcal{I}$, $n \in \mathbb{N}$ so that (M4) holds. Now by (M2) and (M3), we have $\mu(\mathcal{D}_{i_n} )=\mu(\mathcal{D}_{i_n} \cap \mathcal{M}_1) \leq \mu(s_k)(\mathcal{D}_{i_n})$ for all $n \in \mathbb{N}$. However, by (M4) and $\sigma$-aditivity of $\mu$ and $\mu(s_n)$, we get
	\begin{equation*}
		1 = \sum_{n=1}^{\infty} \mu(\mathcal{D}_{i_n}) \leq \sum_{n=1}^{\infty} \mu(s_k)(\mathcal{D}_{i_n})=\mu(s_k)(\cup_{n=1}^{\infty}\mathcal{D}_{i_n}) \leq 1,
	\end{equation*}
	thus equality must hold in all the terms. As $\nu$ is the weak$^*$-limit of $\mu(s_k)$ and $\mathcal{D}_{i_n}$ are closed, we have that for all $n\in \mathbb{N}$, 
	$\nu(\mathcal{D}_{i_n})\geq \limsup_{k \rightarrow \infty}\mu(s_k)(\mathcal{D}_{i_n}) = \mu(\mathcal{D}_{i_n})$. As $\mathcal{D}_{i_n}$, we analogously as above have
	$$
	1 = \sum_{n=1}^{\infty} \mu(\mathcal{D}_{i_n}) \leq \sum_{n=1}^{\infty} \nu(\mathcal{D}_{i_n})=\nu(\cup_{n=1}^{\infty}\mathcal{D}_{i_n}) \leq 1,
	$$
	thus again equality must hold in all the terms. 
	
	By (M1) and the definition of $\hat{\theta}$, $\hat{\theta}$ and $\hat{S}$ commute. As $\nu(\mathcal{D})=\mu(\mathcal{D})=\mu(\mathcal{D} \cap \mathcal{M}_1)$, and $\mathcal{D} \cap \mathcal{M}_1$ generate all Borel-measurable sets on $\mathcal{M}_1$, to show that $\hat{\theta}$ is measure-preserving, it suffices to show that for all $i \in \mathcal{I}$, 
	\begin{equation}
		\nu (\mathcal{D}_i)  = \nu(\hat{\theta}^{-1}(\mathcal{D}_i \cap \mathcal{M}_1)).  \label{r:musthold}
	\end{equation} Choose $i_1 \in \mathcal{I}$, and find a sequence $i_n \in \mathcal{I}$ so that (M4) holds. By definition of $\hat{\theta}$ in (M3), we have that $\mathcal{D}_{i_n} \subset \hat{\theta}^{-1}(\mathcal{D}_{i_n} \cap \mathcal{M}_1)$, thus $\nu(\mathcal{D}_{i_n}) \leq \nu( \hat{\theta}^{-1}(\mathcal{D}_{i_n} \cap \mathcal{M}_1))$.
	By (M1) and (M3), the sets $\hat{\theta}^{-1}(\mathcal{D}_{i_n}\cap \mathcal{M}_1)$, $n\in \mathbb{N}$ are pairwise disjoint. Analogously as above, from all of this and $\sum_{n=1}^{\infty}\nu(\mathcal{D}_{i_n})=1$ we conclude that (\ref{r:musthold}) must hold.
	
	If $\mu$ is $\hat{S}$-ergodic, we can find a $\hat{S}$-ergodic $\nu$ by Lemma \ref{l:muergodic}.	
\end{proof}

\vspace{2ex}

\centerline{II: INVARIANT SETS IN THE A-PRIORI UNSTABLE CASE}

\vspace{2ex}

\section{The homoclinic orbits} \label{s:five}

As of this section, we focus on the a-priori unstable case with the Lagrangian (\ref{d:unstable}) and $N=2$. In this section we recall the key properties of the Peierl's barrier function and stable and unstable manifolds of the invariant tori $\mathbb{T}_{\omega}$. The results of this section are standard (see \cite{Fathi,Sorrentino10} and references therein). As we were unable to find in the literature the a-priori bounds we require later, we give self-contained proofs.

For a fixed $\omega \in \mathbb{R}$, let $S^-_{\omega},S^+_{\omega}: \mathbb{R}^2 \rightarrow \infty$ be the Peierl's barrier functions defined with
\begin{align*}
S^-_{\omega}(t_0,v_0) & = \inf \left\lbrace \int_{-\infty}^{t_0} L_{\omega}(q(t),q_t(t),t)dt, \: q=(u,v)\in H^1_{\text{loc}}((-\infty,t_0])^2, q(t_0)=(\pi,v_0), \lim_{t\rightarrow -\infty }u(t)=0\right\rbrace,  \\
S^+_{\omega}(t_0,v_0) & = \inf \left\lbrace \int_{t_0}^{\infty} L_{\omega}(q(t),q_t(t),t) dt, \: q=(u,v)\in H^1_
{\text{loc}}([t_0,\infty))^2, q(t_0)=(\pi,v_0), \lim_{t\rightarrow \infty }u(t)= 2\pi  \right\rbrace.  
\end{align*}
The functions for which the minima $S^-_{\omega}(t_0,v_0)$, $S^+_{\omega}(t_0,v_0)$ are attained are the solutions of (\ref{r:EL}) and lie on unstable, respectively stable manifolds of $\mathbb{T}_{\omega}$ (see Proposition \ref{p:stable}, (i) below). We call them one-sided (left-, respectively right-hand) sided minimizers at $(\omega,t_0,v_0)$.

We first obtain a-priori bounds on $S^-_{\omega}$, $S^+_{\omega}$. We then introduce the notion and construct specific super- and sub-solutions of (\ref{r:grad}) required in this section and later, and finally construct one-sided minimizers and prove explicit a-priori bounds. In particular, we prove that there exists an absolute constant $c_2 > 0$ so that, if $q^-=(u^-,v^-):(-\infty,t_0] \rightarrow \mathbb{R}^2$, $q^+=(u^+,v^+):[t_0,\infty) \rightarrow \mathbb{R}^2$ are any one-sided minimizers, then
\begin{align}
	|u^-(t)|  \leq c_2 e^{ -\frac{1}{2}\sqrt{\ve}  |t-t_0|}, \: \text{for all } t \leq t_0, \label{r:thirdq} \hspace{30pt} 
	|u^+(t) - 2\pi|  \leq c_2 e^{-\frac{1}{2}\sqrt{\ve}  |t-t_0|}, \: \text{for all } t \geq t_0. 
\end{align}
Lastly, we prove continuity of $S^-$, $S^+$ in $\omega,t_0,v_0$ and estimate the Lipschitz constant in $\omega$. 

\begin{lemma} \label{l:Sbounds}
	For all $\omega,t_0,v_0 \in \mathbb{R}$, we have
	\begin{equation}
		4\sqrt{\ve (1 - \mu)} \leq S^-_{\omega}(t_0,v_0),S^+_{\omega}(t_0,v_0) \leq 4\sqrt{\ve (1 + \mu)}. \label{s:bound}
	\end{equation}
\end{lemma}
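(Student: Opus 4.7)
The plan is to prove the upper and lower bounds separately using an explicit test function and a pointwise AM--GM-style inequality, respectively. The argument reduces in each direction to the classical computation of the pendulum separatrix action, because the potential $V(u,v,t)=\varepsilon(1-\cos u)(1-\mu f(u,v,t))$ satisfies the uniform bracketing
\begin{equation*}
\varepsilon(1-\mu)(1-\cos u)\;\leq\; V(u,v,t)\;\leq\;\varepsilon(1+\mu)(1-\cos u)
\end{equation*}
(using $|f|\leq 1$ from (A1)). I will focus on $S^-_\omega$; the case of $S^+_\omega$ is identical after time reversal.

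For the upper bound, I would construct a test function $q^{\mathrm{test}}=(u^{\mathrm{test}},v^{\mathrm{test}})$ that (i) makes the rotator kinetic term vanish and (ii) realizes the ``worst-case'' pendulum. Take $v^{\mathrm{test}}(t)=v_0+\omega(t-t_0)$, so $(v^{\mathrm{test}}_t-\omega)^2/2\equiv 0$, and $u^{\mathrm{test}}$ the (shifted) half-separatrix of $\ddot u=\varepsilon(1+\mu)\sin u$ with $\lim_{t\to-\infty}u^{\mathrm{test}}(t)=0$ and $u^{\mathrm{test}}(t_0)=\pi$, given explicitly by $u^{\mathrm{test}}(t)=4\arctan\!\bigl(e^{\sqrt{\varepsilon(1+\mu)}(t-t_0)}\bigr)$. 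Energy conservation for this auxiliary pendulum yields $(u^{\mathrm{test}}_t)^2/2=\varepsilon(1+\mu)(1-\cos u^{\mathrm{test}})$, so the integrand of $\int_{-\infty}^{t_0} L_\omega\,dt$ is bounded pointwise by $(u^{\mathrm{test}}_t)^2$. Changing variable to $u$ (the test $u^{\mathrm{test}}$ is monotone increasing) gives
\begin{equation*}
\int_{-\infty}^{t_0}(u^{\mathrm{test}}_t)^2\,dt=\int_0^{\pi} u^{\mathrm{test}}_t\,du=\int_0^{\pi}\sqrt{2\varepsilon(1+\mu)(1-\cos u)}\,du=4\sqrt{\varepsilon(1+\mu)},
\end{equation*}
using $1-\cos u=2\sin^2(u/2)$ and $\int_0^{\pi}2\sin(u/2)\,du=4$. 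Exponential decay of $u^{\mathrm{test}}$ as $t\to-\infty$ ensures admissibility, so $S^-_\omega(t_0,v_0)\leq 4\sqrt{\varepsilon(1+\mu)}$.

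For the lower bound, I would apply AM--GM pointwise to an \emph{arbitrary} admissible $q=(u,v)$. Dropping the nonnegative term $(v_t-\omega)^2/2$ and using $V\geq\varepsilon(1-\mu)(1-\cos u)$ together with $\tfrac12 a^2+\tfrac12 b^2\geq |ab|$ with $a=u_t$ and $b=2\sqrt{\varepsilon(1-\mu)}|\sin(u/2)|$ gives
\begin{equation*}
L_\omega(q,q_t,t)\;\geq\;\tfrac12 u_t^2+\varepsilon(1-\mu)(1-\cos u)\;\geq\;2\sqrt{\varepsilon(1-\mu)}\,\bigl|u_t\sin(u/2)\bigr|.
\end{equation*}
Integrating and pulling the absolute value outside,
\begin{equation*}
\int_{-\infty}^{t_0}\bigl|u_t\sin(u/2)\bigr|\,dt\;\geq\;\Bigl|\int_{-\infty}^{t_0}u_t\sin(u/2)\,dt\Bigr|=\bigl|F(u(t_0))-F(u(-\infty))\bigr|=2,
\end{equation*}
where $F(u)=-2\cos(u/2)$ is an antiderivative of $\sin(u/2)$, and the boundary conditions $u(t_0)=\pi$, $\lim_{t\to-\infty}u(t)=0$ give $F(\pi)-F(0)=2$. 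Taking the infimum over admissible $q$ yields $S^-_\omega(t_0,v_0)\geq 4\sqrt{\varepsilon(1-\mu)}$.

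There is no real obstacle here: the only subtlety is handling the limit at $-\infty$ for the admissible competitors in the lower bound (the limit is taken in the sense of the infimum definition, and the antiderivative argument accommodates it by interpreting the integral as an improper one), and checking that the test function in the upper bound is admissible, which is immediate from the explicit exponential form. Essentially the same computation with the time interval $[t_0,\infty)$ and the boundary condition $u(\infty)=2\pi$ proves the matching bounds for $S^+_\omega$.
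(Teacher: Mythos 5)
Your proof is correct and takes essentially the same route as the paper: both reduce the bounds to the action of a pendulum separatrix after bracketing the potential by $\ve(1\mp\mu)(1-\cos u)$. The only difference is that the paper cites \cite{Arnold:64,Bessi:96} for the ``well-known'' value $4\sqrt{\ve(1\pm\mu)}$ of the pendulum action, whereas you derive it from scratch (energy conservation plus a change of variables for the upper bound, and a pointwise AM--GM plus an antiderivative argument for the lower bound), which makes your argument self-contained but not structurally different.
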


\begin{proof}
	By definition,
	\begin{equation}
		\int_{t_0}^{\infty}L_{\omega}(q,q_t,t)dt \leq \int_{t_0}^{\infty} \left( \frac{1}{2}u_t^2 + \frac{1}{2}(v_t-c)^2 + (\ve(1+\mu))(1-\cos u(t)) \right) dt. \label{r:updef}
	\end{equation}
	It is well-known \cite{Arnold:64,Bessi:96} that the right-hand side of (\ref{r:updef}) attains minimum for the separatrix solution of the pendulum equation $u^0(t)= 4 \arctg e^{\sqrt{\ve(1+\mu)} \: (t-t_0)}$, $v^0(t)=\omega(t-t_0)+v_0$, and that the value of the integral on the right-hand side of (\ref{r:updef}) is then by direct calculation $4 \sqrt{\ve(1 + \mu)}$. Analogously we deduce that for any $q \in H^1_{\text{loc}}((t_0,\infty))^2$,
	$$
	4\sqrt{ \ve(1 - \mu)} \leq \int_{t_0}^{\infty} \left( \frac{1}{2}u_t^2 + \frac{1}{2}(v_t-c)^2 + (\ve(1-\mu))(1-\cos u(t)) \right) dt \leq \int_{t_0}^{\infty}L_{\omega}(q,q_t,t)dt,
	$$
	which completes the proof for $S^+_{\omega}(t_0,v_0)$. The bounds for $S^-_{\omega}(t_0,v_0)$ are analogous.
\end{proof}

In order to obtain a-priori bounds on one-sided minimizers, we require the notion of super-, respectively sub-solutions of (\ref{r:gradu}) or (\ref{r:gradv}). We say that $q=(u,v)$ is a super-solution of (\ref{r:gradu}) on $U = (t_0,t_1) \times (s_0,s_1]$, where $-\infty \leq t_0 <t_1 \leq \infty$, $-\infty \leq s_0 <s_1 \leq \infty$, if it is continuous on $\bar U$ and for any $(s,t) \in U$, $u_s-u_{tt}+\partial_uV(u,v,t)\leq 0$. Analogously we say that $q=(u,v)$ is a sub-solution of (\ref{r:gradu}) on $U$, if $u_s-u_{tt}+\partial_uV(u,v,t)\geq 0$. We say that $q$ is a strict super-, respectively sub-solution, if strict inequalities hold. 

We say that a $z : \mathbb{R} \rightarrow \mathbb{R}$ is a stationary super-solution on $I = (t_0,t_1)$, $-\infty \leq t_0 <t_1 \leq \infty$, if it is continuous on $[t_0,t_1]$ and $C^2$ on $(t_0,t_1)$, and such that for any $v \in C^2(\mathbb{R}^2)$, and for any $t \in (t_0,t_1)$,
$$ z_{tt} - \partial_uV(z,v,t) \leq 0.$$
We see that then for any $v \in C^2(\mathbb{R})$, $(z,v)$ is a super-solution of (\ref{r:gradu}) on $(t_0,t_1) \times \mathbb{R}$, where $z$ is considered as a fixed function in $s$. Analogously we define the notions of strict stationary super-solutions, sub-solutions, and analogous notions for (\ref{r:gradv}).

The following Lemma is a special case of the parabolic maximum principle \cite{Evans:10}.

\begin{lemma} \label{l:stationary}
	Assume $z$ is a strict stationary super-solution of (\ref{r:gradu}) on $(t_0,t_1)$, $-\infty \leq t_0 <t_1 \leq \infty$, assume $q=(u,v) \in \mathcal{E}$, and let $u(t) \leq z(t)$ for all $t \in (t_0,t_1)$. Then for all $t \in (t_0,t_1)$, $u(t) < z(t)$.
	
Analogous statements hold for strict stationary sub-solutions.
\end{lemma}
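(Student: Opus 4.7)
First I would reduce the claim to a linear second-order differential inequality for the gap function $w(t) := z(t) - u(t)$, which is nonnegative on $(t_0,t_1)$ by hypothesis and of class $C^2$ there (since $u$ solves the Euler--Lagrange ODE and $z$ is $C^2$ on $(t_0,t_1)$ by definition of a stationary super-solution). Combining the strict super-solution inequality $z_{tt}(t) - \partial_u V(z(t), v(t), t) < 0$, applied with the $v$-component of $q = (u,v) \in \mathcal{E}$, with the Euler--Lagrange identity $u_{tt}(t) - \partial_u V(u(t), v(t), t) = 0$, subtracting and expanding the difference of $\partial_u V$ by the mean value theorem in the $u$-variable yields
\begin{equation*}
w_{tt}(t) < c(t)\, w(t), \qquad t \in (t_0,t_1),
\end{equation*}
where $c(t) = \int_0^1 \partial_{uu} V(u(t) + \theta w(t), v(t), t)\, d\theta$ is continuous (and bounded on compact subintervals) by the $C^2$-regularity of $V$.

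Next I would close by the standard interior-minimum test of the strong maximum principle. Suppose for contradiction there exists $t^\ast \in (t_0,t_1)$ with $w(t^\ast) = 0$. Since $w \geq 0$ throughout $(t_0,t_1)$, the point $t^\ast$ is an interior local minimum, so $w_{tt}(t^\ast) \geq 0$. On the other hand, evaluating the displayed inequality at $t^\ast$ gives $w_{tt}(t^\ast) < c(t^\ast) \cdot 0 = 0$, contradicting $w_{tt}(t^\ast) \geq 0$. Therefore $w(t) > 0$ for every $t \in (t_0,t_1)$, which is exactly the conclusion $u(t) < z(t)$.

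The analogous statement for strict stationary sub-solutions follows by applying the same argument to $\tilde z := -z$ with the corresponding sign change, or directly by running the interior-maximum variant of the test on $w := u - z$. Note that no condition at the endpoints $t_0,t_1$ is required and the interval may be unbounded, since the contradiction is extracted at a single interior point. The essential and only nontrivial ingredient is strictness of the super-solution inequality: without it, at a zero of $w$ one would get $w_{tt}(t^\ast) \leq 0$ together with $w_{tt}(t^\ast) \geq 0$, which are compatible with $w$ touching zero. This is precisely why the lemma is naturally a special case of the parabolic strong maximum principle, and there is no genuine obstacle to the proof.
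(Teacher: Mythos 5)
Your proof is correct and takes essentially the same approach as the paper: at an interior point where $u$ and $z$ coincide, the interior-minimum condition on $z-u$ forces $z_{tt}\geq u_{tt}$, while the strict super-solution inequality (with the potential terms cancelling since $u=z$ there) forces $z_{tt}<u_{tt}$. The mean-value-theorem step and the coefficient $c(t)$ are harmless extra machinery, since at the contradiction point $w(t^\ast)=0$ and that term drops out anyway.
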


\begin{proof}
	Assume the contrary and find $t_2 \in (t_0,t_1)$ such that $u(t_2)=z(t_2)$. Direct calculation yields that 
	$$
	u_{tt}(t_2)-\partial_u V(u(t_2),v(t_2),t_2) \leq z_{tt}(t_2) - \partial_u V(z(t_2),v(t_2),t_2) < 0,
	$$
	which is in contradiction to $q \in \mathcal{E}$.
\end{proof}

\begin{lemma} \label{l:subsuper}
	There exist $z^- : (-\infty, 3/(4\sqrt{\ve})] \rightarrow \mathbb{R}$ and $z^+ : [-3/(4\sqrt{\ve}),\infty) \rightarrow \mathbb{R}$, depending only on $\ve$, $\mu$, satisfying for all $t$ in the domain of definition:

	(i) $0 < z^-(t) < 3 \pi /2 $, $ \pi/2 < z^+(t) < 2\pi$, both are continuous and $C^2$ in the interior of the domain,
	
	(ii) $z^-(0)=z^+(0)=\pi$,
	
	(iii) $z^-$ is a strict stationary super-solution on $(-\infty, 3/(4\sqrt{\ve}))$ of (\ref{r:gradu}), and $z^+$ is a strict stationary sub-solution (\ref{r:gradu}) on $(-3/(4\sqrt{\ve}),\infty)$. Furthermore, for any constant $T \geq 0$, $z^-(t+T)$ and $z^+(t-T)$ are strict stationary super-, resp. sub-solutions in the interior of their domain of definition.
	
	(iv) $z^-$, $z^+$ are strictly increasing and we have
	\begin{equation}
	 \sqrt{\ve}/2 < z^-_t(t),z^+_t(t) \quad \text{for all } t \in (-1/(4\sqrt{\ve}),1/(4\sqrt{\ve})).
	\end{equation}
	
	(v) For all $t \in [1/(4\sqrt{\ve}), 3/(4\sqrt{\ve})]$, $z^-(t) \leq \pi + 1/4$ and $z^+(-t) \geq \pi- 1/4$,
	
	(vi) There exists an absolute constant $c_2 > 0$ such that for all $t$ in the domains of definition,
	\begin{align}
	|z^-(t)|  \leq c_2 e^{ -\frac{1}{2}\sqrt{\ve} |t|},   \hspace{30pt} 
	|z^+(t) - 2\pi|  \leq c_2 e^{-\frac{1}{2}\sqrt{\ve}  |t|}.  \label{r:firstz}
	\end{align}
	
	(vii) There exists an absolute constant $c_3 > 0$ such that
	\begin{align}
	|z^-(t)-u^{(\ve)}(t)| \leq c_3 \sqrt{\ve \mu}, \: t \leq 0, \hspace{30pt} |z^+(t)-u^{(\ve)}(t)| \leq c_3 \sqrt{\ve \mu}, \: t \geq 0, \label{r:z-prec}
	\end{align}
    where $u^{(\ve)}(t) = 4 \arctg e^{\sqrt{\ve}t}$ is the separatrix solution in the case $\mu= 0$.
\end{lemma}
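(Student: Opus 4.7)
The plan is to construct $z^-$ (with $z^+$ obtained analogously via the reflection $t \mapsto -t$, $z \mapsto 2\pi - z$) as a piecewise function glued at $t = 0$: a scaled separatrix of a modified pendulum on $(-\infty, 0]$, and an oscillatory orbit of a further-modified pendulum on $[0, 3/(4\sqrt{\ve})]$. The strict stationary super-solution property (iii), namely $z^-_{tt} < I(z^-) := \inf_{v,t} V_u(z^-, v, t)$, is verified on each piece by direct comparison, using that $f \in C^{4+\gamma}(\mathbb{R}^3)$ ensures $\|f\|_\infty$, $\|f_u\|_\infty$, $\|f_v\|_\infty$ are all finite.

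For $t \leq 0$ I take $z^-(t) := 4 \arctg(e^{\alpha t})$, which satisfies $z^-_{tt} = \alpha^2 \sin z^-$, $z^-(0) = \pi$ and $z^-_t(0) = 2\alpha$. Setting $\alpha := \sqrt{\ve}/2$ delivers the velocity bound in (iv) with $z^-_t(0) = \sqrt{\ve}$, the asymptotic $z^-(t) \sim 4 e^{\alpha t}$ as $t \to -\infty$ yields (vi) with an absolute constant $c_2$, and the strict inequality $\alpha^2 \sin z^- < I(z^-)$ on $(0, \pi)$ reduces to $\ve/4 < \ve(1 - O(\mu)) - O(\ve \mu \|f_u\|_\infty) \tan(z^-/2)$, which holds with a uniform margin on compact subintervals; the residual degeneracy at $z^- = \pi$ will be closed by the construction of the right piece.

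For $t \in [0, 3/(4\sqrt{\ve})]$ I continue $z^-$ as the solution of the autonomous ODE $\zeta_{tt} = C\ve \sin\zeta - B\ve\mu$ for absolute constants $C > 1$ and $B > 2\|f_u\|_\infty$, with initial data $(\pi, \sqrt{\ve})$. The constant $C$ is tuned so that the energy integral $\tfrac{1}{2}\zeta_t^2 + C\ve\cos\zeta + B\ve\mu\zeta = H$ yields a turning point at $\zeta_* = \pi + 1/4 + O(\mu)$, giving property (v) once we use $16\mu \leq \ve$ from (A2); an explicit computation fixes $C \approx 16$ to leading order. The strict inequality $C\ve\sin\zeta - B\ve\mu < I(\zeta)$ on $(\pi, \pi + 1/4]$ holds because both sides are negative with the left strictly more negative (by $C > 1$) and the $B\ve\mu$ term dominates the $O(\ve\mu\|f_u\|_\infty)$ contribution to $I$. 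Properties (i), (ii), (iv) on the small interval, and strict monotonicity on the oscillatory branch, are then immediate from the explicit trajectory.

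The main obstacle is ensuring the strict super-solution inequality through the matching at $t = 0$, where the second derivatives disagree by $-B\ve\mu$ and $I(\pi) = O(\ve\mu)$ may itself be negative. I resolve this by introducing a $t$-dependent modulation in the left piece over $[-c/\sqrt{\ve}, 0]$ that gradually adds the $-B\ve\mu$ downward acceleration, using the exponential smallness of $z^-$ at $-\infty$ to keep the correction harmless for the asymptotics; the margin $\alpha^2 = \ve/4 < \ve$ on the left and the choice $B > 2\|f_u\|_\infty$ on the right ensure that the strict inequality persists throughout the transition, with (A2)'s gap $16\mu \leq \ve$ quantifying all perturbations. Finally, (vii) is obtained by writing $\delta := z^- - u^{(\ve)}$: the equations differ by $O(\ve\mu)$ on each piece and $\delta \to 0$ at $t = -\infty$, so an elementary Gronwall estimate against the separatrix's exponential kernel yields $|\delta| \ll \sqrt{\ve\mu}$ with an absolute constant $c_3$.
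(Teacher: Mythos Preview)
Your construction has a genuine gap in property (vii). Taking $\alpha = \sqrt{\ve}/2$ on the left piece gives $z^-(t) = 4\arctg(e^{\sqrt{\ve}\,t/2})$, whereas $u^{(\ve)}(t) = 4\arctg(e^{\sqrt{\ve}\,t})$. These are separatrices of pendula with different frequencies, and their difference is \emph{not} small in $\mu$: for instance at $t=-1/\sqrt{\ve}$ one gets $4\arctg(e^{-1/2})-4\arctg(e^{-1}) \approx 0.77$, an $O(1)$ discrepancy independent of $\mu$. Your Gronwall argument for (vii) asserts that ``the equations differ by $O(\ve\mu)$ on each piece'', but on the left piece the equations are $z^-_{tt}=(\ve/4)\sin z^-$ versus $u^{(\ve)}_{tt}=\ve\sin u^{(\ve)}$, which differ by $(3\ve/4)\sin(\cdot)$, an $O(\ve)$ term. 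No amount of smoothing near $t=0$ repairs this: the damage is in the asymptotic profile as $t\to-\infty$.

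The paper's construction avoids this by taking the separatrix parameter $\ve(1-2\sqrt{\mu})$, i.e.\ $\alpha$ within $O(\sqrt{\ve\mu})$ of $\sqrt{\ve}$, so that the base profile already satisfies (vii). The price is that the sub/super-solution margin is only $O(\ve\sqrt{\mu})$ rather than $3\ve/4$, and near $u=\pi$ (where $\sin u\to 0$ but $1-\cos u\to 2$) this margin is eaten by the $\ve\mu(1-\cos u)$ term. The paper fixes that not by switching to another ODE but by adding an explicit cubic $\ve^{3/2}\mu^{1/2}(t_1-t)^3$ on a short interval near $\pi$, which supplies just enough extra convexity while perturbing the profile by $O(\sqrt{\mu})$ only. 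A secondary issue with your proposal: choosing $B>2\|f_u\|_\infty$ makes $z^\pm$ depend on $f$, contradicting the requirement that they depend only on $\ve,\mu$.
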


An explicit construction of $z^-,z^+$ and the proof of Lemma \ref{l:subsuper} is given in the Appendix B.

\begin{proposition} \label{p:stable}
	Let $(\omega,t_0,v_0)\in \mathbb{R}^3$. Then there exist one-sided minimizers $q^- : (\infty,t_0] \rightarrow \mathbb{R}$, $q^+: [t_0,\infty) \rightarrow \mathbb{R}$, for which $S^-_{\omega}(t_0,v_0)$, $S^+_{\omega}(t_0,v_0)$  
	attain their minimal value.
	
	Furthermore, any such one-sided minimizers $q^-=(u^-,v^-)$, $q^+=(u^+,v^+)$ at $(\omega,t_0,v_0)$ satisfy the following:
	
	(i) They are $C^4$ and solutions of the Euler-Lagrange equations on $(-\infty,t_0)$, $(t_0,\infty)$ respectively, 
	
	(ii) For all $0 \leq T \leq 3/(4\sqrt{\ve})$,
	\begin{align}
	0 & < u^-(t) \leq z^-(t-t_0+T)  & \text{for all } t \leq t_0, \label{r:firstq} \\
	z^+(t-t_0-T) & \leq u^+(t) < 2 \pi & \text{for all } t \geq t_0. \label{r:secondq}
	 \end{align}
\end{proposition}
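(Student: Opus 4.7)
The plan is to prove existence of $q^\pm$ by the direct method of the calculus of variations (carefully handling the non-compact endpoint at $\pm\infty$), derive regularity by ODE bootstrapping, and obtain the pointwise bounds by a comparison-and-replacement argument against the strict stationary super-solution $z^-$ from Lemma~\ref{l:subsuper}. I describe the argument for $q^-$; the case of $q^+$ is symmetric.

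For existence, take a minimizing sequence $q^{(n)}=(u^{(n)},v^{(n)})$ for $S^-_\omega(t_0,v_0)$. Lemma~\ref{l:Sbounds} bounds the infimum by $4\sqrt{\varepsilon(1+\mu)}$, so the actions may be taken uniformly bounded. Since every term in $L_\omega=\tfrac12 u_t^2+\tfrac12(v_t-\omega)^2+V$ is nonnegative, this yields uniform $L^2$-bounds on $u^{(n)}_t$ and $v^{(n)}_t-\omega$ on $(-\infty,t_0]$. Combined with the pointwise constraint at $t_0$, Rellich--Kondrachov plus a diagonal extraction produce a subsequence converging weakly in $H^1_{\mathrm{loc}}$ and strongly in $C^0_{\mathrm{loc}}$ to some $q^-$. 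Lower semicontinuity of the quadratic part and Fatou's lemma for the nonnegative potential imply $q^-$ attains the infimum. To secure $\lim_{t\to -\infty}u^-(t)=0$, I would combine $V\geq \tfrac12\varepsilon(1-\cos u)$ with the uniform action bound (localizing $u^{(n)}$ near $2\pi\mathbb{Z}$ outside a set of bounded total measure), H\"older continuity coming from the $L^2$-bound on $u^{(n)}_t$, and the boundary condition $u^{(n)}(-\infty)=0$, then pass to the limit. Part~(i) then follows by standard variational arguments: compactly supported variations give the Euler--Lagrange equations, and ODE bootstrapping using $V\in C^{4+\gamma}$ gives $q^-\in C^4$ on $(-\infty,t_0)$.

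For the bounds in~(ii), fix $T\in[0,3/(4\sqrt{\varepsilon})]$ and set $z_T(t):=z^-(t-t_0+T)$. By Lemma~\ref{l:subsuper}, $z_T$ is a strict stationary super-solution of~(\ref{r:gradu}) on the interior of its domain, with $z_T(t_0)=z^-(T)\geq\pi=u^-(t_0)$ and $z_T(t)\to 0$ as $t\to -\infty$. Suppose by contradiction that $E=\{t\leq t_0:u^-(t)>z_T(t)\}$ is non-empty, and let $(a,b)\subset E$ be a connected component with $u^-=z_T$ at $\partial(a,b)$ (or at the appropriate limit if $a=-\infty$). Replacing $u^-$ on $(a,b)$ by $\tilde u=\min(u^-,z_T)$ yields an admissible competitor with identical boundary values. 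Writing $w=u^--z_T\geq 0$ on $(a,b)$ and integrating by parts on the cross-term (boundary terms vanish, using the exponential decay from Lemma~\ref{l:subsuper}(vi) if $a=-\infty$), the action difference equals
\[
\int_a^b\left[(V_u(z_T,v^-,t)-z_{T,tt})\,w+\tfrac12 w_t^2+\int_0^w\bigl(V_u(z_T+s,v^-,t)-V_u(z_T,v^-,t)\bigr)\,ds\right]dt.
\]
The first bracket is strictly positive by the strict super-solution property, the middle term is strictly positive unless $w\equiv 0$, and the remainder, bounded in absolute value by $\tfrac12\varepsilon(1+O(\mu))\,w^2$ via $\sup|V_{uu}|\leq\varepsilon(1+O(\mu))$, is absorbed using the quantitative strictness of $z^-$ built into the explicit construction of Lemma~\ref{l:subsuper}. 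The result is a strict decrease of the action, contradicting minimality. Strict positivity $u^->0$ on $(-\infty,t_0]$ is argued separately: if $u^-(t^*)=0$ at some $t^*\leq t_0$, the glued competitor $\hat u\equiv 0$, $\hat v=v^-(t^*)+\omega(t-t^*)$ on $(-\infty,t^*]$ joined to $(u^-,v^-)$ on $[t^*,t_0]$ has strictly smaller action unless $u^-\equiv 0$ on $(-\infty,t^*]$; in that case ODE uniqueness through $(u,u_t)=(0,0)$ forces $u^-\equiv 0$ globally, contradicting $u^-(t_0)=\pi$.

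The main obstacle is the quantitative comparison step: the positive lower bound on $V_u(z_T)-z_{T,tt}$ coming from strict super-solution-ness must dominate the non-convexity contribution of order $\varepsilon w^2$, where $w$ can be of order one across $(a,b)$. This forces essential use of the explicit construction of $z^\pm$ from Appendix~B---particularly bounds (vi)--(vii) of Lemma~\ref{l:subsuper} pinning $z^\pm$ close to the unperturbed separatrix---to secure uniform and quantitatively sufficient strictness, together with careful handling of the boundary contributions when a bad component extends to $-\infty$ via the exponential decay of $z^-$ and $z^-_t$.
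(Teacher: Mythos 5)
Your approach to part~(ii) is genuinely different from the paper's and contains a gap that you yourself flag but do not resolve, and that I do not believe can be patched along the lines you suggest.

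\emph{Where the argument breaks.} You replace $u^-$ by $\min(u^-,z_T)$ on a component $(a,b)$ of $\{u^- > z_T\}$ and expand the action difference as
\[
\int_a^b\left[(V_u(z_T,v^-,t)-z_{T,tt})\,w+\tfrac12 w_t^2+\int_0^w\bigl(V_u(z_T+s,v^-,t)-V_u(z_T,v^-,t)\bigr)\,ds\right]dt,
\]
and you need the sum to be strictly positive. The difficulty is that the strictness margin $V_u(z_T)-z_{T,tt}$ supplied by the explicit construction of $z^-$ (Appendix~B) is of size $O(\varepsilon\sqrt{\mu})$ --- it comes from the frequency shift $\sqrt{\varepsilon(1-2\mu^{1/2})}$ plus the cubic correction, and the proof of Lemma~\ref{l:construction}(iv) only establishes $\mathcal{F}(w)>0$ with a lower bound of that order --- while the non-convexity term can be as negative as $\approx -\varepsilon(1-\cos w)$, which is $O(\varepsilon)$ once $w$ is of order one and $z_T+s$ passes through the concavity region near $u\approx\pi$. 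Since under assumption~(A2) one has $\mu\le\varepsilon/16$, there is no regime in which $\varepsilon\sqrt{\mu}$ dominates $\varepsilon$, so the absorption you invoke simply does not hold. (In the limiting case $\mu=0$, which (A2) permits, $z^\pm$ coincide with the unperturbed separatrix and the margin vanishes identically, making the replacement argument vacuous.) The kinetic Poincar\'{e} gain $\tfrac12\int w_t^2$ does not save you either, since the interval on which $z_T$ is near $\pi$ has length $\sim 1/\sqrt{\varepsilon}$ and $w$ can vary slowly there.

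\emph{What the paper does instead.} The paper never compares actions directly. It first uses the Tonelli theorem on finite truncations $[t_0,t_0+k]$ to obtain $H^2$-minimizers $q_k$ that solve the Euler--Lagrange ODE on the open interval; it then proves the comparison $u_k(t)\ge z^+(t-t_0-T)$ by the ODE (elliptic) maximum principle of Lemma~\ref{l:stationary} via a sliding-$T$ argument: the inequality is trivial for $T=k+3/(4\sqrt{\varepsilon})$, and at the infimal $T^*$ tangency plus strict super/sub-solution-ness gives a contradiction. This argument is purely local and pointwise --- it needs only $\mathcal{F}(w)>0$ at each point, not a uniform positive lower bound --- and the finite truncation ensures the sliding procedure terminates at $T^*=3/(4\sqrt{\varepsilon})$ because the endpoint data $u_k(t_0)=\pi$, $u_k(t_0+k)=2\pi$ is explicit. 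Proposition~\ref{p:stable} then follows by a diagonal limit. If you want to keep your direct-method existence proof, the fix for~(ii) is to abandon the replacement computation, establish (i) first, and run the sliding-$T$ maximum-principle comparison on a family of growing compact intervals as the paper does; attempting the comparison on $(-\infty,t_0]$ in one shot is also delicate because $u^-$ and $z_T$ share the same asymptotic value $0$ at $-\infty$ and the tangency analysis there requires quantitative decay rates you have not yet established.

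One smaller remark: your existence sketch on the unbounded domain must guard against the minimizing sequence drifting to a different branch $u\to 2k\pi$ at $-\infty$; your appeal to the action cost of an extra jump is the right idea, but notice the paper sidesteps this entirely by prescribing both endpoint values on compact intervals and only then letting $k\to\infty$.
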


The proof is in the Appendix B. (Existence and (i) are a consequence of the Tonelli theorem \cite[Appendix 1]{Mather:91}, and the a-priori bounds follow from Lemma \ref{l:stationary} applied to $z^-$, $z^+$ constructed in Lemma \ref{l:subsuper}.)

Combining (\ref{r:firstz}), (\ref{r:firstq}) and (\ref{r:secondq}) with $T=0$ we get:

\begin{corollary}
	Any one-sided minimizers $q^-$, $q^+$ at $(\omega,t_0,v_0)$ satisfy (\ref{r:thirdq}).
\end{corollary}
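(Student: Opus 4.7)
The corollary is essentially immediate from the two results the author has already assembled, so the plan is simply to combine them in the right order with $T=0$ and separate the two cases $u^-$ (left minimizer) and $u^+$ (right minimizer).

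First I would set $T=0$ in Proposition \ref{p:stable}(ii). For a left one-sided minimizer $q^-=(u^-,v^-):(-\infty,t_0]\to\mathbb{R}^2$, inequality (\ref{r:firstq}) gives
\[
0 < u^-(t) \leq z^-(t-t_0) \qquad \text{for all } t \leq t_0,
\]
and this is valid because $t-t_0 \leq 0 < 3/(4\sqrt{\ve})$, i.e.\ $t-t_0$ lies inside the domain of definition of $z^-$. Now I apply the exponential decay estimate (\ref{r:firstz}) from Lemma \ref{l:subsuper}(vi), which gives $z^-(t-t_0) \leq c_2 e^{-\frac{1}{2}\sqrt{\ve}|t-t_0|}$. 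Combining these two bounds and using positivity of $u^-$ yields
\[
|u^-(t)| = u^-(t) \leq c_2 e^{-\frac{1}{2}\sqrt{\ve}|t-t_0|}, \qquad t\leq t_0,
\]
which is the first half of (\ref{r:thirdq}).

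Next I would do the symmetric argument for a right one-sided minimizer $q^+=(u^+,v^+):[t_0,\infty)\to\mathbb{R}^2$. Here (\ref{r:secondq}) with $T=0$ gives $z^+(t-t_0) \leq u^+(t) < 2\pi$ for all $t\geq t_0$, and $t-t_0\geq 0 > -3/(4\sqrt{\ve})$ lies in the domain of $z^+$. The estimate (\ref{r:firstz}) then gives $2\pi - z^+(t-t_0) \leq c_2 e^{-\frac{1}{2}\sqrt{\ve}|t-t_0|}$, so
\[
|u^+(t)-2\pi| = 2\pi - u^+(t) \leq 2\pi - z^+(t-t_0) \leq c_2 e^{-\frac{1}{2}\sqrt{\ve}|t-t_0|}, \qquad t\geq t_0,
\]
establishing the second half of (\ref{r:thirdq}).

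There is no genuine obstacle here: the whole content of the corollary has been packaged into Lemma \ref{l:subsuper} (the construction of the exponentially decaying super/sub-solutions $z^\pm$) and Proposition \ref{p:stable} (the maximum-principle-based squeeze of $u^\pm$ between $0$ or $2\pi$ and $z^\pm$). The hard work was the explicit construction of $z^\pm$ and verifying they are strict stationary super/sub-solutions; once that is in hand, the corollary is a one-line combination. I would simply state the two chains of inequalities above, noting that both arguments use only the $T=0$ specialization.
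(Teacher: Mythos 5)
Your proof is correct and is exactly what the paper does: the paper's entire proof is the single sentence ``Combining (\ref{r:firstz}), (\ref{r:firstq}) and (\ref{r:secondq}) with $T=0$ we get,'' and you have simply unpacked that sentence into the two explicit chains of inequalities, correctly noting that $u^->0$ and $u^+<2\pi$ (and $0<z^-$, $z^+<2\pi$) let you drop the absolute values in the right direction.
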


We finally deduce the Lipschitz constant for $S^-$ and $S^+$ in the variable $\omega$.
	
\begin{corollary} \label{l:continuity}
The functions $S^-$, $S^+$ are continuous in $t_0,v_0,\omega$. Furthermore, there exists an absolute constant $c_4 \geq 1$ such that for any $(t_0,v_0)\in \mathbb{R}^2$,
\begin{equation}
|S_{\omega}^-(t_0,v_0)-S_{\tilde{\omega}}^-(t_0,v_0)| \leq c_4 \mu |\tilde{\omega}-\omega|,   \hspace{5ex}
|S_{\omega}^+(t_0,v_0)-S_{\tilde{\omega}}^+(t_0,v_0)| \leq c_4 \mu |\tilde{\omega}-\omega|.  \label{r:sminus}
\end{equation}
\end{corollary}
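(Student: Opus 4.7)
\bigskip

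The plan is to prove the Lipschitz estimate in $\omega$ by a simple test-function argument, and then derive continuity in $(t_0,v_0,\omega)$ from analogous variations. The key observation is that when $\mu=0$, the potential $V$ is independent of $v$, so the Peierl's barriers are $\omega$-independent; thus the entire $\omega$-dependence lives in the $\mu f(u,v,t)$ correction, which immediately suggests why the Lipschitz constant carries the factor $\mu$.

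For the Lipschitz bound on $S^-_\omega$, fix $(t_0,v_0)$ and $\omega,\tilde\omega\in\mathbb{R}$. By Proposition~\ref{p:stable}, pick a one-sided minimizer $q^-=(u^-,v^-)$ attaining $S^-_\omega(t_0,v_0)$. Define the test function
\[
\tilde q^-(t) = \bigl(u^-(t),\: v^-(t)+(\tilde\omega-\omega)(t-t_0)\bigr), \qquad t\leq t_0.
\]
Then $\tilde q^-(t_0)=(\pi,v_0)$, $u^-(t)\to 0$, and $\tilde v^-_t\to\tilde\omega$ as $t\to-\infty$, so $\tilde q^-$ is admissible for $S^-_{\tilde\omega}(t_0,v_0)$. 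Moreover $\tilde v^-_t-\tilde\omega=v^-_t-\omega$, so the kinetic piece of $L_\omega-L_{\tilde\omega}$ cancels and only the potential contributes:
\[
S^-_{\tilde\omega}(t_0,v_0) - S^-_\omega(t_0,v_0) \leq \int_{-\infty}^{t_0} \bigl[V(u^-,\tilde v^-,t)-V(u^-,v^-,t)\bigr]\,dt.
\]
Using $V(u,v,t)=\varepsilon(1-\cos u)(1-\mu f(u,v,t))$ with $|f_v|\leq 1$, the integrand is bounded pointwise by $\varepsilon\mu(1-\cos u^-(t))\,|\tilde\omega-\omega|\,|t-t_0|$. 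Combining $1-\cos u^-\leq \tfrac12(u^-)^2$ with the a-priori decay $|u^-(t)|\leq c_2 e^{-\sqrt\varepsilon|t-t_0|/2}$ from \eqref{r:thirdq} and changing variables $s=\sqrt\varepsilon(t_0-t)$ yields
\[
\int_{-\infty}^{t_0} \varepsilon(1-\cos u^-(t))\,|t-t_0|\,dt \leq \frac{c_2^2}{2}\int_0^\infty s e^{-s}\,ds = \frac{c_2^2}{2},
\]
so $S^-_{\tilde\omega}-S^-_\omega \leq \tfrac{c_2^2}{2}\mu|\tilde\omega-\omega|$. Swapping the roles of $\omega$ and $\tilde\omega$ gives the two-sided bound with $c_4 := c_2^2\vee 1$. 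The argument for $S^+_\omega$ is identical after using \eqref{r:thirdq} for $u^+$ on $[t_0,\infty)$.

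For continuity in $v_0$, apply the same idea with the test function $\tilde q^-(t)=(u^-(t),v^-(t)+\delta)$ for $S^-_\omega(t_0,v_0+\delta)$: the kinetic terms are unchanged and $|V(u^-,v^-+\delta,t)-V(u^-,v^-,t)|\leq \varepsilon\mu(1-\cos u^-(t))|\delta|$, which is integrable by the same exponential decay, giving a bound $O(\mu|\delta|/\sqrt\varepsilon)$. For continuity in $t_0$, use the time-shifted test function $\tilde q^-(t)=q^-(t-\delta)$ for $S^-_\omega(t_0+\delta,v_0)$; the difference reduces to $\int_{-\infty}^{t_0}[V(u^-(s),v^-(s),s+\delta)-V(u^-(s),v^-(s),s)]\,ds$, and since $V_t=-\varepsilon\mu(1-\cos u)f_t$ with $f_t$ bounded by (A1), the integrand is again controlled by $\varepsilon\mu(1-\cos u^-)\|f_t\|_\infty|\delta|$, which is integrable. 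Swapping roles yields two-sided continuity in all three variables.

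The only technical point worth care is ensuring that the swap argument actually yields a two-sided bound: given a minimizer $\tilde q^-$ for $(\tilde\omega,t_0,v_0)$ one constructs the reverse test function $(\tilde u^-,\tilde v^--(\tilde\omega-\omega)(t-t_0))$, which is admissible for $S^-_\omega(t_0,v_0)$, and the same computation applies with $u^-$ replaced by $\tilde u^-$, to which the a-priori decay \eqref{r:thirdq} applies uniformly. I do not expect real obstacles; the entire argument is a direct application of the exponential decay of one-sided minimizers established in Proposition~\ref{p:stable}.
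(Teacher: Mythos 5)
Your proof is correct and takes essentially the same approach as the paper: fix a one-sided minimizer for one value of $\omega$, use the shifted test function $(u,v+(\tilde\omega-\omega)(t-t_0))$ so the kinetic term cancels, bound the remaining potential difference via $|f_v|\leq 1$ and the exponential decay estimate (\ref{r:thirdq}), then swap roles for the two-sided bound. The paper carries out the computation for $S^+$ and leaves the $t_0,v_0$-continuity as ``follows similarly''; you work with $S^-$ and fill in those remaining details, but the argument is the same.
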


\begin{proof}
We fix first $(t_0,v_0)$ and show that the Lipschitz constant of $S^+_{\omega}(t_0,v_0)$ in $\omega$. Choose $\omega,\tilde{\omega} \in \mathbb{R}$, and let $q=(u,v)$ be a right-hand sided minimizer constructed in Proposition \ref{p:stable} at $(\omega,t_0,v_0)$ respectively. Define
$$
\tilde{q}(t)=(\tilde{u}(t),\tilde{v}(t)):=(u(t),v(t) + (\tilde{\omega}-\omega)(t-t_0)),
$$
defined for $t \in [t_0,\infty).$ By definition of $q,\tilde{q}$, by applying $1-\cos u \leq (u-2\pi)^2/2$, (\ref{r:thirdq}) and the standing assumption (A2), we get
\begin{align*}
	S^+_{\tilde{\omega}}(t_0,v_0) & \leq 
	\int_{t_0}^{\infty} L_{\tilde{\omega}}(\tilde{q}(t),\tilde{q}_t(t),t)dt = \int_{t_0}^{\infty} \left( \frac{1}{2} u^2_t + \frac{1}{2}(v_t-\omega)^2 + V(\tilde{u},\tilde{v},t) \right) dt \\
	& = S^+_{\omega}(t_0,v_0) + \int_{t_0}^{\infty} (V(\tilde{u},\tilde{v},t)-V(u,v,t))dt \\
	& \leq S^+_{\omega}(t_0,v_0) + \ve \mu \int_{t_0}^{\infty} \left\lbrace 1-\cos(u(t))\right\rbrace \left\lbrace |\sup_{a \in [v(t),\tilde{v}(t)]} f_v(u(t),a,t)| |\tilde{\omega}-\omega|(t-t_0) \right\rbrace dt  \\
	& \leq  S^+_{\omega}(t_0,v_0) + c_4\ve \mu \int_{t_0}^{\infty} e^{-\sqrt{\ve}(t-t_0)}|\tilde{\omega}-\omega|(t-t_0)dt  \leq  S^+_{\omega}(t_0,v_0) + c_4 \mu |\tilde{\omega}-\omega|, 
\end{align*}
where $c_4$ (chosen to be $\geq 1$) is an absolute constant. The other inequalities in (\ref{r:sminus}) are proved analogously. Continuity in $t_0,v_0$ is follows similarly from the definitions of $S^-$, $S^+$.
\end{proof}

\section{The heteroclinic orbits and the region of instability} \label{s:hetero}

We discuss first the notion of a region of instability defined in the Introduction. We then recall the fact that, if $\omega,\tilde{\omega}$ are sufficiently close and in the same region of instability, then there exists a heteroclinic orbit connecting the tori $\mathbb{T}_{\omega}$ and $\mathbb{T}_{\tilde{\omega}}$. We also establish a-priori bounds on heteroclinic orbits, and show that the set of heteroclinic orbits is compact in $H^2_{\text{loc}}(\mathbb{R})^2$. As we were unable to find in the literature the a-priori bounds we need later, we give self-contained proofs.

We can write the function $S_{\omega}$ defined in the Introduction as $S_{\omega}(t,v)=S_{\omega}^-(t,v)+S_{\omega}^+(t,v)$.
\begin{defn} \label{d:instability}
	The region of instability is a connected component of the set of non-degenerate $\omega \in \mathbb{R}$, where $\omega$ is non-degenerate if every connected component of the set of global minima of $S_{\omega}$ in $\mathbb{R}^2$ is bounded. 
\end{defn}
One can easily show as a consequence of Corollary \ref{l:continuity} that a region of instability is open. We do not require it here, as we take (S1) as the standing assumption. 
Thus for every global minimum $(t_0,v_0)$ of $S_{\omega}$ we can find a closed, bounded set $\mathcal{N}(t_0,v_0) \subset \mathbb{R}^2$, $(t_0,v_0)\in \mathcal{N}(t_0,v_0)$, such that for each $(t_1,v_1) \in \partial \mathcal{N}(t_0,v_0)$, (\ref{r:DDelta0}) holds, and such that there is a constant $R \geq \sup \lbrace |x-y|, \: x,y \in \mathcal{N}(t_0,v_0) \rbrace$ satisfying (\ref{r:Rcond}). 

\begin{remark} \label{rem:instability} By continuity and periodicity of $S_{\omega}$, if $[{\omega}^-,{\omega}^+]$ is a segment in a region of instability, we can always find $\Delta_0>0$, $R>0$ satisfying (\ref{r:DDelta0}), uniform over $\omega \in [{\omega}^-,{\omega}^+]$, and uniform over $(t_0,v_0)$ which are global minimizers of $S_{\omega}$. The proof is analogous to the argument used in Lemma \ref{l:compact}, and omitted as not needed in the following.
\end{remark}	 

Let $[\omega^-,\omega^+]$ be an interval in the same region of instability satisfying (S1), and let $\varpi=\max \lbrace \omega^-,\omega^+, 1 \rbrace$. We fix $\Delta_0$, $R$ associated to $[\omega^-,\omega^+]$ from now on. 
We define the action $\mathcal{L}_{\omega,\tilde{\omega}}: H^1_{\text{loc}}(\mathbb{R})^2 \rightarrow \mathbb{R} \cup \lbrace \infty \rbrace$ and the minimal action $\Sigma_{\omega,\tilde{\omega}}: \mathbb{R}^2 \rightarrow \mathbb{R}$ along a trajectory of the heteroclinic orbit as:
\begin{align}
 \mathcal{L}_{\omega,\tilde{\omega}}(q) & =\int_{-\infty}^0 L_{\omega}(q,q_t,t)dt + \int_0^{\infty}L_{\tilde{\omega}}(q,q_t,t)dt + (\tilde{\omega}-\omega)v(0), \label{d:Lhetero} \\
\Sigma_{\omega,\tilde{\omega}}(t_0,v_0)& :=S^-_{\omega}(t_0,v_0)+S^+_{\tilde{\omega}}(t_0,v_0)+(\tilde{\omega}-\omega)v_0+\frac{1}{2}(\omega^2-\tilde{\omega}^2)t_0. \label{d:sigma}
\end{align}
The main result of the section is:
\begin{proposition} \label{p:hetero} Assume that $\omega,\tilde{\omega} \in [\omega^-,\omega^+]$ satisfy
	\begin{equation}
	|\omega-\tilde{\omega}| \leq \frac{\Delta_0}{4 c_4 (R \vee \mu) \cdot \varpi}. \label{r:assumec}
	\end{equation}
\noindent (i) There exist $q=(u,v) \in \mathcal{E}$ and $(t_0,v_0)\in [0,2\pi)^2$ such that $q(t_0)= (\pi,v_0)$, and such that $q|_{t\leq t_0}$ and $q|_{t\geq t_0}$ are one-sided minimizers at $(\omega,t_0,v_0)$, respectively $(\tilde{\omega},t_0,v_0)$.
\vspace{1ex}

\noindent (ii) Furthermore, there exists a closed, bounded set $\mathcal{N}_q \subset \mathbb{R}^2$ containing $(t_0,v_0)$, of radius at most $R$, such that for any $(t_1,v_1) \in \partial \mathcal{N}_q$, 
\begin{equation}
\Sigma_{\omega,\tilde{\omega}}(t_1,v_1)-\Sigma_{\omega,\tilde{\omega}}(t_0,v_0) \geq 2\Delta_0 >0. \label{r:sigma}
\end{equation}
\end{proposition}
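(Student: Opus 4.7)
The strategy is to first locate an interior minimum of $\Sigma_{\omega,\tilde\omega}$ inside the neighborhood $\mathcal{N}$ supplied by (S1), and then use this minimum both to establish (ii) and to anchor the construction of the heteroclinic in (i). The starting observation is that $\Sigma_{\omega,\omega}=S_\omega$, so (S1) provides a global minimizer $(t_0^\ast,v_0^\ast)\in[0,2\pi)^2$ of $S_\omega$ together with a closed neighborhood $\mathcal{N}$ of diameter at most $R$ on whose boundary $S_\omega$ exceeds its minimum by at least $3\Delta_0$. For $\tilde\omega$ close to $\omega$, I would control the difference $\Sigma_{\omega,\tilde\omega}-S_\omega=(S^+_{\tilde\omega}-S^+_\omega)+(\tilde\omega-\omega)v_0+\tfrac12(\omega^2-\tilde\omega^2)t_0$ on $\mathcal{N}$. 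Corollary~\ref{l:continuity} bounds the first summand pointwise by $c_4\mu|\tilde\omega-\omega|$, while the two linear corrections oscillate on $\mathcal{N}$ by at most $R|\tilde\omega-\omega|$ and $\varpi R|\tilde\omega-\omega|$ respectively, using $\tfrac12|\omega^2-\tilde\omega^2|\leq\varpi|\omega-\tilde\omega|$. Summing and using $\varpi,c_4\geq 1$, the oscillation of $\Sigma_{\omega,\tilde\omega}-S_\omega$ on $\mathcal{N}$ is at most $4c_4(R\vee\mu)\varpi|\tilde\omega-\omega|$, which by (\ref{r:assumec}) is bounded by $\Delta_0$.

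Take $(t_0,v_0)$ to minimize the continuous function $\Sigma_{\omega,\tilde\omega}$ on the compact set $\bar{\mathcal{N}}$. Since $(t_0^\ast,v_0^\ast)\in\mathcal{N}$, we have $\Sigma_{\omega,\tilde\omega}(t_0,v_0)\leq\Sigma_{\omega,\tilde\omega}(t_0^\ast,v_0^\ast)$, and for any $(t_1,v_1)\in\partial\mathcal{N}$ the oscillation estimate yields
\begin{equation*}
\Sigma_{\omega,\tilde\omega}(t_1,v_1)-\Sigma_{\omega,\tilde\omega}(t_0,v_0)\geq \Sigma_{\omega,\tilde\omega}(t_1,v_1)-\Sigma_{\omega,\tilde\omega}(t_0^\ast,v_0^\ast)\geq \bigl(S_\omega(t_1,v_1)-S_\omega(t_0^\ast,v_0^\ast)\bigr)-\Delta_0 \geq 2\Delta_0.
\end{equation*}
This directly yields (ii) with $\mathcal{N}_q=\mathcal{N}$, and simultaneously forces $(t_0,v_0)$ into the interior of $\mathcal{N}$. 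Using the $2\pi$-periodicity of $V$ in $t$ and $v$, a shift of $(t_0,v_0)$ by multiples of $2\pi$ (accompanied by the corresponding shift of the orbit to be constructed) places $(t_0,v_0)\in[0,2\pi)^2$ without changing any of the estimates.

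For (i), apply Proposition~\ref{p:stable} at this interior minimum to obtain one-sided minimizers $q^-$ at $(\omega,t_0,v_0)$ and $q^+$ at $(\tilde\omega,t_0,v_0)$; each is a $C^4$ solution of (\ref{r:EL}) on its half-line and they agree at $t_0$ with common value $(\pi,v_0)$. To upgrade the concatenation to a global Euler-Lagrange solution $q\in\mathcal{E}$, one invokes the first-order optimality $\partial_{t_0}\Sigma_{\omega,\tilde\omega}=\partial_{v_0}\Sigma_{\omega,\tilde\omega}=0$ at the interior minimum. Expressing the Peierl's-barrier derivatives via the standard endpoint Hamilton-Jacobi formulas, $\partial_{v_0}S^-_\omega=v^-_t(t_0)-\omega$, $\partial_{v_0}S^+_{\tilde\omega}=-(v^+_t(t_0)-\tilde\omega)$, together with the analogous identities for $\partial_{t_0}$ in terms of the Hamiltonians $H_\omega=\tfrac12 u_t^2+\tfrac12 v_t^2-\tfrac12\omega^2-V$, and using $q^-(t_0)=q^+(t_0)$ to cancel the potential contributions, the linear correction $(\tilde\omega-\omega)v_0+\tfrac12(\omega^2-\tilde\omega^2)t_0$ in the definition of $\Sigma_{\omega,\tilde\omega}$ is designed precisely to reduce the critical-point equations to the matching conditions $v^-_t(t_0)=v^+_t(t_0)$ and $u^-_t(t_0)=u^+_t(t_0)$, the sign ambiguity in $u_t$ being excluded by minimality. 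Uniqueness for the Cauchy problem of (\ref{r:EL}) then identifies $q^-$ and $q^+$ as restrictions of a single $q\in\mathcal{E}$, giving (i).

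The main obstacle I anticipate is this last bookkeeping: because $L_\omega$ and $L_{\tilde\omega}$ carry $\omega$-dependent momenta and Hamiltonians, the precise signs and magnitudes of the correction terms in $\Sigma_{\omega,\tilde\omega}$ must be tracked delicately so that an interior critical point yields genuine $C^1$ gluing rather than a spurious jump $v_t^+-v_t^-\sim\tilde\omega-\omega$ at $t_0$. If the direct calculation does not close cleanly, a fallback is to construct $q$ by minimizing $\mathcal{L}_{\omega,\tilde\omega}$ (reparametrized so that the split is at $t_0$ rather than at $0$) over paths that cross $(\pi,v_0)$ with $(t_0,v_0)\in\bar{\mathcal{N}}$: the a-priori bounds of Section~\ref{s:five} and lower semicontinuity in $H^1_{\text{loc}}$ produce a minimizer whose crossing point lies in the interior of $\mathcal{N}$ by the estimate already proved for (ii), and at such an interior configuration the free-endpoint natural boundary conditions automatically upgrade the concatenation to a $C^4$ Euler-Lagrange solution.
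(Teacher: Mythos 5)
Your treatment of part (ii) and the location of an interior minimizer of $\Sigma_{\omega,\tilde\omega}$ in $\mathcal{N}$ is essentially the paper's Lemma~\ref{l:sigmamin}: both control the oscillation of $\Sigma_{\omega,\tilde\omega}-S_\omega$ on $\mathcal{N}$ via the Lipschitz bound (\ref{r:sminus}) on $S^+$ in $\omega$ and the trivial bound on the linear corrections, then trade the resulting $\Delta_0$ against the $3\Delta_0$ margin from (S1).

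For part (i), however, your primary route has a genuine gap: you invoke ``first-order optimality $\partial_{t_0}\Sigma=\partial_{v_0}\Sigma=0$'' and the Hamilton--Jacobi endpoint formulas $\partial_{v_0}S^-_\omega=v^-_t(t_0)-\omega$, etc., but the Peierl's barriers $S^\pm$ are established only to be continuous (indeed Lipschitz), not $C^1$, and they can genuinely fail to be differentiable at a minimum of $\Sigma$ (this occurs precisely when the one-sided minimizer is non-unique). So the identity $\partial\Sigma=0$ and the cancellation you describe are not available as stated. The paper sidesteps this by never differentiating $\Sigma$: it fixes the interior minimum $(t_0,v_0)$, forms the concatenation $q$ of the one-sided minimizers, proves the pointwise inequality $\mathcal{L}_{\omega,\tilde\omega}(\tilde q)-\mathcal{L}_{\omega,\tilde\omega}(q)\geq\Sigma_{\omega,\tilde\omega}(t_1,v_1)-\Sigma_{\omega,\tilde\omega}(t_0,v_0)$ for any competitor $\tilde q$ crossing $u=\pi$ at $(t_1,v_1)$ (Eq.~(\ref{r:Lmain})), verifies that for small $\delta$ the crossing point of $q+\delta h$ remains in a neighborhood where the right-hand side is nonnegative, and then computes the first variation of the smooth functional $\mathcal{L}_{\omega,\tilde\omega}$ at the concrete $q$ to show $\partial\mathcal{L}_{\omega,\tilde\omega}(q)h=(u^-_t(t_0)-u^+_t(t_0),v^-_t(t_0)-v^+_t(t_0))\cdot h(t_0)=0$, which forces $C^1$-matching. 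Your ``fallback'' paragraph is in the right spirit and closer to this, but remains vague on two points the paper makes precise: (a) why the constrained infimum is attained (compactness on the infinite horizon), and (b) why the natural-boundary-condition argument applies, namely the explicit chain from (\ref{r:Lmain}) through the perturbed crossing point to the vanishing first variation. These are the substantive details your proposal would need to complete.
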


We denote by  $\mathcal{H}$ the set of all $q \in \mathcal{E}$ satisfying (i), (ii) in Proposition \ref{p:hetero} for some $\omega,\tilde{\omega} \in [\omega^-,\omega^+]$ satisfying (\ref{r:assumec}). We say that such $q \in \mathcal{H}$ is a heteroclinic minimizer connecting $\omega,\tilde{\omega}$. Within this section, denote by $(t_{\omega},v_{\omega}) \in [0,2\pi)^2$ a minimizer of $S_{\omega}$, fixed if non-unique, and by $\mathcal{N}_{\omega}=\mathcal{N}(t_{\omega},v_{\omega})$.

\begin{lemma} \label{l:sigmamin}
	If (\ref{r:assumec}) holds, then $\Sigma_{\omega,\tilde{\omega}}$ attains a local minimum $(t_0,v_0)$ in the interior of $\mathcal{N}_{\omega}$, such that for any $(t_1,v_1) \in \partial \mathcal{N}_{\omega}$, (\ref{r:sigma}) holds.
\end{lemma}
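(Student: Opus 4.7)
The plan is to show that on the neighbourhood $\mathcal{N}_\omega$ the functional $\Sigma_{\omega,\tilde\omega}$ is only a small perturbation of $S_\omega$, so that the $3\Delta_0$ barrier (\ref{r:DDelta0}) for $S_\omega$ at its global minimum $(t_\omega,v_\omega)$ descends to at least a $2\Delta_0$ barrier for $\Sigma_{\omega,\tilde\omega}$; the existence of an interior minimum then follows by compactness.

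First I would write $\Sigma_{\omega,\tilde\omega}(t,v) = S_\omega(t,v) + \Phi(t,v)$, where
\begin{equation*}
\Phi(t,v) = \bigl(S^+_{\tilde\omega}(t,v) - S^+_\omega(t,v)\bigr) + (\tilde\omega-\omega)v + \tfrac{1}{2}(\omega^2-\tilde\omega^2)t,
\end{equation*}
and estimate the oscillation of $\Phi$ on $\mathcal{N}_\omega$ term by term. Corollary \ref{l:continuity} bounds $|S^+_{\tilde\omega}(t,v)-S^+_\omega(t,v)|\leq c_4\mu|\omega-\tilde\omega|$ pointwise, so the first summand contributes at most $2c_4\mu|\omega-\tilde\omega|$ to the oscillation. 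Since $\mathrm{diam}\,\mathcal{N}_\omega\leq R$ and $|\omega|,|\tilde\omega|\leq \varpi$, the two linear terms contribute at most $R|\omega-\tilde\omega|$ and $\varpi R|\omega-\tilde\omega|$, respectively. Using $c_4,\varpi\geq 1$,
\begin{equation*}
\sup_{(t_1,v_1),(t_2,v_2)\in\mathcal{N}_\omega}|\Phi(t_1,v_1)-\Phi(t_2,v_2)| \leq \bigl(2c_4\mu+R(1+\varpi)\bigr)|\omega-\tilde\omega| \leq 4c_4(R\vee\mu)\varpi\cdot|\omega-\tilde\omega|,
\end{equation*}
which by the hypothesis (\ref{r:assumec}) is at most $\Delta_0$.

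Combining this oscillation bound with (\ref{r:DDelta0}) gives, for every $(t_1,v_1)\in\partial\mathcal{N}_\omega$,
\begin{equation*}
\Sigma_{\omega,\tilde\omega}(t_1,v_1)-\Sigma_{\omega,\tilde\omega}(t_\omega,v_\omega) = \bigl(S_\omega(t_1,v_1)-S_\omega(t_\omega,v_\omega)\bigr) + \bigl(\Phi(t_1,v_1)-\Phi(t_\omega,v_\omega)\bigr) \geq 3\Delta_0 - \Delta_0 = 2\Delta_0.
\end{equation*}
Compactness of $\mathcal{N}_\omega$ and continuity of $\Sigma_{\omega,\tilde\omega}$ (again from Corollary \ref{l:continuity}) yield a point $(t_0,v_0)\in\mathcal{N}_\omega$ where $\Sigma_{\omega,\tilde\omega}$ attains its minimum on $\mathcal{N}_\omega$. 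The previous inequality shows every boundary point exceeds $\Sigma_{\omega,\tilde\omega}(t_\omega,v_\omega)$, and therefore exceeds the minimum, strictly, so $(t_0,v_0)$ lies in the interior of $\mathcal{N}_\omega$; the bound (\ref{r:sigma}) then follows at $(t_0,v_0)$ from $\Sigma_{\omega,\tilde\omega}(t_0,v_0)\leq\Sigma_{\omega,\tilde\omega}(t_\omega,v_\omega)$.

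This is really just stability of a non-degenerate minimum under small perturbation, and no step is genuinely delicate; the only thing to watch is the arithmetic, as the constant $4c_4(R\vee\mu)\varpi$ in (\ref{r:assumec}) is tuned precisely so that the three contributions to the oscillation of $\Phi$ sum to $\Delta_0$ with no slack to spare.
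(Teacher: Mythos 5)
Your proof is correct and follows essentially the same route as the paper: both decompose $\Sigma_{\omega,\tilde\omega}$ as $S_\omega$ plus a perturbation, bound the perturbation's oscillation over $\mathcal{N}_\omega$ using Corollary \ref{l:continuity} for the Peierl's barrier swap and the diameter bound $R$ for the linear terms, verify via (\ref{r:assumec}) that the total oscillation is at most $\Delta_0$, subtract from the $3\Delta_0$ in (\ref{r:DDelta0}), and finish by compactness. The paper splits the perturbation into two pieces each bounded by $\Delta_0/2$, whereas you package it into a single function $\Phi$ and bound its oscillation directly; this is a cosmetic difference and your arithmetic verifies the same inequality.
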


\begin{proof} Note first that $\Sigma_{\omega,\tilde{\omega}}$ is continuous by the definition and Corollary \ref{l:continuity}. Thus by the definition and compactness of $\mathcal{N}_{\omega}$, it suffices to show that for some $(t_2,v_2)$ in the interior of $\mathcal{N}_{\omega}$, and any $(t_1,v_1) \in \partial \mathcal{N}_{\omega}$,  $\Sigma_{\omega,\tilde{\omega}}(t_1,v_1)-\Sigma_{\omega,\tilde{\omega}}(t_2,v_2) \geq 2\Delta_0 >0$. Let $(t_2,v_2) = (t_{\omega},v_{\omega})$. Then by definition, because of $|t_1-t_{\omega}| \leq R$, $|v_1-v_{\omega}| \leq R$, $c_4 \geq 1$ and (\ref{r:assumec}) we obtain
	\begin{align}
	\Sigma_{\omega,\tilde{\omega}}(t_1,v_1)-\Sigma_{\omega,\tilde{\omega}}(t_{\omega},v_{\omega}) & \geq S^-_{\omega}(t_1,v_1)-S^-_{\omega}(t_{\omega},v_{\omega}) + S^+_{\tilde{\omega}}(t_1,v_1)-S^+_{\tilde{\omega}}(t_{\omega},v_{\omega}) \notag \\
	& \quad - |\tilde{\omega}-\omega||v_1-v_{\omega}| - \varpi|\tilde{\omega}-\omega||t_1-t_{\omega}| \notag \\
	& \geq S^-_{\omega}(t_1,v_1)-S^-_{\omega}(t_{\omega},v_{\omega}) + S^+_{\tilde{\omega}}(t_1,v_1)-S^+_{\tilde{\omega}}(t_{\omega},v_{\omega})  - \Delta_0 / 2. \label{r:sigmahelp}
	\end{align}
	From (\ref{r:sminus}) and $\varpi \geq 1$ we deduce that
	$$
	S^+_{\tilde{\omega}}(t_1,v_1)-S^+_{\tilde{\omega}}(t_{\omega},v_{\omega}) \geq 	S^+_{\omega}(t_1,v_1)-S^+_{\omega}(t_{\omega},v_{\omega}) - \Delta_0 / 2,
	$$
	which combined with (\ref{r:sigmahelp}) and (\ref{r:DDelta0}) gives
	$$
	\Sigma_{\omega,\tilde{\omega}}(t_1,v_1)-\Sigma_{\omega,\tilde{\omega}}(t_{\omega},v_{\omega}) \geq S_{\omega}(t_1,v_1)-S_{\omega}(t_{\omega},v_{\omega}) - \Delta_0 \geq 2 \Delta_0.
	$$
\end{proof}

\begin{lemma} \label{l:sigmahetero}
	Assume $(t_0,v_0)$ is a local minimum of $\Sigma_{\omega,\tilde{\omega}}$, and let
	\begin{equation}
	q(t)= 
	\begin{cases} q^-(t) & t \leq t_0 , \\
	q^+(t) & t \geq t_0,
	\end{cases} \label{r:defhetero}
	\end{equation}
	where $q^-$ is the left-hand sided minimizer at $(\omega,t_0,v_0)$, and $q^+$ the right-sided minimizers at $(\tilde{\omega},t_0,v_0)$. We then have that $q$ is a solution of (\ref{r:EL}). 
\end{lemma}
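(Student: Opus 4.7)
The plan is to translate the first-order criticality of $\Sigma_{\omega,\tilde{\omega}}$ at $(t_0,v_0)$ into $C^1$-matching of $q^-$ and $q^+$ at $t_0$. By Proposition \ref{p:stable}(i), $q^-$ and $q^+$ are $C^4$ solutions of (\ref{r:EL}) on $(-\infty,t_0)$ and $(t_0,\infty)$ respectively, and by construction $q^-(t_0)=q^+(t_0)=(\pi,v_0)$, so the piecewise $q$ is already continuous at $t_0$. The lemma then reduces to establishing the velocity matching $q^-_t(t_0)=q^+_t(t_0)$: once this holds, uniqueness for the Cauchy problem of (\ref{r:EL}) forces the two branches to be the two halves of a single global solution, and smoothness of $V$ bootstraps the regularity.

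To derive the velocity matching, I would use Hamilton--Jacobi-type identities for the one-sided actions. Differentiating $S^-_\omega$ and $S^+_{\tilde{\omega}}$ in their boundary data by perturbing the minimisers from Proposition \ref{p:stable}, integrating by parts, cancelling the interior piece via (\ref{r:EL}) and discarding the boundary contributions at $\pm\infty$ (which vanish by the transversality $v^-_t\to\omega$, $v^+_t\to\tilde{\omega}$ combined with the exponential decay (\ref{r:thirdq})) yields
\[ \partial_{v_0}S^-_\omega=v^-_t(t_0)-\omega,\qquad \partial_{t_0}S^-_\omega=-\bigl[\tfrac12|q^-_t(t_0)|^2-\tfrac12\omega^2-V(\pi,v_0,t_0)\bigr], \]
with the mirror formulas for $S^+_{\tilde{\omega}}$ (overall sign flip, since $t_0$ is now the left endpoint of integration). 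Substituting into $\partial_{v_0}\Sigma=0$ and $\partial_{t_0}\Sigma=0$, the correction terms $(\tilde{\omega}-\omega)v_0+\tfrac12(\omega^2-\tilde{\omega}^2)t_0$ in the definition of $\Sigma$ are designed precisely to absorb the $(\omega,\tilde{\omega})$-mismatch in the naive momentum- and energy-matching conditions, reducing them to $v^-_t(t_0)=v^+_t(t_0)$ and $(u^-_t(t_0))^2=(u^+_t(t_0))^2$. Monotonicity of the separatrix branches at the passage through $\pi$ (comparison with the strictly increasing barriers $z^\pm$ of Lemma \ref{l:subsuper}) forces $u^\pm_t(t_0)>0$, so the second identity upgrades to $u^-_t(t_0)=u^+_t(t_0)$, completing the $C^1$-matching.

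The main delicacy I foresee is the rigorous justification of the Hamilton--Jacobi formulas at the free endpoint at infinity: one has to verify that the asymptotic constant $\lim_{t\to-\infty}(v^-(t)-\omega t)$, which is unconstrained in the definition of $S^-_\omega$ and may shift as $(t_0,v_0)$ is perturbed, contributes zero to the derivative. This rests on the natural transversality $v^-_t-\omega\to 0$ at $-\infty$ together with the quantitative decay estimates recalled in Section \ref{s:five}, which make the boundary integrand vanish uniformly in the perturbation parameter.
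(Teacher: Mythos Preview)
Your approach via Hamilton--Jacobi identities is different from the paper's and is viable in principle, but as written it has a gap: you need the value functions $S^\pm$ to be differentiable at $(t_0,v_0)$, and this is not established anywhere (Corollary~\ref{l:continuity} gives only continuity). The issue is not the boundary term at infinity, which you correctly identify as manageable; it is the possible non-uniqueness of one-sided minimizers at a given $(t_0,v_0)$. If there are several $q^-$, then $S^-_\omega$ can have a corner, and the identity $\partial_{v_0}S^-_\omega=v^-_t(t_0)-\omega$ loses meaning as an equality. One can rescue the argument by observing that $S^-_\omega$ and $S^+_{\tilde\omega}$ are semi-concave (each is an infimum of a smooth family), hence so is $\Sigma_{\omega,\tilde\omega}$, and a semi-concave function is automatically differentiable at an interior local minimum with gradient zero; but this step is absent from your sketch, and the ``main delicacy'' you flag is not the one that actually bites.

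The paper sidesteps this entirely. It works with the full action $\mathcal{L}_{\omega,\tilde\omega}$ on $H^1_{\mathrm{loc}}$ (equation~\eqref{d:Lhetero}) and proves the key inequality
\[
\mathcal{L}_{\omega,\tilde\omega}(\tilde q)-\mathcal{L}_{\omega,\tilde\omega}(q)\ge\Sigma_{\omega,\tilde\omega}(t_1,v_1)-\Sigma_{\omega,\tilde\omega}(t_0,v_0)
\]
for any competitor $\tilde q$ crossing $u=\pi$ at $(t_1,v_1)$. Since for small $\delta$ the perturbation $q+\delta h$ does cross $u=\pi$ at some $(t_1,v_1)$ near $(t_0,v_0)$ (this uses the strict monotonicity of $z^\pm$ at $0$, the same fact you invoke at the end), the local minimality of $\Sigma$ yields $\mathcal{L}_{\omega,\tilde\omega}(q+\delta h)\ge\mathcal{L}_{\omega,\tilde\omega}(q)$ for all $h$ and small $\delta$. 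The first variation $\partial\mathcal{L}(q)h$ is then computed explicitly as the velocity-jump pairing $(q^-_t(t_0)-q^+_t(t_0))\cdot h(t_0)$ and must vanish, giving the $C^1$-matching directly without ever differentiating $S^\pm$.
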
 

\begin{proof} Let $q$ and $(t_0,v_0)$ be as in the statement of the Lemma. We first show that, if $\tilde{q}=(\tilde{u},\tilde{v}) \in H^1(\mathbb{R})^2$ such that for some $(t_1,v_1) \in \mathbb{R}^2$, $\tilde{q}(t_1)=(\pi,v_1)$, then 
\begin{equation}	
\mathcal{L}_{\omega,\tilde{\omega}}(\tilde{q}) - \mathcal{L}_{\omega,\tilde{\omega}}(q) \geq \Sigma_{\omega,\tilde{\omega}}(t_1,v_1)-\Sigma_{\omega,\tilde{\omega}}(t_0,v_0).	 \label{r:Lmain}
\end{equation}
Indeed, by definitions and the partial integration, 	
\begin{align*}
	\mathcal{L}_{\omega,\tilde{\omega}}(\tilde{q}) & =
	 \int_{-\infty}^{t_1} L_{\omega}(\tilde{q}(t),\tilde{q}_t(t),t)dt  + \int_{t_1}^{\infty} L_{\tilde{\omega}}(\tilde{q}(t),\tilde{q}_t(t),t)dt + (\tilde{\omega}-\omega)v_1 + \frac{1}{2}(\omega^2-\tilde{\omega}^2)t_1  \\
	& \geq S_{\omega}^-(t_1,v_1)+S_{\tilde{\omega}}^+(t_1,v_1) + (\tilde{\omega}-\omega)v_1 + \frac{1}{2}(\omega^2-\tilde{\omega}^2)t_1 = \Sigma_{\omega,\tilde{\omega}}(t_1,v_1).
\end{align*}
By the definition of $q$, we obtain an equality in an analogous calculation for $q$, thus $\mathcal{L}_{\omega,\tilde{\omega}}(q) = \Sigma_{\omega,\omega*}(t_0,v_0)$. This gives (\ref{r:Lmain}).
	
We now claim that for any $h \in H^1(\mathbb{R})^2$,
	$
	\partial \mathcal{L}(q)h = \lim_{\delta \rightarrow 0}\frac{1}{\delta}\left( \mathcal{L}(q+ \delta h)-\mathcal{L}(q) \right)
	$
	is equal to $0$.
	It suffices to show that for any $h=(u^h,v^h) \in H^1(\mathbb{R})^2$ and sufficiently small $\delta >0$, $\mathcal{L}(q+ \delta h) \geq \mathcal{L}(q)$, and that $\partial\mathcal{L}(q)h$ exists. 
	Consider an open neighborhood $U$ of $(t_0,v_0)$ in $\mathbb{R}^2$ such that $\Sigma_{\omega,\tilde{\omega}}|_U \geq \Sigma_{\omega,\tilde{\omega}}(t_0,v_0)$. 
	
	We show first that for any $h \in H^1(\mathbb{R})^2$ we can find $\delta_0>0$ small enough, such that for any $0 \leq \delta \leq \delta_0$, there exists $(t_1,v_1) \in U$ such that $(q+\delta h)(t_1)=(\pi,v_1)$. Let $\tilde{q}=(\tilde{u},\tilde{v})=q + \delta h$. Indeed, we can find $t_1$ sufficiently close to $t_0$ such that $\tilde{u}(t_1)=\pi$ for $\delta$ small enough, because of (\ref{r:firstq}), (\ref{r:secondq}) with $T=0$ and the fact that $z^-$, $z^+$ are strictly increasing at $t=0$. We find $v_1=\tilde{v}(t_1)$ sufficiently close to $v_0$ by finding $\delta_0$ small enough so that all the terms in
	\begin{align*}
	|\tilde{v}(t_1)-v_0|  \leq \int_{t_0}^{t_1}|\tilde{v}_t|dt + |\tilde{v}(t_0)-v_0|   \leq \sqrt{2}|t_1-t_0|^{1/2} \left( \int_{t_0}^{t_1}(v^2_t(t)+ \delta^2 (v^h)^2_t) dt  \right) ^{1/2} + \delta |v^h(t_0)|
	\end{align*}
	are small enough. Now combining it with (\ref{r:Lmain}) and the fact that the right-hand side in (\ref{r:Lmain}) is $\geq 0$ on $U$, we obtain $\mathcal{L}(q+ \delta h) \geq \mathcal{L}(q)$ for $\delta \leq \delta_0$.
	Now, a straightforward calculation and the fact from Proposition \ref{p:stable} that $q$ solves (\ref{r:EL}) for all $t$ except perhaps $t=t_0$ yields
	$$ \partial{L}(q)h=(u^-_t(t_0)-u^+_t(t_0),v^-_t(t_0)-v^+_t(t_0))\cdot h(t_0),$$
	and $\partial{L}(q)h$ exists, thus $\partial{L}(q)h=0$. 
	As $h$ is arbitrary, we get that $q$ is $C^1$ at $t_0$. By the uniqueness of solutions of the Euler-Lagrange equations, $q$ is a solution of (\ref{r:EL}) also at $t=t_1$.
\end{proof}

Proposition \ref{p:hetero} now follows from Lemmas \ref{l:sigmamin} and \ref{l:sigmahetero}, with $\mathcal{N}_q:=\mathcal{N}_{\omega}$.

\begin{lemma} \label{l:t0v0}
	If $q \in \mathcal{H}$, then there exists a unique $(t_0,v_0) \in [0,2\pi]$ such that $q(t_0)=(\pi,v_0)$.
\end{lemma}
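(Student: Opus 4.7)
The plan is to combine the pointwise one-sided a priori bounds from Proposition~\ref{p:stable}(ii) with the strict monotonicity of $z^\pm$ from Lemma~\ref{l:subsuper}(iv) to confine the continuous real-valued lift $\tilde u : \mathbb{R} \to \mathbb{R}$ of the $u$-component of $q$ to the open strip $(0, 2\pi)$, and then to force the equation $\tilde u(t) = \pi$ to have exactly one solution, namely the $t_0$ produced by Proposition~\ref{p:hetero}(i).

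Concretely, given $q = (u, v) \in \mathcal{H}$, Proposition~\ref{p:hetero}(i) supplies $\omega, \tilde\omega \in [\omega^-, \omega^+]$ satisfying (\ref{r:assumec}) and a point $(t_0, v_0) \in [0, 2\pi)^2$ with $q(t_0) = (\pi, v_0)$, such that $q|_{t \leq t_0}$ and $q|_{t \geq t_0}$ are one-sided minimizers at $(\omega, t_0, v_0)$ and $(\tilde\omega, t_0, v_0)$ respectively. Writing $\tilde u$ for the continuous real-valued lift of $u$ normalized by $\tilde u(t_0) = \pi$, Proposition~\ref{p:stable}(ii) applied with $T = 0$ on each half-line yields $0 < \tilde u(t) \leq z^-(t - t_0)$ for $t \leq t_0$ and $z^+(t - t_0) \leq \tilde u(t) < 2\pi$ for $t \geq t_0$. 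Combining these bounds with $z^\pm(0) = \pi$ (Lemma~\ref{l:subsuper}(ii)) and the global strict monotonicity of $z^\pm$ (Lemma~\ref{l:subsuper}(iv)), so that $z^-(s) < \pi$ for all $s < 0$ and $z^+(s) > \pi$ for all $s > 0$, one obtains $\tilde u(t) < \pi$ strictly for $t < t_0$, $\tilde u(t_0) = \pi$, and $\tilde u(t) > \pi$ strictly for $t > t_0$; in particular $\tilde u(\mathbb{R}) \subset (0, 2\pi)$.

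The conclusion is then immediate: the relation $u(t) \equiv \pi \pmod{2\pi}$ on the torus is equivalent to $\tilde u(t) \in \pi + 2\pi \mathbb{Z}$, and the confinement $\tilde u \in (0, 2\pi)$ reduces this to $\tilde u(t) = \pi$, which by the preceding step is attained only at $t = t_0$; then $v_0 = v(t_0) \bmod 2\pi$ is uniquely determined. No step in this plan is a genuine obstacle, and the lemma is essentially a direct corollary of the one-sided bounds developed in Section~\ref{s:five}; the only small point needing care is to invoke the qualitative global strict monotonicity statement in Lemma~\ref{l:subsuper}(iv), rather than the explicit derivative lower bound $> \sqrt{\ve}/2$ stated there, which holds only on the short interval $(-1/(4\sqrt{\ve}), 1/(4\sqrt{\ve}))$.
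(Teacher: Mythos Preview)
Your proof is correct and follows essentially the same approach as the paper: existence from the definition of $\mathcal{H}$, and uniqueness from the one-sided bounds (\ref{r:firstq})--(\ref{r:secondq}) of Proposition~\ref{p:stable}(ii) combined with the strict monotonicity and normalization $z^\pm(0)=\pi$ from Lemma~\ref{l:subsuper}. The paper's proof is a two-line citation of exactly these ingredients; you have simply unpacked the details, including the useful clarification that one needs the qualitative strict-increase statement in Lemma~\ref{l:subsuper}(iv) rather than the quantitative derivative bound on the short interval.
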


\begin{proof}
	Existence follows from the definition of $\mathcal{H}$, and uniqueness from (\ref{r:secondq}) and the properties of $z^-$, $z^+$ proved in Lemma \ref{l:subsuper}.
\end{proof}

\begin{lemma} \label{l:heteroprop}
	Let $q =(u,v)\in \mathcal{H}$ connecting $\omega,\tilde{\omega}$, such that $q(t_0)=(\pi,v_0)$. Then there exists an absolute constant $c_5>0$ such that for all $t \in \mathbb{R}$,
\begin{align}
 | u(t)- 2\pi \mathbf{1}_{[t_0,\infty)}(t)| & \leq c_5 e^{-\frac{1}{2}\sqrt{\ve} |t-t_0|}, \label{r:uhzero} \\
 |u_t(t)| & \leq c_5 \sqrt{\ve} e^{-\frac{1}{2}\sqrt{\ve} |t-t_0|}, \label{r:uhone}
\end{align}
 \begin{align}
 |v(t)-v_0-\omega(t-t_0) | & \leq c_5 \mu,  &  |v_t(t)-\omega | & \leq c_5 \sqrt{\ve} \mu \: e^{-\sqrt{\ve} |t-t_0|}, & t\leq t_0,  \label{r:vhleft} \\
 |v(t)-v_0-\tilde{\omega}(t-t_0) | & \leq c_5 \mu, 
 &  |v_t(t)-\tilde{\omega} | & \leq c_5 \sqrt{\ve}  \mu \:  e^{-\sqrt{\ve} |t-t_0|}, & t\geq t_0, \label{r:vhright}
\end{align}
\begin{align}
|u_{tt}(t)| & \leq c_5  \ve \: e^{-\frac{1}{2}\sqrt{\ve} |t-t_0|}, &  |v_{tt}(t)| &\leq c_5  \ve \mu \: e^{-\sqrt{\ve} |t-t_0|}, \label{r:two} \\
|u_{ttt}(t)| & \leq c_5  \ve \varpi \: e^{-\frac{1}{2}\sqrt{\ve} |t-t_0|}, &  |v_{ttt}(t)| & \leq c_5  \ve \mu \varpi  \: e^{-\frac{1}{2}\sqrt{\ve} |t-t_0|}. \label{r:three}
\end{align}
\end{lemma}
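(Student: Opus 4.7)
\medskip

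\textbf{Proof proposal.} The plan is to establish the bounds in the order they are stated, using Proposition~\ref{p:stable} and the properties of the barrier $z^\pm$ from Lemma~\ref{l:subsuper} as the base, and then propagating exponential decay through the Euler--Lagrange equations (\ref{r:EL}). For (\ref{r:uhzero}), we apply Proposition~\ref{p:stable}(ii) with $T=0$ to obtain $0<u(t)\le z^-(t-t_0)$ for $t\le t_0$ and $z^+(t-t_0)\le u(t)<2\pi$ for $t\ge t_0$, and then feed the two-sided bound (\ref{r:firstz}) of Lemma~\ref{l:subsuper} into these. This gives (\ref{r:uhzero}) with some absolute constant. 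In particular, pointwise we get $|\sin u(t)|\ll e^{-\frac12\sqrt{\ve}|t-t_0|}$ and $1-\cos u(t)\ll e^{-\sqrt{\ve}|t-t_0|}$, which are the key inputs for everything that follows.

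Next, for the first derivatives (\ref{r:uhone})--(\ref{r:vhright}), the idea is to integrate the Euler--Lagrange equations from $\pm\infty$. From (\ref{r:ELu}) and the bounds of the previous step, $|u_{tt}(s)|\ll \ve\,e^{-\frac12\sqrt{\ve}|s-t_0|}$, which is integrable on $\mathbb{R}$, so $u_t$ has limits at $\pm\infty$; combined with $u(t)\to 0$ (resp. $2\pi$) these limits must be $0$. Thus for $t\le t_0$,
\[
|u_t(t)|=\Bigl|\int_{-\infty}^{t} u_{tt}(s)\,ds\Bigr|\ll \int_{-\infty}^{t}\ve\,e^{-\frac12\sqrt{\ve}(t_0-s)}\,ds\ll \sqrt{\ve}\,e^{-\frac12\sqrt{\ve}|t-t_0|},
\]
and the case $t\ge t_0$ is symmetric, yielding (\ref{r:uhone}). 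For the $v$-component, from (\ref{r:ELv}) and the bound on $1-\cos u$, $|v_{tt}(s)|\ll \ve\mu\,e^{-\sqrt{\ve}|s-t_0|}$, so $v_t$ has limits at $\pm\infty$; because $q|_{(-\infty,t_0]}$ is a minimizer of $\int_{-\infty}^{t_0}L_\omega$ (which contains $(v_t-\omega)^2/2$) and $v_{tt}\in L^1$, the limits must equal $\omega$ and $\tilde\omega$ respectively. Integration of $v_{tt}$ from $-\infty$ (resp.\ $+\infty$) then gives the $v_t$ bounds in (\ref{r:vhleft})--(\ref{r:vhright}), and a further integration from $t_0$ gives the $v$ bounds (the $t=t_0$ value is $v_0$ by construction, so no boundary term appears).

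Finally, the second derivatives (\ref{r:two}) are immediate by reading off $u_{tt}=V_u$ and $v_{tt}=V_v$ from (\ref{r:EL}) and inserting the decay of $\sin u$ and $1-\cos u$. For the third derivatives (\ref{r:three}), differentiate (\ref{r:EL}) in $t$ to get $u_{ttt}=V_{uu}u_t+V_{uv}v_t+V_{ut}$ and similarly for $v_{ttt}$; then expand using the explicit form of $V$. The one point to be careful about is that $V_{uu}=\ve\cos u(1-\mu f)+\ve\mu(1-\cos u)f_{uu}$ contains a term $\ve\cos u$ that does \emph{not} decay, but it gets multiplied by $u_t$, which contributes $\ve\cdot\sqrt{\ve}\,e^{-\frac12\sqrt{\ve}|t-t_0|}\le \ve\varpi\,e^{-\frac12\sqrt{\ve}|t-t_0|}$ (using $\sqrt{\ve}\le 1\le\varpi$). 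All other contributions carry either an extra factor of $\sin u$ or $(1-\cos u)$ and therefore decay exponentially, while the $v_t$ factor is bounded by $|v_t|\le\varpi+O(\sqrt{\ve}\mu)$; this yields (\ref{r:three}). The only potential obstacle is tracking the decay rates uniformly (in particular ensuring the same exponent $\tfrac12\sqrt{\ve}$ survives through the differentiation of the $v$-equation, where products of two decaying factors of rates $\tfrac12\sqrt{\ve}$ and $\sqrt{\ve}$ naturally arise); this is handled by always replacing a sharper rate by the weaker $\tfrac12\sqrt{\ve}$ at the expense of an absorbed absolute constant. Absorbing all constants into a single $c_5$ completes the proof.
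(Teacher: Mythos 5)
Your proposal follows essentially the same route as the paper: reduce (\ref{r:uhzero}) to the sub/super-solution bounds of Lemma~\ref{l:subsuper} via Proposition~\ref{p:stable} (the paper routes this through the intermediate statement (\ref{r:thirdq}), which is just the Corollary to that Proposition), read off the second-derivative bounds (\ref{r:two}) directly from the Euler--Lagrange equations using $|\sin u|\ll e^{-\frac12\sqrt\ve|t-t_0|}$ and $1-\cos u\ll e^{-\sqrt\ve|t-t_0|}$, integrate from $\pm\infty$ to get (\ref{r:uhone}) and (\ref{r:vhleft})--(\ref{r:vhright}), and differentiate the Euler--Lagrange equations once more for (\ref{r:three}), absorbing the non-decaying $\ve\cos u$ factor against the exponentially decaying $u_t$. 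The only cosmetic departure is that you briefly justify $\lim_{t\to\mp\infty}v_t=\omega,\tilde\omega$ via $v_{tt}\in L^1$ and the finiteness of the action, whereas the paper simply asserts these limits; your bookkeeping for the third derivatives is slightly coarser (bounding $|v_t|\ll\varpi$ directly rather than splitting off the decaying part as the paper does) but gives the same result since $\sqrt\ve\le1\le\varpi$.
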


\begin{proof} The absolute constant $c_5$ may change from line to line in the proof.
The relation (\ref{r:uhzero}) follows from the definition of $q$ and (\ref{r:thirdq}). 
Now, by using $\sin x \leq |x - 2 k \pi|$ for $k=0,1$, the fact that $u$ is a solution of (\ref{r:EL}), and finally using (\ref{r:uhzero}), we see that 
\begin{equation*}
|u_{tt}(t) | \leq \ve (1-\cos u(t)+|\sin u(t)|) \ll \ve \: e^{-\frac{1}{2}\sqrt{\ve} |t-t_0|},
\end{equation*}
which is the left-hand side of (\ref{r:two}). By integrating it over $[t,\infty)$ for $t \geq t_0$, alternatively over $(-\infty,t]$ for $t \leq t_0$, and using $ \lim_{|t| \rightarrow \infty } u_t = 0$, we get (\ref{r:uhone}). 
Analogously, as $v(t)$ is a solution of (\ref{r:EL}), by using $\cos x \leq (x - 2 k \pi)^2/2$ for $k=0,1$ and (\ref{r:uhzero}), we obtain
\begin{equation*}
|v_{tt}(t) | \leq \ve \mu (1-\cos u(t)) \ll \ve \mu \: e^{-\sqrt{\ve} |t-t_0|}, \label{r:vktwo}
\end{equation*}
which is the right-hand side of of (\ref{r:two}). As  $ \lim_{t \rightarrow -\infty } v_t(t) = \omega$ and $ \lim_{t \rightarrow \infty } v_t(t) = \tilde{\omega}$, by integrating it over $(-\infty,t]$ for $t\leq t_0$, respectively over $[t,\infty)$ for $t \geq t_0$, we obtain the right-hand sides of (\ref{r:vhleft}) and (\ref{r:vhright}). We use $v(t_0)=v_0$, integrate the right-hand side of (\ref{r:vhleft}) over $[t,t_0]$, respectively the right-hand side of (\ref{r:vhright}) over $[t_0,t]$, and obtain the left-hand sides of (\ref{r:vhleft}) and (\ref{r:vhright}). Finally, to bound the third derivatives, by careful differentiation, while using uniform bounds on $f$ and its derivatives, and as $\mu \leq 1$ and $\varpi \geq 1$, we obtain
\begin{align*}
|u_{ttt}| & = |D_t V_u(u(t),v(t),t)| \ll \ve\mu |u_t| + \ve |u_t| + \ve \mu (1- \cos u + |\sin u|)|(v_t| + \ve \mu (1 - \cos u + |\sin u|) \\
& \ll \ve |u_t| + \ve |v_t - \omega \mathbf{1}_{(-\infty,t_0)}(t)-\tilde{\omega} \mathbf{1}_{[t_0,\infty)}(t)| + \ve |(1 - \cos u + |\sin u|)| \varpi.
\end{align*}
By inserting the bounds (\ref{r:uhzero}), (\ref{r:uhone}) and the right-hand sides of (\ref{r:vhleft}), (\ref{r:vhright}), we obtain the left-hand side of (\ref{r:three}).
Similarly we get
\begin{align*}
|v_{ttt}| & =  |D_t V_v(u(t),v(t),t)| 
\ll \ve \mu |u_t| + \ve \mu |v_t - \omega \mathbf{1}_{(-\infty,t_0)}(t)-\tilde{\omega} \mathbf{1}_{[t_0,\infty)}(t)| + \ve \mu \varpi ( 1 - \cos u),
\end{align*}
which analogously as above implies the right-hand side of (\ref{r:three}).
\end{proof}

\begin{lemma} \label{l:compact}
	The set $\mathcal{H}$ is compact in $H^2_{\text{loc}}(\mathbb{R})^2$. Furthermore, for each $q \in \mathcal{H}$ we have that $q \in \mathcal{E}$ and $q_t \in H^2_{\text{ul}}(\mathbb{R})^2$.
\end{lemma}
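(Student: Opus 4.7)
The ``furthermore'' clause is immediate from Lemma \ref{l:heteroprop}: the exponential bounds (\ref{r:uhone})--(\ref{r:three}) yield $q_t, q_{tt}, q_{ttt} \in L^\infty(\mathbb{R})^2 \subset L^2_{\text{ul}}(\mathbb{R})^2$, hence $q_t \in H^2_{\text{ul}}(\mathbb{R})^2$; together with Lemma \ref{l:Xcontain} and the fact that every $q \in \mathcal{H}$ solves (\ref{r:EL}) by definition, this gives $q \in \mathcal{E}$.

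For compactness of $\mathcal{H}$, pick a sequence $q^{(n)} \in \mathcal{H}$ with associated data $\omega_n, \tilde{\omega}_n \in [\omega^-, \omega^+]$, base points $(t_0^{(n)}, v_0^{(n)}) \in [0, 2\pi]^2$ (Lemma \ref{l:t0v0}), and closed sets $\mathcal{N}_n$ of diameter at most $R$ containing $(t_0^{(n)}, v_0^{(n)})$. After passing to a subsequence these all converge: $\omega_n \to \omega$, $\tilde{\omega}_n \to \tilde{\omega}$, $(t_0^{(n)}, v_0^{(n)}) \to (t_0, v_0)$, and $\mathcal{N}_n \to \mathcal{N}^*$ in the Hausdorff metric (Blaschke's selection theorem, applicable since the $\mathcal{N}_n$ are uniformly bounded closed subsets of $\mathbb{R}^2$). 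Moreover, the bounds of Lemma \ref{l:heteroprop}, centered at the bounded $t_0^{(n)}$, give a uniform $H^3([-K,K])^2$ estimate on $q^{(n)}$ for every $K>0$; by the compact embedding $H^3 \hookrightarrow H^2$ and a diagonal extraction, along a further subsequence $q^{(n)} \to q^*$ in $H^2_{\text{loc}}(\mathbb{R})^2$.

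Passing to the limit in (\ref{r:EL}), which is legitimate because the $H^3_{\text{loc}}$-bound and $H^2_{\text{loc}}$-convergence upgrade to $C^2_{\text{loc}}$-convergence by Sobolev embedding, shows that $q^*$ solves the Euler--Lagrange equations, that $q^*(t_0) = (\pi, v_0)$, and that the exponential decay bounds (\ref{r:uhzero})--(\ref{r:vhright}) transfer to $q^*$ with the limit parameters, yielding the correct asymptotics at $\mathbb{T}_\omega$ and $\mathbb{T}_{\tilde{\omega}}$. For the minimality of the two halves, Fatou's lemma applied to the non-negative Lagrangian combined with the joint continuity of $S^-$ from Corollary \ref{l:continuity} gives
\[
S^-_{\omega}(t_0, v_0) \leq \int_{-\infty}^{t_0} L_{\omega}(q^*, q^*_t, t)\,dt \leq \liminf_n S^-_{\omega_n}(t_0^{(n)}, v_0^{(n)}) = S^-_{\omega}(t_0, v_0),
\]
so that $q^*|_{(-\infty, t_0]}$ is a left one-sided minimizer at $(\omega, t_0, v_0)$; the right half is treated identically, and the closed inequality (\ref{r:assumec}) passes to the limit.

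It remains to exhibit a set $\mathcal{N}_{q^*}$ realizing (\ref{r:sigma}); I expect this to be the main obstacle, since Hausdorff convergence of sets does not a priori control the limit of their boundaries. I would take $\mathcal{N}_{q^*} := \mathcal{N}^*$, which is closed, of diameter at most $R$, and contains $(t_0, v_0)$. For $x \in \partial \mathcal{N}^*$ and any sufficiently small open ball $U \ni x$, both $U \cap \mathcal{N}_n$ and $U \setminus \mathcal{N}_n$ are nonempty for large $n$: the former by approximating $x \in \mathcal{N}^*$ via Hausdorff convergence, the latter by fixing a $y \in U \setminus \mathcal{N}^*$ with $d(y, \mathcal{N}^*) > 0$ and invoking Hausdorff convergence again. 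Connectedness of $U$ then forces $\partial \mathcal{N}_n \cap U \neq \emptyset$, so one can select $z_n \in \partial \mathcal{N}_n$ with $z_n \to x$; using the joint continuity of $\Sigma_{\omega, \tilde{\omega}}$ in all four of $\omega, \tilde{\omega}, t, v$ from Corollary \ref{l:continuity},
\[
\Sigma_{\omega, \tilde{\omega}}(x) - \Sigma_{\omega, \tilde{\omega}}(t_0, v_0) = \lim_n \bigl(\Sigma_{\omega_n, \tilde{\omega}_n}(z_n) - \Sigma_{\omega_n, \tilde{\omega}_n}(t_0^{(n)}, v_0^{(n)})\bigr) \geq 2 \Delta_0.
\]
This verifies (\ref{r:sigma}) for $\mathcal{N}^*$ and concludes that $q^* \in \mathcal{H}$, so $\mathcal{H}$ is compact in $H^2_{\text{loc}}(\mathbb{R})^2$.
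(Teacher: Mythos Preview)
Your proof is correct and follows essentially the same approach as the paper's: extract convergent subsequences of the data $(\omega_n,\tilde\omega_n,t_0^{(n)},v_0^{(n)},\mathcal{N}_n,q^{(n)})$, pass to the limit in the Euler--Lagrange equations and in the minimality condition via continuity of $S^\pm$, and verify (\ref{r:sigma}) for the Hausdorff limit by approximating boundary points of $\mathcal{N}^*$ by boundary points of $\mathcal{N}_n$. Your treatment is in fact more careful than the paper's in two places --- you spell out the boundary-approximation argument that the paper asserts without proof, and you use Fatou where the paper invokes dominated convergence (either works given the uniform exponential bounds of Lemma~\ref{l:heteroprop}).
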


\begin{proof} It is straightforward to observe that the closure in $H^2_{\text{loc}}(\mathbb{R})^2$ of all $q$ satisfying (\ref{r:uhzero})-(\ref{r:three}) and $(t_0,v_0) \in [0,2\pi]^2$ is compact. Thus it suffices to show that $\mathcal{H}$ is closed in $H^2_{\text{loc}}(\mathbb{R})^2$. Assume $q_n \in \mathcal{H}$ connecting $\omega_n$ and $\tilde{\omega}_n$, $q_n(t_n)=(\pi,v_n)$, is a sequence converging to $q \in H^2_{\text{loc}}(\mathbb{R})^2$. We first show that $q \in \mathcal{E}$. By construction, $q$ is a solution of (\ref{r:EL}), and by Lemma \ref{l:heteroprop} and the construction we easily show that $q_t \in L^{\infty}(\mathbb{R})^2$. Now $q \in \mathcal{E}$ follows from Lemma \ref{l:Xcontain}. We see that $q$ must satisfy the condition (i) from the definition of $\mathcal{H}$, as the sequences $ \int_{-\infty}^{t_n}L_{\omega_n}(q_n,(q_n)_t,t)dt$, $\int_{t_n}^{\infty}L_{\tilde{\omega}_n}(q_n,(q_n)_t,t)dt$ are convergent by (\ref{r:uhzero})-(\ref{r:vhright}) and the Lebesgue dominated convergence theorem, and as $S^+$, $S^-$ are continuous. 
	
To show (ii), note that $\mathcal{N}_{q_n}$ is a family of compact sets with a bounded union, thus we can find a convergent subsequence converging to a set $\mathcal{N}_q$ in the Hausdorff topology. By the construction, the sequence $(t_n,v_n)$ converges to some $(t_0,v_0) \in \mathcal{N}_q$ such that $q(t_0)=(\pi,v_0)$. If $(\tilde{t}_0,\tilde{v}_0) \in \partial \mathcal{N}_q$, it is a limit of a subsequence of  $(\tilde{t}_{n_k},\tilde{v}_{n_k})$ lying on the boundaries of the convergent sub-sequence of $\partial \mathcal{N}_{q_n}$. The relation (\ref{r:sigma}) now follows by the continuity of $(\omega,\tilde{\omega},t,v) \mapsto \Sigma_{\omega,\tilde{\omega}}(t,v)$, established by the definition and Corollary \ref{l:continuity}.
\end{proof}

\section{An approximate shadowing orbit} \label{s:six}

In this section we define an approximate shadowing orbit $q^0$ which can be understood as a suitable initial condition for (\ref{r:grad}). Furthermore, we define the set $\mathcal{A}$ from Lemma \ref{l:combine}, and introduce the constants $L$, $L_k$, $k\in \mathbb{Z}$ (the time between the jumps) and $M$ (the magnitude of oscillations of $v$ with respect to (\ref{r:gradv})) to be optimized later, as a scaffolding for the proofs. Finally we show that $q^0 \in \mathcal{A}$, and evaluate
 bounds on $q^0$ needed later. We will eventually see that essentially the only role of $q^0$ in the proofs is to show that the constructed sets $\mathcal{A}$, $\mathcal{B}$ are not empty.

Fix a closed subset of a region of instability $[\omega^-,\omega^+]$, with the uniform constants $\Delta_0$, $R$ as in (S1) and $\varpi$ as in Introduction. Assume $\omega_k$, $k\in \mathbb{Z}$ is a sequence in $[\omega^-,\omega^+]$ such that for all $k \in \mathbb{Z}$, (\ref{r:assumec}) holds. The constant $4L$ will be the minimal time between two "jumps". Let $\tilde{L}_k$ be the approximate time of the "jumps", satisfying $\tilde{L}_k \equiv 0 \mod 2\pi$ and $\tilde{L}_{k+1}-\tilde{L}_k \geq 4L + 2\pi$. Let $q_k=(u_k,v_k) \in \mathcal{H}$ and $(T_k,V_k)$ be such that $q_k(T_k)=(\pi,V_k)$, and let $\mathcal{N}_{q_k} \subset \mathbb{R}^2$ be the sets associated to $q_k$ as in the definition of $\mathcal{H}$. Note that if $(t_1,v_1) \in \partial \mathcal{N}_{q_k}$, and if $\tilde{q}=(\tilde{u},\tilde{v}) \in H^1_{\text{loc}} (\mathbb{R})^2$ such that $\tilde{q}(t_1)=(\pi,v_1)$, $\lim_{t \rightarrow -\infty}\tilde{u}(t)=0$, $\lim_{t \rightarrow \infty}\tilde{u}(t)=2\pi$, then by (\ref{r:sigma}) and (\ref{r:Lmain}), we have
\begin{equation}
\mathcal{L}_{\omega,\omega^*}(\tilde{q}) -\mathcal{L}_{\omega,\omega^*}(q)  \geq 2\Delta_0 >0. \label{r:LL}
\end{equation}
Also, by the definition of $\Delta_0$ and (\ref{s:bound}), we can easily deduce (using $\mu \leq 1/16$ by (A2)) the useful bound
\begin{equation}
\Delta_0 \leq 9 \sqrt{\ve}\mu. \label{r:Delta0up}
\end{equation}
We construct the required parameters, functions and sets inductively in $|k|$ as follows: $\tilde{T}_0=T_0$, $\tilde{V}_0=V_0$, $\tilde{q}_0=(\tilde{u}_0,\tilde{v}_0):=q_0$, and
\begin{align*}
\tilde{T}_k & = T_k \mod 2\pi, \quad \text{so that } -\pi  < \tilde{T}_k-\tilde{L}_{k} \leq \pi, \\
\tilde{q}_k(t) & = (u_k(t-\tilde{T}_k+T_k)+2k\pi, \: v_k(t-\tilde{T}_k+T_k)+ \tilde{V}_k-V_k ), \\
\tilde{V}_k  & = V_k \mod 2\pi \quad \text{so that } -\pi < \tilde{v}_{k-1}(\tilde{T}_k)-\tilde{V}_k  \leq \pi \text{ for }k \geq 1, \\
\tilde{V}_k  & = V_k \mod 2\pi \quad \text{so that } -\pi <  \tilde{v}_{k+1}(\tilde{T}_k)-\tilde{V}_k \leq \pi \text{ for }k \leq -1, \\
\tilde{\mathcal{N}}_k & = \mathcal{N}_{q_k}+(\tilde{T}_k-T_k,\tilde{V}_k-V_k), \\
L_k &= \tilde{T}_{k+1} - \tilde{T}_k,
\end{align*}
where we always use the notation $\tilde{q}_k=(\tilde{u}_k,\tilde{v}_k)$. We now require "smoothening" functions $\varphi^-$, $\varphi^+$, defined over an arbitrary interval $[a,b]$, $a < b$:
\begin{equation}
\label{d:smooth}
\begin{split} 
\varphi^-_{a,b}(t) & = \begin{cases} 1 & t \leq a, \\
\frac{\exp(-(b-a)/(t-a))}{\exp(-(b-a)/(t-a))+\exp(-(b-a)/(b-t))} & t \in [a,b], \\
0 & t \geq b, 
\end{cases} \\
\varphi^+_{a,b}(t) & =1-\varphi^-_{a,b}(t).
\end{split}
\end{equation}
By definition $\varphi^-,\varphi^+$ are $C^{\infty}$, with values in $[0,1]$, and with uniformly bounded derivatives
\begin{equation}
 (\varphi^-_{a,b})^{(k)}(t),(\varphi^+_{a,b})^{(k)}(t) =O_k \left( \frac{1}{|b-a|^k} \right), \label{r:varphib}
\end{equation}
where the implicit constant depends only on $k$.
Let
\begin{equation}
q^0(t) = \varphi^-_{\tilde{T}_{k-1}+L,\tilde{T}_k-L}(t)\tilde{q}_{k-1}(t) + \varphi^+_{\tilde{T}_{k-1}+L,\tilde{T}_k-L}(t)\tilde{q}_k(t) \: \quad \text{for all } t \in [\tilde{T}_{k-1},\tilde{T}_k]. \label{r:defq*}
\end{equation}
\begin{remark} \label{r:notation}
Assume we fix a segment $[\omega^-,\omega^+]$ in a region of instability, and that for each $\omega,\tilde{\omega} \in [\omega^-,\omega^+]$ satisfying (\ref{r:assumec}) we chose a single $q \in \mathcal{H}$ (as such $q$ is not necessarily unique). Then $q^0$ is uniquely defined by the choice of $L$, $(\tilde{L}_k)_{k \in \mathbb{Z}}$, $(\omega_k)_{k \in \mathbb{Z}}$ (uniqueness of $T_k,V_k,\tilde{T}_k,\tilde{V}_k$ follows from Lemma \ref{l:t0v0}). In the proofs of the main theorems, we thus use the notation $q^0(L,(\tilde{L}_k)_{k \in \mathbb{Z}},(\omega_k)_{k \in \mathbb{Z}})$. We fix $q^0$ for now and do not use such notation until Section \ref{s:shadow}.
\end{remark}

Finally, let $M$ be a constant chosen later, so that
\begin{align}
 M \geq \sup_{k \in \mathbb{Z}} \lbrace |\tilde{v}_{k-1}(\tilde{T}_k)|-\tilde{V}_k |, |\tilde{v}_k(\tilde{T}_{k-1})|-\tilde{V}_{k-1} | \rbrace + ( \varpi+1)\mu . \label{r:mbound}
\end{align}
The set $\mathcal{A}$ is defined as the set of all $q = (u,v)\in H^3_{\text{loc}}(\mathbb{R})^2 \cap \mathcal{X}$ such that $q_t \in H^2_{\text{ul}}(\mathbb{R})^2$, and such that for all $k \in \mathbb{Z}$,
\begin{align}
|u(\tilde{T}_k) - (2k  + 1) \pi | \leq \frac{1}{3}, \label{d:Au} \\
|v(\tilde{T}_k) - \tilde{V}_k | \leq M. \label{d:Av}
\end{align}
\begin{lemma}
	We have that $q^0 \in \mathcal{A}$.
\end{lemma}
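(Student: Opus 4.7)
The plan is to exploit the fact that the smoothening interval $[\tilde T_{k-1}+L,\tilde T_k-L]$ in (\ref{r:defq*}) sits strictly inside $(\tilde T_{k-1},\tilde T_k)$, so that on a full neighborhood of each $\tilde T_k$ the formula collapses to $q^0=\tilde q_k$. The pointwise conditions (\ref{d:Au})--(\ref{d:Av}) then reduce to a single evaluation, and the regularity claim reduces to the smoothness of each $\tilde q_k$.

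First I would verify $L_k=\tilde T_{k+1}-\tilde T_k\geq 4L$, which follows from $\tilde L_{k+1}-\tilde L_k\geq 4L+2\pi$ together with $|\tilde T_j-\tilde L_j|\leq\pi$. Consequently the smoothening interval has length $\geq 2L$ and is separated from $\tilde T_k$ by at least $L$ on each side, so by (\ref{d:smooth}), $\varphi^-_{\tilde T_{k-1}+L,\tilde T_k-L}\equiv 0$ on $[\tilde T_k-L,\tilde T_k]$ and $\varphi^-_{\tilde T_k+L,\tilde T_{k+1}-L}\equiv 1$ on $[\tilde T_k,\tilde T_k+L]$, giving $q^0\equiv\tilde q_k$ on $[\tilde T_k-L,\tilde T_k+L]$. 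Using $q_k(T_k)=(\pi,V_k)$ (built into the definition of $\mathcal H$), evaluation at $\tilde T_k$ gives
\begin{equation*}
q^0(\tilde T_k)=\tilde q_k(\tilde T_k)=\bigl(u_k(T_k)+2k\pi,\;v_k(T_k)+\tilde V_k-V_k\bigr)=\bigl((2k+1)\pi,\tilde V_k\bigr),
\end{equation*}
so (\ref{d:Au})--(\ref{d:Av}) hold with both left-hand sides equal to $0$.

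For the regularity, $q^0\in C^4(\mathbb R)^2$ follows because each $\tilde q_k$ is $C^4$ by Proposition \ref{p:stable}(i), the cut-offs $\varphi^{\pm}$ are $C^\infty$, and the gluing at each $\tilde T_k$ is seamless by the previous step; in particular $q^0\in H^3_{\text{loc}}(\mathbb R)^2$. To obtain $q^0_t\in H^2_{\text{ul}}(\mathbb R)^2$ it suffices to produce uniform $L^\infty$ bounds on $q^0_t,q^0_{tt},q^0_{ttt}$. I would write
\begin{equation*}
q^0=\tilde q_k+\varphi^-_{\tilde T_{k-1}+L,\tilde T_k-L}\,(\tilde q_{k-1}-\tilde q_k)\quad\text{on }[\tilde T_{k-1},\tilde T_k]
\end{equation*}
and apply Leibniz, reducing the bound to: (i) uniform pointwise control on the derivatives of $\tilde q_j$ furnished by Lemma \ref{l:heteroprop}; (ii) the estimate $(\varphi^\pm)^{(j)}=O(L^{-j})$ from (\ref{r:varphib}); and (iii) the pointwise bound $|\tilde q_{k-1}(t)-\tilde q_k(t)|=O(\varpi L_k)$ on the smoothening interval, obtained from (\ref{r:uhzero}) (each $u$-component lies within $O(1)$ of $2k\pi$) combined with (\ref{r:vhleft})--(\ref{r:vhright}) and the defining condition $|\tilde v_{k-1}(\tilde T_k)-\tilde V_k|\leq\pi$ on $\tilde V_k$. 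The main, though entirely routine, subtlety is in (iii): the $v$-component difference $\tilde v_{k-1}-\tilde v_k$ grows linearly in $L_k$, so one must pair it carefully with $(\varphi^-)^{(j)}=O(L_k^{-j})$ (valid since $L_k-2L\geq L_k/2$) to obtain a product that is bounded uniformly in $k$.
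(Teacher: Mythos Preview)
Your proof is correct and follows essentially the same approach as the paper. The paper's proof is a two-sentence sketch invoking the construction, Lemma~\ref{l:heteroprop}, and Remark~\ref{r:UL}; you have simply unpacked those references, showing in particular that $q^0\equiv\tilde q_k$ near each $\tilde T_k$ so that $q^0(\tilde T_k)=((2k+1)\pi,\tilde V_k)$ holds with equality, and spelling out the Leibniz estimate behind the uniform $L^\infty$ bounds on $q^0_t,q^0_{tt},q^0_{ttt}$ needed for $q^0_t\in H^2_{\mathrm{ul}}(\mathbb R)^2$.
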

\begin{proof}
	The smoothness of $q^0$ and $q_t^0 \in H^2_{\text{ul}}(\mathbb{R})^2$ follow from the construction, Lemma \ref{l:heteroprop} and Remark \ref{r:UL}. By definition, $u(\tilde{T}_k)=(2k+1)\pi$ and $v(\tilde{T}_k)=\tilde{V}_k$, which trivially implies (\ref{d:Au}), (\ref{d:Av}). 
\end{proof}

Let $k(t) = j$ for $t \in (T_{j-1},T_j]$ and $\dt =\min \lbrace t-T_{k(t)-1},T_{k(t)}-t \rbrace = \min \lbrace |t-\tilde{T}_k|, \: k \in \mathbb{Z} \rbrace$. Furthermore, let
\[
\tilde{\omega}_k = \frac{\tilde{V}_k-\tilde{V}_{k-1}}{\tilde{T}_k-\tilde{T}_{k-1}}.
\]

\begin{lemma} \label{l:q*bounds}
	There exist an absolute constant $c_6 \geq 1$ so that any $q^0=(u^0,v^0)$ given by (\ref{r:defq*}) satisfies:
	\begin{align}
		|u^0(t)- 2k(t)\pi| & \leq c_6 e^{-\frac{1}{2}\sqrt{\ve}\dt}, \label{r:Uzero} \\
	|v^0(t)-V_{k(t)-1}-\tilde{\omega}_{k(t)}(t)(t-T_{k(t)-1}) |  & \leq c_6 (1 \wedge M), \label{r:Vzero} 
	\end{align}
	\begin{align}
	|u^0_t(t)| & \leq c_6   \left( \sqrt{\ve}\: e^{-\frac{1}{2}\sqrt{\ve}\dt} + \frac{1}{L_{k(t)}}\right), & |v^0_t(t) - \omega_{k(t)}| & \leq c_6 \left( \sqrt{\ve} \mu \: e^{-\frac{1}{2}\sqrt{\ve}\dt} + \frac{1}{L_{k(t)}}\right), \label{r:One} \\
	|u^0_{tt}(t)| & \leq c_6  \left( \ve\:  e^{-\frac{1}{2}\sqrt{\ve}\dt} + \frac{1}{L^2_{k(t)}}\right), & |v^0_{tt}(t)| & \leq c_6 \left(  \ve \mu \:  e^{-\frac{1}{2}\sqrt{\ve}\dt} + \frac{1}{L^2_{k(t)}}\right), \label{r:Two}
	\end{align}
	and finally
	\begin{subequations}
		\begin{align}
	|u^0_{ttt}(t)| & \leq c_6 \varpi  \left(\ve \: e^{-\frac{1}{2}\sqrt{\ve}\dt} + \frac{1}{L^3_{k(t)}}\right),  \\ |v^0_{ttt}(t)| & \leq c_6 \varpi  \left( \ve \mu \: e^{-\frac{1}{2} \sqrt{\ve}\dt} + \frac{1}{L^3_{k(t)}}\right). 
	\end{align}	\label{r:Three}
	\end{subequations}
\end{lemma}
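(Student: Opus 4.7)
The plan is to decompose each interval $[\tilde{T}_{k-1}, \tilde{T}_k]$ into three sub-regions according to the support of the smoothing functions: the pure heteroclinic subintervals $[\tilde{T}_{k-1}, \tilde{T}_{k-1}+L]$ where $\varphi^-_{\tilde{T}_{k-1}+L,\tilde{T}_k-L} = 1$ so $q^0 = \tilde{q}_{k-1}$, and $[\tilde{T}_k - L, \tilde{T}_k]$ where $\varphi^+_{\tilde{T}_{k-1}+L,\tilde{T}_k-L} = 1$ so $q^0 = \tilde{q}_k$, and the interior interpolation region $I_k := [\tilde{T}_{k-1}+L, \tilde{T}_k-L]$ where both bumps are nontrivial. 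In the pure heteroclinic regions all bounds follow directly from Lemma \ref{l:heteroprop} applied to $q_{k-1}$ or $q_k$ (shifted by $(\tilde{T}_k - T_k, \tilde{V}_k - V_k)$ and $2k\pi$ respectively); one only has to match $\dt$ with the $|t-\tilde{T}_{k-1}|$ or $|t-\tilde{T}_k|$ appearing in (\ref{r:uhzero})--(\ref{r:three}) and note that in these subintervals the $1/L_k^j$ terms in (\ref{r:One})--(\ref{r:Three}) are not needed, so the stated bounds are trivially implied.

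In the interpolation region $I_k$, the key observation is that both $\tilde{u}_{k-1}(t)$ and $\tilde{u}_k(t)$ are close to the same asymptote $2k\pi$ (the right endpoint of the $(k-1)$-th heteroclinic and the left endpoint of the $k$-th), with $|\tilde{u}_{k-1}(t)-2k\pi|, |\tilde{u}_k(t)-2k\pi| \leq c_5 e^{-\frac{1}{2}\sqrt{\ve}\dt}$ from (\ref{r:uhzero}). Writing
\[
u^0(t) - 2k\pi = \varphi^-(t)\bigl(\tilde{u}_{k-1}(t) - 2k\pi\bigr) + \varphi^+(t)\bigl(\tilde{u}_k(t) - 2k\pi\bigr)
\]
and using $\varphi^- + \varphi^+ = 1$ gives (\ref{r:Uzero}). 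For the derivatives, we exploit $(\varphi^-)_t + (\varphi^+)_t = 0$ to rewrite
\[
u^0_t = (\varphi^-)_t\bigl[(\tilde{u}_{k-1}-2k\pi)-(\tilde{u}_k-2k\pi)\bigr] + \varphi^-(\tilde{u}_{k-1})_t + \varphi^+(\tilde{u}_k)_t,
\]
so the dangerous $O(1)$ jump between $\tilde{u}_{k-1}$ and $\tilde{u}_k$ cancels; the first bracket is $O(e^{-\sqrt{\ve}\dt/2})$ by (\ref{r:uhzero}), while $|(\varphi^\pm)_t| = O(1/L_k)$ by (\ref{r:varphib}), and the remaining two terms are bounded by $c_5\sqrt{\ve}e^{-\sqrt{\ve}\dt/2}$ via (\ref{r:uhone}), giving (\ref{r:One}). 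Higher derivatives (\ref{r:Two})--(\ref{r:Three}) follow by iterating the product rule and applying in parallel the fact that $\sum_{j=0}^k \binom{k}{j}[(\varphi^-)^{(j)} + (\varphi^+)^{(j)}] = 0$, collecting one factor of $1/L_k$ per derivative landing on a bump function and using (\ref{r:two})--(\ref{r:three}) for derivatives of $\tilde{u}_j$.

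The $v^0$ bounds are treated by the same three-region decomposition, but the bookkeeping is slightly trickier because the asymptotic rotator velocities $\omega_{k-1}, \omega_k, \omega_{k+1}$ appearing in (\ref{r:vhleft})--(\ref{r:vhright}) differ (by $O(\mu)$) from the averaged rate $\tilde{\omega}_k = (\tilde{V}_k-\tilde{V}_{k-1})/L_k$ used in (\ref{r:Vzero}); however, the defining bound (\ref{r:mbound}) on $M$ together with the definition $\tilde{v}_{k-1}(\tilde{T}_k) \approx \tilde{V}_{k-1} + \omega_k L_k$ implies $|\omega_k - \tilde{\omega}_k| L_k \ll M$, which absorbs the resulting mismatch into the $1\vee M$ (treated as $1\wedge M$ in the statement) term. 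In the interpolation region, writing $v^0 - \tilde{V}_{k-1} - \tilde{\omega}_k(t-\tilde{T}_{k-1}) = \varphi^-[\tilde{v}_{k-1} - (\tilde{V}_{k-1} + \tilde{\omega}_k(t-\tilde{T}_{k-1}))] + \varphi^+[\tilde{v}_k - (\tilde{V}_{k-1} + \tilde{\omega}_k(t-\tilde{T}_{k-1}))]$ and using the affine approximations for $\tilde{v}_{k-1}, \tilde{v}_k$ gives (\ref{r:Vzero}); the derivative bounds for $v^0$ proceed as for $u^0$, using the telescoping $(\varphi^-)_t + (\varphi^+)_t = 0$ trick on the nonaffine fluctuation to cancel the $O(M)$ jumps, with exponential decay supplied by the $\mu$-factor bounds in (\ref{r:vhleft})--(\ref{r:three}). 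The main obstacle is precisely this last cancellation for $v$: the linear drifts do not themselves vanish, and one must repeatedly subtract the same affine reference from both $\tilde{v}_{k-1}$ and $\tilde{v}_k$ so that the bump-function derivatives see only exponentially small differences while the $M$-scale mismatch is charged to $M/L_k^j$ contributions, reproducing the form of (\ref{r:One})--(\ref{r:Three}).
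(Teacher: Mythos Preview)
Your approach is essentially the same as the paper's: write $u^0-2k\pi$ and $v^0-(\text{affine reference})$ as convex combinations via $\varphi^\pm$, then differentiate and use (\ref{r:uhzero})--(\ref{r:three}) together with (\ref{r:varphib}). The paper does not bother with the three-region split, since the convex-combination identities (\ref{r:u*final})--(\ref{r:v*final}) are valid on all of $[\tilde T_{k-1},\tilde T_k]$ and reduce to your ``pure heteroclinic'' cases automatically when $\varphi^-\equiv 1$ or $\varphi^+\equiv 1$; but this is only cosmetic.

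There is one genuine small gap in your $v$-estimates. You derive $|\omega_k-\tilde\omega_k|\,L_k\ll M$ from (\ref{r:mbound}) alone, and then say the resulting contributions are ``$M/L_k^j$'', which would force $c_6$ to depend on $M$ and would not give the stated $1\wedge M$ in (\ref{r:Vzero}) or the bare $1/L_k$ in (\ref{r:One}). What you are missing is the second, independent bound coming from the \emph{definition} of $\tilde V_k$: it is chosen $\bmod\,2\pi$ so that $|\tilde v_{k-1}(\tilde T_k)-\tilde V_k|\le\pi$, hence $|\tilde v_{k-1}(\tilde T_k)-\tilde V_k|\ll 1\wedge M$. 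Combined with $|\tilde v_{k-1}(\tilde T_k)-\tilde V_{k-1}-\omega_k L_k|\ll\mu$ from (\ref{r:vhleft}), this yields the paper's sharper estimate
\[
|\omega_k-\tilde\omega_k|\ \ll\ \frac{1\wedge M}{L_k}\ \le\ \frac{1}{L_k},
\]
which is exactly what is needed to get an $M$-free $1/L_k^j$ in the derivative bounds and the $1\wedge M$ (not $1\vee M$) in (\ref{r:Vzero}). Once you add this one observation, your argument goes through and matches the paper's. (Also, your displayed identity $\sum_j\binom{k}{j}[(\varphi^-)^{(j)}+(\varphi^+)^{(j)}]=0$ is garbled; the correct fact you want is simply $(\varphi^-)^{(j)}+(\varphi^+)^{(j)}=0$ for every $j\ge 1$.)
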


\begin{proof} 
We write (\ref{r:defq*}) in an abbreviated form $q^0=\varphi^- \tilde{q}_{k-1} + \varphi^+ \tilde{q}_k$ for $t \in [\tilde{T}_{k-1},\tilde{T}_k]$. Then
\begin{align}
u^0 & = \varphi^- (\tilde{u}_{k-1}-2k\pi) + \varphi^+ (\tilde{u}_k-2k\pi) + 2k\pi, \label{r:u*final} \\
v^0 & = \varphi^- (\tilde{v}_{k-1}-\tilde{V}_{k-1}-\tilde{\omega}_k(t-\tilde{T}_{k-1})) + \varphi^+ (\tilde{u}_k-\tilde{V}_{k-1}-\tilde{\omega}_k(t-\tilde{T}_{k-1}) ) + \tilde{V}_{k-1}+\tilde{\omega}_k(t-\tilde{T}_{k-1}). \label{r:v*final}
\end{align}
To obtain (\ref{r:Uzero}), the left-hand side of (\ref{r:One}), and (\ref{r:Two}), (\ref{r:Three}), it suffices 
to differentiate (\ref{r:u*final}), (\ref{r:v*final}) and insert (\ref{r:uhzero})-(\ref{r:three}) as required and (\ref{r:varphib}).

From the left-hand sides of (\ref{r:vhleft}), (\ref{r:vhright}) and the definition of $\tilde{v}_k$, $\tilde{T}_k$, $\tilde{V}_k$, we easily obtain that for all $t \in [T_{k-1},T_k]$,
\begin{subequations}
\begin{align}
|\tilde{v}_{k-1}(t)-\tilde{V}_{k-1}-\omega_k(t-\tilde{T}_{k-1}) | & \ll \mu,  \label{r:vkzeroa} \\
|\tilde{v}_k(t)-\tilde{V}_k-\omega_k(t-\tilde{T}_k) | & \ll \mu. \label{r:vkzerob}
\end{align}
\label{r:vkzero}
\end{subequations}
By inserting $t=\tilde{T}_k$ in (\ref{r:vkzeroa}), we get 
\begin{equation*}
\left| \omega_k - \frac{\tilde{v}_{k-1}(\tilde{T}_k)-\tilde{V}_{k-1}}{\tilde{T}_k-\tilde{T}_{k-1}} \right| \ll \frac{\mu}{\tilde{T}_k-\tilde{T}_{k-1}}.
\end{equation*}
By definition and (\ref{r:mbound}) we know that $|\tilde{V}_k - \tilde{v}_{k-1}(\tilde{T}_k)| \ll 1 \wedge M$. As also $\mu \leq 1 \wedge M$, we have
\begin{equation}
 | \omega_k - \tilde{\omega}_k | \leq \left| \omega_k - \frac{\tilde{v}_{k-1}(\tilde{T}_k)-\tilde{V}_{k-1}}{\tilde{T}_k-\tilde{T}_{k-1}} \right| + 
 \left| \frac{\tilde{V}_k - \tilde{v}_{k-1}(\tilde{T}_k)}{\tilde{T}_k-\tilde{T}_{k-1}} \right| \ll \frac{1 \wedge M}{\tilde{T}_k-\tilde{T}_{k-1}}. \label{r:omega}
\end{equation}
Combining it with (\ref{r:vkzero}), and using $\tilde{V}_{k-1}+\tilde{\omega}_k(t-\tilde{T}_{k-1}) = \tilde{V}_k+\tilde{\omega}_k(t-\tilde{T}_k)$ and $\mu \leq 1 \wedge M$, we obtain
\begin{subequations}
	\begin{align}
	|\tilde{v}_{k-1}(t)-\tilde{V}_{k-1}-\tilde{\omega}_k(t-\tilde{T}_{k-1}) | & \ll 1 \wedge M,  \label{r:vkzerooa} \\
	|\tilde{v}_k(t)-\tilde{V}_{k-1}-\tilde{\omega}_k(t-\tilde{T}_{k-1}) | & \ll 1 \wedge M. \label{r:vkzeroob}
	\end{align}	\label{r:vkzeroo}
\end{subequations}
Now (\ref{r:Vzero}) follows from (\ref{r:v*final}) and (\ref{r:vkzeroo}). The right-hand side of (\ref{r:One}) is obtained easily by differentiating (\ref{r:v*final}), using the right-hand sides of (\ref{r:vhleft}), (\ref{r:vhright}) and finally (\ref{r:omega}).
\end{proof}

\section{Invariant sets with $L^{\infty}$ bounds} \label{s:seven}

We now construct $\mathcal{A}$-relatively $\xi$-invariant sets with respect to the dynamics (\ref{r:grad}) satisfying a-priori $L^{\infty}$ bounds. More specifically, we construct $\mathcal{B}_1$ such that any $q=(u,v) \in \mathcal{B}_1$ satisfies for all $t \in \mathbb{R}$
\begin{align}
  |u(t)-2k(t)\pi | & \leq c_7 e^{-\frac{1}{2}\sqrt{\ve}\: \dt  }, \label{r:main1} \\
  |v(t)-v^0(t)| & \leq c_8 M, \label{r:main2}
\end{align}	
where $c_7,c_8>0$ are absolute constants. We first define the set $\mathcal{B}_1$, then show that it is $\mathcal{A}$-relatively $\xi$-invariant, that $q^0 \in \mathcal{B}_1$, and finally we deduce (\ref{r:main1}) and (\ref{r:main2}).

Let $v^-_k,v^+_k : [\tilde{T}_k,\tilde{T}_{k+1}] \rightarrow{\mathbb{R}}$ be the unique $C^2$ functions satisfying
\begin{align*}
v^-_k(\tilde{T}_k)&= \tilde{V}_k+c_6 M,  &
v^+_k(\tilde{T}_k)&= \tilde{V}_k-c_6 M,  \\
v^-_k(\tilde{T}_{k+1})&= \tilde{V}_{k+1}+c_6 M, &
v^+_k(\tilde{T}_{k+1})&= \tilde{V}_{k+1}-c_6 M,
\end{align*}
and
\begin{equation*}
-(v^-_k)_{tt}(t)=(v^+_k)_{tt}(t)=c_2^2 e^2 \: \ve \mu \: e^{-\sqrt{\ve} \: \dt}.
\end{equation*}
We define $\mathcal{B}_1$ to be the set of all $q \in \mathcal{A}$ satisfying
\begin{align}
     z^+(t-\tilde{T}_k-3/(4 \sqrt{\ve})) + 2k\pi &\leq  u(t) \leq z^-(t-\tilde{T}_{k+1}+3/(4 \sqrt{\ve})) + 2(k+1)\pi, & t\in [\tilde{T}_k,\tilde{T}_{k+1}], \label{r:defb1u} \\
     v^+_k(t)& \leq  v(t) \leq v^-_k(t), & t\in [\tilde{T}_k,\tilde{T}_{k+1}]. \label{r:defb1v}
\end{align}

\begin{lemma} \label{l:B1}
	The set $\mathcal{B}_1$ is $\mathcal{A}$-relatively $\xi$-invariant.
\end{lemma}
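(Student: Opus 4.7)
The plan is a parabolic comparison argument applied coordinate-by-coordinate. Suppose for contradiction that $q(s_0) \in \mathcal{B}_1$ and $\xi^{s-s_0}(q) \in \mathcal{A}$ for all $s \in [s_0, s_1]$, yet one of the defining inequalities (\ref{r:defb1u})--(\ref{r:defb1v}) fails at some $s \in (s_0, s_1]$; let $s^*$ be the infimum of such $s$. By continuity of $\xi$ in $\mathcal{X}_{\text{loc}}$ (Theorem \ref{t:exist}(ii)) and the closedness of the barrier constraints, at $s = s^*$ the solution $(u,v) = \xi^{s^*-s_0}(q)$ must touch (not strictly cross) one of the barriers at some point $t^* \in \mathbb{R}$.

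I first treat the $u$-barriers. By Lemma \ref{l:subsuper}(iii) combined with the periodicity of $V$ in $u$,
\begin{equation*}
\bar u_k(t) := z^-(t - \tilde T_{k+1} + 3/(4\sqrt{\ve})) + 2(k+1)\pi, \qquad \underline u_k(t) := z^+(t - \tilde T_k - 3/(4\sqrt{\ve})) + 2k\pi
\end{equation*}
are, respectively, a strict stationary super- and sub-solution of (\ref{r:gradu}) on $(\tilde T_k, \tilde T_{k+1})$, uniformly in the $v$-coordinate. Suppose $u(s^*, t^*) = \bar u_k(t^*)$ (the three other sub-cases being symmetric). If $t^*$ lies in the open interior, the strict parabolic comparison argument, exactly in the spirit of Lemma \ref{l:stationary}, gives a contradiction: on one hand $u_s(s^*, t^*) \geq 0$ since $s^*$ is the first touching time, while on the other hand
\begin{equation*}
u_s = u_{tt} - \partial_u V(u,v,\cdot) \leq \bar u_{k,tt} - \partial_u V(\bar u_k,v,\cdot) < 0
\end{equation*}
by the strict super-solution property combined with $u = \bar u_k$ at $(s^*, t^*)$. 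If $t^* = \tilde T_j$ is an endpoint, condition (\ref{d:Au}) forces $|u(s^*, \tilde T_j) - (2j+1)\pi| \leq 1/3$, while Lemma \ref{l:subsuper}(v)--(vi), together with the choice of $L$ large enough that $z^-$ evaluated at the "far" argument $-L_k + 3/(4\sqrt{\ve})$ is exponentially small by (vi), shows the barrier value $\bar u_k(\tilde T_j)$ is strictly larger than the $\mathcal A$-permitted range, ruling out touching at the boundary.

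Next I handle the $v$-barriers. Given that the $u$-barriers hold throughout $[s_0, s^*]$ by the step above, the bound (\ref{r:defb1u}) together with Lemma \ref{l:subsuper}(vi) yields $|u(t) - 2k(t)\pi| \leq c_7 e^{-\sqrt{\ve}\dt/2}$ for an appropriate absolute constant $c_7$ (this is the content of (\ref{r:main1}), a direct consequence of the $u$-barriers). Using the elementary inequality $1 - \cos x \leq x^2/2$ and $|f_v| \leq 1$ from (A1) then gives
\begin{equation*}
|\partial_v V(u(t), v(t), t)| = \ve\mu(1-\cos u(t))|f_v| \leq \tfrac{1}{2} c_7^2 \ve\mu e^{-\sqrt{\ve}\dt},
\end{equation*}
which is strictly less in absolute value than $|(v^\mp_k)_{tt}| = c_2^2 e^2 \ve\mu e^{-\sqrt{\ve}\dt}$ provided $c_7^2/2 < c_2^2 e^2$ (a compatibility requirement on the constants). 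Hence $v^-_k$ is a strict stationary super-solution and $v^+_k$ a strict stationary sub-solution of (\ref{r:gradv}) on each $(\tilde T_k, \tilde T_{k+1})$. At the kinks $v^\mp_k(\tilde T_k) = \tilde V_k \pm c_6 M$, whereas (\ref{d:Av}) gives $|v(s^*, \tilde T_k) - \tilde V_k| \leq M$; since $c_6 \geq 1$, there is strict slack at the boundary. The same first-touching argument as for $u$ then excludes any violation of the $v$-barriers, completing the contradiction.

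The principal obstacle is the compatibility of constants at the kinks $\tilde T_j$, where the barriers from adjacent intervals meet: one must verify that the $\mathcal A$-constraints (\ref{d:Au}) and (\ref{d:Av}) produce boundary values strictly within the barrier bounds, using the specific construction in Lemma \ref{l:subsuper}(v)--(vi) and $L$ chosen large. A subsidiary but essential point is the sequential treatment of the two coordinates: the $u$-barriers must be handled first, since they are uncoupled from $v$, and only then can the $v$ super/sub-solution property be invoked, as it relies on the $u$-bound via the factor $1-\cos u$.
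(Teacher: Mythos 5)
Your proposal follows the paper's argument: apply the parabolic comparison principle on each $[\tilde T_k,\tilde T_{k+1}]\times[s_0,s_1]$, first to $u$ with the stationary barriers built from $z^\mp$ of Lemma~\ref{l:subsuper} (the endpoint conditions supplied by (\ref{d:Au}) and Lemma~\ref{l:subsuper}(v)--(vi)), then to $v$ with $v^\mp_k$ once the exponential $L^\infty$ bound (\ref{r:main1}) on $u$ is established; the sequential $u$-then-$v$ treatment is exactly the paper's. The only difference is stylistic: you re-derive the parabolic maximum principle via a first-touching-time argument on each compact interval rather than citing Evans' theorem as the paper does, and in doing so two small imprecisions should be tightened — the compatibility $c_7^2/2 < c_2^2e^2$ is not an open requirement (the paper fixes $c_7=c_2e$ precisely at this step), and with $c_6\geq 1$ the $v$-barrier may be touched, not strictly avoided, at the endpoints $\tilde T_k$, which is harmless since (\ref{r:defb1v}) is a non-strict inequality (the direct PMP citation handles this without comment).
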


\begin{proof} 
We apply twice the parabolic maximum principle \cite[Sec. 7, Theorem 12]{Evans:10}. Assume that $q(s_0) \in \mathcal{B}_1$, and that for all $s \in [s_0,s_1]$, $q(s)\in \mathcal{A}$. We have already shown in Lemma \ref{l:subsuper}, (iii), that  $z^+(.-\tilde{T}_k-3/(4 \sqrt{\ve}))$ is a strict stationary sub-solution, and $z^-(.-\tilde{T}_{k+1}+3/(4 \sqrt{\ve}))$ a strict stationary super-solution of (\ref{r:gradu}) on $(\tilde{T}_k,\tilde{T}_{k+1})$. The assumptions and Lemma \ref{l:stationary},(v) imply that 
$$
z^+(t-\tilde{T}_k-3/(4 \sqrt{\ve})) + 2(k-1)\pi \leq u(t,s) \leq z^-(t-\tilde{T}_{k+1}+3/(4 \sqrt{\ve})) + 2k\pi
$$
holds on the parabolic boundary 
\begin{equation}
(t,s) \in \lbrace [\tilde{T}_k,s], s\in [s_0,s_1] \rbrace \cup \lbrace [t,s_0], t\in [\tilde{T}_k,\tilde{T}_{k+1}] \rbrace \cup \lbrace [\tilde{T}_{k+1},s], s\in [s_0,s_1] \rbrace, \label{r:parboundary}
\end{equation}
thus by the parabolic maximum principle, (\ref{r:defb1u}) holds for all $s \in [s_0,s_1]$ and all $k \in \mathbb{Z}$.

Consider now the bounds on $z^-$, $z^+$. By Lemma \ref{l:stationary},(vi), we that for $t \in [\tilde{T}_k,\tilde{T}_{k+1}]$,
\begin{align*}
 |z^+(t-\tilde{T}_k-3/(4\sqrt{\ve}))-2\pi | & \leq c_2 e^{-\frac{1}{2}\sqrt{\ve} | t-\tilde{T}_k| + \frac{3}{8}} \leq c_2 e \:  e^{-\frac{1}{2}\sqrt{\ve} \dt }. 
\end{align*}
We obtain analogous bounds on $ |z^-(t-\tilde{T}_{k+1}+3/(4 \sqrt{\ve}))|$. We deduce that whenever $u(t)$ satisfies (\ref{r:defb1u}), then (\ref{r:main1}) holds for some absolute $c_7 = c_2 e$. 

We now show that whenever $u(t,s)$ satisfies (\ref{r:main1}), $v^-$ is a super-solution, and $v^+$ a sub-solution of (\ref{r:gradv}) on $(\tilde{T}_k,\tilde{T}_{k+1})$. Consider $v^-$. By using the definition of $(v_k^-)_{tt}$, the relation $(1-\cos x) \leq (x-2k(t)\pi)^2/2$, (\ref{r:main1}) with $c_7=c_2e$ and the standing assumption (A1), we get that for $t \in (\tilde{T}_k,\tilde{T}_{k+1})$,
\begin{align*}
(v^-_k)_{tt}-V_v(u,v^-,t) & = - c_2^2 e^2 \: \ve \mu e^{-\sqrt{\ve} \: \dt} + \ve \mu (1-\cos u(t))|f_v(u,v^-_k,t)| \\
& \leq - c_2^2 e^2 \: \ve \mu e^{-\sqrt{\ve} \: \dt} + c_2^2 e^2 \: \ve \mu  e^{-\sqrt{\ve} \: \dt} \leq 0.
\end{align*}
Analogously we get that $(v^+_k)_{tt}-V_v(u,v^+_k,t) \geq 0$. Furthermore, by the definition of $\mathcal{A}$, we have that for all $k \in \mathbb{Z}$, $v^+_k(\tilde{T}_k) \leq v(\tilde{T}_k) \leq v^-_k(\tilde{T}_k)$ and $v^+_k(\tilde{T}_{k+1}) \leq v(\tilde{T}_{k+1}) \leq v^-_k(\tilde{T}_{k+1})$.
It suffices now to apply the parabolic maximum principle to (\ref{r:gradv}) for all $k \in \mathbb{Z}$, with the same parabolic boundary (\ref{r:parboundary}).  
\end{proof}

\begin{lemma}
	We have that $q^0 \in \mathcal{B}_1$.
\end{lemma}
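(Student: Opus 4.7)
The plan is to verify the two enclosure conditions (\ref{r:defb1u}) and (\ref{r:defb1v}) on each interval $[\tilde{T}_k,\tilde{T}_{k+1}]$; membership in $\mathcal{A}$ and the smoothness assertions were dealt with in the preceding lemma, so only these pointwise bounds remain. I will treat the $u$- and $v$-components separately, reducing in each case to bounds already in hand for the building blocks $\tilde{q}_k,\tilde{q}_{k+1}$ and then taking a convex combination through the cut-off weights $\varphi^{\pm}$.

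For the $u$-component, on $[\tilde{T}_k,\tilde{T}_{k+1}]$ one has $u^0=\varphi^-\tilde{u}_k+\varphi^+\tilde{u}_{k+1}$ with $\varphi^-+\varphi^+=1$, $\varphi^{\pm}\in[0,1]$. I would apply Proposition \ref{p:stable} (ii) with the maximal shift $T=3/(4\sqrt{\ve})$ to each of the two adjacent heteroclinic minimizers $u_k,u_{k+1}$; after translating back via $\tilde{u}_k(t)=u_k(t-\tilde{T}_k+T_k)+2k\pi$ (and similarly for $\tilde{u}_{k+1}$) one obtains, on the whole of $[\tilde{T}_k,\tilde{T}_{k+1}]$,
\begin{align*}
z^+(t-\tilde{T}_k-3/(4\sqrt{\ve}))+2k\pi &\le \tilde{u}_k(t) < 2(k+1)\pi,\\
2k\pi<\tilde{u}_{k+1}(t)&\le z^-(t-\tilde{T}_{k+1}+3/(4\sqrt{\ve}))+2(k+1)\pi.
\end{align*}
Combined with $0<z^{\pm}<2\pi$ from Lemma \ref{l:subsuper} (i), each of $\tilde{u}_k,\tilde{u}_{k+1}$ lies individually inside the strip prescribed by (\ref{r:defb1u}); their convex combination $u^0$ therefore does too.

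For the $v$-component, the key observation is that $v^-_k$ is concave and $v^+_k$ is convex by construction (read off from the sign of their prescribed second derivatives), so each lies on the correct side of the chord interpolating its boundary data. Writing $\tilde{\omega}_{k+1}=(\tilde{V}_{k+1}-\tilde{V}_k)/(\tilde{T}_{k+1}-\tilde{T}_k)$, these chords are $\tilde{V}_k\pm c_6 M+\tilde{\omega}_{k+1}(t-\tilde{T}_k)$, hence
$$v^+_k(t)\le \tilde{V}_k-c_6 M+\tilde{\omega}_{k+1}(t-\tilde{T}_k),\qquad v^-_k(t)\ge \tilde{V}_k+c_6 M+\tilde{\omega}_{k+1}(t-\tilde{T}_k).$$
On the same interval Lemma \ref{l:q*bounds} delivers $|v^0(t)-\tilde{V}_k-\tilde{\omega}_{k+1}(t-\tilde{T}_k)|\le c_6(1\wedge M)\le c_6 M$ (using $k(t)=k+1$ there), and chaining the three inequalities yields (\ref{r:defb1v}).

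The only real obstacle is bookkeeping: reconciling the interval index $k$ of (\ref{r:defb1u})--(\ref{r:defb1v}) with the index $k(t)=k+1$ used in Lemma \ref{l:q*bounds}, and checking that the shifts $\pm 3/(4\sqrt{\ve})$ keep the arguments of $z^{\pm}$ inside their respective domains of definition. The latter is immediate because $L_k\ge 4L$ and because $z^-$ is defined on $(-\infty,3/(4\sqrt{\ve})]$ while $z^+$ is defined on $[-3/(4\sqrt{\ve}),\infty)$, so both required arguments are admissible throughout $[\tilde{T}_k,\tilde{T}_{k+1}]$.
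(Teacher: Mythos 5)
Your proposal matches the paper's proof in structure: use Proposition \ref{p:stable} (ii) with the maximal shift $T=3/(4\sqrt{\ve})$ for the $u$-component, pass to a convex combination through $\varphi^{\pm}$, and use concavity/convexity of $v^{\mp}_k$ together with (\ref{r:Vzero}) for the $v$-component. The $v$-argument is clean and correct.

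There is, however, a slip in the $u$-argument that breaks the chain as written. In your second display you claim $2k\pi < \tilde{u}_{k+1}(t)$. The correct (and needed) lower bound is $2(k+1)\pi < \tilde{u}_{k+1}(t)$, which follows immediately because $\tilde{u}_{k+1}(t) = u_{k+1}(t-\tilde{T}_{k+1}+T_{k+1}) + 2(k+1)\pi$ and $u_{k+1}>0$. With your weaker bound, the assertion that $\tilde{u}_{k+1}$ lies above the lower envelope fails: since $\pi/2<z^+<2\pi$, the quantity $z^+(t-\tilde{T}_k-3/(4\sqrt{\ve}))+2k\pi$ ranges up to just below $2(k+1)\pi$, and so $\tilde{u}_{k+1}(t)>2k\pi$ does \emph{not} imply $\tilde{u}_{k+1}(t)\ge z^+(\cdot)+2k\pi$. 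The corrected bound $\tilde{u}_{k+1}(t)>2(k+1)\pi \geq z^+(\cdot)+2k\pi$ closes the argument, and that middle link $z^+(\cdot)+2k\pi \leq 2(k+1)\pi \leq \tilde{u}_{k+1}(t)$ is exactly the chain the paper exhibits.
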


\begin{proof}
	Use the notation $\tilde{q}_k=(\tilde{u}_k,\tilde{v}_k)$ and $\tilde{q}_{k+1}=(\tilde{u}_{k+1},\tilde{v}_{k+1})$. From Proposition \ref{p:stable}, we see that for $t \in [\tilde{T}_k,\tilde{T}_{k+1}]$, 
	\begin{align*}
	 z^+(t-\tilde{T}_k-3/(4 \sqrt{\ve})) + 2k \pi &\leq  \tilde{u}_k(t) \leq 2(k+1)\pi \leq z^-(t-\tilde{T}_{k+1}+3/(4 \sqrt{\ve})) + 2(k+1)\pi, \\
	 z^+(t-\tilde{T}_k-3/(4 \sqrt{\ve})) + 2k \pi &\leq 2(k+1)\pi  \leq \tilde{u}_{k+1}(t) \leq z^-(t-\tilde{T}_{k+1}+3/(4 \sqrt{\ve})) + 2(k+1)\pi.
	\end{align*}
	As $u^0$ is a convex combination of $\tilde{u}_k(t)$, $\tilde{u}_{k+1}(t)$ on $[\tilde{T}_k,\tilde{T}_{k+1}]$, (\ref{r:defb1u}) holds for $u=u^0$.
		
The relation (\ref{r:defb1v}) for $v=v^0$ follows from (\ref{r:Vzero}) and the definitions of $v_k^-$, $v_k^+$ (as $v_k^-$ is concave and $v_k^+$ is convex).	
\end{proof}

\begin{lemma}
	The relations (\ref{r:main1}), (\ref{r:main2}) hold for all $q \in \mathcal{B}_1$.
\end{lemma}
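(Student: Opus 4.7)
The plan is to derive both bounds directly from the defining inequalities (\ref{r:defb1u}), (\ref{r:defb1v}) of $\mathcal{B}_1$, combined with the estimates on $z^\pm$ from Lemma \ref{l:subsuper} and the approximation of $v^0$ from Lemma \ref{l:q*bounds}; the semiflow structure plays no role here. For (\ref{r:main1}), fix $k$ and $t \in [\tilde{T}_k,\tilde{T}_{k+1}]$, so $2k(t)\pi = 2(k+1)\pi$ on the interior. From (\ref{r:defb1u}) I get the two-sided bracket
\[
z^+(s_1) - 2\pi \;\leq\; u(t)-2(k+1)\pi \;\leq\; z^-(s_2),
\]
with $s_1 = t-\tilde{T}_k-3/(4\sqrt{\ve})$, $s_2 = t-\tilde{T}_{k+1}+3/(4\sqrt{\ve})$. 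When $\dt \geq 3/(4\sqrt{\ve})$, the triangle inequality gives $|s_j| \geq \dt - 3/(4\sqrt{\ve})$, so Lemma \ref{l:subsuper}(vi) yields $|z^-(s_2)|, |z^+(s_1)-2\pi| \leq c_2 e^{3/8}e^{-\frac{1}{2}\sqrt{\ve}\dt}$, essentially the calculation already carried out in Lemma \ref{l:B1}. When $\dt < 3/(4\sqrt{\ve})$, one has only the trivial bound $|u(t)-2(k+1)\pi| \leq 2\pi + 1/3$ (the $1/3$ coming from $\mathcal{A}$ at the jump time), but then $e^{-\frac{1}{2}\sqrt{\ve}\dt} \geq e^{-3/8}$ absorbs this into an absolute multiple of $e^{-\frac{1}{2}\sqrt{\ve}\dt}$. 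Choosing $c_7$ as the maximum of these two constants proves (\ref{r:main1}).

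For (\ref{r:main2}), decompose $v^{-}_k(t) = \ell^-_k(t) + \psi^-_k(t)$ on $[\tilde{T}_k,\tilde{T}_{k+1}]$, where $\ell^-_k(t) = \tilde{V}_k + c_6 M + \tilde{\omega}_{k+1}(t-\tilde{T}_k)$ is the affine interpolant matching the prescribed boundary values (the slope being $\tilde{\omega}_{k+1}$ by definition), and $\psi^-_k$ solves the Dirichlet problem $-(\psi^-_k)_{tt} = c_2^2 e^2\,\ve\mu\, e^{-\sqrt{\ve}\dt}$, $\psi^-_k(\tilde{T}_k)=\psi^-_k(\tilde{T}_{k+1})=0$. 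Using (\ref{r:Vzero}) we get $\ell^-_k(t)-v^0(t) \in [c_6 M - c_6(1\wedge M),\, c_6 M + c_6(1\wedge M)] \subseteq [0, 2c_6 M]$, since $1\wedge M \leq M$. For $\psi^-_k$, the Dirichlet Green's function $G(t,s) = \min(t,s)\big(L_k - \max(t,s)\big)/L_k$ satisfies the $L_k$-independent bound $G(t,s) \leq \min(s,L_k-s)$, so
\[
|\psi^-_k(t)| \;\leq\; c_2^2 e^2\,\ve\mu \int_0^{L_k} \min(s,L_k-s)\, e^{-\sqrt{\ve}\min(s,L_k-s)}\, ds \;\leq\; 2 c_2^2 e^2\,\ve\mu \int_0^{\infty} s\, e^{-\sqrt{\ve}s}\, ds \;=\; 2 c_2^2 e^2 \mu.
\]
Since (\ref{r:mbound}) enforces $M \geq (\varpi+1)\mu \geq \mu$, this is at most $2c_2^2 e^2 M$. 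Hence $v^-_k(t) - v^0(t) \leq (2c_6 + 2c_2^2 e^2) M$, and the symmetric argument for $v^+_k$ gives the matching lower bound. The sandwich $v^+_k \leq v \leq v^-_k$ from (\ref{r:defb1v}) then delivers (\ref{r:main2}) with $c_8 = 2c_6 + 2c_2^2 e^2$.

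The only mildly delicate point is the treatment of the "boundary layer" $\dt \leq 3/(4\sqrt{\ve})$ in (\ref{r:main1}), where the exponential decay of $z^\pm$ is unavailable and one must fall back on Lemma \ref{l:subsuper}(i)/(v); this is harmless because the target exponential is bounded below by $e^{-3/8}$ on the same range. The critical ingredient in (\ref{r:main2}) is the $L_k$-independent Green's function estimate, which guarantees that the $L^\infty$ oscillation produced by the Dirichlet data plus the exponentially localized source remains bounded purely in terms of $M$ (via $\mu \leq M$), thereby giving a uniform bound across all jump intervals regardless of their lengths.
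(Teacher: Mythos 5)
Your proof is correct and follows essentially the same route as the paper: for (\ref{r:main1}) you extract the bound from (\ref{r:defb1u}) and Lemma \ref{l:subsuper}(vi) exactly as done in the proof of Lemma \ref{l:B1} (to which the paper simply refers), and for (\ref{r:main2}) you use the sandwich (\ref{r:defb1v}) and the decomposition of $v^\pm_k$ into an affine interpolant plus a Dirichlet-problem correction, which is the same decomposition the paper uses, bounded via the Green's function rather than by the paper's direct integration from the midpoint of the interval.

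Two small remarks. First, your case split at $\dt = 3/(4\sqrt{\ve})$ in the proof of (\ref{r:main1}) is unnecessary: for all $t \geq \tilde{T}_k$ one has $|t-\tilde{T}_k - 3/(4\sqrt{\ve})| \geq |t-\tilde{T}_k| - 3/(4\sqrt{\ve})$ even when the right-hand side is negative, so $e^{-\frac{1}{2}\sqrt{\ve}|t-\tilde{T}_k-3/(4\sqrt{\ve})|} \leq e^{3/8}e^{-\frac{1}{2}\sqrt{\ve}|t-\tilde{T}_k|}$ holds on the whole interval, which is exactly the paper's one-line estimate. Moreover, your fallback bound ``$|u(t)-2(k+1)\pi|\leq 2\pi+1/3$, with $1/3$ from $\mathcal{A}$'' is justified incorrectly: the $\mathcal{A}$-condition (\ref{d:Au}) constrains $u$ only at the jump times $\tilde{T}_k$, not at a generic $t$ with $\dt < 3/(4\sqrt{\ve})$. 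The correct trivial bound on that range is $|u(t)-2(k+1)\pi| < 3\pi/2$, which follows from (\ref{r:defb1u}) together with Lemma \ref{l:subsuper}(i); that bound is smaller than the one you stated, so your conclusion survives, but the source of the constant should be (\ref{r:defb1u}), not $\mathcal{A}$. Second, your route through $v^0$ via (\ref{r:Vzero}) is a slightly different (and arguably cleaner) bookkeeping than the paper's, which bounds $v^-_k - v^+_k$ directly and then uses $|v-v^0| \leq v^-_k - v^+_k$; both give the same $O(M)$ bound with absolute constants.
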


\begin{proof}
	The relation (\ref{r:main1}) has already been established in the proof of Lemma \ref{l:B1}. As $v(t),v^0(t)$ satisfy (\ref{r:defb1v}), we have that 
	\begin{equation}
	|v(t)-v^0(t)| \leq \sup \lbrace |v^+_k(t)-v^-_k(t)|, k \in \mathbb{Z}, t \in [\tilde{T}_k,\tilde{T}_{k+1}] \rbrace . \label{r:v+v-}
	\end{equation}
	To establish a bound on $|v^+_k(t)-v^-_k(t)|$, we introduce $w(t)=v^-_k(t)-\tilde{V}_k-(\tilde{V}_{k+1}-\tilde{V}_k)(t-\tilde{T}_k)-M$. As $w(\tilde{T}_k)=w(\tilde{T}_{k+1})=0$, by symmetry $w_t(\tilde{T})=0$, where $\tilde{T}=(\tilde{T}_k+\tilde{T}_{k+1})/2$. Consider $T \in [\tilde{T},\tilde{T}_{k+1}]$.
	\begin{align*}
	|w_t(T)| & = c_2^2 e^2 \: \ve \mu \int_{\tilde{T}}^T \exp( -\sqrt{\ve} \: (\tilde{T}_k-t))dt \leq 8 e^2 \sqrt{\ve} \: \mu \exp(-\sqrt{\ve}(\tilde{T}_k-T)), \\
	|w(T)-w(\tilde{T})| & \leq c_2^2 e^2 \: \sqrt{\ve} \: \mu \int_{\tilde{T}}^T \exp(-\sqrt{\ve}(\tilde{T}_k-t)) dt \leq c_2^2 e^2 \mu.
	\end{align*}
	As $w(T)$ is decreasing on $[\tilde{T},\tilde{T}_{k+1}]$, we get that $|w(t)| \leq c_2^2 e^2 \mu$. By analogy, the same holds on $[\tilde{T}_k,\tilde{T}]$. We see that $|v^-_k(t)-\tilde{V}_k-(\tilde{V}_{k+1}-\tilde{V}_k)(t-\tilde{T}_k)| \leq M + c_2^2 e^2 \mu$. Analogously,
	$|v^+_k(t)-\tilde{V}_k-(\tilde{V}_{k+1}-\tilde{V}_k)(t-\tilde{T}_k)| \leq M + c_2^2 e^2 \mu$, thus
	$$ 
	| v^+_k(t)-v^-_k(t)| \leq 2M + 2c_2^2 e^2 \: \mu, \text{ for all } t \in [\tilde{T}_k,\tilde{T}_{k+1}],
	$$
	which is by (\ref{r:mbound}) $\leq c_8 M$, with $c_8 = 2c_2^2 e^2 +2$. It suffices to insert this in (\ref{r:v+v-}).
\end{proof}

\section{Bounds on the derivatives} \label{s:eight}

Here we show that there is a $\mathcal{A}$-relatively $\xi$-invariant set such that the norms of the first, second and third order derivatives of $u$, $v$ all behave as $O_{\ve}(\log(\dt )/\dt )$. Let
\begin{equation}
\lambda(\tau) = \frac{\sqrt{\ve}}{4} \wedge  \frac{8\log \dtau}{\dtau}, \label{r:lambda}
\end{equation} 
where $\wedge$ is the minimum. In this section we use the weighted $ L^2$ norm
\begin{equation*}
||w||^2_{\Ltau}  := \int_{\mathbb{R}} e^{-\lambda(\tau)|t-\tau|} w^2(t)dt.
\end{equation*}
Let $\mathcal{B}_2$ be the set of all $q \in \mathcal{B}_1$ such that
\begin{subequations} \label{r:defB2}
\begin{align}
||u_t-u^0_t||^2_{\Ltau^2} & \leq c_9 \: \lambda(\tau), \label{r:defB2u} \\
||v_t-v^0_t||^2 _{\Ltau^2} & \leq c_9(M^2+1) \: \lambda(\tau), \label{r:defB2v}
\end{align} 
\end{subequations}
for all $\tau \in \mathbb{R}$. Let $\mathcal{B}_3$ be the set of all $q \in \mathcal{B}_2$ such that for all $\tau \in \mathbb{R}$,
\begin{subequations} \label{r:defB3}
\begin{align}
\ve||u_t-u^0_t||^2_{\Ltau^2} + ||u_{tt}||^2_{\Ltau^2}  & \leq c_{10}  (M^2 + \varpi^2) \ve \: \lambda(\tau), \label{r:defB3u}   \\
\ve||v_t-v^0_t||^2_{\Ltau^2} + ||v_{tt}||^2_{\Ltau^2}  & \leq c_{10}  (M^2 + \varpi^2) \ve \: \lambda(\tau). \label{r:defB3v}
\end{align}
\end{subequations}
Finally, let 
$\mathcal{B}_4$ be the set of all $q \in \mathcal{B}_3$ such that for all $\tau \in \mathbb{R}$,
\begin{subequations} \label{r:defB4}
\begin{align}
\ve||u_t-u^0_t||^2_{\Ltau^2} + ||u_{tt}||^2_{\Ltau^2} + ||u_{ttt}||^2_{\Ltau^2}   & \leq c_{11} (M^4 + \varpi^4)\ve \: \lambda(\tau), \label{r:defB4u}   \\
\ve||v_t-v^0_t||^2_{\Ltau^2} + ||v_{tt}||^2_{\Ltau^2} + ||v_{ttt}||^2_{\Ltau^2} & \leq c_{11} (M^4 + \varpi^4)\ve \: \lambda(\tau). \label{r:defB4v}
\end{align}
\end{subequations}
\begin{proposition} \label{p:Bthree} There exist absolute constants $c_9$, $c_{10}$ and $c_{11}$ so that the sets ${\mathcal B}_2$, ${\mathcal B}_3$ and ${\mathcal B}_4$ are $\mathcal{A}$-relatively $\xi$-invariant, and such that $q^0 \in  {\mathcal B}_4$. 
\end{proposition}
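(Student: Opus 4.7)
The plan is to establish the three $\mathcal{A}$-relative invariances by a bootstrap energy argument in the order $\mathcal{B}_2\Rightarrow\mathcal{B}_3\Rightarrow\mathcal{B}_4$. At each level I use the characterisation underlying Lemma \ref{l:combine}: assuming $q(s_0)\in\mathcal{B}_i$ and $q(s)\in\mathcal{A}$ on $[s_0,s_1]$, it is enough to show that at any instant at which a defining inequality of $\mathcal{B}_i$ becomes an equality, its $s$-derivative is strictly negative. Writing $\tilde{q}=q-q^0$, the deviation satisfies
\[
\tilde{q}_s \;=\; \tilde{q}_{tt} - \bigl(V_q(q,t)-V_q(q^0,t)\bigr) + F,\qquad F := q^0_{tt}-V_q(q^0,t),
\]
so the forcing $F$ is controlled pointwise by Lemma \ref{l:q*bounds}, and the $V$-difference is Lipschitz in $(u,v)$ with constants $O(\ve)$ and $O(\ve\mu)$ that are absorbed using the $L^{\infty}$-bounds (\ref{r:main1})--(\ref{r:main2}) provided by $\mathcal{B}_1$.

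For the first step I would differentiate $\lVert u_t-u^0_t\rVert^2_{L^2_{\tau}}$ in $s$ and integrate by parts in $t$. The parabolic piece produces the dissipation $-2\lVert(u-u^0)_{tt}\rVert^2_{L^2_{\tau}}$; the weight-derivative contributes an error of order $\lambda(\tau)\lVert u_t-u^0_t\rVert_{L^2_{\tau}}\cdot\lVert(u-u^0)_{tt}\rVert_{L^2_{\tau}}$, which a Young inequality converts into at most a quarter of the dissipation plus $O(\lambda(\tau)^2)\lVert u_t-u^0_t\rVert^2_{L^2_{\tau}}$, absorbable because $\lambda(\tau)\leq\sqrt{\ve}/4$. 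The remaining forcing and nonlinear inner products are controlled by Cauchy--Schwarz against the pointwise bounds of Lemma \ref{l:q*bounds}, producing an inhomogeneous contribution of size $O(\sqrt{\ve}\,\lambda(\tau))$ that fixes $c_9$. The argument for (\ref{r:defB2v}) is parallel, the extra factor $M^2+1$ coming from the $\mathcal{B}_1$ bound on $\lVert v-v^0\rVert_{L^{\infty}}$ used in the nonlinear term.

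For $\mathcal{B}_3$ and $\mathcal{B}_4$ I would apply the same weighted-energy identity to the equations for $u_{tt},v_{tt}$ (respectively $u_{ttt},v_{ttt}$) obtained by differentiating (\ref{r:grad}) once (resp.\ twice) in $t$. The dissipation is then $\lVert u_{ttt}\rVert^2_{L^2_{\tau}}$ (resp.\ $\lVert u_{tttt}\rVert^2_{L^2_{\tau}}$); the forcing now involves $D_tV_u,D_t^2V_u$ (resp.\ $D_t^2V_u,D_t^3V_u$), whose weighted norms are controlled by combining the already established $\mathcal{B}_2$ (resp.\ $\mathcal{B}_3$) bounds with the $L^{\infty}$-bounds of $\mathcal{B}_1$. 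Each $t$-differentiation of $V$ extracts one power of $v_t$ via the factor $f(u,v,t)$, which is $O(\varpi)$ on $\mathcal{A}$; this accounts for the powers $(M^2+\varpi^2)$ and $(M^4+\varpi^4)$ appearing in (\ref{r:defB3})--(\ref{r:defB4}). For $c_{10},c_{11}$ sufficiently large the inequality again closes at saturation. Finally, $q^0\in\mathcal{B}_4$ is a direct verification: the ``$u_t-u^0_t$'' and ``$v_t-v^0_t$'' terms vanish at $q=q^0$, while inserting (\ref{r:Two})--(\ref{r:Three}) into the weighted norms yields contributions of order $\ve^{3/2}\varpi^2$ and $1/L_{k(t)}^3$, both dominated by $c_{11}\varpi^4\ve\,\lambda(\tau)$ thanks to the definition of $\lambda$ and the assumption that $L\sqrt{\ve}$ is large.

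The main obstacle is the simultaneous closure of the energy inequality uniformly in $\tau$. The $\tau$-dependent weight $\lambda(\tau)\sim\log\dtau/\dtau$ is chosen precisely so that the gradient-of-weight error is a small fraction of the dissipation, while the forcing piece from $F$ integrated against $e^{-\lambda(\tau)|t-\tau|}$ still scales as $\lambda(\tau)$ rather than as a $\tau$-independent constant; this balance is what makes the bootstrap from $\mathcal{B}_i$ to $\mathcal{B}_{i+1}$ propagate the same linear-in-$\lambda$ form, and is the reason a constant weight would not suffice.
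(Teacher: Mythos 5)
Your overall plan — differentiate the weighted $L^2_{\tau}$-norms in $s$, integrate by parts to extract the dissipation, bound the weight-derivative and forcing terms, and bootstrap through $\mathcal{B}_2\Rightarrow\mathcal{B}_3\Rightarrow\mathcal{B}_4$ — matches the structure of Appendix C, and your verification of $q^0\in\mathcal{B}_4$ is also as in the paper (the $u_t-u^0_t$ and $v_t-v^0_t$ terms vanish and the weighted bounds on $u^0_{tt},u^0_{ttt},v^0_{tt},v^0_{ttt}$ follow from Lemma~\ref{l:q*bounds} via Lemma~\ref{l:weights}).

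However, there is a genuine gap at the central step, the closure of the first energy inequality. After integration by parts and Young's inequality one does not obtain a dissipative inequality of the form you can ``close at saturation''; what comes out is (in the notation $X=\lVert u_t-u^0_t\rVert^2_{\Ltau}$, $Y=\lVert u_{tt}-u^0_{tt}\rVert^2_{\Ltau}$)
\begin{equation*}
\frac{d}{ds}X \;\leq\; -Y + \lambda(\tau)^2 X + c\,\lambda(\tau)^3,
\end{equation*}
with $+\lambda(\tau)^2 X$ having the \emph{wrong sign}. You assert this term is ``absorbable because $\lambda(\tau)\leq\sqrt{\ve}/4$'', but smallness of $\lambda(\tau)$ does not help: an inequality $\dot{X}\leq \lambda^2 X+c\lambda^3$ admits exponential growth, and the saturation argument fails too since nothing in your reasoning gives a lower bound on the dissipation $Y$ at the saturation level $X=c_9\lambda(\tau)$. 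The paper's way around this is Lemma~\ref{l:poincare1}: a Poincar\'{e}-type inequality $X^2 \leq c_{34}\bigl(\lambda(\tau)^2 + \lambda(\tau)^{-1}Y\bigr)$ whose proof relies precisely on the $L^{\infty}$ bound $\lVert u-u^0\rVert_{L^\infty}\leq c_6+c_7$ from $\mathcal{B}_1$. Combined with the completed square $0\leq \frac{\lambda}{2c_{34}}X^2-2\lambda^2 X+2c_{34}\lambda^3$, this converts $-\tfrac12 Y+\lambda^2 X$ into $-\lambda^2 X+O(\lambda^3)$, after which Gronwall closes the argument. Without this inequality (or an equivalent device turning the dissipation into a negative multiple of $X$), your differential inequality does not propagate the bound, and the analogous difficulty recurs at the $\mathcal{B}_3$ and $\mathcal{B}_4$ levels. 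Your account also does not explain how the nonlinear term $V_q(q,t)-V_q(q^0,t)$ is controlled in $\Ltau$; the paper handles this directly via the pointwise bounds $|V_u|\ll\ve e^{-\sqrt{\ve}\dt/2}$, $|V_v|\ll\ve\mu e^{-\sqrt{\ve}\dt}$ from the $\mathcal{B}_1$ membership, fed through Lemma~\ref{l:weights}, rather than via a Lipschitz estimate on $V_q$.

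A smaller inaccuracy: you say ``each $t$-differentiation of $V$ extracts one power of $v_t$ \dots which is $O(\varpi)$ on $\mathcal{A}$''. There is no pointwise bound $|v_t|\leq O(\varpi)$ on $\mathcal{A}$; the $\varpi$ factors come from splitting $v_t = (v_t-\omega_{k(t)})+\omega_{k(t)}$, bounding $|\omega_{k(t)}|\leq\varpi$ and controlling the remainder in the weighted $L^2$ norm via the $\mathcal{B}_2$ (resp.\ $\mathcal{B}_3$) bounds already established.
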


The proof of Proposition is routine but technical, and as such postponed to the Appendix C. In essence, by differentiating the weighted integral versions of (\ref{r:gradu}), (\ref{r:gradv}), we obtain a differential inequality which by the Gronwall's lemma implies invariance of the sets as required. An important step is use of a variant of the Poincar\'{e} inequality (Lemma \ref{l:poincare1}) which relies on the $L^{\infty}$ bounds obtained in the previous section. We do the procedure iteratively for the three sets.

The main implication needed in the following is that we can for each $k$ approximate $q-\tilde{q}_k$ close to $\tilde{T}_k$ with a "well-behaved" $h$ vanishing at $\pm \infty$.

\begin{lemma} \label{p:B4bounds}
Assume that $q \in \mathcal{B}_4$. Then there exists an absolute constant $c_{12}>0$ such that for each $k \in \mathbb{Z}$ there exist $\tilde{h} = (\tilde{u}^h,\tilde{v}^h) \in H^3_{\text{loc}}(\mathbb{R})^2$ satisfying the following:
	
\begin{itemize}
	\item[(i)] For all $t \in [\tilde{T}_k-L, \tilde{T}_k+L]$, $\tilde{h}(t)=q(t)-\tilde{q}_k(t)$,
	\item[(ii)] For $t \geq \tilde{T}_k + L(1+1/\log L)$ and for $t\leq \tilde{T}_k - L(1+1/\log L)$ we have $\tilde{h}(t)=0$,
	\item[(iii)] For all $t \in \mathbb{R}$, $|\tilde{u}^h(t)| \leq c_{12} e^{-\frac{1}{2}\sqrt{\ve}|t - \tilde{T}_k|}$,
	\item[(iv)] For all $t \in [\tilde{T}_k-2\pi,\tilde{T}_k+2\pi] $, we have that $|\tilde{v}^h(t)|\leq c_8 M$,
	\item[(v)] For all $T \geq 0$,
	\begin{align*}
	 ||\tilde{h}_t||^2_{H^2((-\infty,\tilde{T_k}-T])^2}  +||\tilde{h}_t||^2_{H^2([\tilde{T_k}+T, \infty))^2} & \leq c_{12}(M^4 + \varpi^4) \left( \frac{\log^2 T}{T} \wedge \frac{\sqrt{\ve} |\log \ve|}{8} \right).
	\end{align*}
	\item[(vi)] Specifically, for $T = L$, we have
	\begin{align*}
	||\tilde{h}_t||^2_{H^2((-\infty,\tilde{T_k}-L])^2} + ||\tilde{h}_t||^2_{H^2([\tilde{T_k}+L, \infty))^2} & \leq c_{12} (M^4 + \varpi^4)  \frac{\log L}{L}.
	\end{align*}
\end{itemize}
\end{lemma}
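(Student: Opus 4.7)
The plan is to define $\tilde h$ by multiplying $q-\tilde q_k$ by a smooth cut-off that equals $1$ on the window $[\tilde T_k - L, \tilde T_k + L]$ and tapers to zero across a thin buffer of width $L/\log L$. Using the smoothening functions from (\ref{d:smooth}), set
\begin{equation*}
\psi_k(t) = \varphi^-_{\tilde T_k + L,\, \tilde T_k + L(1+1/\log L)}(t)\; \varphi^+_{\tilde T_k - L(1+1/\log L),\, \tilde T_k - L}(t),
\end{equation*}
so that $\psi_k \equiv 1$ on $[\tilde T_k - L, \tilde T_k + L]$, $\psi_k \equiv 0$ outside $[\tilde T_k - L(1+1/\log L), \tilde T_k + L(1+1/\log L)]$, and $|\psi_k^{(j)}| \ll_j (\log L / L)^j$ for $j \leq 3$ by (\ref{r:varphib}). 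Put $\tilde h(t) := \psi_k(t)(q(t) - \tilde q_k(t))$; items (i) and (ii) are then immediate from the construction.

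For (iii), since $L_{k-1}, L_k \geq 4L$ the support of $\tilde h$ is separated from $\tilde T_{k\pm 1}$ by at least $3L$, so $\dt = |t - \tilde T_k|$ throughout; the $L^{\infty}$-bound (\ref{r:main1}) from $\mathcal{B}_1$ and the heteroclinic estimate (\ref{r:uhzero}) in Lemma \ref{l:heteroprop} then give $|u - \tilde u_k| \leq (c_5 + c_7)e^{-\frac12\sqrt\ve\,|t - \tilde T_k|}$ on the support, and multiplying by $|\psi_k|\leq 1$ yields (iii). For (iv), once $L > 2\pi$, $\psi_k \equiv 1$ on $[\tilde T_k - 2\pi, \tilde T_k + 2\pi]$, and $v^0 = \tilde v_k$ throughout $[\tilde T_k - L, \tilde T_k + L]$ by (\ref{r:defq*}); hence $|\tilde v^h| = |v - v^0| \leq c_8 M$ by (\ref{r:main2}).

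For (v) and (vi), since $\tilde h$ is compactly supported, the task reduces to estimating the integrals of $|\tilde h_t|^2 + |\tilde h_{tt}|^2 + |\tilde h_{ttt}|^2$ over $[\tilde T_k + T, \tilde T_k + L(1+1/\log L)]$ and its mirror image. Where $\psi_k = 1$ one has $\tilde h_t = q_t - q^0_t$ (and analogues for higher derivatives, since $q^0 = \tilde q_k$ throughout the window), so the weighted bounds (\ref{r:defB4}) of $\mathcal{B}_4$ apply directly. On the two buffer strips the Leibniz rule yields cross-terms $\psi_k^{(i)}(q - \tilde q_k)^{(j-i)}$; the pointwise bounds on $\psi_k^{(i)}$ combined with (iii), (iv), Lemma \ref{l:q*bounds}, and $\mathcal{B}_4$ show that after integration over the buffer length $L/\log L$ these contribute at most the stated order.

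The main quantitative step is to pass from the $\tau$-dependent weighted bound of $\mathcal{B}_4$ to an un-weighted $L^2$ bound over the whole region. I would cover $[\tilde T_k + T, \tilde T_k + L(1+1/\log L)]$ by successive intervals $I_j = [\tau_j, \tau_{j+1}]$ with $\tau_0 = \tilde T_k + T$ and $\tau_{j+1} - \tau_j = 1/\lambda(\tau_j)$, so that $\sup_{t \in I_j} e^{\lambda(\tau_j)|t - \tau_j|} \leq e$, and apply (\ref{r:defB4}) centred at each $\tau_j$; the dominant contribution $\int_{I_j} |\tilde h_t|^2 \ll (M^4+\varpi^4)\lambda(\tau_j)$ (the $\tilde h_{tt}, \tilde h_{ttt}$ pieces being smaller by a factor $\ve$), together with $d\tau = dj/\lambda(\tau)$, leads to a Riemann sum for $\int \lambda^2(\tau)\,d\tau$. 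The hard (but elementary) part is the two-regime evaluation of this integral for $\lambda = \sqrt\ve/4 \wedge 8\log\dtau/\dtau$: the first regime contributes $O(\sqrt\ve|\log\ve|)$ (the cross-over happening at $\dtau \sim \log(1/\ve)/\sqrt\ve$), while integration by parts on $\int_T^\infty \log^2\tau/\tau^2\,d\tau$ gives $O(\log^2 T / T)$ in the second, together producing the minimum stated in (v). Item (vi) is the special case $T = L$, where the region collapses to the right-hand buffer alone (length $L/\log L$), covered by $O(1)$ intervals of the partition with $\lambda \sim \log L / L$.
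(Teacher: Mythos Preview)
Your proposal is correct and follows essentially the same route as the paper: the same cut-off $\psi_k$, the same verification of (i)--(iv), and the same localisation of the $\mathcal{B}_4$ bounds over a dyadic-type partition of $[\tilde T_k+T,\infty)$. The only cosmetic difference is in the summation step for (v): the paper packages the estimate $\sum_j \log y_j/y_j \ll \log^2 y_0/y_0$ (with $y_{j+1}=y_j(1+1/\log y_j)$) as a separate elementary lemma, whereas you phrase the same computation as a Riemann sum for $\int_T^\infty \lambda(\tau)^2\,d\tau$ and integrate by parts; since your interval lengths $1/\lambda(\tau_j)\sim \tau_j/(8\log\tau_j)$ agree with the paper's $y_{j+1}-y_j=y_j/\log y_j$ up to a constant, the two arguments are interchangeable.
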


We will use the following simple Lemma:
\begin{lemma} \label{l:basic}
	Assume $y_0 \geq 4$ and let 
	$ y_{j+1} = y_j\left( 1 + 1/ \log y_j \right)$.
	Then for some absolute implicit constant,
	$$ \sum_{j=0}^{\infty} \frac{\log y_j}{y_j} \ll  \frac{\log^2 y_0}{y_0}.$$
\end{lemma}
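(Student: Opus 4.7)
My plan is to linearize the recursion by passing to $L_j := \log y_j$, show that $L_j^2$ grows at least linearly in $j$, and then compare the resulting sum with an explicit Gaussian-type integral.

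First, since $y_0 \geq 4 > e$ and the sequence is increasing, we have $L_j \geq 1$ for all $j$, so $1/L_j \in (0,1]$ and the elementary inequality $\log(1+x) \geq x/2$ on $[0,1]$ gives
\begin{equation*}
L_{j+1} = L_j + \log\!\bigl(1 + 1/L_j\bigr) \geq L_j + \frac{1}{2L_j}.
\end{equation*}
Squaring yields $L_{j+1}^2 \geq L_j^2 + 1$, so by induction $L_j^2 \geq L_0^2 + j$, i.e.\
\begin{equation*}
\frac{\log y_j}{y_j} \leq \frac{\sqrt{L_0^2+j}}{\exp\!\bigl(\sqrt{L_0^2+j}\bigr)}.
\end{equation*}

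Next, observe that the function $\varphi(x):=\sqrt{L_0^2+x}\,e^{-\sqrt{L_0^2+x}}$ is decreasing on $[0,\infty)$, because $L_0 \geq \log 4 > 1$ implies $\sqrt{L_0^2+x} \geq 1$ and $t\mapsto t e^{-t}$ is decreasing for $t\geq 1$. Hence I can estimate the series by the integral: $\sum_{j=0}^\infty \varphi(j) \leq \varphi(0) + \int_0^\infty \varphi(x)\,dx$. The substitution $u = \sqrt{L_0^2+x}$, $dx = 2u\,du$, converts the integral to $2\int_{L_0}^\infty u^2 e^{-u}\,du = 2(L_0^2+2L_0+2)e^{-L_0}$. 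Since $L_0 \geq \log 4$, the factor $L_0^2+2L_0+2$ is bounded by an absolute constant times $L_0^2$, and $\varphi(0) = L_0/y_0 \leq L_0^2/y_0$. Combining, $\sum_{j\geq 0} \log y_j/y_j \ll L_0^2/y_0 = \log^2 y_0 / y_0$, as required.

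There is no real obstacle; the only thing to watch is that $L_0 \geq 1$ is needed for both the inequality $\log(1+x)\geq x/2$ to be applicable at every step and for the monotonicity of $\varphi$, and both are guaranteed by the hypothesis $y_0\geq 4$.
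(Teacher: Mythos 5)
Your proof is correct, and it follows the same broad strategy as the paper (establish a lower bound of the form $y_j \gtrsim \exp\bigl(c\sqrt{j}\bigr)$, then compare the resulting series with a convergent integral), but your route to the lower bound is genuinely different and cleaner. The paper works directly with $y_j$ and invokes the Mean Value Theorem to prove $y_j \geq e^{\frac{1}{2}\sqrt{4\log^2 y_0 + j}}$ by induction, which requires juggling constants of the type $e^{1/2}/8$ to close the inductive step. You instead pass to $L_j := \log y_j$, use $\log(1+x) \geq x/2$ on $[0,1]$ to get $L_{j+1} \geq L_j + \tfrac{1}{2L_j}$, and then observe that squaring produces the exact discrete invariant $L_{j+1}^2 \geq L_j^2 + 1$, hence $L_j^2 \geq L_0^2 + j$ with no estimation loss. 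This isolates the key structural fact that $\log^2 y_j$ increases by at least one per step, yields the slightly sharper bound $y_j \geq e^{\sqrt{\log^2 y_0 + j}}$ (versus the paper's $e^{\sqrt{\log^2 y_0 + j/4}}$), and dispenses with the MVT bookkeeping. The concluding sum-to-integral comparison, the substitution $u = \sqrt{L_0^2 + x}$, and the explicit evaluation $2\int_{L_0}^\infty u^2 e^{-u}\,du = 2(L_0^2+2L_0+2)e^{-L_0}$ are in substance what the paper does, just carried out more carefully, and the monotonicity checks you record ($L_0 > 1$ so $t \mapsto te^{-t}$ is decreasing where needed) are exactly the hypotheses the integral test requires. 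No gaps.
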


\begin{proof}
We first show inductively in $j=0,1,...$ that $y_j \geq e^{\frac{1}{2}\sqrt{x+j}}$, where $x$ is chosen so that $y_0=e^{\frac{1}{2}\sqrt{x}}$, i.e. $x=4\log^2 y_0$. Indeed, by the Mean Value Theorem, there is a real number $z$, $j\leq z \leq j+1$, so that
\begin{align*}
  e^{\frac{1}{2}\sqrt{x+j+1}}& =  e^{\frac{1}{2}\sqrt{x+j}}+\frac{e^{\frac{1}{2}\sqrt{x+z}}}{4 \sqrt{x+z} } \leq e^{\frac{1}{2}\sqrt{x+j}}+\frac{e^{1/2}}{8} \cdot \frac{e^{\frac{1}{2}\sqrt{x+j}}}{ \frac{1}{2}\sqrt{x+j}  } \leq  y_j + \frac{y_j}{\log y_j} = y_{j+1}. 
\end{align*}
Now as $\log y/y$ is decreasing for $y \geq 4$,
\begin{align*}
\sum_{j=0}^{\infty} \frac{\log y_j}{y_j} & \leq \frac{\log y_0}{y_0}+\int_x^{\infty} \frac{1}{2}\sqrt{z}e^{-\frac{1}{2}\sqrt{z}} dz  \ll \frac{\log y_0}{y_0}+ xe^{-\frac{1}{2}\sqrt{x}}= \frac{\log y_0}{y_0} + 4 \frac{\log^2 y_0}{y_0} \ll \frac{\log^2 y_0}{y_0}.
\end{align*}
\end{proof}

\begin{proof}[Proof of Lemma \ref{p:B4bounds}] Let
\begin{align*} 
  \tilde{h}(t) = 
  \begin{cases}
  \varphi^+_{\tilde{T}_k-L(1+1/\log L),\tilde{T}_k-L}(t) \: \cdot (q(t) - \tilde{q}_k(t)), & t \leq \tilde{T}_k, \\
  \varphi^-_{\tilde{T}_k+L, \tilde{T}_k+L(1+1/\log L)}(t) \: \cdot (q(t) - \tilde{q}_k(t)), & t \geq \tilde{T}_k.		 
  \end{cases}
\end{align*}
Now, (i) and (ii) follow from the definition of the smoothening functions (\ref{d:smooth}), and (iii) from (\ref{r:uhzero}), the definition of $\tilde{q}_k$ and (\ref{r:main1}). We claim that for each $T \geq 4 / \sqrt{\ve}$,
\begin{equation} \label{r:boundstemp}
 ||\tilde{h}_t||^2_{H^2([\tilde{T}_k+T,\tilde{T}_k+ T (1 + 1/\log T)])^2} \ll (M^4 + \varpi^4)  \frac{\log T}{T}.
\end{equation}
For $T \leq L$, this follows directly from (\ref{r:defB4}) with $\tau=\tilde{T}_k+T$, where the bounds on $\tilde{q}_k$ follow from its definition (as it is a translate of $q_k \in \mathcal{H}$), the bounds on $q_k$ from (\ref{r:vhleft})-(\ref{r:three}), and finally by using (\ref{r:varphib}). To show (iv), we use (i), and the fact that for all  $t \in [\tilde{T}_k-2\pi,\tilde{T}_k+2\pi]$, $q^0(t)=\tilde{q}^k(t)$, thus by (\ref{r:main2})
\begin{align*}
|\tilde{v}^h(t)| = |v(t)-\tilde{v}_k(t)| = |v(t)-v^0(t)| \leq  c_8 M.
\end{align*}
As by (ii), $h(t)$ vanishes for $t \geq L(1+1/\log L)$, the claim holds for $T \geq L(1+1/\log L)$. For $T \in [L, L(1+1/\log L)]$, (\ref{r:boundstemp}) similarly follows from the case $T = L$. Analogously we obtain for such $T$,
\begin{align}
 ||\tilde{h}_t||^2_{H^2([\tilde{T}_k- T (1 + 1/\log T),\tilde{T}_k-T] )^2} & \ll (M^4 + \varpi^4)  \frac{\log T}{T}, \label{r:boundstemp2} 
\end{align}
and
\begin{equation} \label{r:anotherterm}
||\tilde{h}_t||^2_{H^2([-4/\sqrt{\ve},0])^2}  \ll (M^4 + \varpi^4)  \: \sqrt{\ve}, \quad ||\tilde{h}_t||^2_{H^2([0,4/\sqrt{\ve}] )^2}  \ll (M^4 + \varpi^4)  \: \sqrt{\ve}.
\end{equation}
Now (vi) follows from (\ref{r:boundstemp}) and (\ref{r:boundstemp2}) with $T=L$, and again by noting that by (ii), $\tilde{h}(t)$ vanishes for $t \leq \tilde{T}_k-L(1+1/\log L)$ and $t \geq \tilde{T}_k+L(1+1/\log L)$. We obtain (v) as follows: in the case $T \geq 4 /\sqrt{\ve}$, we combine (\ref{r:boundstemp}) and (\ref{r:boundstemp2}) while inserting a sequence of $y_0=T$, $y_j=y_{j-1}(1+1/\log y_j)$ instead of $T$, and applying Lemma \ref{l:basic}. If $T \leq 4 /\sqrt{\ve}$, we add another term in the that estimate by using (\ref{r:anotherterm}).
\end{proof}

\section{Lower bound on the action dissipation} \label{s:dissipation}

In this section we develop a lower bound for the dissipation of the action with respect to the dynamics (\ref{r:grad}).

We now fix the constant $M$ with
\begin{equation} \label{r:Mcond}
M = 2\pi + 2( \varpi+1)(R+\mu) + 6R^{1/2}\ve^{1/4}.
\end{equation}
(clearly $M$ satisfies (\ref{r:mbound}) as required). 
Let $\mathcal{C}$ be the closure in $H^2_{\text{loc}}(\mathbb{R})^2$ of the set of all $h = (u^h,v^h) \in H^3_{\text{loc}}(\mathbb{R})^2$ satisfiying for all $t \in \mathbb{R}$ and all $T  \geq 0$,
\begin{align}
|u^h(t)| & \leq c_{12}\: e^{-\frac{1}{2}\sqrt{\ve}|t|+2\pi}, \label{d:Cone} \\
|v^h(0)| & \leq c_8M, \label{d:Ctwo} \\
||h_t||^2_{H^2((-\infty,-T])^2}  +||h_t||^2_{H^2([T, \infty))^2} & \leq 2 c_{12}(M^4 + \varpi^4) \left( \frac{\log^2 T}{T} \wedge \frac{\sqrt{\ve} |\log \ve|}{8} \right). \label{d:Cthree}
\end{align}
Consider for $(q,h) \in \mathcal{H} \times \mathcal{C}$, $q=(u,v)$, $h=(u^h,v^h)$,
\begin{align*}
E_q(h) & = \int_{-\infty}^0 L_{\omega^-(q)}(q+h,q_t+h_t,t) dt + \int_0^{\infty}L_{\omega^+(q)}(q+h,q_t+h_t,t)
 + (\omega^+(q)-\omega^-(q))(v(0)+v^h(0)), \\
D_q(h) & = \int_{-\infty}^{\infty}  (q+h)_s^2 dt,
\end{align*}
where we take $\omega^-(q)=\lim_{t \rightarrow -\infty} v_t $ and $\omega^+(q)=\lim_{t \rightarrow\infty} v_t $, and $q_s$ is evaluated by inserting (\ref{r:grad}).
We establish an uniform lower bound on the action dissipation $D_q$ on a certain level of action:
\begin{proposition} \label{p:dissipation}
	There exists a constant $\Delta_1>0$, $0 \leq \Delta_1 \leq \Delta_0/2$, depending on the region of instability $[\omega^-,\omega^+]$, $R$, $\Delta_0$ and $f$, and constants $0 \leq \Delta_0(q) \leq \Delta_0/2$ defined for all $q \in \mathcal{H}$, so that for all $(q,h) \in \mathcal{H} \times \mathcal{C}$, if
	$$ |E_q(h)-E_q(0)-\Delta_0(q)| \leq \Delta_1,$$
	then
	$$ D_q(h) \geq \Delta_1. $$
\end{proposition}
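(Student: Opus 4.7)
The plan is to realise $E_q$ as a smooth functional whose squared $L^2$-gradient equals $D_q$, and then combine (S1) with an infinite-dimensional Sard argument to single out an action level on which the gradient is quantitatively non-vanishing.

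First I would extend $E_q$ to a $C^{4+\gamma}$ functional on a suitable Banach space containing $\mathcal{C}$. Integration by parts, using the decay in (\ref{d:Cone})--(\ref{d:Cthree}) and the asymptotic-velocity corrections built into $E_q$, gives
\[
\partial E_q(h)\cdot k \;=\; -\int_{\mathbb{R}} \bigl((q+h)_{tt}-\partial_q V(q+h,t)\bigr)\cdot k\,dt \;=\; -\int_{\mathbb{R}} (q+h)_s\cdot k\,dt,
\]
so that $D_q(h)=\|dE_q(h)\|_{L^2}^2$ and the critical points of $E_q$ on $\mathcal{C}$ are precisely the heteroclinic solutions of (\ref{r:EL}) with asymptotic velocities $\omega^{\pm}(q)$. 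Next I localise the low-action critical points. If $h\in\mathcal{C}$ is critical with $E_q(h)\le E_q(0)+2\Delta_0$, the variational inequality (\ref{r:Lmain}) together with Proposition \ref{p:hetero} and (S1) force the crossing $(t_1,v_1)$ of $q+h$ with $u\equiv\pi$ (mod $2\pi$) into the interior of $\mathcal{N}_q$; otherwise (\ref{r:sigma}) would give $E_q(h)\ge E_q(0)+2\Delta_0$. Hence the set $K_q$ of critical points of $E_q$ with action in $[E_q(0),E_q(0)+2\Delta_0]$ is bounded in $\mathcal{C}$, and compact in $H^2_{\mathrm{loc}}$ by the bootstrap used in Lemma \ref{l:compact}.

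I then invoke an infinite-dimensional Morse--Sard theorem. The linearisation of the Euler--Lagrange operator at any $q+h\in K_q$ is Fredholm because the asymptotic tori $\mathbb{T}_{\omega^{\pm}(q)}$ are $u$-hyperbolic, so Smale's theorem applies to $E_q$ restricted to a slice transverse to $t$-translation, and the set of critical values $E_q(K_q)$ has empty interior in $[E_q(0),E_q(0)+2\Delta_0]$. I pick $\Delta_0(q)\in(0,\Delta_0/2]$ in its complement; on the level set $\{E_q=E_q(0)+\Delta_0(q)\}$, $D_q>0$ pointwise. A contradiction argument upgrades this to $\Delta_1(q)>0$: if $D_q(h_n)\to 0$ along this level, the bounds (\ref{d:Cone})--(\ref{d:Cthree}) combined with the regularising effect of (\ref{r:grad}) from Theorem \ref{t:exist}(iv) yield a subsequence converging in $H^2_{\mathrm{loc}}$ to some $h_\ast\in\mathcal{C}$ with $E_q(h_\ast)=E_q(0)+\Delta_0(q)$ and $dE_q(h_\ast)=0$, contradicting the previous step. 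Finally $\Delta_1=\inf_{q\in\mathcal{H}}\Delta_1(q)>0$ follows from compactness of $\mathcal{H}$ in $H^2_{\mathrm{loc}}$ (Lemma \ref{l:compact}), continuity of $(q,h)\mapsto(E_q(h),D_q(h))$ in the combined topology, and an upper-semi-continuous selection of $\Delta_0(q)$ in the $q$-dependent complement of the critical values.

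The main obstacle is the Morse--Sard step: setting up the correct Fredholm framework (choosing the transverse slice, coping with the fact that $\mathcal{C}$ is only a closed convex subset of a Banach space rather than a smooth manifold, and matching the finite regularity $C^{4+\gamma}$ allowed by (A1) to the index hypothesis of Smale's theorem) in a form robust enough that $\Delta_0(q)$ can be chosen to depend upper-semi-continuously on $q\in\mathcal{H}$. That upper-semi-continuity is what ultimately delivers the uniform positive $\Delta_1$; the remaining book-keeping (extension of $E_q$ to a neighbourhood of $0$, verification that the boundary terms truly cancel under (\ref{d:Cone})--(\ref{d:Cthree}), and the contradiction compactness argument) is comparatively routine.
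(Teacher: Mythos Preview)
Your overall strategy coincides with the paper's: realise $D_q$ as the squared $L^2$-gradient of $E_q$, apply an infinite-dimensional Morse--Sard theorem to find a regular action level for each $q$, then use compactness of $\mathcal{H}$ and $\mathcal{C}$ to make the lower bound uniform. Two points deserve correction.

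First, the Fredholm/Sard step is simpler than you suggest. There is no $t$-translation symmetry (the system is non-autonomous), so no slice is needed, and $\mathcal{C}$ being merely a closed set is irrelevant: the paper extends $E_q$ to the full Hilbert space $\mathcal{Y}$ with the weighted norm (\ref{r:norm}) and applies Poho\v{z}aev's functional version of Morse--Sard (Lemma \ref{l:sard}) directly. The Fredholm property comes for free because $\text{Ker}\,D^2E_q(h)$ consists of solutions of a linear second-order system in $\mathbb{R}^2$, hence has dimension $\le 4$, and $C^4$ regularity (from (A1)) matches the hypothesis $k\ge\max\{m,2\}=4$.

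Second, and more seriously, your final uniformisation step has a genuine gap. You propose to choose $\Delta_0(q)$ by an ``upper-semi-continuous selection'' in the complement of the critical values of $E_q$, but the set of critical values varies with $q$ in an uncontrolled way and no such selection is available in general. The paper avoids this entirely: it defines $\Delta_1(q,e)=\inf\{D_q(h):h\in\mathcal{C},\,E_q(h)=E_q(0)+e\}$, shows $(q,e)\mapsto\Delta_1(q,e)$ is \emph{lower} semi-continuous on $\mathcal{H}\times[0,\Delta_0]$ (using compactness of level sets in $\mathcal{C}$ and continuity of $E_q,D_q$), and then notes that $\Delta_1(q):=\sup_e\Delta_1(q,e)$ is lower semi-continuous on the compact $\mathcal{H}$, hence bounded below by some $\tilde\Delta_1>0$. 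The final $\Delta_1$ and the assignment $q\mapsto\Delta_0(q)$ are obtained by a finite-cover argument: one covers the open set $\{\Delta_1(q,e)>\tilde\Delta_1/2\}$ by products $U_j\times B(e_j,r_j)$, extracts a finite subcover $U_1,\dots,U_n$ of $\mathcal{H}$, and sets $\Delta_1=\min\{\tilde\Delta_1/2,r_1,\dots,r_n\}$ and $\Delta_0(q)=e_j$ for $q\in U_j$. This piecewise-constant choice is what delivers the uniform band $|E_q(h)-E_q(0)-\Delta_0(q)|\le\Delta_1$ in the statement; your proposed selection mechanism does not.
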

To obtain $\Delta_1$ we introduce for any $q \in \mathcal{H}$:
\begin{align}
\Delta_1(q,e) & = \inf \left\lbrace D_q(h), \: \: h \in \mathcal{C}, \: E_q(h)=E_q(0)+e \right\rbrace,  \\
\Delta_1(q) & = \sup_{e \in [0,\Delta_0]} \Delta_1(q,e).
\end{align}
We prove the Proposition in several steps. First we recall an infinite-dimensional version of the Morse-Sard theorem, which will enable us to deduce that $\Delta_1(q)>0$ for all $q \in \mathcal{H}$. We then in several lemmas establish various continuity and lower semi-continuity properties, which combined with compactness of $\mathcal{H}$, $\mathcal{C}$ enables us to complete the proof.

Recall first the Poho\v{z}aev infinite-dimensional version of the Morse-Sard theorem. Consider a real functional $E$ on a real, separable, reflexive Banach space $\mathcal{Y}$. We say that $E$ is Fredholm, if it is $C^2$ (in the sense of Fr\'{e}chet derivatives), and the dimension of $\text{Ker } D^2 E (h)$, $D^2E(h) : \mathcal{Y} \rightarrow \mathcal{Y}^*$ is finite dimensional for any $h \in \mathcal{Y}$. (Equivalently, $D^2E$ is Fredholm, as in this case it suffices to check finite dimensionality of the kernel \cite{Pohozaev:68}.) A critical value of $E$ is any value $e \in \mathbb{R}$ for which there exists $h\in \mathcal{Y}$ so that $E(h)=e$ and $D E(h) = 0$.

\begin{lemma} \label{l:sard} {\bf Morse-Sard-Poho\v{z}aev \cite{Pohozaev:68}.}
	Assume that $E : \mathcal{Y} \rightarrow \mathbb{R}$ is a real, $C^k$ functional defined on a real, separable, reflexive Banach space $\mathcal{Y}$. Assume that 
	$\dim (\text{Ker } D^2 E (h) )\leq m < \infty$
	for any $h\in \mathcal{Y}$, and let $k \geq \max \left\lbrace m,2 \right\rbrace$. Then the set of critical values of $E$ has Lebesgue measure 0.
\end{lemma}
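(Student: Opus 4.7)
The plan is to reduce the infinite-dimensional assertion to the classical finite-dimensional Morse--Sard theorem by a Lyapunov--Schmidt decomposition localized at each critical point, combined with a Lindel\"of covering argument that exploits separability of $\mathcal{Y}$.

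Concretely, fix a critical point $h_0 \in \mathcal{Y}$ and set $A = D^2 E(h_0) : \mathcal{Y} \to \mathcal{Y}^*$; by hypothesis $K := \text{Ker}\, A$ has finite dimension $d \leq m$. Since $K$ is finite-dimensional it admits a closed topological complement $W$, so $\mathcal{Y} = K \oplus W$. As the remark preceding the lemma statement indicates, the finite-dimensional-kernel hypothesis together with the ambient structure forces $A$ to be Fredholm of index zero, so that $A|_W : W \to Z$ is a topological isomorphism onto a closed complemented subspace $Z \subset \mathcal{Y}^*$, with a continuous projection $P : \mathcal{Y}^* \to Z$. Writing $h = h_0 + \eta + w$ with $\eta \in K$ and $w \in W$, the equation $P \, DE(h_0 + \eta + w) = 0$ has derivative in $w$ at the origin equal to $P A|_W = A|_W$, which is invertible. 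The implicit function theorem then produces $w = \psi(\eta)$, a $C^{k-1}$ map on a neighbourhood $U$ of $0$ in $K$ with $\psi(0) = 0$. Define the reduced functional $\tilde{E} : U \to \mathbb{R}$ by $\tilde{E}(\eta) = E(h_0 + \eta + \psi(\eta))$. A direct chain-rule computation, using that $P \, DE = 0$ along the graph of $\psi$, shows that $h_0 + \eta + \psi(\eta)$ is a critical point of $E$ if and only if $\eta$ is a critical point of $\tilde{E}$, and that the corresponding critical values coincide. Since $\tilde{E}$ is $C^k$ on $U \subset K \cong \mathbb{R}^d$ with $k \geq \max\{d,2\}$, the classical Morse--Sard theorem gives that the critical values of $\tilde{E}$ form a Lebesgue-null subset of $\mathbb{R}$; hence so do the critical values of $E$ lying in the local neighbourhood of $h_0$ produced by this reduction.

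For the global conclusion I would cover the critical set of $E$ by countably many such neighbourhoods. Since $\mathcal{Y}$ is separable (and hence Lindel\"of in its norm topology), any open cover of the critical set admits a countable subcover; the set of all critical values is then a countable union of null sets in $\mathbb{R}$, and therefore null. The main obstacle I expect is the Fredholm step: in a general Banach space, finite dimensionality of $\text{Ker}\, A$ does not by itself guarantee that $A|_W$ is an isomorphism onto a complemented subspace of $\mathcal{Y}^*$. One therefore either appeals to self-adjointness of $A$ together with reflexivity of $\mathcal{Y}$ (in which case a finite-dimensional kernel forces closed range and yields Fredholm index zero, exactly as the parenthetical remark following the lemma's hypothesis signals), or verifies closed range and finite cokernel directly; this is the substance of the original argument in \cite{Pohozaev:68}, whose conclusion the lemma records.
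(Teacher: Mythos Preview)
The paper does not prove this lemma: it is stated as a citation of Poho\v{z}aev's result \cite{Pohozaev:68} and used as a black box. Your sketch via Lyapunov--Schmidt reduction followed by the finite-dimensional Morse--Sard theorem and a Lindel\"of covering is the standard route and is essentially what Poho\v{z}aev does, so there is nothing to compare against in the paper itself.

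One small point worth tightening: you assert that $\tilde{E}$ is $C^k$, but the na\"{\i}ve chain rule only gives $C^{k-1}$, since $\psi$ is obtained from the implicit function theorem applied to the $C^{k-1}$ map $DE$. The extra derivative comes from the variational structure: with the decomposition chosen so that $W$ annihilates the complement $Z'$ of $Z$ in $\mathcal{Y}^*$ (automatic in the Hilbert setting actually used later, where $W=K^\perp$ and $A$ is self-adjoint), one has $DE(h_0+\eta+\psi(\eta))\in Z'$ along the graph, and hence $DE[D\psi(\eta)\xi]=0$. This yields $D\tilde{E}(\eta)=(I-P)\,DE(h_0+\eta+\psi(\eta))|_K$, which is a $C^{k-1}$ function of $\eta$, so $\tilde{E}$ is genuinely $C^k$ and the application of Morse--Sard with $k\geq d$ goes through. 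You already flagged the Fredholm step as the substantive issue, and your remark that reflexivity plus self-adjointness of $D^2E$ forces index zero is exactly the mechanism at work.
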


We will apply Lemma \ref{l:sard} to the functionals $h \mapsto E_q(h)$ for $q \in \mathcal{H}$. Let $\mathcal{Y}$ be the set of all $q=(u,v) \in H^2_{\text{loc}}(\mathbb{R})^2$ such that $||q||_{\mathcal{Y}} < \infty$, where
\begin{equation}
||q||_{\mathcal{Y}} = \left(  \int_{-\infty}^{\infty}e^{\frac{1}{4}\sqrt{\ve}\: |t|}u(t)^2 dt + |v(0)|^2 + ||u_t||^2_{H^1(\mathbb{R})} +  ||v_t||^2_{H^1(\mathbb{R})}  \right)^{1/2}. \label{r:norm}
\end{equation}
The space $\mathcal{Y}$ is a Hilbert space, as the norm (\ref{r:norm}) is induced by a scalar product defined in a straight-forward way. Thus $\mathcal{Y}$ is separable and reflexive.

Let us establish compactness of $\mathcal{H}$ and $\mathcal{C}$ and continuity of $D_q$, $E_q$. Recall that assumed topology on $\mathcal{H}$ and $\mathcal{C}$ is induced by $H^2_{\text{loc}}(\mathbb{R})^2$. It is straightforward to verify that on $\mathcal{C}$ it coincides with the topology induced by the $\mathcal{Y}$-norm.

\begin{lemma} \label{l:comcon}
	(i) The sets $\mathcal{H}$ and $\mathcal{C}$ are compact,
	
	(ii) The functions $(q,h) \mapsto E_q(h),D_q(h)$ are well-defined and continuous on $\mathcal{H} \times \mathcal{C}$.
\end{lemma}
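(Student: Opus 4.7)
The argument splits along (i) and (ii).

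For (i), compactness of $\mathcal{H}$ is Lemma \ref{l:compact}, so only $\mathcal{C}$ needs treatment. The defining bounds (\ref{d:Cone})--(\ref{d:Cthree}) give, on any compact interval $[-N,N]$, a uniform $H^3$-estimate on the pre-closure set: (\ref{d:Cthree}) with $T=0$ (where the second argument of $\wedge$ keeps the right-hand side finite) bounds $h_t$ in $H^2([-N,N])^2$, (\ref{d:Cone}) bounds $u^h$ in $L^\infty(\mathbb{R})$, and (\ref{d:Ctwo}) combined with the Sobolev embedding $H^1 \hookrightarrow L^\infty$ applied to $v^h_t$ bounds $v^h$ in $L^\infty([-N,N])$. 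Rellich--Kondrachov then gives relative compactness of the pre-closure set in $H^2([-N,N])^2$; diagonalizing over $N$ produces $H^2_{\text{loc}}$ convergent subsequences. The pointwise bounds (\ref{d:Cone})--(\ref{d:Ctwo}) pass to the limit by continuity, while the tail $L^2$-bound (\ref{d:Cthree}) passes by lower semicontinuity of $L^2$-norms, so every limit point is again in $\mathcal{C}$.

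For well-definedness in (ii), the potential contribution to $E_q(h)$ is controlled by $\ve(1-\cos u) \leq \tfrac{\ve}{2}(u-2k(t)\pi)^2$, which decays exponentially using (\ref{r:uhzero}) and (\ref{d:Cone}); the adjusted kinetic term $\tfrac{1}{2}(v_t + v^h_t - \omega^\pm(q))^2$ is integrable because $v_t - \omega^\pm(q)$ decays exponentially by (\ref{r:vhleft})--(\ref{r:vhright}) and $v^h_t \in L^2(\mathbb{R})$ follows from (\ref{d:Cthree}); the analogous contribution from $u_t + u^h_t$ uses (\ref{r:uhone}). For $D_q(h)$, exploit $q \in \mathcal{E}$ to rewrite
\begin{equation*}
(q+h)_s = h_{tt} - \bigl(V_q(q+h,t) - V_q(q,t)\bigr);
\end{equation*}
the first summand is in $L^2$ by (\ref{d:Cthree}), and the Lipschitz estimate $|V_q(q+h,t) - V_q(q,t)| \ll \ve\,|h|$ combined with the decay of $h$ makes the second one $L^2$ as well.

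Continuity of $(q,h) \mapsto E_q(h), D_q(h)$ then follows by Lebesgue dominated convergence. Given $(q_n,h_n) \to (q,h)$ in $H^2_{\text{loc}}$, a subsequence converges pointwise a.e., and the estimates above supply integrable majorants that are uniform over $\mathcal{H}\times \mathcal{C}$ because the constants appearing in (\ref{r:uhzero})--(\ref{r:three}) and (\ref{d:Cone})--(\ref{d:Cthree}) are absolute. Continuity of $\omega^\pm : \mathcal{H} \to \mathbb{R}$ is a byproduct: the exponential convergence of $v_t$ to $\omega^\pm(q)$ uniformly over $\mathcal{H}$ identifies $\omega^\pm(q_n)$ up to an error made arbitrarily small by choosing $|t|$ large, while $H^2_{\text{loc}} \hookrightarrow C^1_{\text{loc}}$ convergence handles the pointwise value of $v_{n,t}$ at that $t$.

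The main technical obstacle is keeping the tail-majorants of the kinetic part of $E_q$ uniform as the jump times $T_0(q_n)\in [0,2\pi)$ vary (Lemma \ref{l:t0v0}); since $T_0$ is bounded, shifting the centers of the exponential decay estimates (\ref{r:vhleft})--(\ref{r:vhright}) by this bounded amount costs only an absolute multiplicative constant, which is harmless.
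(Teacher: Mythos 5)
Your argument matches the paper's brief sketch (compact embedding plus diagonalization for $\mathcal{C}$; definitions together with the a-priori bounds for $E_q$, $D_q$, then dominated convergence), and you fill in considerably more detail; part (i) and the $E_q$ part of (ii) are fine. There is, however, one genuine gap in the $D_q$ step.

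You write that $(q+h)_s = h_{tt} - (V_q(q+h,t) - V_q(q,t))$, that the Lipschitz bound $|V_q(q+h,t) - V_q(q,t)| \ll \ve\,|h|$ holds, and that ``the decay of $h$'' makes the second summand $L^2$. But $h = (u^h,v^h)$ does \emph{not} decay: (\ref{d:Cone}) gives exponential decay only for $u^h$, whereas $v^h$ is controlled only through $v^h(0)$ by (\ref{d:Ctwo}) and through $v^h_t \in L^2(\mathbb{R})$ by (\ref{d:Cthree}). Consequently $|v^h(t)|$ can grow in $|t|$ (integrating the tail bound of (\ref{d:Cthree}) over dyadic blocks gives at least logarithmic growth, and without that refined summation one only gets a $\sqrt{|t|}$ bound). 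So the generic Lipschitz majorant $\ve|h|$ does not decay and is not square integrable, and the step as stated does not close. What rescues the conclusion — and what you need to invoke — is the specific structure of $V$: its $v$-dependence enters only through $f$, and every term in $V_u,V_v$ and in their $v$-derivatives carries a factor $\sim |\sin u|$ or $1-\cos u$, which decays exponentially in $|t-t_0|$ by (\ref{r:uhzero}) together with (\ref{d:Cone}). The mean value theorem then gives a bound of the shape $|V_q(q+h,t)-V_q(q,t)| \ll \ve|u^h(t)| + \ve\, g(t)\,|v^h(t)|$ with $g$ exponentially decaying, and the product $g(t)|v^h(t)|$ is square integrable despite the growth of $v^h$. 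The same refinement is then the integrable majorant you need for the dominated-convergence argument in the continuity step.
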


\begin{proof} We have showed compactness of $\mathcal{H}$ in Lemma \ref{l:compact}. Compactness of $\mathcal{C}$ follows directly from the definition, the compact embedding theorem applied to $h$ restricted to any bounded closed interval, and a diagonalization argument. The claim (ii) follows easily from the definitions of $E_q$, $D_q$, the uniform bounds on $q \in \mathcal{H}$ in Lemma \ref{l:heteroprop}, the definition of $\mathcal{C}$ and the assumed localized topologies on the sets $\mathcal{H}$ and $\mathcal{C}$.
\end{proof}

\begin{lemma} \label{r:Eproperties}
For any $q \in \mathcal{H}$,

(i) $E_q : \mathcal{Y} \rightarrow \mathbb{R}$ is $C^4$,

(ii) $E_q$ is Fredholm and the dimension of $\text{Ker }D^2E_q$ is at most $4$,

(iii) If $DE_q(h) \neq 0$, then $D_q(h) > 0$.
\end{lemma}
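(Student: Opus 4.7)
The plan is to dispatch (i), (ii), (iii) in order, relying on the $C^{4+\gamma}$-regularity of $V$ from (A1), the exponential weight built into the $\mathcal{Y}$-norm (\ref{r:norm}), and the heteroclinic asymptotics from Lemma \ref{l:heteroprop}.

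For (i) I would Taylor-expand $L_{\omega}(q+h, q_t+h_t, t) = \tfrac12(q_t+h_t)^2 - \omega(v_t+v^h_t) + \tfrac{\omega^2}{2} + V(q+h, t)$ in $h$ around $h=0$: the kinetic contribution is polynomial of degree two in $h_t$, and the expansion of $V(q+h, t)$ to order four has H\"older remainder thanks to (A1). Each of the resulting multilinear forms in $h$ extends boundedly to $\mathcal{Y}$: the exponentially weighted $L^2$-norm on $u^h$ in the definition of $\mathcal{Y}$ controls the $V$-type integrals (using that several relevant $V$-derivatives decay like $1 - \cos u$ or $\sin u$ at the heteroclinic tails, cf.\ Lemma \ref{l:heteroprop}); the $H^1(\mathbb{R})$-norm on $h_t$ handles the kinetic-type integrals via $H^1(\mathbb{R}) \hookrightarrow L^\infty$; and the explicit linear term $(\omega^+(q)-\omega^-(q))(v(0)+v^h(0))$ is trivially bounded via the $|v^h(0)|$ piece of the $\mathcal{Y}$-norm. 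Standard Nemytskii-type calculus then delivers $E_q \in C^4(\mathcal{Y}, \mathbb{R})$.

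For (ii) a direct differentiation yields
\begin{equation*}
D^2E_q(h)(k,k) = \int_{-\infty}^{\infty}\bigl[(u^k_t)^2 + (v^k_t)^2 + D^2V(q+h, t)(k,k)\bigr]\,dt,
\end{equation*}
because the $\omega$-dependent pieces of $L$ and the explicit correction term are both linear in $h$ and so drop out of the second variation. Thus $k \in \ker D^2E_q(h)$ iff $k$ solves the linearised Euler--Lagrange system $k_{tt} = D^2V(q+h, t)\,k$ as a distributional (hence, by elliptic regularity, classical) equation on all of $\mathbb{R}$, with no jump condition at $t=0$. This is a linear second-order ODE system for an $\mathbb{R}^2$-valued unknown, whose full solution space on $\mathbb{R}$ has dimension $4$; the kernel in $\mathcal{Y}$ is therefore at most $4$-dimensional, and the Fredholm property follows from the criterion recalled just before Lemma \ref{l:sard}.

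For (iii) I would prove the contrapositive: $D_q(h) = 0 \Rightarrow DE_q(h) = 0$. If $D_q(h) = 0$ then the non-negative integrand $|(q+h)_{tt} - \partial_q V(q+h, t)|^2$ vanishes a.e., so $q+h$ is a classical solution of the Euler--Lagrange system, which bootstraps to $C^4$ on $\mathbb{R}$; in particular it is $C^1$ at $t = 0$. For arbitrary $k \in \mathcal{Y}$, I would then integrate by parts on $(-\infty, 0)$ and $(0, \infty)$ in $DE_q(h) \cdot k$. The bulk integrals collapse to $\int k \cdot [-(q+h)_{tt} + \partial_q V(q+h, t)]\,dt$, which vanishes by EL. The boundary contributions at $\pm\infty$ vanish by the heteroclinic asymptotics ($v_t + v^h_t \to \omega^\pm$, $u + u^h \to 0, 2\pi$, and $u^k \in H^1(\mathbb{R})$). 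Finally, the boundary contributions at $t = 0$ coming from the $\omega^- \to \omega^+$ jump in $L_\omega$ are compensated by the first variation of the explicit term $(\omega^+ - \omega^-)(v(0) + v^h(0))$, using the $C^1$-continuity of $q+h$ at $0$. Hence $DE_q(h) = 0$. The hardest part is precisely this last step: the explicit correction term in the definition of $E_q$ is placed exactly so that the $t=0$ boundary terms from the two halves cancel along a genuine solution of the full Euler--Lagrange system on $\mathbb{R}$, and the computation, while routine, must be done carefully to verify the cancellation.
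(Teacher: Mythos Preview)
Your proposal is correct and follows essentially the same route as the paper: explicit Fr\'echet derivatives with bounds from the weighted norm and the $(1-\cos u)$-decay for (i), the linearised-ODE dimension count for (ii), and integration by parts linking $DE_q(h)$ to $(q+h)_s$ for (iii). The only cosmetic difference is in (iii): the paper records the identity $DE_q(h)\,g = -\int_{\mathbb{R}} (q+h)_s\cdot g\,dt$ for \emph{all} $h$ in one stroke (so both implications are immediate), whereas you argue the contrapositive after first assuming $D_q(h)=0$; the $t=0$ boundary cancellation you single out is precisely the same computation.
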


\begin{proof} Let $q=(u,v)$, $h=(u^h,v^h) \in \mathcal{Y}$, and $g^{(1)}, g^{(2)}, g^{(3)}, g^{(4)} \in \mathcal{Y}$, $g^{(j)}=(u^{(j)},v^{(j)})$. We will show that the Fr\'{e}chet derivatives of $E_q$ are for  given with
\begin{align}
DE_q(h)g^{(1)} & =  \int_{-\infty}^{0} (v_t -\omega+v^h_t)v^{(1)}_t dt + \int_0^{\infty}(v_t -\tilde{\omega}+v^h_t)v^{(1)}_t dt + (\tilde{\omega}-\omega)v^{(1)}(0) \notag \\
& \quad + \int_{-\infty}^{\infty} \left\lbrace (u_t+u^h_t)u^{(1)}_t+D_{u,v}V(q(t)+h(t),t)g^{(1)}(t) \right\rbrace dt, \label{r:der1} \\
D^2E_q(h)(g^{(1)},g^{(2)}) & =  \int_{-\infty}^{\infty} \left\lbrace g^{(1)}_t g^{(2)}_t+D^2_{u,v} V(q(t)+h(t),t)(g^{(1)}(t),g^{(2)}(t))  \right\rbrace dt, \label{r:der2} \\
D^k E_q(h)(g^{(1)},...,g^{(k)})   & =  \int_{-\infty}^{\infty}  \left\lbrace D^k_{u,v} V(q(t)+h(t),t) (g^{(1)}(t),...,g^{(k)}(t)) \right\rbrace dt, \label{r:higher}
\end{align}
where $k=3,4$. We first show that the integrals on the right-hand sides are finite. Consider the terms containing $u_t,u^h_t,v_t,v^h_t$ in (\ref{r:der1}). They are absolutely integrable by Cauchy-Schwartz, as $||u^h_t||^2_{L^2(\mathbb{R})}<\infty$, $||v^h_t||^2_{L^2(\mathbb{R})}<\infty$ by the definition of $\mathcal{Y}$, and by Lemma \ref{l:heteroprop} applied to $u_t,|v_t-\omega|,|v_t-\tilde{\omega}|$.	Analogously the term $g^{(1)}_t g^{(2)}_t$ in (\ref{r:der2}) is absolutely integrable.
Thus it suffices to show that for any integers $i,j \geq 0$, $i+j=k$, $k=1,2,3,4$, the integral
\begin{align}
X:=\int_{-\infty}^{\infty} \left\lbrace \partial^{i}_{u}\partial^{j}_{v} V(q(t)+h(t),t)u^{1}(t)...u^{i}(t)v^{i+1}(t)...v^k(t) \right\rbrace dt
\end{align}
is absolutely integrable. It is straightforward to check that for $g^{(j)}=(u^{(j)},v^{(j)}) \in \mathcal{Y}$,
\begin{align*}
|u^{(j)}(t)| & \leq ||u^{(j)}||_{L^{\infty }(\mathbb{R})}\ll ||u^{(j)}||_{H^1(\mathbb{R})} \leq ||g^{(j)}||_{\mathcal{Y}}, \\  
|v^{(j)}(t)| & \leq |v^{(j)}(0)| + \int_0^t |v^{(j)}_t(\tau)|d\tau \ll (1+|t|^{1/2})||g^{(j)}||_{\mathcal{Y}}.
\end{align*}
If $i \geq 1$, we thus have by applying uniform bounds on derivatives of $V$,
\begin{align}
X & \ll_f ||g^{(2)}||_{\mathcal{Y}}...||g^{(k)}||_{\mathcal{Y}} \int_{-\infty}^{\infty} |u^{(1)}(t)|(1+|t|^{1/2})^{k-1}dt \notag \\
& \ll_f   ||g^{(2)}||_{\mathcal{Y}}...||g^{(k)}||_{\mathcal{Y}} \left( \int_{-\infty}^{\infty} e^{-\frac{1}{4}\sqrt{\ve}|t|} (1+|t|^{k-1}) dt \right)^{1/2} \left( e^{\frac{1}{4}\sqrt{\ve}|t|} |u^{(1)}(t)|^2 dt 
\right)^{1/2} \notag \\
& \ll_{f,\ve}  ||g^{(1)}||_{\mathcal{Y}}||g^{(2)}||_{\mathcal{Y}}...||g^{(k)}||_{\mathcal{Y}}. \label{r:higherhelp}
\end{align}
Analogously, in the case $i=0$,
\begin{align*}
X & \ll ||g^{(1)}||_{\mathcal{Y}}...||g^{(k)}||_{\mathcal{Y}} \int_{-\infty}^{\infty}  |\partial^{k}_{v}V(q(t)+h(t),t)| (1+|t|^{1/2})^k dt \\ 
& \ll_f ||g^{(1)}||_{\mathcal{Y}}...||g^{(k)}||_{\mathcal{Y}} \int_{-\infty}^{\infty} \left( 1-\cos (u(t)+u^h(t)) \right) (1+|t|^{1/2})^k dt \\
& \ll_f ||g^{(1)}||_{\mathcal{Y}}...||g^{(k)}||_{\mathcal{Y}}  \int_{-\infty}^{\infty} \left(e^{-\frac{1}{2}\sqrt{\ve} (t-t_0)} + u^h(t)^2 \right)(1+|t|^{1/2})^k dt,
\end{align*}
where in the last row we applied 
$1 - \cos (u(t)+u^h(t)) \ll (u(t)-2\pi \mathbf{1}_{[t_0,\infty)}(t))^2 + u^h(t)^2$
and then (\ref{r:uhzero}). Analogously to (\ref{r:higherhelp}) we establish bound on the remaining terms containing $u^h$ and get
\begin{equation}
 X \ll_{f,\ve} ||g^{(1)}||_{\mathcal{Y}}...||g^{(k)}||_{\mathcal{Y}} ( 1 + ||h||^2_{\mathcal{Y}}). \label{r:higherhelp2}
\end{equation}
We now show that (\ref{r:der1}) is indeed the Fr\'{e}chet derivative of $E_q$; the proof for higher order derivatives is analogous (we use H\"older continuity of fourth derivatives assumed in (A1) for $D^4E_q$). By the Mean Value theorem and analogously as when evaluating (\ref{r:higherhelp}), (\ref{r:higherhelp2}), we obtain
\begin{align*}
|E_q(h+g^{(1)})-E_q(h)-DE_q(h)g^{(1)}| & \leq  \int_{-\infty}^{\infty} \left\lbrace \frac{1}{2} (g^{(1)}_t)^2 + |D^2_{u,v}V(q(t)+h(t),t)||g^{(1)}|^2 \right\rbrace dt \\
&\ll_{f,\ve} (1+ ||h||^2_{\mathcal{Y}}) ||g^{(1)}||^2_{\mathcal{Y}},
\end{align*}
which by the definition of the Fr\'{e}chet derivative gives the claim.

To show (ii), we observe that the kernel of $D^2E_q(h)$ is by partial integration the set of all $g^{(1)} \in \mathcal{Y}$ such that $-g^{(1)}_{tt}+	D^2_{u,v} V(q(t)+h(t),t)g^{(1)}(t)=0$ for all $t \in \mathbb{R}$. This is a system of four linear ordinary differential equations (a linearization of (\ref{r:EL})), so the space of its solutions in $\mathcal{Y}$ is at most four-dimensional.

By partial integration and (\ref{r:grad}), we can write (\ref{r:der1}) as
$ DE_q(h)g^{(1)} =  \int_{-\infty}^{\infty} -(q+h)_sg^{(1)} dt$, which is clearly $\equiv 0$ if and only if $(q+h)_s \equiv 0$, which implies (iii). 
\end{proof}

We are now ready to apply the Morse-Sard-Poho\v{z}aev Lemma.

\begin{lemma} \label{l:notzero}
	For any $q \in \mathcal{H}$, there exists $e$, $0 \leq e \leq \Delta_0$ such that $\Delta_1(q,e) > 0$.
\end{lemma}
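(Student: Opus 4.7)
The plan is to apply the Morse-Sard-Poho\v{z}aev theorem (Lemma \ref{l:sard}) to the functional $E_q : \mathcal{Y} \to \mathbb{R}$. By Lemma \ref{r:Eproperties}(i)--(ii), $E_q$ is $C^4$ and Fredholm with $\dim \ker D^2 E_q(h) \leq 4$ for every $h \in \mathcal{Y}$, so the hypotheses of Lemma \ref{l:sard} hold with $m=4$, $k=4$. Consequently the set of critical values of $E_q$ has Lebesgue measure zero in $\mathbb{R}$, and since $(0, \Delta_0]$ has positive measure, I can pick $e \in (0, \Delta_0]$ such that $E_q(0) + e$ is a regular value of $E_q$: i.e.\ $DE_q(h) \neq 0$ whenever $h \in \mathcal{Y}$ satisfies $E_q(h) = E_q(0) + e$. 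I claim $\Delta_1(q,e) > 0$ for this choice.

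A preliminary observation is that $\mathcal{C} \subset \mathcal{Y}$. This is a direct computation from the defining bounds of $\mathcal{C}$: (\ref{d:Cone}) dominates the weighted $L^2$-norm $\int e^{\frac{1}{4}\sqrt{\ve}|t|} u^h(t)^2 \, dt$ by a convergent integral, (\ref{d:Ctwo}) controls $|v^h(0)|$, and taking $T=0$ in (\ref{d:Cthree}) controls $\|h_t\|_{H^1(\mathbb{R})^2}^2$. Hence for every $h \in \mathcal{C}$ the Fr\'echet derivative $DE_q(h)$ is well defined on $\mathcal{Y}$.

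Next I would realize the infimum defining $\Delta_1(q,e)$ via compactness. If the constraint set $\{h \in \mathcal{C} : E_q(h) = E_q(0) + e\}$ is empty, the infimum is $+\infty$ and there is nothing to show. Otherwise, take a minimizing sequence $h_n$; by compactness of $\mathcal{C}$ in $H^2_{\text{loc}}(\mathbb{R})^2$ (Lemma \ref{l:comcon}(i)) a subsequence converges to some $h^\star \in \mathcal{C}$, and continuity of $E_q$ and $D_q$ on $\mathcal{H} \times \mathcal{C}$ (Lemma \ref{l:comcon}(ii)) yields $E_q(h^\star) = E_q(0) + e$ and $D_q(h^\star) = \Delta_1(q,e)$. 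Since $h^\star \in \mathcal{C} \subset \mathcal{Y}$ and $E_q(0) + e$ is a regular value, $DE_q(h^\star) \neq 0$, and Lemma \ref{r:Eproperties}(iii) then gives $D_q(h^\star) > 0$, completing the argument.

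The main subtlety---and the only place where the construction could go wrong---is the interplay between the two topologies. Morse-Sard-Poho\v{z}aev lives on the Hilbert space $\mathcal{Y}$ (where $E_q$ is smooth and Fredholm), while the infimum defining $\Delta_1(q,e)$ is naturally taken in the weaker $H^2_{\text{loc}}$ topology of $\mathcal{C}$ (where one has compactness and continuity of $E_q, D_q$). The uniform exponential decay of $u^h$ and the global $H^2$-bound on $h_t$ encoded in the definition of $\mathcal{C}$ are exactly what ensures $\mathcal{C} \subset \mathcal{Y}$ and that the minimizer $h^\star$ lies in the domain where the regular-value property can be invoked; this is the step to be checked carefully.
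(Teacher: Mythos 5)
Your proposal is correct and follows essentially the same route as the paper: apply the Morse--Sard--Poho\v{z}aev theorem via Lemma \ref{r:Eproperties}(i)--(ii) to pick a regular value, use Lemma \ref{r:Eproperties}(iii) to convert $DE_q\neq 0$ into $D_q>0$, and then use compactness of $\mathcal{C}$ and continuity of $E_q,D_q$ (Lemma \ref{l:comcon}) to bound $D_q$ away from zero on the level set. Your explicit check that $\mathcal{C}\subset\mathcal{Y}$ and the minimizing-sequence realization of the infimum are just the paper's "compact level set plus continuity" step spelled out in more detail.
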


\begin{proof}
Because of Lemma \ref{r:Eproperties}, (i) and (ii), we can apply the Morse-Sard-Poho\v{z}aev Lemma \ref{l:sard} to the functional $E_q$, and find any level set $E_q(h)=e$, $0 \leq e \leq \Delta_0$, so that for any $h \in \mathcal{Y}$, $E_q(h)=E_q(0)+e$ implies that $DE_q(h) \neq 0$. By  Lemma \ref{r:Eproperties}, (iii), for any $h \in \mathcal{Y}$, $D_q(h) > 0$. 

By Lemma \ref{l:comcon}, (i) and (ii), the level set $\lbrace E_q(h)=E_q(0)+ e \rbrace \cap \mathcal{C}$ is a compact subset of $\mathcal{Y}$. By continuity of $D_q$, we can bound $D_q(h)$ away from zero on that level set, which by definition of $\Delta_1(q,e)$ completes the proof.
\end{proof}

\begin{lemma}
	The function $(q,e) \mapsto \Delta_1(q,e)$ is lower semi-continuous on $\mathcal{H} \times [0,\Delta_0]$.  
\end{lemma}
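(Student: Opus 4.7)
The plan is to prove lower semi-continuity by a standard marginal-infimum contradiction argument, leveraging the compactness of $\mathcal{C}$ together with the joint continuity of $E_q(h)$ and $D_q(h)$ in $(q,h) \in \mathcal{H} \times \mathcal{C}$ already established in Lemma \ref{l:comcon}.

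Suppose towards contradiction that lower semi-continuity fails at some $(q,e) \in \mathcal{H} \times [0,\Delta_0]$. Then one can extract a sequence $(q_n, e_n) \to (q, e)$ and a real number $\alpha$ with $\alpha < \Delta_1(q, e)$ (using the convention $\Delta_1(q,e) = +\infty$ when the constraint set is empty) such that $\Delta_1(q_n, e_n) < \alpha$ for every $n$. By the infimum definition of $\Delta_1(q_n, e_n)$, I pick for each $n$ some $h_n \in \mathcal{C}$ with
\[
E_{q_n}(h_n) = E_{q_n}(0) + e_n, \qquad D_{q_n}(h_n) < \alpha.
\]

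Next I would invoke the compactness of $\mathcal{C}$ from Lemma \ref{l:comcon}(i) to pass to a subsequence $h_{n_k}$ converging to some $h \in \mathcal{C}$ in the $H^2_{\text{loc}}(\mathbb{R})^2$ topology. Using the joint continuity of $(q,h) \mapsto E_q(h)$ and $(q,h) \mapsto D_q(h)$ from Lemma \ref{l:comcon}(ii), I pass to the limit in both the constraint and the objective, obtaining $E_q(h) = E_q(0) + e$ and $D_q(h) \le \alpha < \Delta_1(q,e)$. But this $h$ is admissible for the infimum defining $\Delta_1(q,e)$ and realizes a strictly smaller value than the infimum, a contradiction.

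The argument itself is routine; no step is genuinely delicate. The real work has already been done in Lemma \ref{l:comcon}, and the "hard part" is simply to be sure that the decay condition (\ref{d:Cone}) and the uniform tail estimate (\ref{d:Cthree}) in the definition of $\mathcal{C}$ really do make $E_q$ and $D_q$ jointly continuous under the $H^2_{\text{loc}}$-topology, i.e. that the integrals over the unbounded domain are stable under $H^2_{\text{loc}}$-convergence. This is the only place where the specific structure of $\mathcal{C}$ is used here, and it is precisely what the definition of $\mathcal{C}$ was designed to guarantee.
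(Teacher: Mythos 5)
Your argument is correct and follows essentially the same route as the paper's: extract a convergent subsequence of (approximate) minimizers $h_n$ in $\mathcal{C}$ using compactness from Lemma \ref{l:comcon}(i), pass to the limit in both $E_q$ and $D_q$ using the joint continuity from Lemma \ref{l:comcon}(ii), and conclude. The only cosmetic differences are that you phrase it by contradiction and use approximate minimizers $D_{q_n}(h_n)<\alpha$, whereas the paper selects exact minimizers on the compact level sets and shows directly $\liminf_n \Delta_1(q_n,e_n)\ge\Delta_1(q,e)$; these are equivalent.
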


\begin{proof}
	Choose a sequence $(q_n,e_n) \in \mathcal{H} \times [0,\Delta_0]$ converging to $(q,e)$. As the level sets $\lbrace E_q(h)=E_q(0)+ e \rbrace \cap \mathcal{C}$ are compact, we can find $h_n \in \mathcal{C}$, $n \in \mathbb{N}$ so that $E_{q_n}(h_n)=E_{q_n}(0)+e_n$ and $D_{q_n}(h_n)=\Delta_1(q_n,e_n)$. 
	
	Find a subsequence $k_n$ so that $\liminf_{n \rightarrow \infty} \Delta_1(q_n,e_n)=\lim_{n \rightarrow \infty}\Delta_1(q_{k_n},e_{k_n})$, and a further subsequence (denoted again by $k_n$) so that $h_{k_n}$ is convergent in $\mathcal{C}$. 	Let $h = \lim_{n \rightarrow \infty} h_{k_n}$. Now because of continuity of $(q,h)\mapsto D_q(h)$ by Lemma \ref{l:comcon}, (ii), we have
	\begin{align*}
	\liminf_{n \rightarrow \infty} \Delta_1(q_n,e_n)& =\lim_{n \rightarrow \infty}\Delta_1(q_{k_n},e_{k_n}) = \lim_{n \rightarrow \infty} D_{q_n}(h_n) = D_q(h). 
	\end{align*}
	Again by Lemma \ref{l:comcon}, (ii), $E_q(h)=E_q(0)+e$, thus by definition $D_q(h) \geq \Delta_1(h,e)$, which completes the proof.	
\end{proof}

\begin{proof}[Proof of Proposition \ref{p:dissipation}]
We prove it by using several times the well-known properties of lower semi-continuous functions. As $q \mapsto \Delta_1(q)$ is by definition a supremum of a family of lower semi-continuous functions, it is lower-semi continuous. By Lemma \ref{l:notzero}, $\Delta_1(q)>0$ for every $q \in \mathcal{H}$. As lower semi-continuous functions on a compact set attain a minimum, and $\mathcal{H}$ is compact, there exists $\tilde{\Delta}_1 >0$ so that  $\Delta_1(q) \geq \tilde{\Delta}_1$ for all $q \in \mathcal{H}$.

Now consider the set $\lbrace \Delta_1(q,e) > \tilde{\Delta}_1 / 2 \rbrace \subset \mathcal{H} \times [0,\Delta_0/2]$. By lower semi-continuity of $(q,e)\mapsto \Delta_1(q,e)$, it is an open subset of $\mathcal{H} \times [0,\Delta_0/2]$. We can find its open cover consisting of sets $U_i \times B(e_i,r_i)$, $i \in \mathcal{I}$, where $U_i \subset \mathcal{H}$ is open in $\mathcal{H}$, and $B(e_i,r_i)$ are open balls in $\mathbb{R}$. By choice of $\tilde{\Delta}_1$, the set $\lbrace \Delta_1(q,e) > \tilde{\Delta}_1 / 2 \rbrace$ projects in the first coordinate to the entire $\mathcal{H}$, so $U_i$, $i \in \mathcal{I}$ is an open cover of $\mathcal{H}$. By compactness, we find its finite subcover. Denote it by $U_1,...,U_n$, and its associated open balls by $B(e_1,r_1),...,B(e_n,r_n)$.

We now set $\Delta_1 = \min \lbrace \tilde{\Delta}_1/2,r_1,...,r_n \rbrace$, and set for $q \in U_j$, $\Delta_0(q)=e_j$ (we choose any $j$ if $q$ is in more than one $U_j$). It is straightforward to check that this completes the proof.
\end{proof}

\section{Local upper bounds on the action} \label{s:upper}

This section contains the core of the argument, as we show that the action within two "intersections", or more precisely in the segment $[\tilde{T}_k-L,\tilde{T}_k+L]$, can not increase more than an arbitrarily small constant, proportional to 
$\log L/L$. This will complete our method of control of the dynamics. The proof relies on an action-balance law, stating that the change of action with respect to (\ref{r:grad}) is equal to the action dissipation and action flux. Let $\tilde{E}_k, \tilde{D}_k, \tilde{F}_k : \mathcal{X} \rightarrow \mathbb{R}$ be the truncated action, action dissipation and action flux near $\tilde{T}_k$, defined for $q=(u,v)$ as 
\begin{align*}
\tilde{E}_k(q) & =\int_{\tilde{T}_k-L}^{\tilde{T}_k} L_{\omega_k}(q,q_t,t)dt + \int_{\tilde{T}_k}^{\tilde{T}_k+L} L_{\omega_{k+1}}(q,q_t,t)dt  + (\omega_{k+1}-\omega_k)v(\tilde{T}_k), \\
\tilde{D}_k(q)&  = \int_{\tilde{T}_k-L}^{\tilde{T}_k+L} \left\lbrace u_s^2 + v_s^2 \right\rbrace dt, \\
\tilde{F}_k(q)& = u_t(\tilde{T}_k+L)u_s(\tilde{T}_k+L)+(v_t(\tilde{T}_k+L)- \omega_{k+1})v_s(\tilde{T}_k+L) \\ &\quad - u_t(\tilde{T}_k-L)u_s(\tilde{T}_k-L)-(v_t(\tilde{T}_k-L)- \omega_k)v_s(\tilde{T}_k-L).
\end{align*}
Let $\mathcal{B}_5$ be the set of all $q \in \mathcal{B}_4$ so that for all $k \in \mathbb{Z}$,
\begin{equation}
\tilde{E}_k(q) \leq \tilde{E}_k(\tilde{q}_k) + \Delta_0(q_k), \label{r:Econd}
\end{equation}
where $\Delta_0(\tilde{q}_k)$, $\Delta_1$ are as constructed in Proposition \ref{p:dissipation}. 

\begin{proposition} \label{p:b6}
	There exists an absolute constant $c_{13}>0$ such that, if 
	\begin{equation}
	  L \geq c_{13}(\varpi^5+M^5) \frac{| \log \Delta_1 |}{\Delta_1},	
	\label{r:Lcond}
	\end{equation}
	then $\mathcal{B}_5$ is $\mathcal{A}$-relatively $\xi$-invariant. Furthermore, $q^0 \in \mathcal{B}_5$.
\end{proposition}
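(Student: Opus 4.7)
The strategy is to combine the dissipation lower bound from Proposition \ref{p:dissipation} with an action-balance law for $\tilde E_k$ along orbits of $\xi$. Since Proposition \ref{p:Bthree} already gives $\mathcal{A}$-relative $\xi$-invariance of $\mathcal{B}_4$, it suffices to focus on the single inequality (\ref{r:Econd}) defining $\mathcal{B}_5$ inside $\mathcal{B}_4$. A direct differentiation in $s$, followed by partial integration in $t$ and use of (\ref{r:grad}), yields for any orbit $q(s)$ staying in $\mathcal{A}$ the balance law
\begin{equation*}
\frac{d}{ds}\tilde E_k(q(s)) \;=\; -\tilde D_k(q(s)) + \tilde F_k(q(s)),
\end{equation*}
where the boundary terms at $t=\tilde T_k\pm L$ are exactly what appears in $\tilde F_k$ (the contribution at the interior point $\tilde T_k$ cancels because the two occurrences of $v(\tilde T_k)$ in $\tilde E_k$ produce matching $v_t$-jumps cancelling against the explicit $(\omega_{k+1}-\omega_k)v_s(\tilde T_k)$ term). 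Assuming the conclusion fails, pick $k$ and let $s^\ast$ be the first time the equality $\tilde E_k(q(s^\ast))=\tilde E_k(\tilde q_k)+\Delta_0(q_k)$ holds. It then suffices to show $\tilde D_k(q(s^\ast))>\tilde F_k(q(s^\ast))$, yielding $\tfrac{d}{ds}\tilde E_k(q(s^\ast))<0$ and a contradiction.

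\textbf{Lower bound on $\tilde D_k$.} Apply Lemma \ref{p:B4bounds} to produce $\tilde h$ coinciding with $q-\tilde q_k$ on $[\tilde T_k-L,\tilde T_k+L]$ and vanishing outside $[\tilde T_k-L(1+1/\log L),\tilde T_k+L(1+1/\log L)]$. Properties (iii)--(v) of that lemma together with (\ref{r:main1})--(\ref{r:main2}) and the definition of $M$ in (\ref{r:Mcond}) show $\tilde h(\cdot+\tilde T_k)\in \mathcal{C}$. Using (vi) of Lemma \ref{p:B4bounds} to control the contribution of the two tails of $\tilde h$ outside $[\tilde T_k-L,\tilde T_k+L]$ in both the action and the action-dissipation, one obtains
\begin{equation*}
\bigl|E_{\tilde q_k}(\tilde h)-E_{\tilde q_k}(0)-(\tilde E_k(q(s^\ast))-\tilde E_k(\tilde q_k))\bigr|\;\ll\;(M^4+\varpi^4)\,\frac{\log L}{L},
\end{equation*}
and analogously $|\tilde D_k(q(s^\ast))-D_{\tilde q_k}(\tilde h)|\ll (M^4+\varpi^4)\log L/L$. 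For $L$ satisfying (\ref{r:Lcond}) with $c_{13}$ large enough these two errors are each at most $\Delta_1/4$, so the level-set hypothesis of Proposition \ref{p:dissipation} is satisfied by $\tilde h$, giving $D_{\tilde q_k}(\tilde h)\geq \Delta_1$ and therefore $\tilde D_k(q(s^\ast))\geq 3\Delta_1/4$.

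\textbf{Upper bound on $|\tilde F_k|$.} Bound the boundary contribution by Cauchy--Schwarz over a short interval near each endpoint $\tilde T_k\pm L$. The factors $u_t,u_{tt},v_t-\omega_{k\bullet},v_{tt}$ evaluated there are controlled by the $L^2_\tau$-bounds (\ref{r:defB4}) (applied with $\tau=\tilde T_k\pm L$) and Sobolev embedding, while $u_s=u_{tt}-V_u$ and $v_s=v_{tt}-V_v$ are likewise controlled by $\mathcal{B}_4$-bounds together with (\ref{r:main1})--(\ref{r:main2}). A short computation yields $|\tilde F_k(q(s^\ast))|\ll (M^4+\varpi^4)\sqrt{\ve}\,\log L/L$, which is strictly smaller than $\Delta_1/2$ as soon as (\ref{r:Lcond}) holds with $c_{13}$ large enough. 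This gives $\tilde D_k(q(s^\ast))>\tilde F_k(q(s^\ast))$ and the desired contradiction.

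\textbf{Initial condition.} For $q^0\in\mathcal{B}_5$ note that on $[\tilde T_k-L,\tilde T_k+L]$ we have $q^0=\tilde q_k$, save for the smoothing regions $[\tilde T_k\pm L,\tilde T_k+(L\pm 1)]$ where (\ref{r:varphib}) together with Lemma \ref{l:q*bounds} and the exponential decay of $\tilde q_k-2k\pi,\,\tilde q_k-2(k+1)\pi$ from Lemma \ref{l:heteroprop} yields $|\tilde E_k(q^0)-\tilde E_k(\tilde q_k)|\ll (M^2+\varpi^2)\log L/L$, which is smaller than $\Delta_0(q_k)\in[0,\Delta_0/2]$ provided $L$ satisfies (\ref{r:Lcond}).

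\textbf{Main obstacle.} The delicate part is the double $O(\log L/L)$ accounting: the same smallness parameter must simultaneously absorb (i) the tails of $\tilde h$ (both in the $E$-value, to land in the window where Proposition \ref{p:dissipation} applies, and in the $D$-value, to transfer the dissipation lower bound from $\tilde h$ back to $q$) and (ii) the boundary flux $\tilde F_k$. Both corrections carry the prefactor $(M^4+\varpi^4)$, which forces the precise form of (\ref{r:Lcond}) with $|\log\Delta_1|/\Delta_1$ on the right. One must also check that the infimum $s^\ast$ is actually attained, which follows from continuity of $s\mapsto \tilde E_k(q(s))$ in $\mathcal{X}_{\mathrm{loc}}$ granted by Theorem \ref{t:exist}(ii) together with the smoothing effect Theorem \ref{t:exist}(iv).
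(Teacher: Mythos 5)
Your proposal follows essentially the same route as the paper: the action-balance law $\frac{d}{ds}\tilde E_k = -\tilde D_k + \tilde F_k$ (Lemma \ref{l:balance}), the flux bound $|\tilde F_k|\le \Delta_1/4$ obtained from the $\mathcal{B}_4$-estimates on $q_t,q_{tt},q_s$ at $\tilde T_k \pm L$ (Lemma \ref{l:flux}), the transfer of the Proposition \ref{p:dissipation} dissipation bound to $\tilde D_k$ via the truncated difference $\tilde h$ from Lemma \ref{p:B4bounds} (Lemma \ref{l:dissip}), and the contradiction at the first crossing time. One small inaccuracy: for $q^0\in\mathcal{B}_5$ you introduce a smoothing-correction term, but the smoothening factors $\varphi^\pm_{\tilde T_{k-1}+L,\tilde T_k-L}$ in (\ref{r:defq*}) act entirely \emph{outside} the window $[\tilde T_k - L, \tilde T_k + L]$, so in fact $q^0\equiv\tilde q_k$ on that window and $\tilde E_k(q^0)=\tilde E_k(\tilde q_k)$ exactly; this exact identity together with $\Delta_0(q_k)>0$ gives $q^0\in\mathcal{B}_5$ directly, whereas your version implicitly needs a lower bound on $\Delta_0(q_k)$ in terms of $\log L/L$ that is not provided by (\ref{r:Lcond}) and is not established in Proposition \ref{p:dissipation}.
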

\noindent (Recall that $M$ in (\ref{r:Lcond}) is given by (\ref{r:Mcond}).) To prove it, we first establish the action-balance law (\ref{r:balance}), then find upper bounds on the action flux, lower bounds on the action dissipation on the energy level $\tilde{E}_k(q) = \tilde{E}_k(\tilde{q}_k) + \Delta_0(\tilde{q}_k)$, and then complete the proof. The constant $c_{13}$ may change throughout the section.

\begin{lemma} \label{l:balance}
	For any $q \in \mathcal{X}$,
	\begin{equation}
	\frac{d}{ds}\tilde{E}_k(q)=-\tilde{D}_k(q) + \tilde{F}_k(q). \label{r:balance}
	\end{equation}
\end{lemma}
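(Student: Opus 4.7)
The proof is a routine action-balance computation: differentiate in $s$ under the integral, integrate by parts in $t$, and substitute the gradient equation \eqref{r:grad}. These manipulations are justified for $s > s_0$ by the instantaneous smoothing in Theorem \ref{t:exist}(iv), which places $q(s, \cdot)$ in $H^k_{\text{ul}}(\mathbb{R})^2$ for any $k$, so that all integrands and their $t$-derivatives are absolutely integrable on the bounded intervals involved.

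First I would record a one-interval identity: for fixed $\omega$ and $a < b$, differentiating $L_\omega(q,q_t,t) = \tfrac{1}{2}u_t^2 + \tfrac{1}{2}(v_t - \omega)^2 + V(u,v,t)$ pointwise in $s$ yields $u_t u_{ts} + (v_t - \omega)v_{ts} + V_u u_s + V_v v_s$, and integration by parts in $t$ on the first two terms gives
\begin{equation*}
\frac{d}{ds}\int_a^b L_\omega(q,q_t,t)\, dt = \bigl[u_t u_s + (v_t - \omega)v_s\bigr]_a^b - \int_a^b \bigl((u_{tt} - V_u)u_s + (v_{tt} - V_v)v_s\bigr)\, dt.
\end{equation*}
Substituting \eqref{r:grad} collapses the bulk integrand to $-(u_s^2 + v_s^2)$.

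Next I would apply this identity on $[\tilde T_k - L, \tilde T_k]$ with $\omega = \omega_k$ and on $[\tilde T_k, \tilde T_k + L]$ with $\omega = \omega_{k+1}$, sum the two contributions, and add $\frac{d}{ds}\bigl[(\omega_{k+1}-\omega_k)v(\tilde T_k)\bigr] = (\omega_{k+1}-\omega_k)v_s(\tilde T_k)$. The two bulk integrals combine to $-\tilde D_k(q)$, and the boundary contributions at $\tilde T_k \pm L$ are exactly the four terms that define $\tilde F_k(q)$. The interior boundary contributions at $\tilde T_k$, appearing with opposite signs from the two subintervals, leave only the $\omega$-dependent mismatch, which is absorbed by the $s$-derivative of the coupling term $(\omega_{k+1}-\omega_k)v(\tilde T_k)$.

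There is no substantive obstacle here --- this is the standard Lyapunov identity expressing ``action change equals minus dissipation plus boundary flux'' for the formally gradient semiflow $\xi$. The only place where care is required is the bookkeeping of the interior-boundary contributions at the matching point $\tilde T_k$, which is precisely the role of the coupling term in the definition of $\tilde E_k$.
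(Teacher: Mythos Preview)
Your proposal is correct and takes essentially the same approach as the paper, whose own proof is the one-liner ``straightforward, by differentiating $\tilde E_k(q)$, partial integration and inserting \eqref{r:grad}.'' You have simply spelled out that computation, including the bookkeeping at the interior point $\tilde T_k$ and the role of the coupling term $(\omega_{k+1}-\omega_k)v(\tilde T_k)$.
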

\begin{proof} The proof is straightforward, by differentiating $\tilde{E}_k(q)$, partial integration and inserting (\ref{r:grad}).
\end{proof}

\begin{lemma} \label{l:flux}
	There exists an absolute constant $c_{13}>0$ such that if (\ref{r:Lcond}) holds, then for any $q \in \mathcal{B}_4$ and any $k \in \mathbb{Z}$,
	$|\tilde{F}_k(q)| \leq \Delta_1 / 4.$
\end{lemma}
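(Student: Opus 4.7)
The plan is to turn $\tilde{F}_k(q)$ into a sum of four products of pointwise quantities at $t=\tilde{T}_k\pm L$ and to show each factor is small. Using (\ref{r:grad}) I replace $u_s=u_{tt}-V_u(u,v,t)$ and $v_s=v_{tt}-V_v(u,v,t)$, so $\tilde{F}_k(q)$ is a sum of terms of the form $u_t\cdot (u_{tt}-V_u)$ and $(v_t-\omega)\cdot(v_{tt}-V_v)$ evaluated at the two boundary points. It suffices to bound each of the six quantities $u_t$, $v_t-\omega$, $u_{tt}$, $v_{tt}$, $V_u$, $V_v$ pointwise at $\tilde{T}_k\pm L$, and multiply.

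The key input is Lemma \ref{p:B4bounds}: write $q=\tilde{q}_k+\tilde{h}$ on $[\tilde{T}_k-L,\tilde{T}_k+L]$ with $\tilde{h}$ the localized deviation. Item (vi) with $T=L$ yields
\[
\|\tilde{h}_t\|^2_{H^2((-\infty,\tilde{T}_k-L])^2}+\|\tilde{h}_t\|^2_{H^2([\tilde{T}_k+L,\infty))^2}\leq c_{12}(M^4+\varpi^4)\frac{\log L}{L}.
\]
By the one-dimensional Sobolev embedding $H^2\hookrightarrow C^1$ on a half-line, this provides the pointwise bound $|\tilde{h}_t(\tilde{T}_k\pm L)|+|\tilde{h}_{tt}(\tilde{T}_k\pm L)|\ll (M^2+\varpi^2)\sqrt{\log L/L}$. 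On the heteroclinic piece $\tilde{q}_k$, Lemma \ref{l:heteroprop} supplies the complementary exponential decay $|\tilde{u}_{k,t}|\ll\sqrt{\ve}\,e^{-\sqrt{\ve}L/2}$, $|\tilde{u}_{k,tt}|\ll\ve\, e^{-\sqrt{\ve}L/2}$, and $|\tilde{v}_{k,t}-\omega_{k+1}|\ll\sqrt{\ve}\mu\,e^{-\sqrt{\ve}L}$, $|\tilde{v}_{k,tt}|\ll\ve\mu\,e^{-\sqrt{\ve}L}$ at $\tilde{T}_k+L$ (and analogously for $\omega_k$ at $\tilde{T}_k-L$).

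For the potential terms I combine Lemma \ref{p:B4bounds}(iii), giving $|\tilde{h}_u(\tilde{T}_k+L)|\leq c_{12}e^{-\sqrt{\ve}L/2}$, with (\ref{r:uhzero}), giving $|\tilde{u}_k(\tilde{T}_k+L)-2(k+1)\pi|\ll e^{-\sqrt{\ve}L/2}$, to obtain $|u(\tilde{T}_k+L)-2(k+1)\pi|\ll e^{-\sqrt{\ve}L/2}$. From the explicit form of $V$ in (\ref{d:unstable}) and (A1) this yields $|V_u|\ll \ve\, e^{-\sqrt{\ve}L/2}$ and $|V_v|\ll\ve\mu\, e^{-\sqrt{\ve}L}$ at the endpoints. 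Adding everything, each of the six pointwise factors is bounded by $O\big((M^2+\varpi^2)\sqrt{\log L/L}\big)$ plus exponentially small corrections in $\sqrt{\ve}L$.

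Multiplying the pairs appearing in $\tilde{F}_k(q)$ therefore gives
\[
|\tilde{F}_k(q)|\ll (M^4+\varpi^4)\frac{\log L}{L}+\ve\, e^{-\sqrt{\ve}L/2}.
\]
Since $\Delta_1\leq \Delta_0/2\leq 9\sqrt{\ve}\mu/2$ by (\ref{r:Delta0up}), the second term is negligible as soon as $\sqrt{\ve}L\gtrsim |\log\Delta_1|$, which is built into (\ref{r:Lcond}). The dominant first term is $\leq \Delta_1/4$ once $L\geq c\,(M^4+\varpi^4)\log L/\Delta_1$; solving this implicit inequality explicitly and absorbing the logarithmic factor $\log(M^4+\varpi^4)$ by one additional power of $M+\varpi$ (since $M,\varpi\geq 1$) gives the cleaner sufficient condition (\ref{r:Lcond}) with a suitable absolute $c_{13}$. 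The main technical obstacle is precisely this bookkeeping: ensuring that the Sobolev embedding constants, the sixfold product expansion, and the inversion of the $L\mapsto \log L/L$ relation can all be absorbed into a single absolute constant $c_{13}$ while keeping the $(M^5+\varpi^5)$ scaling in (\ref{r:Lcond}).
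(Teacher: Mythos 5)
Your argument is correct and follows the paper's overall strategy: replace $q_s$ by $q_{tt}-V_q$, bound the six pointwise quantities appearing in $\tilde F_k(q)$ at $\tilde T_k\pm L$, multiply, and choose $L$ large enough. The variant is in how you obtain the pointwise bounds at the endpoints. The paper reads them off the weighted $L^2_\tau$-norms directly: from (\ref{r:defB3u}) and the embedding $|g(\tau)|\ll\|g\|_{H^1([\tau,\tau+1])}$ (with the local $H^1$-norm controlled by the $L^2_\tau$-norm) it gets $|u_t(\tau)|\ll(M+\varpi)\sqrt{\log L/L}$, from (\ref{r:defB4u}) it gets $|u_{tt}(\tau)|\ll(M^2+\varpi^2)\sqrt{\log L/L}$, yielding $|\tilde F_k(q)|\ll(M^3+\varpi^3)\log L/L$. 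You instead route through Lemma~\ref{p:B4bounds}(vi) and the decomposition $q=\tilde q_k+\tilde h$, together with the trace/Sobolev inequality $H^2\hookrightarrow C^1$ on half-lines. Since Lemma~\ref{p:B4bounds}(vi) bundles the first and second derivatives of $\tilde h_t$ under a single $(M^4+\varpi^4)$ coefficient, you end up with $(M^2+\varpi^2)$ for both $\tilde h_t$ and $\tilde h_{tt}$, hence $(M^4+\varpi^4)\log L/L$, one power worse than the paper; as you observe this is still absorbed by the $(M^5+\varpi^5)$ factor in (\ref{r:Lcond}) after inverting the implicit $\log L/L$ inequality and absorbing $\log(M+\varpi)$ into one extra power. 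Your handling of the exponentially small corrections via $\sqrt{\ve}L\gtrsim|\log\Delta_1|$ (built into (\ref{r:Lcond}) since $\Delta_1\leq\Delta_0/2\ll\sqrt{\ve}$) is also sound, so the proof goes through.
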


\begin{proof} By (\ref{r:One}), (\ref{r:defB3u}), the definition of $\lambda(\tau)$, using that $\dtau \leq L_{k(\tau)}$  and $\ve \leq 1$, we obtain
	\begin{align}
	|u_t(\tau)| & \ll ||u_t||_{H^1([\tau,\tau+1])} \ll 
	(M + \varpi) \lambda(\tau)^{1/2} + ||u^0_t||_{L^2[\tau,\tau+1]} \notag \\ & \ll (M + \varpi) \left( \frac{\log ||\tau ||}{\dtau} \right)^{1/2}, \label{r:uder}
	\end{align}
	and analogously
	\begin{align}
	|v_t(\tau) - \omega_{k(\tau)}| & \ll (M + \varpi)\left( \frac{\log ||\tau ||}{\dtau} \right)^{1/2}.  \label{r:vder}
	\end{align}
	Because of (\ref{r:Vubound}) and (\ref{r:Vvbound}) in the Appendix C, we get for all $q \in \mathcal{B}_4 \subset \mathcal{B}_1$, $|V_u| \ll e^{-\sqrt{\ve} \dt / 2} \ll (\log \dt / \dt)^{1/2}$, $|V_v| \ll e^{-\sqrt{\ve} \dt} \ll (\log \dt / \dt)^{1/2}$. 
	Using this and (\ref{r:defB4u}), (\ref{r:defB4v}) applied to $|u_{tt}|$, $|v_{tt}|$ analogously as above, we get
	\begin{align}
	|u_s| & \leq |u_{tt}|+|V_u| \ll (M^2 + \varpi^2)\left( \frac{\log \dtau}{\dtau} \right)^{1/2}, \\
	|v_s| & \leq |v_{tt}|+|V_v| \ll (M^2 + \varpi^2)\left( \frac{\log \dtau}{\dtau} \right)^{1/2}.	\label{r:sbound}	
	\end{align}	
 As by definition, for $\tau = \tilde{T_k} \pm L$, $\dtau = L$, we deduce that for any $q \in \mathcal{B}_4$,
 \begin{equation}
 \tilde{F}_k(q) \leq c_{13} (M^3+\varpi^3)\frac{\log L}{L} \label{r:Ffinal}
 \end{equation}
 for some absolute constant $c_{13}>0$. Now it is straightforward to check that 
 $$
 L \gg (\varpi^5+M^5) \frac{| \log \Delta_1 |}{\Delta_1} \gg (\varpi^3+M^3) \log (\varpi^3+M^3) \frac{| \log \Delta_1 |}{\Delta_1}  
 $$
 suffices for the right-hand side of (\ref{r:Ffinal}) to be $\ll \Delta_1$, which completes the proof for a large enough absolute constant $c_{13}$.
\end{proof}

\begin{lemma} \label{l:dissip}
There exists an absolute constant $c_{13}>0$ such that if (\ref{r:Lcond}) holds, then for any $q=(u,v) \in \mathcal{B}_4$ and any $k \in \mathbb{Z}$, there exists $h \in \mathcal{C}$, so that
	\begin{gather}
	|\tilde{E}_k(q)-E_{q_k}(h) - T_k(\omega_k^2-\omega_{k+1}^2)/2-(\omega_{k+1}-\omega_k)(\tilde{V}_k-V_k)|  \leq \Delta_1 / 2, \label{r:Dis1} \\
	|\tilde{D}_k(q)-D_{q_k}(h)|  \leq \Delta_1 / 2. \label{r:Dis2}
	\end{gather}
Furthermore, for all $k \in \mathbb{Z}$,
   	\begin{align}
   	|\tilde{E}_k(\tilde{q}_k)-E_{q_k}(0)- T_k(\omega_k^2-\omega_{k+1}^2)/2-(\omega_{k+1}-\omega_k)(\tilde{V}_k-V_k)| \leq \Delta_1 / 2. \label{r:Dis3}
   	\end{align}
\end{lemma}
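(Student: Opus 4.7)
The plan is to define an explicit candidate $h \in \mathcal{C}$ by translating the function $\tilde{h}$ produced by Lemma \ref{p:B4bounds}, and then relate $E_{q_k}(h)$, $D_{q_k}(h)$ to $\tilde{E}_k(q)$, $\tilde{D}_k(q)$ via a change of variables that exploits the fact that the shift $\delta := \tilde{T}_k - T_k$ and the offset $\tilde{V}_k - V_k$ are both integer multiples of $2\pi$. Concretely, set $h(t) := \tilde{h}(t+\delta)$. Since $\tilde{q}_k(\tau) = q_k(\tau-\delta) + (2k\pi, \tilde{V}_k-V_k)$ and $V$ is $2\pi$-periodic in each of $u$, $v$, $t$, the identity
\[
  L_\omega\bigl((q_k+h)(t), (q_k+h)_t(t), t\bigr) = L_\omega\bigl(q(t+\delta), q_t(t+\delta), t+\delta\bigr)
\]
holds for every $t$ in the support of $h$. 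One first verifies that $h\in \mathcal{C}$: the exponential bound (\ref{d:Cone}) follows from Lemma \ref{p:B4bounds} (iii) after absorbing the shift $|T_k|\le 2\pi$ (and $\sqrt{\ve}\le 1$) into the constant $e^{2\pi}$; the pointwise bound (\ref{d:Ctwo}) follows from Lemma \ref{p:B4bounds} (iv) since $v^h(0)=\tilde v^h(\delta)$ and $|\delta-\tilde T_k|=|T_k|\le 2\pi$; the tail bound (\ref{d:Cthree}) follows from Lemma \ref{p:B4bounds} (v) by enlarging the tail intervals by $2\pi$, which at worst doubles the constant and accounts for the factor $2c_{12}$ appearing in (\ref{d:Cthree}).

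The next step is to partition each of the integrals in $E_{q_k}(h)$ into three pieces: a central piece on $[T_k-L,T_k+L]$, a smoothing-annulus piece on $[T_k-L(1+1/\log L),T_k-L]\cup [T_k+L, T_k+L(1+1/\log L)]$, and a tail piece outside. On the central piece the change of variables $\tau=t+\delta$ converts the integrand exactly into $L_\omega(q(\tau),q_t(\tau),\tau)$ on $[\tilde T_k-L,\tilde T_k+L]$, reproducing the integrand of $\tilde E_k(q)$; the only mismatch is that $E_{q_k}$ splits the two Lagrangians at $t=0$ while $\tilde E_k$ splits (after the shift) at $t=T_k$. This mismatch contributes the interchange integral $\int_0^{T_k}[L_{\omega_k}(q_k)-L_{\omega_{k+1}}(q_k)]\,dt = (\omega_{k+1}-\omega_k)(V_k-v_k(0))+\tfrac12(\omega_k^2-\omega_{k+1}^2)T_k$, which together with the boundary-term difference $(\omega_{k+1}-\omega_k)v(\tilde T_k) - (\omega_{k+1}-\omega_k)(v_k(0)+v^h(0))$ is precisely what produces the claimed explicit correction $\tfrac12(\omega_k^2-\omega_{k+1}^2)T_k + (\omega_{k+1}-\omega_k)(\tilde V_k - V_k)$, up to remainders that must be estimated.

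The remaining errors come from the tail and annulus pieces. The tail integrals of $L_{\omega_k}(q_k)$ over $(-\infty, T_k-L]$ and of $L_{\omega_{k+1}}(q_k)$ over $[T_k+L, \infty)$ are of order $e^{-\sqrt{\ve}\,L}$, by the exponential bounds (\ref{r:uhone}), (\ref{r:vhleft})--(\ref{r:vhright}) of Lemma \ref{l:heteroprop}. The annulus contributions are controlled by expanding $L_\omega(q_k+h)-L_\omega(q_k)$ by the mean value theorem and using Lemma \ref{p:B4bounds} (v) to bound $\|\tilde h_t\|_{H^2}^2$ on the annulus by $c_{12}(M^4+\varpi^4)\log L/L$, together with the exponential decay of $q_{k,t}$ and of $V_u, V_v$ at $q_k$ (as in Lemma \ref{l:heteroprop}); after Cauchy--Schwarz this gives a remainder of order $(M^2+\varpi^2)\sqrt{\log L/L}$. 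By the hypothesis (\ref{r:Lcond}), all these remainders are dominated by $\Delta_1/2$, which establishes (\ref{r:Dis1}). For (\ref{r:Dis2}), the same change of variables and periodicity give $(q_k+h)_s(t) = q_s(t+\delta)$ on the central piece, so this part of $D_{q_k}(h)$ equals $\tilde D_k(q)$ exactly; outside the support of $h$ one has $(q_k+h)_s = (q_k)_s \equiv 0$ because $q_k\in\mathcal{E}$; and on the annulus Lemma \ref{p:B4bounds} (v) bounds $\|(q_k+h)_s\|_{L^2}^2$ by a quantity of order $(M^4+\varpi^4)\log L/L$, again $\le \Delta_1/2$ by (\ref{r:Lcond}). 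Finally, (\ref{r:Dis3}) is the special case $q=\tilde q_k$: then $\tilde h\equiv 0$ (hence $h\equiv 0$), the annulus contribution disappears entirely, and only the exponentially small tail integrals of $L_{\omega_k}(q_k)$, $L_{\omega_{k+1}}(q_k)$ remain to be absorbed into $\Delta_1/2$.

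The main obstacle is the bookkeeping in the middle step: isolating the explicit correction $\tfrac12(\omega_k^2-\omega_{k+1}^2)T_k + (\omega_{k+1}-\omega_k)(\tilde V_k - V_k)$ from the interchange integral and boundary-term difference in a clean way, so that what is left over can be matched pointwise on the central piece and bounded by the tail/annulus estimates on the remainder. The condition (\ref{r:Lcond}) is exactly calibrated so that both the exponential tail bound $e^{-\sqrt{\ve}L}$ and the polynomial annulus bound $(M^2+\varpi^2)\sqrt{\log L/L}$ fall below $\Delta_1/2$.
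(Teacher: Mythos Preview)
Your approach is the same as the paper's: define $h(t)=\tilde h(t+\tilde T_k-T_k)$ from Lemma~\ref{p:B4bounds}, verify $h\in\mathcal C$, and compare $E_{q_k}(h)$ with $\tilde E_k(q)$ via the substitution $t\mapsto t+\tilde T_k-T_k$ and the $2\pi$-periodicity. The bookkeeping of the correction $X_k=T_k(\omega_k^2-\omega_{k+1}^2)/2+(\omega_{k+1}-\omega_k)(\tilde V_k-V_k)$ is also as in the paper.

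There is, however, a real gap in your annulus estimate. You claim the annulus contribution is $O\bigl((M^2+\varpi^2)\sqrt{\log L/L}\bigr)$, and then assert that (\ref{r:Lcond}) makes this $\le\Delta_1/2$. It does not. Condition (\ref{r:Lcond}) only forces $\log L/L\lesssim \Delta_1/(\varpi^5+M^5)$, so $\sqrt{\log L/L}\lesssim \sqrt{\Delta_1}/(\varpi^{5/2}+M^{5/2})$, and hence $(M^2+\varpi^2)\sqrt{\log L/L}$ is of order $\sqrt{\Delta_1}$, which for small $\Delta_1$ is \emph{much larger} than $\Delta_1/2$. So as written the argument does not close.

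The correct bound is $O\bigl((M^4+\varpi^4)\log L/L\bigr)$, with no square root, and this does follow from your own ingredients. In the mean-value expansion
\[
L_\omega(q_k+h)-L_\omega(q_k)=(q_k)_t\cdot h_t+\tfrac12 h_t^2+\bigl[V(q_k+h)-V(q_k)\bigr],
\]
the cross term $(q_k)_t\cdot h_t$ is exponentially small on the annulus (since $\|(q_k)_t\|_{L^2}$ there is $O(\ve^{1/4}e^{-\sqrt\ve L/2})$ by Lemma~\ref{l:heteroprop}); the potential difference is likewise exponentially small because $V_u,V_v$ carry the factor $\sin u$ or $1-\cos u$ evaluated near $q_k$; and the dominant term is the quadratic $\tfrac12\|h_t\|_{L^2(\text{annulus})}^2$, which by Lemma~\ref{p:B4bounds}(vi) is $\ll(M^4+\varpi^4)\log L/L$. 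There is no occasion for Cauchy--Schwarz to produce a square root here. The paper avoids the issue by bounding the full tail integral $\int_{\tilde T_k+L}^\infty L_{\omega_{k+1}}(\tilde q_k+\tilde h)\,dt$ directly: the potential is $\ll\sqrt\ve\,e^{-\sqrt\ve L/2}$ and the kinetic part is $\ll\|(\tilde q_k)_t\|^2+\|\tilde h_t\|^2\ll(M^4+\varpi^4)\log L/L$, which then falls below $\Delta_1/2$ exactly as in Lemma~\ref{l:flux}. Replacing your $\sqrt{\log L/L}$ by $\log L/L$ (and correspondingly $(M^2+\varpi^2)$ by $(M^4+\varpi^4)$) fixes the proof.
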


\begin{proof} Fix $k \in \mathbb{Z}$. We define $\tilde{h}$ as in Lemma \ref{p:B4bounds}, and let $h(t)=\tilde{h}(t+\tilde{T}_k-T_k)$. We first show that $h \in \mathcal{C}$. As by definition $|T_k|\leq 2\pi$, by inserting the definition of $h$ in the bounds in Lemma \ref{p:B4bounds}, we easily see that
(\ref{d:Cone}) and (\ref{d:Ctwo}) follow from Lemma \ref{p:B4bounds}, (iii) and (iv). Analogously we show that (\ref{d:Cthree}) follows from Lemma \ref{p:B4bounds}, (vi). 

Denote by $X_k=T_k(\omega_k^2-\omega_{k+1}^2)/2 + (\omega_{k+1}-\omega_k)(\tilde{V}_k-V_k)$ (a constant independent of $q$).
By definitions, the partial integration to change the range of integration in the second line, and substitution $t \rightarrow t + \tilde{T}_k-T_k$ in the third line, and finally by using Lemma \ref{p:B4bounds}, (i), we obtain
\begin{align}
E_{q_k}(h) & = \int_{-\infty}^0 L_{\omega_k}(q_k+h,(q_k)_t+h_t,t) dt + \int_0^{\infty}L_{\omega_{k+1}}(q_k+h,(q_k)_t+h_t,t)
+ (\omega_{k+1}-\omega_k)(v_k(0)+v^h(0)) \notag \\
& = \int_{-\infty}^{T_k} L_{\omega_k}(q_k+h,(q_k)_t+h_t,t) dt + \int_{T_k}^{\infty}L_{\omega_{k+1}}(q_k+h,(q_k)_t+h_t,t) \notag
\\ & \hspace{150pt} + (\omega_{k+1}-\omega_k)(V_k+v^h(T_k))  + (\omega^2_k-\omega_{k+1}^2)T_k/2 \notag \\
& = \int_{-\infty}^{\tilde{T}_k} L_{\omega_k}(\tilde{q}_k+\tilde{h},(\tilde{q}_k)_t+\tilde{h}_t,t) dt + \int_{\tilde{T}_k}^{\infty}L_{\omega_{k+1}}(\tilde{q}_k+\tilde{h},(\tilde{q}_k)_t+\tilde{h}_t,t)dt
+ (\omega_{k+1}-\omega_k)v(\tilde{T}_k)+X_k\notag \\
& = \tilde{E}_k(q) + \int_{-\infty}^{\tilde{T}_k-L} L_{\omega_k}(\tilde{q}_k+\tilde{h},(\tilde{q}_k)_t+\tilde{h}_t,t) dt 
+ \int_{\tilde{T}_k+L}^{\infty}L_{\omega_{k+1}}(\tilde{q}_k+\tilde{h},(\tilde{q}_k)_t+\tilde{h}_t,t)dt
+ X_k. \label{r:secondintegral}
\end{align} 
By the definition of $V$, $\tilde{q}_k$, and then (\ref{r:uhzero}) and Lemma (\ref{p:B4bounds}), (iii), we obtain
\begin{align*}
\int_{-\infty}^{\tilde{T}_k-L} V(\tilde{q}_k+\tilde{h},t)dt & \leq \ve \int_{-\infty}^{\tilde{T}_k-L} (1-\cos (\tilde{q}_k+\tilde{h}))\dt \ll \ve \int_{-\infty}^{T_k-L}q^2_k(t)dt + \ve \int_{-\infty}^{\tilde{T}_k-L}\tilde{h}^2_k(t)dt \\
& \ll \ve \int_{-\infty}^{T_k-L} e^{-\frac{1}{2}\sqrt{\ve}|t-T_k|} + \ve \int_{-\infty}^{\tilde{T}_k-L}  e^{-\frac{1}{2}\sqrt{\ve}|t-\tilde{T}_k|} \ll \sqrt{\ve}e^{-\frac{1}{2}\sqrt{\ve}L} \ll \frac{\log L}{L}.
\end{align*}
Inserting it in the definition of $L_{\omega_k}$ and combining with (\ref{r:uhone}), (\ref{r:vhleft}) and Lemma (\ref{p:B4bounds}), (vi), we get
\begin{align*}
\int_{-\infty}^{\tilde{T}_k-L} L_{\omega_k}(\tilde{q}_k+\tilde{h},(\tilde{q}_k)_t+\tilde{h}_t,t) dt & \ll ||(\tilde{u}_k)_t||^2_{L^2((-\infty,\tilde{T}_k-L])}+||(\tilde{v}_k)_t-\omega_k||^2_{L^2((-\infty,\tilde{T}_k-L])} \\ & \quad +||\tilde{h}_t||^2_{L^2([\tilde{T}_k-L(1+1/L),\tilde{T}_k-L])^2} + \frac{\log L}{L} \\
& \ll (M^4+\varpi^4)\frac{\log L}{L}.
\end{align*}
By proving an analogous statement for the second integral in (\ref{r:secondintegral}) and combining all the relations above, we see that
$$
|\tilde{E}_k(q)-E_{q_k}(h) - X_k|  \ll (M^4+\varpi^4)\frac{\log L}{L}.
$$
We complete the claim analogously as in Lemma \ref{l:flux}. The proof for (\ref{r:Dis2}) is analogous, as we also by Lemmas \ref{l:heteroprop} and \ref{p:B4bounds} control the second derivatives of $\tilde{q}_k$ and $\tilde{h}$. We get (\ref{r:Dis3}) by inserting $h=0$ in (\ref{r:Dis1}).
\end{proof}

\begin{proof}[Proof of Proposition \ref{p:b6}]
Let $c_{13}$ be the larger of the constants in Lemmas \ref{l:balance} and \ref{l:dissip}. Let for some $k \in \mathbb{Z}$, $s_1$ be the supremum of all the times $s$ such that (\ref{r:Econd}) holds. Then by continuity, 
\begin{equation}
\tilde{E}_k(q) = \tilde{E}_k(\tilde{q}_k) + \Delta_0(q_k). \label{r:Dis4}
\end{equation} Combining (\ref{r:Dis1}), (\ref{r:Dis3}) and (\ref{r:Dis4}),  we obtain $|E_{q_k}(h)-E_{q_k}(0)-\Delta_0(q_k)| \leq \Delta_1$. By Proposition \ref{p:dissipation}, we thus obtain $D_{q_k}(h) \geq \Delta_1$, so by (\ref{r:Dis2}), $\tilde{D}_k(q) \geq \Delta_1 / 2$. Inserting this and the bound from Lemma \ref{l:flux} in the action balance law (\ref{r:balance}), we get
\begin{equation*}
\frac{d}{ds}\tilde{E}_k(q) \leq - \Delta_1 / 2 + \Delta_1/4 \leq - \Delta_1 / 4 < 0,
\end{equation*}
which is in contradiction with the assumption. To show $q^0 \in \mathcal{B}_5$, we recall that $q^0$ and $\tilde{q}_k$ coincide on $[\tilde{T}_k-L,\tilde{T}_k+L]$. As we always have $\Delta_0(q_k) > 0$,  we conclude that $\tilde{E}_k(q^0) = \tilde{E}_k(\tilde{q}_k) < \tilde{E}_k(\tilde{q}_k)+\Delta_0(q_k)$, so $q^0 \in \mathcal{B}_5$ by definition.
\end{proof}

\section{Completion of construction of an invariant set} \label{s:invariant}

In this section we complete the construction of an invariant set $\mathcal{B}$ with respect to (\ref{r:grad}) by finally applying Lemma \ref{l:combine}.

\begin{proposition} \label{p:invariant}
Assume $\omega_k$ is a sequence in a closed subset $[\omega^-,\omega^+]$ satisfying (\ref{r:assumec}), and that $L$ satisfies
\begin{equation}
L \geq c_{14} \varpi^5 \frac{|\log \Delta_1|}{\Delta_1} \label{r:LLcond}
\end{equation}
for some large enough absolute constant $c_{14}$.
Then there exists a $\xi$-invariant set $\mathcal{B} \subset \mathcal{B}_5  \subset \mathcal{X}$, such that $q^0 \in \mathcal{B}$.
\end{proposition}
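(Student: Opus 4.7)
The plan is to invoke Lemma~\ref{l:combine} with the set $\mathcal{A}$ of Section~\ref{s:six} and a further refinement $\mathcal{B}_6\subset\mathcal{B}_5$ defined below; then $\mathcal{B}:=\mathcal{A}\cap\mathcal{B}_6$ will be the desired $\xi$-invariant set. For each $q\in\mathcal{B}_5$, the $H^2$ bounds on $u_t-u^0_t$ from $\mathcal{B}_4$ (Lemma~\ref{p:B4bounds}), combined with the separatrix asymptotic $u^0_t(\tilde T_k)\sim\sqrt{\ve}$ and the smallness of $R$ via (\ref{r:Rcond}), force $u(\cdot,q)$ to cross the level $(2k+1)\pi$ at a unique point $\tilde T_k(q)$ in a neighbourhood of $\tilde T_k$ of size at least $R$, depending continuously on $q\in\mathcal{X}_{\text{loc}}$; set $\tilde V_k(q):=v(\tilde T_k(q),q)$. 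Define
\[
\mathcal{B}_6:=\bigl\{\,q\in\mathcal{B}_5\ :\ (\tilde T_k(q),\tilde V_k(q))\in\tilde{\mathcal{N}}_k^{\circ}\ \text{for every }k\in\mathbb{Z}\,\bigr\}.
\]
Since $q^0(\tilde T_k)=((2k+1)\pi,\tilde V_k)$ and $(\tilde T_k,\tilde V_k)$ is the centre of $\tilde{\mathcal{N}}_k$, we have $q^0\in\mathcal{B}_6$.

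For (B1) ($\mathcal{A}$-relative $\xi$-invariance of $\mathcal{B}_6$), the key claim is that no $q\in\mathcal{B}_5$ satisfies $(\tilde T_k(q),\tilde V_k(q))\in\partial\tilde{\mathcal{N}}_k$ for any $k$. Granting this: if $q(s_0)\in\mathcal{B}_6$ and $q(s)\in\mathcal{A}$ on $[s_0,s_1]$, then Proposition~\ref{p:b6} keeps $q(s)\in\mathcal{B}_5$ throughout; the continuous curve $s\mapsto(\tilde T_k(q(s)),\tilde V_k(q(s)))$ starts in $\tilde{\mathcal{N}}_k^{\circ}$, never meets $\partial\tilde{\mathcal{N}}_k$, so the disjoint open cover $\{\tilde{\mathcal{N}}_k^{\circ},\mathbb{R}^2\setminus\tilde{\mathcal{N}}_k\}$ of its trace together with connectedness of $[s_0,s_1]$ traps it in $\tilde{\mathcal{N}}_k^{\circ}$. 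To prove the claim, suppose $(\tilde T_k(q),\tilde V_k(q))\in\partial\tilde{\mathcal{N}}_k$ and let $h\in\mathcal{C}$ be the element attached to $\tilde T_k$ by Lemma~\ref{p:B4bounds}. The translation $(t,v)\mapsto(t-\tilde T_k+T_k,\,v-\tilde V_k+V_k)$ sends the crossing point of $q$ into a boundary point of $\mathcal{N}_{q_k}$, so (\ref{r:Lmain}) and Proposition~\ref{p:hetero}(ii) yield $E_{q_k}(h)-E_{q_k}(0)\geq 2\Delta_0$. Combining with (\ref{r:Dis1}) and (\ref{r:Dis3}) from Lemma~\ref{l:dissip} gives $\tilde E_k(q)-\tilde E_k(\tilde q_k)\geq 2\Delta_0-\Delta_1\geq\tfrac{3}{2}\Delta_0>\Delta_0(\tilde q_k)$, contradicting $q\in\mathcal{B}_5$ through (\ref{r:Econd}). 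The size condition (\ref{r:LLcond}) on $L$ is used precisely to absorb the Lemma~\ref{l:dissip} errors into $\Delta_1\leq\Delta_0/2$.

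For (B2), every $q\in\mathcal{B}_6$ satisfies $|\tilde T_k(q)-\tilde T_k|,|\tilde V_k(q)-\tilde V_k|\leq R$, while Sobolev embedding applied to the $\mathcal{B}_4$-bounds provides a uniform pointwise bound on $v_t$ in a unit neighbourhood of $\tilde T_k$; the choice of $M$ in (\ref{r:Mcond}) then forces $|v(\tilde T_k,q)-\tilde V_k|<M$ and $|u(\tilde T_k,q)-(2k+1)\pi|<1/3$ strictly, with a uniform positive slack to $\partial\mathcal{A}$. The pointwise bound on $|q_s|$ from the proof of Lemma~\ref{l:flux} is uniform over $\mathcal{B}_5$, so a $\lambda>0$ independent of $q$ exists with $\xi^s(q)\in\mathcal{A}$ for $s\in[0,\lambda]$. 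Lemma~\ref{l:combine} then produces $\mathcal{B}=\mathcal{A}\cap\mathcal{B}_6$ as a $\xi$-invariant subset of $\mathcal{B}_5$ containing $q^0$. The hardest step is the dissipation-to-barrier identification in (B1)---matching $\tilde E_k(q)$ with $\Sigma_{\omega_k,\omega_{k+1}}$ at the actual crossing within error $\Delta_1$ via the tail-cut $h\in\mathcal{C}$ of Lemma~\ref{p:B4bounds}---which is the sole place where (\ref{r:LLcond}) is invoked; a secondary difficulty, and the reason for (\ref{r:Rcond}), is to keep $\tilde T_k(q)$ unambiguously defined and $h$ in $\mathcal{C}$ across the full range of crossings allowed in $\mathcal{B}_5$ when $\mathcal{N}_{q_k}$ is topologically complex.
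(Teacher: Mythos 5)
Your proposal correctly identifies the high-level architecture (apply Lemma~\ref{l:combine} with $\mathcal{A}$ and a set $\mathcal{B}_6\subset\mathcal{B}_5$, and show that the crossing of $u$ through $(2k+1)\pi$ cannot lie on $\partial\tilde{\mathcal{N}}_k$ because $q\in\mathcal{B}_5$ caps the truncated action), and your barrier estimate $\tilde E_k(q)-\tilde E_k(\tilde q_k)\geq 2\Delta_0-\Delta_1 > \Delta_0(q_k)$ is essentially the paper's Lemmas~\ref{l:B6one}--\ref{l:B6two}. However, there is a genuine gap at the foundation of your definition of $\mathcal{B}_6$ and of the (B1) argument built on it.

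You assert that the $\mathcal{B}_4$ bounds, the separatrix asymptotic $u^0_t\sim\sqrt{\ve}$, and (\ref{r:Rcond}) force $u$ to cross $(2k+1)\pi$ at a \emph{unique} point $\tilde T_k(q)$ depending continuously on $q$. None of the estimates you cite provides this. The controls you have are $L^\infty$ bounds on $u$ (from $\mathcal{B}_1$, (\ref{r:main1}), (\ref{r:defb1u})) and weighted $L^2$ bounds on $u_t-u^0_t$, $u_{tt}$, etc.\ (from $\mathcal{B}_2$--$\mathcal{B}_4$). These confine $u-2k\pi$ to a tube around a separatrix and control its average regularity, but they do not make $u$ monotone near $\tilde T_k$, nor do they exclude multiple crossings of $(2k+1)\pi$, nor do they give a nondegenerate (transversal) crossing; all three are needed for $\tilde T_k(q)$ to be single-valued and $\mathcal{X}_{\text{loc}}$-continuous. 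With multiple or degenerate crossings, your ``disjoint open cover plus connectedness'' continuity argument for (B1) collapses, because $s\mapsto(\tilde T_k(q(s)),\tilde V_k(q(s)))$ is not a well-defined continuous curve.

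The paper resolves exactly this issue by not assuming uniqueness: $\mathcal{B}_6$ is defined through the \emph{parity} of the number of crossings of $(2k+1)\pi$ that land in $\mathcal{N}_k$, counted via the equivalence relation $\sim_k$ on $\varUpsilon_k$. This parity, rather than the location of a single crossing, is the quantity that is topologically invariant under the continuous semiflow $\xi$ once boundary intersections are excluded (Lemma~\ref{l:B6two}), and its invariance is not automatic: it is proved as the Parity Lemma (Proposition~\ref{p:parity}, Appendix~D), whose proof the paper itself flags as ``somewhat subtle'' and as relying in a fundamental way on the continuity of the semiflow. This parity bookkeeping is the missing ingredient in your argument. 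A secondary note: (\ref{r:Rcond}) is used in the paper for the quantitative step in (B2) (keeping $|u(\tilde T_k)-(2k+1)\pi|$ and $|v(\tilde T_k)-\tilde V_k|$ strictly below the $\mathcal{A}$-thresholds), not for disambiguating the crossing time as you suggest.
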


\begin{remark} \label{r:notation2} Assume that as in the Remark \ref{r:notation} we fix a segment $[\omega^-,\omega^+]$ in a region of instability, and that for each $\omega,\tilde{\omega} \in [\omega^-,\omega^+]$ satisfying (\ref{r:assumec}) we chose a single $q \in \mathcal{H}$ (as such $q$ is not necessarily unique). We also fix $\mathcal{N}_q$ associated to such $q$ as in the definition of $\mathcal{H}$. We will see that the set $\mathcal{B}$ is then uniquely defined by the choice of $L$, $(\tilde{L}_k)_{k \in \mathbb{Z}}$ and $(\omega_k)_{k \in \mathbb{Z}}$, and satisfies all the relations in the definitions of $\mathcal{B}_1$-$\mathcal{B}_5$. In the proofs of the main theorems, we thus use the notation $\mathcal{B}(L,(\tilde{L}_k)_{k \in \mathbb{Z}},(\omega_k)_{k \in \mathbb{Z}})$. 
\end{remark}

We say that $q$ intersects the set $\mathcal{N}_k \subset \mathbb{R}^2$ at $k$, if there exists $(t,v) \in \mathcal{N}_k$ such that $q(t)=((2k+1)\pi,v)$. Analogously we define the notion of $q$ intersecting $\partial \mathcal{N}_k$ at $k$. We first in two lemmas establish that $q \in \mathcal{B}_5$ can not intersect $\partial \mathcal{N}_k$ at $k$, then define $\mathcal{B}_6$ and prove its $\xi$-relative invariance, and finally complete the proof of Proposition \ref{p:invariant}.

\begin{lemma} \label{l:B6one} There exists an absolute constant $c_{15}$ such that, if
	\begin{equation} 
	L \geq c_{15} \frac{|\log \mu|}{\sqrt{\ve}}, \label{r:Lcond2}
	\end{equation}
	then the following holds: for each $q \in \mathcal{B}_5$ intersecting $\partial \mathcal{N}_k$ at $k$, we have
\begin{equation}
\tilde{E}_k(q) \geq \tilde{E}_k(\tilde{q}_k) + \Delta_0. \label{r:morethan}
\end{equation}
\end{lemma}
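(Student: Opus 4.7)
The plan is to reduce the claim to the heteroclinic action-gap (\ref{r:LL}) via the perturbation $h\in\mathcal{C}$ that Lemma \ref{p:B4bounds} associates to $q\in\mathcal{B}_5\subset\mathcal{B}_4$. I will set $h(\cdot)=\tilde{h}(\cdot+\tilde{T}_k-T_k)$, where $\tilde{h}$ is the cut-off of $q-\tilde{q}_k$ supplied by that lemma; the argument already recorded in the proof of Lemma \ref{l:dissip} shows that this $h$ indeed lies in $\mathcal{C}$.

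First I would translate the intersection hypothesis into the assertion that $q_k+h$ crosses $\partial\mathcal{N}_{q_k}$. Suppose $(t_1,v_1)\in\partial\mathcal{N}_k$ with $q(t_1)=((2k+1)\pi,v_1)$. Since $\mathcal{N}_k=\mathcal{N}_{q_k}+(\tilde{T}_k-T_k,\tilde{V}_k-V_k)$ has diameter at most $R$, one has $|t_1-\tilde{T}_k|\le R$, so $t_1\in[\tilde{T}_k-L,\tilde{T}_k+L]$ once the new hypothesis (\ref{r:Lcond2}) is used to force $L\ge R$. In this window $\tilde{h}=q-\tilde{q}_k$ holds exactly by Lemma \ref{p:B4bounds}(i), and a short direct calculation using $\tilde{q}_k(t)=q_k(t-\tilde{T}_k+T_k)+(2k\pi,\tilde{V}_k-V_k)$ gives
$$(q_k+h)(t_1-\tilde{T}_k+T_k)=(\pi,\,v_1-(\tilde{V}_k-V_k)),$$
and the image point lies on $\partial\mathcal{N}_{q_k}$ by the very definition of $\mathcal{N}_k$. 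Because $h\in\mathcal{C}$ forces $u^h\to 0$ at $\pm\infty$ via (\ref{d:Cone}), the function $q_k+h$ inherits the asymptotics $u\to 0$, $u\to 2\pi$ from $q_k$, so (\ref{r:LL}) applies and yields
$$E_{q_k}(h)\ge E_{q_k}(0)+2\Delta_0.$$

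The conclusion then follows by combining this gap with the truncation bounds of Lemma \ref{l:dissip}. Estimate (\ref{r:Dis1}) applied to $q$ and (\ref{r:Dis3}) applied to $\tilde{q}_k$ each introduce an error of at most $\Delta_1/2$, and the common shift constant $X_k=T_k(\omega_k^2-\omega_{k+1}^2)/2+(\omega_{k+1}-\omega_k)(\tilde{V}_k-V_k)$ cancels on subtraction, leaving
$$\tilde{E}_k(q)-\tilde{E}_k(\tilde{q}_k)\ge \bigl(E_{q_k}(h)-E_{q_k}(0)\bigr)-\Delta_1\ge 2\Delta_0-\Delta_1\ge\tfrac{3}{2}\Delta_0\ge\Delta_0,$$
where the penultimate inequality uses $\Delta_1\le\Delta_0/2$ from Proposition \ref{p:dissipation}.

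There is no real analytic obstacle beyond the bookkeeping: the heavy lifting is already done in Lemma \ref{p:B4bounds} (which packages $\tilde{h}$ with precisely the decay needed for $\mathcal{C}$) and in Lemma \ref{l:dissip} (which controls the truncation error by $\Delta_1$). The sole role of the new hypothesis (\ref{r:Lcond2}) is to guarantee $L\ge R$, so that the intersection point $t_1$ falls inside the window in which the cutoff $\tilde{h}$ exactly reproduces $q-\tilde{q}_k$; using $R\sqrt{\ve}\le 1/144$ from (\ref{r:Rcond}) and $\mu\le 1/16$ from (A2), any $c_{15}\ge (144\log 16)^{-1}$ will suffice.
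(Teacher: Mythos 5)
Your reduction to (\ref{r:LL}) via the perturbation $h$ from Lemma \ref{p:B4bounds}, and the computation showing that $(q_k+h)$ passes through $(\pi,\cdot)$ on $\partial\mathcal{N}_{q_k}$, are correct. The problem is how you bound the truncation error: you invoke Lemma \ref{l:dissip} for (\ref{r:Dis1}) and (\ref{r:Dis3}), but that lemma is stated and proved under the hypothesis (\ref{r:Lcond}), namely $L\geq c_{13}(\varpi^5+M^5)|\log\Delta_1|/\Delta_1$, which is a \emph{stronger} requirement than the hypothesis (\ref{r:Lcond2}) of the lemma you are trying to prove. Since $\Delta_1\leq\Delta_0/2\leq 9\sqrt{\ve}\mu/2$ is typically much smaller than $\mu$, the bound $L\geq c_{15}|\log\mu|/\sqrt{\ve}$ does not imply (\ref{r:Lcond}); so as written your argument establishes a statement with a strictly stronger hypothesis than Lemma \ref{l:B6one}. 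In particular your final remark --- that the sole role of (\ref{r:Lcond2}) is to force $L\geq R$, and that $c_{15}\geq(144\log 16)^{-1}$ suffices --- is inconsistent with your own use of Lemma \ref{l:dissip}, which needs $L$ far larger.

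The paper's proof avoids this by not passing through Lemma \ref{l:dissip} at all. Instead it builds a global competitor $\tilde q$ directly: it keeps $q(\cdot-T_k+\tilde T_k)-(2k\pi,\tilde V_k-V_k)$ on $[T_k-L,T_k+L]$ and grafts on explicit exponential tails $a\,e^{-\frac12\sqrt\ve|t-T_k|}$, $2\pi-b\,e^{-\frac12\sqrt\ve|t-T_k|}$ outside this window, together with a piecewise-affine $\tilde v$ with slopes $\omega_k,\omega_{k+1}$. The $L^\infty$ bound (\ref{r:main1}) already available in $\mathcal B_1$ then gives $|\tilde E_k(q)-\mathcal L_{\omega_k,\omega_{k+1}}(\tilde q)|\ll e^{-\frac12\sqrt\ve L}$ and similarly for $\tilde q_k$ versus $q_k$, and this exponential error is controlled by $\Delta_0/2$ using only (\ref{r:Lcond2}). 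This is both weaker in the hypothesis and cheaper: it uses only the sup bounds on $u$ rather than the weighted $H^2$ bounds packaged in $\mathcal B_4$, and it isolates the use of $\Delta_1$ (and hence of (\ref{r:Lcond})) to Proposition \ref{p:b6}, which is where $\mathcal B_5$-invariance is actually established. To repair your proof you would either have to add (\ref{r:Lcond}) to the hypothesis of Lemma \ref{l:B6one} (weakening the lemma, though in the eventual application both conditions do hold), or redo the truncation estimate as the paper does with an $e^{-\frac12\sqrt\ve L}$ error instead of a $(M^4+\varpi^4)\log L/L$ error.
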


\begin{proof}
	To prove (\ref{r:morethan}), we will approximate $q(t-T_k+\tilde{T}_k)-(2k \pi,\tilde{V}_k-V_k)$ with a $\tilde{q}=(\tilde{u},\tilde{v}) \in H^1_{\text{loc}}$ so that we can apply (\ref{r:LL}). Indeed, let $\tilde{q}(t)=q(t-T_k+\tilde{T}_k)-(2\pi k,\tilde{V}_k-V_k)$ for $t \in [T_k-L,T_k+L]$. We define $\tilde{v}(t)$ uniquely for all $t \in \mathbb{R}$ by $\tilde{v}_t = \omega_k$ for $t < T_k-L$, $\tilde{v}_t = \omega_{k+1}$ for $t > T_k+L$, $\tilde{v}$ continuous. Let
	\begin{equation*}
    \tilde{u}(t)=a \cdot e^{-\frac{1}{2}\sqrt{\ve} |t-T_k|}, \: t \leq T_k-L, \hspace{50pt} \tilde{u}(t)=2\pi - b \cdot e^{-\frac{1}{2}\sqrt{\ve} |t-T_k|}, \: t \geq T_k+L,
	\end{equation*}
	where the constants $a,b$ are uniquely chosen so that $\tilde{u}$ is continuous. By construction, (\ref{r:main1}) and the definition of $V$, we have
	\begin{align}
	|\tilde{E}_k(q) - \mathcal{L}_{\omega_k,\omega_{k+1}}(\tilde{q}) | & = \int_{-\infty}^{T_k-L}L_{\omega_k}(\tilde{q},\tilde{q}_t,t) dt + \int_{T_k+L}^{\infty}L_{\omega_{k+1}}(\tilde{q},\tilde{q}_t,t) dt \notag \\
	& \ll \ve \int_{-\infty}^{T_k-L}e^{-\frac{1}{2}\sqrt{\ve} |t-T_k|}dt + \ve \int_{T_k+L}^{\infty}e^{-\frac{1}{2}\sqrt{\ve} |t-T_k|}dt \ll \sqrt{\ve}e^{-\frac{1}{2}\sqrt{\ve}L} \leq e^{-\frac{1}{2}\sqrt{\ve}L}.  \label{r:EminLa}
	\end{align}
	Similarly, we obtain 
	\begin{equation}
	|\tilde{E}_k(\tilde{q}_k)-\mathcal{L}_{\omega_k,\omega_{k+1}}(q_k)| \ll  e^{-\frac{1}{2}\sqrt{\ve}L}. \label{r:EminLb}
	\end{equation} 
	 By $(\ref{s:bound})$ and the definition of $\Delta_0$, we have $\Delta_0 \ll \sqrt{\ve} \: \mu \leq \mu$. Thus we can choose $L \gg  |\log \mu| / \sqrt{\ve} \gg |\log \Delta_0| / \sqrt{\ve} $ for a large enough absolute constant, so that the left-hand sides od (\ref{r:EminLa}) and (\ref{r:EminLb}) are bounded by $\Delta_0 / 2$. Combining it with 
	(\ref{r:LL}), we complete the proof.
\end{proof}

\begin{lemma} \label{l:B6two}
If $q \in \mathcal{B}_5$, then for all $k \in \mathbb{Z}$, $q$ can not intersect  $\partial \mathcal{N}_k$ at $k$.
\end{lemma}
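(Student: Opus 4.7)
The plan is to prove Lemma \ref{l:B6two} by direct contradiction, exploiting the gap between the action growth forced by a boundary crossing (Lemma \ref{l:B6one}) and the action ceiling built into the definition of $\mathcal{B}_5$. Essentially all the hard work has already been done: Lemma \ref{l:B6one} converts a geometric intersection with $\partial \mathcal{N}_k$ into a lower bound $\tilde{E}_k(q) \geq \tilde{E}_k(\tilde{q}_k) + \Delta_0$, while the construction in Proposition \ref{p:dissipation} ensures $\Delta_0(q_k) \leq \Delta_0/2$, so the inequality (\ref{r:Econd}) defining $\mathcal{B}_5$ reads $\tilde{E}_k(q) \leq \tilde{E}_k(\tilde{q}_k) + \Delta_0/2$. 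These two bounds are incompatible.

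Concretely, I would argue by contradiction: suppose $q \in \mathcal{B}_5$ intersects $\partial \mathcal{N}_k$ at $k$ for some $k \in \mathbb{Z}$. Since $\mathcal{B}_5 \subset \mathcal{B}_4$ and the standing hypothesis on $L$ in Proposition \ref{p:invariant} majorizes the bound (\ref{r:Lcond2}) (note $\Delta_0 \ll \sqrt{\ve}\mu$ by (\ref{r:Delta0up}), so $|\log \mu|/\sqrt{\ve} \ll |\log \Delta_1|/\Delta_1 \cdot \varpi^5$ up to an absolute constant), Lemma \ref{l:B6one} applies and yields
\[
\tilde{E}_k(q) \geq \tilde{E}_k(\tilde{q}_k) + \Delta_0.
\]
On the other hand, the defining relation (\ref{r:Econd}) of $\mathcal{B}_5$, together with the bound $\Delta_0(q_k) \leq \Delta_0/2$ furnished by Proposition \ref{p:dissipation}, gives
\[
\tilde{E}_k(q) \leq \tilde{E}_k(\tilde{q}_k) + \Delta_0(q_k) \leq \tilde{E}_k(\tilde{q}_k) + \tfrac{1}{2}\Delta_0.
\]
Comparing these two estimates produces $\Delta_0 \leq \Delta_0/2$, contradicting $\Delta_0 > 0$ from (S1).

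The only point that requires care — and which I regard as the single nontrivial step — is verifying that the hypothesis (\ref{r:Lcond2}) of Lemma \ref{l:B6one} is genuinely implied by the hypothesis (\ref{r:LLcond}) used throughout Section \ref{s:invariant}, so that Lemma \ref{l:B6one} may indeed be invoked here. This reduces to the quantitative comparison $|\log \mu|/\sqrt{\ve} \lesssim \varpi^5 |\log \Delta_1|/\Delta_1$, which follows from (\ref{r:Delta0up}) (so $\log(1/\mu) \lesssim \log(1/\Delta_0) \lesssim \log(1/\Delta_1)$, using $\Delta_1 \leq \Delta_0/2$) together with $\Delta_1 \leq \Delta_0/2 \leq 5\sqrt{\ve}\mu \leq \sqrt{\ve}$, so that $1/\sqrt{\ve} \leq 1/\Delta_1$. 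Choosing the absolute constant $c_{14}$ sufficiently large relative to $c_{15}$ secures this, and the rest of the proof is the two-line contradiction above.
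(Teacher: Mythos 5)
Your proof is correct and takes essentially the same approach as the paper's: combine the ceiling $\tilde{E}_k(q) \leq \tilde{E}_k(\tilde{q}_k) + \Delta_0(q_k) \leq \tilde{E}_k(\tilde{q}_k) + \Delta_0/2$ from (\ref{r:Econd}) and Proposition \ref{p:dissipation} with the lower bound (\ref{r:morethan}) from Lemma \ref{l:B6one} to reach the contradiction $\Delta_0 \leq \Delta_0/2$. The extra care you take to verify that (\ref{r:Lcond2}) follows from (\ref{r:LLcond}) is a sound observation; the paper leaves this implicit at this point and confirms it explicitly only inside the proof of Proposition \ref{p:invariant}.
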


\begin{proof}
As $q \in \mathcal{B}_5$, (\ref{r:Econd}) holds. If $q$ would intersect $\partial \mathcal{N}_k$ at $k$, this would contradict (\ref{r:morethan}) and $\Delta_0(\tilde{q}_k) \leq \Delta_0/2$ established in Proposition \ref{p:dissipation}.
\end{proof}

We now define the number of times $q$ intersects of $\mathcal{N}_k$ at $k$. Let $\varUpsilon_k$ (as a function of $q=(u,v) \in \mathcal{X}$) be the set of all $t \in \mathbb{R}$ satifying 
$$
\varUpsilon_k = \lbrace \tilde{T}_{k-1} \cup \tilde{T}_{k+1} \rbrace \cup \lbrace t \in [\tilde{T}_{k-1},\tilde{T}_{k+1}], (t,v(t)) \in \partial B_k \rbrace.
$$
By the definition of $\mathcal{A}$, $u(\tilde{T}_{k-1}) < (2k+1)\pi$ and $u(\tilde{T}_{k+1}) > (2k+1)\pi$.  Now by Lemma (\ref{l:B6two}), for any $t \in \varUpsilon_k$, $u(t) \neq (2k+1)\pi$. Let $\sim_k$ be a relation of equivalence on $\varUpsilon_k$ defined with $t_1 \sim_k t_2$ whenever for all $t_3 \in \varUpsilon_k$ such that $t_1 \leq t_3 \leq t_2$, we have that $u(t_1)-(2k+1)\pi$, $u(t_2)-(2k+1)\pi$ and $u(t_3)-(2k+1)\pi$ have the same sign. Let $\tilde{\varUpsilon}_k = \varUpsilon_k / \sim_k$ with the induced topology. As by assumptions, $\varUpsilon_k$ is a closed subset of a compact set, $\varUpsilon_k$ is compact. By definition, continuity of $q$ and compactness of $\partial \mathcal{N}_k$, we see that $\tilde{\varUpsilon}_k$ is totally disconnected and compact, thus finite, and $|\tilde{\varUpsilon}_k| \geq 2$. Consider $|\tilde{\varUpsilon}_k|-1$ intervals $(t_j,t_{j+1}) \in  \varUpsilon_k^c$,  where $t_j,t_{j+1} \in \varUpsilon_k$ and $t_j \not\sim_k t_{j+1}$. We say that $q$ intersects $\mathcal{N}_k$ at $k$ exactly $m$-number of times, if $m$ is the number of such intervals, for which $q$ intersects $\mathcal{N}_k$ at $k$ for some $(t,v) \in \mathcal{N}_k$ such that $t \in (t_j,t_{j+1})$. 

Let $\mathcal{B}_6$ be the set of all $q \in \mathcal{B}_5$, such that for each $k \in \mathbb{Z}$, $q$ intersects $\mathcal{N}_k$ at $k$ odd number of times (i.e. that $m$ in the definition of the number of intersections is odd).

\begin{lemma} \label{l:intersect}
	The set $\mathcal{B}_6$ is $\mathcal{A}$-relatively $\xi$-invariant, and $q^0 \in \mathcal{B}_6$.
\end{lemma}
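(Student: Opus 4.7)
The plan is to reinterpret the combinatorial count $m_k(q)$ from the definition of $\mathcal{B}_6$, modulo $2$, as a topological mod-$2$ intersection number of a spatial curve with a codimension-one surface-with-boundary in $\mathbb{R}^3$; the invariance of this number under $\xi$-evolution inside $\mathcal{A} \cap \mathcal{B}_5$ will then follow from standard homotopy invariance, with the single geometric input being Lemma \ref{l:B6two}.

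The first and most delicate step I would carry out is a purely combinatorial reduction: for every $q \in \mathcal{B}_5$ and every $k$,
\begin{equation*}
m_k(q) \equiv \bigl| \Gamma^{\mathrm{in}}_k(q) \bigr| \pmod{2}, \quad \Gamma^{\mathrm{in}}_k(q) := \bigl\{ t \in [\tilde{T}_{k-1},\tilde{T}_{k+1}] : u(t) = (2k+1)\pi,\ (t,v(t)) \in \tilde{\mathcal{N}}_k \bigr\}.
\end{equation*}
Two observations drive this. By maximality of equivalence classes, two consecutive $\sim_k$-classes must always have opposite signs of $u-(2k+1)\pi$, so each of the $|\tilde{\varUpsilon}_k|-1$ intervals $(t_j,t_{j+1})$ used in the definition of $m_k$ is \emph{sign-changing} and hence contains an odd number of zeros of $u-(2k+1)\pi$. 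Second, between two consecutive points of $\varUpsilon_k$ the curve $t \mapsto (t,v(t))$ cannot touch $\partial \tilde{\mathcal{N}}_k$, so by connectedness it lies entirely in $\tilde{\mathcal{N}}_k^\circ$ or entirely in its complement. Sign-changing intervals of the first kind contribute $1 \pmod{2}$ to both $m_k$ and $|\Gamma^{\mathrm{in}}_k|$, those of the second kind contribute $0$ to both, while the remaining (same-sign) intervals between consecutive $\varUpsilon_k$-points contribute an even number of zeros, hence $0 \pmod{2}$.

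Next I would prove $|\Gamma^{\mathrm{in}}_k(q(s))| \pmod{2}$ is constant along any orbit $q(s) \in \mathcal{A} \cap \mathcal{B}_5$. Associate to $q$ the curve $\gamma_q : [\tilde{T}_{k-1},\tilde{T}_{k+1}] \to \mathbb{R}^3$, $\gamma_q(t) = (t,v(t),u(t))$, and the codimension-one surface-with-boundary $\Sigma := \tilde{\mathcal{N}}_k \times \{(2k+1)\pi\}$ with $\partial \Sigma = \partial \tilde{\mathcal{N}}_k \times \{(2k+1)\pi\}$. Then $|\Gamma^{\mathrm{in}}_k(q)| = |\gamma_q \cap \Sigma|$. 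Condition (\ref{d:Au}) keeps the endpoints $\gamma_q(\tilde{T}_{k\pm 1})$ at $u$-distance $\geq \pi - 1/3$ from $\Sigma$, and Lemma \ref{l:B6two} asserts exactly that $\gamma_q \cap \partial \Sigma = \emptyset$ for $q \in \mathcal{B}_5$. Since the $\xi$-semiflow is continuous (Theorem \ref{t:exist}), $s \mapsto \gamma_{q(s)}$ is a homotopy through maps avoiding $\partial \Sigma$ with endpoints off $\Sigma$, and mod-$2$ homotopy invariance of the intersection number of a $1$-chain with a codimension-one submanifold-with-boundary finishes the step.

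To verify $q^0 \in \mathcal{B}_6$, I would compute $|\Gamma^{\mathrm{in}}_k(q^0)| = 1$ directly. The bound $u^-(t) \leq z^-(t-T_k)$ from Proposition \ref{p:stable}(ii) with $T=0$, combined with the strict inequality $z^-(\tau) < \pi$ for $\tau < 0$ from Lemma \ref{l:subsuper}, gives $u_k(t) < \pi$ for $t < T_k$; symmetrically $u_k(t) > \pi$ for $t > T_k$, so $\tilde{u}_k - (2k+1)\pi$ vanishes on $[\tilde{T}_{k-1},\tilde{T}_{k+1}]$ only at $\tilde{T}_k$. Combined with $\tilde{u}_{k-1} < 2k\pi < (2k+1)\pi$ throughout $[\tilde{T}_{k-1},\tilde{T}_k]$ and the convex-combination formula (\ref{r:defq*}), this shows that $u^0 - (2k+1)\pi$ has a unique zero on $[\tilde{T}_{k-1},\tilde{T}_{k+1}]$, located at $\tilde{T}_k$. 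Since $(T_k,V_k)$ lies in the interior of $\mathcal{N}_{q_k}$ by the construction in Lemma \ref{l:sigmamin}, $(\tilde{T}_k,\tilde{V}_k) \in \tilde{\mathcal{N}}_k^\circ$ and this zero is counted in $\Gamma^{\mathrm{in}}_k(q^0)$, so $m_k(q^0) \equiv 1 \pmod{2}$. The main obstacle throughout is the first combinatorial reduction: the definition of $m_k$ is adapted to the potentially complicated geometry of $\mathcal{N}$ (the source of the diameter bound $R$), and the $\sim_k$-class bookkeeping must be unpacked carefully before the clean topological step becomes available.
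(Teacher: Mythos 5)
Your conceptual approach -- reinterpreting the combinatorial parity $m_k(q) \bmod 2$ as a mod-$2$ topological intersection number of the curve $\gamma_q(t) = (t,v(t),u(t))$ with the surface-with-boundary $\Sigma = \tilde{\mathcal{N}}_k \times \{(2k+1)\pi\}$, and then invoking homotopy invariance along the $\xi$-orbit -- is essentially the same idea as the paper's Parity Lemma (Proposition~\ref{p:parity} in Appendix~D). Your verification that $q^0 \in \mathcal{B}_6$, which the paper leaves implicit, is correct and cleanly argued. However, the bridge you build between $m_k(q)$ and the topological picture has a genuine gap.

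The reduction $m_k(q) \equiv |\Gamma^{\mathrm{in}}_k(q)| \pmod 2$ is not valid as stated. For a merely continuous function (and $u(\cdot) \in H^3_{\text{loc}}$ is only $C^2$), a sign-changing interval need not contain an odd number of zeros, nor even finitely many: the zero set can be a Cantor set, an interval, or it can include tangential touchings that contribute with even multiplicity. So $|\Gamma^{\mathrm{in}}_k(q)|$ may be infinite, and even when finite its parity need not agree with $m_k(q)$. The combinatorial definition in Section~\ref{s:invariant} was designed precisely to avoid this: it counts sign-changing \emph{gaps} between equivalence classes of $\varUpsilon_k$ (a finite, well-defined count), not individual zeros. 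Your phrase ``hence contains an odd number of zeros'' is exactly the step that fails. A related issue is that the ``standard homotopy invariance of the mod-$2$ intersection number of a $1$-chain with a codimension-one submanifold-with-boundary'' is not off-the-shelf for merely continuous curves; making it rigorous requires a perturbation-to-transversality argument (or an elementary substitute), which is the actual content of the paper's proof. The paper proves Proposition~\ref{p:parity} directly by introducing the closed $\delta$-neighborhoods $U(\delta)$ of $\partial U$, observing that the associated gap-count $n(s,\delta)$ can only jump by even amounts as $\delta$ increases, and that for $\delta$ away from the finitely many jump values the count is locally constant in $s$; this circumvents any reference to the literal zero set and so is immune to the pathology above. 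To repair your argument, you would need to either (a) replace $|\Gamma^{\mathrm{in}}_k(q)|$ by a degree-theoretic intersection number defined via perturbation, and then show that this equals $m_k(q) \bmod 2$ using the equivalence-class structure rather than a naive zero count, or (b) adopt the paper's elementary $\delta$-perturbation scheme directly.
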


The proof relies on somewhat subtle topological considerations, and is postponed to the Appendix D. We note that the proof uses in a fundamental way the existence of a continuous semiflow which solves (\ref{r:grad}). 

\begin{proof}[Proof of Proposition \ref{p:invariant}] Assume that $L$ satisfies (\ref{r:Lcond}) and (\ref{r:Lcond2}), and that $M$ is given as in (\ref{r:Mcond}). Let $\tilde{\mathcal{B}}=\mathcal{B}_6$ and $\mathcal{B} = \tilde{\mathcal{B}} \cap \mathcal{A}$. It suffices to show that $q^0 \in \mathcal{B}$, and that the conditions (B1), (B2) hold, as the claim will then hold by Lemma \ref{l:combine}. We have already shown (B1) and $q^0 \in \mathcal{B}$ in Lemma \ref{l:intersect}. We now show (B2). The smoothness requirement in the definition of $\mathcal{A}$ follows from Theorem \ref{t:exist}, (iv). We now show that (\ref{d:Au}) and (\ref{d:Av}) hold.
	
Let $q \in \mathcal{B}_6$, fix $k \in \mathcal{B}_6	$, and find $(t,v(t)) \in \tilde{\mathcal{N}}_k$ such that $u(t)=(2k+1)\pi$ (this exists by the definition of $\mathcal{B}_6$). By definition of $\tilde{\mathcal{N}}_k$, $\tilde{T}_k$ and $\tilde{V}_k$ we have that $|t-\tilde{T}_k| \leq R$, $|v(t)-\tilde{V}_k| \leq R$. Without loss of generality, let $t \leq \tilde{T}_k$, $v(t) \leq \tilde{T}_k$ (the other cases are analogous). Then by the definition of $\Delta_0$ and (\ref{s:bound}),
\begin{equation*}
 \int_{t}^{\tilde{T}_k} \left( u_t^2 + (v_t-\omega_k)^2 \right) dt \leq \max_{(t_0,v_0)\in \mathbb{R}^2}S_{\omega}(t_0,v_0)
 \leq 8 \sqrt{ \ve(1 + \mu)} \leq 9 \sqrt{\ve}.
\end{equation*}
We thus have
\begin{align*}
 |u(\tilde{T}_k)-(2k+1)\pi| & \leq \int_t^{\tilde{T}_k}|u_t|dt  \leq R^{1/2} \left( \int_{t}^{\tilde{T}_k} u_t^2 dt \right)^{1/2} \leq 3 R^{1/2} \ve^{1/4}, \\
 |v(\tilde{T}_k)-\tilde{V}_k | & \leq R + \int_t^{\tilde{T}_k}|v_t|dt \leq R (1 + |\varpi|) +  R^{1/2} \left( \int_{t}^{\tilde{T}_k} (v_t-\omega_k)^2 dt \right)^{1/2} \\
 & \leq R(1+|\varpi|) + 3 R^{1/2} \ve^{1/4}.
\end{align*}
By the assumptions (\ref{r:Rcond}) and (\ref{r:Mcond}), we thus have $|u(\tilde{T}_k)-(2k+1)\pi| \leq 1/4$, $|v(\tilde{T}_k)-\tilde{V}_k | \leq M/2$. 
Let $q(s_0)=q$, and consider the solution $q(s)$ of (\ref{r:grad}) with the initial condition $q(s_0)$ at $s=s_0$. By (\ref{r:sbound}) and the definition of $M$ in (\ref{r:Mcond}),
we have $|u_s| \ll_{\ve,\varpi} 1$, $|v_s| \ll_{\ve,\varpi} 1$, thus the upper bound on $|u_s(t)|$, $|v_s(t)|$ is independent of the choice of $t=\tilde{T}_k$ and $q=(u,v) \in \mathcal{B}_6$. We conclude that for any $\tilde{\lambda}>0$, there exists $\lambda >0$, independent of the choice of $q(s_0) \in \mathcal{B}_6$, such that for all $s \in [s_0,s_0+\lambda]$, we have $|u(\tilde{T}_k)-(2k+1)\pi| \leq 1/4 + \tilde{\lambda}$, $|v(\tilde{T}_k)-\tilde{V}_k | \leq M/2 + \tilde{\lambda}$. By the definition of $\mathcal{A}$, this proves (B2). We conclude that $\mathcal{B}:=\mathcal{B}_6 \cap \mathcal{A}$ is indeed $\xi$-invariant.

It remains to show that (\ref{r:LLcond}) suffices for (\ref{r:Lcond}) and (\ref{r:Lcond2}) to hold. By definition of $M$ in (\ref{r:Mcond}), $c(\varpi) \ll \varpi^5$, thus (\ref{r:Lcond}) is satisfied. As by definition of $\Delta_1$ in Proposition \ref{p:dissipation}, (\ref{r:Delta0up}) and (A2), we know that $\Delta_1 \leq \Delta_0/2 \leq 4\sqrt{\ve}\mu \leq \sqrt{\ve}$, we deduce  $|\log \mu| / \sqrt{\ve} \ll |\log \sqrt{\ve}| / \sqrt{\ve} \leq |\log \Delta_1|/\Delta_1$, which was required. 
\end{proof}

\vspace{2ex}

\centerline{III: PROOFS OF THE MAIN THEOREMS}

\vspace{2ex}

\section{Proofs of the shadowing Theorem, Theorems \ref{t:main1} and \ref{t:main2}} \label{s:shadow}

We first prove a "classical" shadowing theorem, showing existence of a solution of (\ref{r:EL}) shadowing an arbitrary sequence of tori $\mathbb{T}_{\omega}$, and then Theorems \ref{t:main1} and \ref{t:main2}. Prior to all of it, we establish useful a-priori bounds on the derivatives of $q \in \mathcal{B}$. The constant $c_0$ may change from line to line within the section.

\begin{lemma} Assume $q \in \mathcal{B}$. Then for all $t \in \mathbb{R}$ and some absolute $c_0 >0$,
	\begin{equation}
	|u_t(t)| \leq c_0 \varpi \left(\sqrt{\ve} \wedge \left( \frac{\log \dt}{\dt} \right)^{1/2} \right), \hspace{20pt} |v_t(t) - \omega_{k(t)}| \leq c_0 \varpi \left(\sqrt{\ve} \wedge \left( \frac{\log \dt}{\dt} \right)^{1/2} \right). \label{r:derbounds}
	\end{equation}
	Specifically, (\ref{r:smallset}) holds.
\end{lemma}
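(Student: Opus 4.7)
The plan is to derive the pointwise bounds (\ref{r:derbounds}) by applying a one-dimensional Gagliardo--Nirenberg-type interpolation to the weighted $L^2$ estimates available on $\mathcal{B}\subset\mathcal{B}_3$, and then combining the result with the a-priori pointwise bounds on $q^0$ from Lemma \ref{l:q*bounds}. The claim (\ref{r:smallset}) is then immediate upon choosing the absolute constant $c_0$ sufficiently small.

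First, I would fix $\tau\in\mathbb{R}$ and pass from the weighted $\Ltau$-norms to ordinary $L^2$-norms over a sufficiently long interval. Let $I_\tau=[\tau-1/\lambda(\tau),\tau+1/\lambda(\tau)]$. On this interval the weight $e^{-\lambda(\tau)|t-\tau|}$ in the $\Ltau$-norm is bounded below by $e^{-1}$, and by the definition (\ref{r:lambda}) of $\lambda(\tau)$, its length satisfies $|I_\tau|=2/\lambda(\tau)\geq 8/\sqrt{\ve}$. Using (\ref{r:defB3u}) I then extract
\begin{equation*}
\|u_t-u_t^0\|^2_{L^2(I_\tau)} \ll (M^2+\varpi^2)\,\lambda(\tau), \qquad \|u_{tt}\|^2_{L^2(I_\tau)} \ll (M^2+\varpi^2)\,\ve\,\lambda(\tau),
\end{equation*}
and the analogous estimates for $v_t-v_t^0$ and $v_{tt}$ from (\ref{r:defB3v}).

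Next, since $|I_\tau|$ is long, I would apply the one-dimensional Gagliardo--Nirenberg interpolation $\|f\|^2_{L^\infty(I_\tau)}\ll \|f\|_{L^2(I_\tau)}\|f'\|_{L^2(I_\tau)}$ with $f=u_t-u_t^0$, absorbing $\|u_{tt}^0\|_{L^2(I_\tau)}$ using (\ref{r:Two}) (this term is of the same order as the bound on $\|u_{tt}\|_{L^2(I_\tau)}$). This yields
\begin{equation*}
|u_t(\tau)-u_t^0(\tau)|^2 \ll (M^2+\varpi^2)\,\sqrt{\ve}\,\lambda(\tau).
\end{equation*}
Crucially, exploiting the full length of $I_\tau$ rather than a unit-length interval is what produces the extra factor $\sqrt{\ve}$ here; a unit-interval Sobolev embedding would only give $\ve^{1/4}$.

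I would then split according to the two cases in the definition $\lambda(\tau)=\sqrt{\ve}/4 \wedge 8\log\dtau/\dtau$: in the first case I obtain $|u_t-u_t^0|\ll (M+\varpi)\sqrt{\ve}$, and in the second, using $\ve\leq 1$, I obtain $|u_t-u_t^0|\ll (M+\varpi)(\log\dtau/\dtau)^{1/2}$. Taking the minimum yields precisely the form $\sqrt{\ve}\wedge(\log\dt/\dt)^{1/2}$ on the right-hand side. Adding the pointwise bound (\ref{r:One}) on $u_t^0$, whose contribution is of compatible form given the lower bound (\ref{r:LLcond}) on $L$, completes the estimate for $u_t$. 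The argument for $v_t-\omega_{k(t)}$ is identical, using (\ref{r:defB3v}) and the second inequality in (\ref{r:One}). Finally, since (\ref{r:Mcond}) together with the assumptions (A2) and (\ref{r:Rcond}) bound $M$ in terms of $\varpi$, the prefactor $M+\varpi$ is absorbed into $c_0\varpi$. The statement (\ref{r:smallset}) follows at once.

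The one delicate point is the choice of interval length in the Gagliardo--Nirenberg step: recovering the sharp $\sqrt{\ve}$-scaling (rather than $\ve^{1/4}$) requires integrating over an interval of length $\gtrsim 1/\sqrt{\ve}$, and hence a careful use of the specific form of $\lambda(\tau)$ that governs the weight in the $\Ltau$-norm.
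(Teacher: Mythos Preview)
Your proposal is correct and follows essentially the same strategy as the paper: pass from the weighted $\Ltau$-bounds in $\mathcal{B}_3$ to ordinary $L^2$-bounds on an interval of length $\gtrsim 1/\sqrt{\ve}$, then use a Sobolev/interpolation estimate whose scaling recovers the sharp factor $\sqrt{\ve}$, and finally absorb $M$ into $\varpi$ via (\ref{r:Mcond}). The only cosmetic differences are that the paper works on the fixed interval $[\tau,\tau+4/\sqrt{\ve}]$ and uses an explicit rescaling $\tilde w(t)=w(\tau+4t/\sqrt{\ve})$ in place of Gagliardo--Nirenberg, applies the argument directly to $w=u_t$ and $w=v_t-\omega_{k(\cdot)}$ (bounding $\|u_t^0\|_{L^2}$ separately via (\ref{r:One})) rather than to the differences, and cites (\ref{r:uder})--(\ref{r:vder}) for the $(\log\dt/\dt)^{1/2}$ branch instead of re-deriving it.
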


\begin{proof}
	We first establish the following fact: assume $w \in H^1([\tau,\tau+4/\sqrt{\ve}])$ and $A >0$ a constant, such that
	\begin{equation}
	||w||^2_{L^2([\tau,\tau+4/\sqrt{\ve}])} \leq A \sqrt{\ve}, \hspace{20pt} ||w_t||^2_{L^2([\tau,\tau+4/\sqrt{\ve}])} \leq A \ve^{3/2}. \label{r:condw}
	\end{equation}
	Then we have $||w||_{L^{\infty}([\tau,\tau+4/\sqrt{\ve}])} \ll A^{1/2} \sqrt{\ve}$ (for some absolute implicit constant). Indeed, by substitution $\tilde{w}(t)=w(\tau+4t /\sqrt{\ve})$, we obtain by direct calculation $||\tilde{w}||_{H^1([0,1])} \ll A^{1/2} \sqrt{\ve}$, which implies  $||\tilde{w}||_{L^{\infty}([0,1])} \ll A^{1/2} \sqrt{\ve}$, thus the claim.
	
	By (\ref{r:One}), for any $\tau \in \mathbb{R}$ we have that $||u_t^0||^2_{{L^2([\tau,\tau+4/\sqrt{\ve}])}} \ll \sqrt{\ve}$ and $||v_t^0-\omega_{k(.)}||^2_{{L^2([\tau,\tau+4/\sqrt{\ve}])}} \ll \sqrt{\ve}$. Inserting that in (\ref{r:defB3u}) and (\ref{r:defB3v}), and in the view of the definition (\ref{r:Mcond}) of $M$ and that $\lambda(\tau)\leq \sqrt{\ve}/4$, we get that for any $\tau \in \mathbb{R}$ the relations (\ref{r:condw}) hold with $ w = u_t$, and also with $w = v_t-\omega_{k(.)}$, with $A = c_0 \varpi^2$, for some absolute $c_0$. This yields the $c_0\varpi \sqrt{\ve}$ upper bound in (\ref{r:derbounds}). The other bound has already been established (by an analogous argument) in (\ref{r:uder}) and (\ref{r:vder}).	
\end{proof}	

\begin{thm} \label{t:shadowing} Let $[\omega^-,\omega^+]$ be such that (S1) holds.
Let $\omega_k$, $k \in \mathbb{Z}$ be a sequence in $[\omega^-,\omega^+]$, and choose arbitrarily small $\delta_k>0$, $k \in \mathbb{Z}$. Then there exists $q=(u,v) \in \mathcal{E}$ and a sequence of times $t_k$ such that for all $k \in \mathbb{Z}$
	\begin{equation}
	|u(t_k) - 2k \pi| < \delta_k, \quad |u_t| < \delta_k, \quad	 |v_t(t_k) - \omega_k|  < \delta_k. \label{r:target}
	\end{equation}
\end{thm}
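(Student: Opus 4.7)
The plan is to reduce the theorem to Proposition \ref{p:invariant} together with Theorem \ref{t:contains}, with the main preparatory step being a careful refinement of the given rotator sequence. First I would refine $(\omega_k)_{k\in\mathbb{Z}}$ into a sequence $(\tilde{\omega}_j)_{j\in\mathbb{Z}}$ in $[\omega^-,\omega^+]$ whose consecutive pairs satisfy (\ref{r:assumec}), by inserting between each $\omega_k$ and $\omega_{k+1}$ sufficiently many intermediate values so that all gaps stay below $\Delta_0/(4c_4(R\vee\mu)\varpi)$; let $k\mapsto j(k)$ be the strictly increasing map with $\tilde{\omega}_{j(k)}=\omega_k$. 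Fix $L$ large enough to satisfy (\ref{r:LLcond}), and then pick the jump times $\tilde{L}_j\in 2\pi\mathbb{Z}$ with $\tilde{L}_{j+1}-\tilde{L}_j\geq 4L+2\pi$, enlarged near each $j(k)$ so that the two spacings $\tilde{L}_{j(k)}-\tilde{L}_{j(k)-1}$ and $\tilde{L}_{j(k)+1}-\tilde{L}_{j(k)}$ are at least $P_k$, where $P_k$ is any constant large enough that
\begin{equation*}
  c_7 \exp\!\bigl(-\tfrac{\sqrt{\ve}}{4}P_k\bigr) + c_0\varpi\,\bigl(\tfrac{2\log(P_k/2)}{P_k/2}\bigr)^{1/2} < \delta_k .
\end{equation*}

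Applying Proposition \ref{p:invariant} to $(L,(\tilde{L}_j),(\tilde{\omega}_j))$ produces a non-empty $\xi$-invariant set $\mathcal{B}\subset\mathcal{B}_1$, which in particular is bounded in $\mathcal{X}_{\mathrm{ul}}$ thanks to the $H^1_{\mathrm{ul}}$-type control built into $\mathcal{B}_1,\dots,\mathcal{B}_4$. Theorem \ref{t:contains} then yields an element $q\in\mathcal{E}$ in the $\mathcal{X}_{\mathrm{loc}}$-closure of $\mathcal{B}$. The pointwise bound (\ref{r:main1}) and the weighted $L^2$ bounds (\ref{r:defB3}) pass to this limit: the former by uniform convergence of smooth solutions on compact sets, and the latter by Fatou's lemma applied to the exponential weights. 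Combining the limiting $L^2$ bounds with the Sobolev embedding used in the derivation of (\ref{r:derbounds}), one obtains that $q$ itself obeys (\ref{r:main1}) and (\ref{r:derbounds}) pointwise on $\mathbb{R}$.

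Finally I choose $t_k:=(\tilde{T}_{j(k)-1}+\tilde{T}_{j(k)})/2$, so that $\|t_k\|\geq P_k/2$ and the segment containing $t_k$ carries the rotator speed $\tilde{\omega}_{j(k)}=\omega_k$. Evaluating (\ref{r:main1}) and (\ref{r:derbounds}) at $\tau=t_k$ and using the defining property of $P_k$ gives $|u(t_k)-2j(k)\pi|<\delta_k$, $|u_t(t_k)|<\delta_k$ and $|v_t(t_k)-\omega_k|<\delta_k$; since $u$ is a coordinate on $\mathbb{T}$ and $2j(k)\pi\equiv 2k\pi\pmod{2\pi}$, this is exactly (\ref{r:target}). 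The main obstacle I expect is the verification that the pointwise derivative estimates (\ref{r:derbounds}), which are deduced in Section \ref{s:seven} from the weighted $L^2$ bounds of $\mathcal{B}_3$, genuinely survive the passage to the $\mathcal{X}_{\mathrm{loc}}$-closure; beyond this the argument is a direct assembly of the machinery established in Sections \ref{s:six}--\ref{s:invariant}.
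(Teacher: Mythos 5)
Your argument assembles the same machinery as the paper's proof — fix $L$ so that (\ref{r:LLcond}) holds, inflate the jump spacings near the indices of interest, invoke Proposition \ref{p:invariant} to obtain the invariant set $\mathcal{B}$, pass through Theorem \ref{t:contains} to extract $q \in \mathcal{E}$ in the $\mathcal{X}_{\text{loc}}$-closure of $\mathcal{B}$, and evaluate (\ref{r:main1}) and (\ref{r:derbounds}) at the midpoints $t_k$ — and the extra care you take about why these pointwise estimates survive the passage to the closure is a genuine improvement over the paper's terse presentation: elements of $\mathcal{B}\subset\mathcal{B}_4$ are uniformly bounded in $H^3_{\text{loc}}$, so the $\mathcal{X}_{\text{loc}}$-convergence upgrades to $C^1_{\text{loc}}$ by compactness and both bounds pass through.

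However, the preliminary refinement of $(\omega_k)$ into $(\tilde{\omega}_j)$ — inserting intermediate rotator speeds to enforce (\ref{r:assumec}) — is both unnecessary and incorrect as a matter of the literal conclusion. It is unnecessary because (\ref{r:assumec}) is the standing assumption carried through Sections \ref{s:six}--\ref{s:invariant} (imposed explicitly at the start of Section \ref{s:six}), so the sequence $(\omega_k)$ in Theorem \ref{t:shadowing} is tacitly required to satisfy it and Proposition \ref{p:invariant} applies directly, exactly as in the paper. More importantly, after the refinement you no longer prove (\ref{r:target}). Elements of $\mathcal{E}$ are \emph{real-valued} functions, and (\ref{r:main1}) evaluated at $t_k = (\tilde{T}_{j(k)-1}+\tilde{T}_{j(k)})/2$ gives $|u(t_k)-2j(k)\pi|<\delta_k$; once $j(k)\neq k$ this is centred at a different real number from $2k\pi$, and the congruence $2j(k)\pi\equiv 2k\pi\pmod{2\pi}$ is irrelevant to the absolute-value estimate asked for. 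What you actually obtain after refinement is a weakened conclusion in which $2k\pi$ is replaced by $2m_k\pi$ for a strictly increasing integer sequence $m_k$ with $m_{k+1}-m_k$ unbounded. The conclusion as stated, in which $u$ makes exactly one revolution between consecutive $t_k$, requires no refinement — it requires the (implicit) hypothesis (\ref{r:assumec}) on the given $\omega$-sequence, which is what the paper's proof uses.
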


\begin{proof} 
	We set $L$ large enough so that (\ref{r:LLcond}) holds. Choose $\tilde{L}_k$ so that $\tilde{L}_{k+1}-\tilde{L}_k \geq 4L \: \vee \:  c_{16} \: \varpi^2 |\log \delta_k|^2 / \delta_k^2 + 2\pi$ for some $c_{16}>0$ to be determined later, and then $L_k \geq 4L \: \vee \:  c_{16} \: \varpi^2 |\log \delta_k|^2 / \delta_k^2$.
	Let $q^0(L,(\tilde{L}_k)_{k \in \mathbb{Z}},(\omega_k)_{k \in \mathbb{Z}})$ as in Remark \ref{r:notation}, and let  $\mathcal{B}=\mathcal{B}(L,(\tilde{L}_k)_{k \in \mathbb{Z}},(\omega_k)_{k \in \mathbb{Z}})$ as in Remark \ref{r:notation2}. Then by Proposition \ref{p:invariant}, $\mathcal{B}$ is $\xi$-invariant, non-empty, and by Theorem \ref{t:contains}, there is a $q \in \mathcal{E}$ which is also in the closure of $\mathcal{B}$ in $\mathcal{X}_{\text{loc}}$. Let $t_k = (\tilde{T}_{k-1}+\tilde{T}_k)/2)=\tilde{T}_{k-1}+L_k/2$. By (\ref{r:main1}) (the bound on $u$) and (\ref{r:derbounds}) (the bounds on $u_t$, $v_t$), inserting $\tau=t_k$, thus $||\tau|| = L_k/2$, it follows that we can choose an absolute constant $c_{16}$ large enough so that (\ref{r:target}) holds.
\end{proof}

We prove first Theorem \ref{t:main2} as the construction is simpler and illustrative, and then Theorem \ref{t:main1}. 

\begin{proof}[Proof of Theorem \ref{t:main2}, (i)] 
Let $L \equiv 0 \mod 2 \pi$ be such that $ c_{17} \frac{|\log \mu|}{\sqrt{\ve}} \leq L \leq c_{17} \frac{|\log \mu|}{\sqrt{\ve}} + 2\pi $ for some $c_{17}\geq c_{15}$, i.e. such that (\ref{r:LLcond}) holds, for some absolute $c_{17}$ to be determined later.
	
To prove (i), we will embed the standard Bernoulli shift in $\hat{\mathcal{X}}$ in such a way that the Theorem \ref{t:variational} can be applied by using the construction of $\xi$-invariant sets in Proposition \ref{p:invariant}. Let $(\Omega_0,\mathcal{F}_0,\mu_0,s)$ be the standard Bernoulli shift, where $\Omega_0$ is the set of all $\chi = (\chi_j)_{j \in \mathbb{Z}}$, $\chi_j \in \lbrace 0,1 \rbrace$, $\mathcal{F}_0$ is the $\sigma$-algebra on $\Omega_0$ induced by finite cylinders, $\mu_0$ is the product measure, where $0$ and $1$ have the same probability $1/2$, and $s : \Omega_0 \rightarrow \Omega_0 $ is the right shift. The mapping $\iota_0 : \Omega_0 \rightarrow \mathcal{X}$ and the induced map $\hat{\iota}_0 = \iota \circ \iota_0: \Omega_0 \rightarrow \hat{\mathcal{X}}$ is defined as follows: we associate to each $\chi \in \Omega_0$ a sequence $\omega_k(\chi)$ and $\tilde{L}_k(\chi)$, $k \in \mathbb{Z}$. We then define $q_{\chi}=q^0(L,(\tilde{L}_k(\chi))_{k \in \mathbb{Z}},(\omega_k(\chi))_{k \in \mathbb{Z}})$ as in Remark \ref{r:notation}, and the $\xi$-invariant sets $\mathcal{B}_{\chi}$ containing $q_{\chi}$, $\mathcal{B}_{\chi}=\mathcal{B}(L,(\tilde{L}_k(\chi))_{k \in \mathbb{Z}},(\omega_k(\chi))_{k \in \mathbb{Z}})$ as in Remark \ref{r:notation2}. Let $\hat{q}_{\chi}$, $\hat{\mathcal{B}}_{\chi}$ be their embeddings in the quotient set $\hat{\mathcal{X}}$.

Choose arbitrary $\omega_k = \omega \in [\omega^-,\omega^+]$ fixed for all $k \in \mathbb{Z}$. We set $\tilde{L}_k$ so that $q_{\chi}$ has a "jump" at $T=4nL$ if and only if $\chi_n = 0$. More precisely, let $\tilde{\Omega}_0$ be the subset of $\Omega_0$ of $\chi$ with infinitely many $0$ (clearly measurable and $\mu_0(\tilde{\Omega}_0)=1$), and for $\chi \in \tilde{\Omega}_0$, let $k_j(\chi)$ be the increasing sequence of integers of "positions" of zeros in $\chi$, uniquely defined by the requirement $k_0(\chi) \leq 0$, $k_1(\chi) \geq 1$. We now set $\tilde{L}_j(\chi) = 4L \cdot k_j(\chi)$, and complete the definition of $\hat{q}_{\chi}$, $\mathcal{B}_{\chi}$. By the definition of the induced localized topology on $\hat{\mathcal{X}}$, $\hat{\iota_0} : \tilde{\Omega}_0 \rightarrow \hat{\mathcal{X}}$ is continuous (assuming the product topology on $\Omega_0$, $\tilde{\Omega}_0$), thus measurable. Also note that by construction, $\hat{q}_{s(\chi)}=\hat{S}^{4L/2\pi}(\hat{q}_{\chi})$, $\hat{\mathcal{B}}_{s(\chi)}=\hat{S}^{4L/2\pi}(\hat{\mathcal{B}}_{\chi})$. Let $\tilde{\mu}=(\hat{\iota}_0)^*\mu_0$ be the pulled measure, and let $\tilde{\mathcal{M}_1} = \hat{\iota}_0(\tilde{\Omega}_0)$. 

As the entropy $h_{\mu_0}(s)=\log 2$ \cite{Walters}, by construction we have $h_{\tilde{\mu}}(\hat{S}^{4L/2\pi}) = \log 2$. Finally we define a measure $\mu$ "to be shadowed" by 
\begin{equation*}
\mu = \sum_{n=0}^{4L/2\pi -1} (\hat{S}^n)^*\tilde{\mu}, \hspace{30pt} \mathcal{M}_1=\bigcup_{n=0}^{4L/2\pi -1} \hat{S}^n(\tilde{\mathcal{M}}_1).
\end{equation*}
By construction, $\mu$ is $\hat{S}$-invariant. By \cite{Walters}, Theorem 4.13, (i), $h_{\mu}(\hat{S})=2\pi \log 2 / (4L)$.

To apply Theorem \ref{t:variational} and establish existence of a shadowing measure $\nu$, we need to construct a $\sigma$-subalgebra $\mathcal{G}$. First we define the sets $\mathcal{D}_i \subset \hat{\mathcal{X}}$, $i \in \mathcal{I}$, where $\mathcal{I}=\lbrace (j,k,n), \: j \in \lbrace 0,1 \rbrace, \: k \in \lbrace 0,1,...,4L/2\pi-1 \rbrace, \: n \in \mathbb{Z}\rbrace$,  as the set of all $q = (u,v) \in \hat{\mathcal{X}}$ satisfying the following conditions
\begin{align}
u(4nL+2k\pi) & \in \begin{cases}
[\pi -1, \pi+1] \mod 2\pi & j=0,\\
 [ -1, 1] \mod 2\pi \:  & j=1.
\end{cases}  \label{d:j0j1}
\end{align}
Now let $\mathcal{G}$ be the $\sigma$-subalgebra generated by $\mathcal{D}_i$ $i\in \mathcal{I}$.
As each $q \in \mathcal{M}_1$ can be represented as $\hat{S}^{k_0}q_{\chi}$ for some $k_0=0,...,4L/2\pi-1$, $\chi \in \tilde{\Omega}_0$, we define
\begin{equation*}
\mathcal{D}_q = \bigcap_{n=-\infty}^{\infty} \left\lbrace \mathcal{D}_{\chi_n,k_0,n} \bigcap_{\substack{ k = 0 \\ k \neq k_0}}^{4L/2\pi-1} \mathcal{D}_{0,k,n}   \right\rbrace,
\hspace{30pt} \mathcal{B}_q = \hat{S}^k \mathcal{B}_{\chi}.
\end{equation*}
By construction we can choose an absolute constant $c_{17}\geq c_{15}$ large enough so that $\mathcal{B}_q \subset \mathcal{D}_q$ for all $q \in \mathcal{M}_1$. The reason is as follows: (i) in the case $j=0$ in (\ref{d:j0j1}), because of the relations (\ref{d:Au}), (\ref{r:defb1u}) and the properties of $z^+$, $z^-$ (as there is some $\tilde{T}_m$ such that $|\tilde{T}_m-(4nL+2k\pi)| \leq 2\pi$), and (ii) in the case $j=1$, because of (\ref{r:defb1u}) and the exponentially fast decay of $z^+$, $z^-$ towards $0 \mod 2\pi$. Thus by Proposition \ref{p:invariant}, for all $q \in \mathcal{M}_1$ and all $s \geq 0$, $\hat{\xi}^s(q) \in  \mathcal{D}_q$. By construction we have that if $q \in \mathcal{D}\cap \mathcal{M}_1$ and $\mathcal{D} \in \mathcal{G}$, then $\mathcal{D}_q \subset \mathcal{D}$. This completes the proof of the condition (M2). By construction, (M1), (M3) hold. To verify (M4), it suffices to note that for fixed $k,n$, $\mathcal{D}_{0,k,n}$ and $\mathcal{D}_{1,k,n}$ are disjoint and cover the entire $\mathcal{M}_1$.
As this completes the proof of conditions (M1)-(M4), we can apply Theorem \ref{t:variational} and obtain a $\nu \in \mathcal{M}(\hat{\mathcal{E}})$ which shadows $\mu$. By Lemma \ref{l:entropy} we get $h_{\nu}(\hat{S})\geq h_{\mu}(\hat{S}) \sim 1/L$. The claim now follows from (\ref{r:LLcond}) and the variational principle for the topological and metric entropy \cite{Walters}.
\end{proof}

\begin{proof}[Proof of Theorem \ref{t:main2}, (ii)] 
	Choose $L \equiv 0 \mod 2\pi$ satisfying $ c_{15} \frac{|\log \mu|}{\sqrt{\ve}} \leq L \leq c_{15} \frac{|\log \mu|}{\sqrt{\ve}} + 2\pi $, i.e. such that (\ref{r:LLcond}) holds.	
	Choose an increasing sequence $\omega_1 = \omega^-$, ..., $\omega_n=\omega^+$, such that $\omega_k-\omega_{k-1} \leq \Delta_0 / (4 c_4 (R \vee \mu) \varpi)$ for $k=2,...,n$ (as required by (\ref{r:assumec})), with equality for all except possibly $k=n$. Choose $\omega_k$ for $k \notin \lbrace 1,...,n \rbrace$ in an arbitrary way as long as $\omega_k \in [\omega^-,\omega^+]$ and as long as (\ref{r:assumec}) holds. Let $\tilde{L}_{k+1}-\tilde{L}_k = 4L + 2\pi$ for all $k$ except $k=1$ and $k=n$ for which we set
	\begin{equation}
	 \tilde{L}_{k+1}-\tilde{L}_k = c_0 \varpi^2 \frac{|\log \delta |^2}{\delta^2} + 4L + 2\pi, \label{r:smallLk}
	\end{equation} 
	for $c_0$ large enough to be chosen later. By Proposition \ref{p:invariant}, $\mathcal{B}=\mathcal{B}(L,(\tilde{L}_k(\chi))_{k \in \mathbb{Z}},(\omega_k(\chi))_{k \in \mathbb{Z}})$ is $\xi$-invariant, and by Theorem \ref{t:contains}, there is a $q \in \mathcal{E}$ which lies in the closure of $\mathcal{B}$ in $\mathcal{X}_{\text{loc}}$.
	
	Let $t^-= (\tilde{T}_1+\tilde{T}_2)/2 = \tilde{T}_1 + L_1/2$, and $t^+ = (\tilde{T}_n+\tilde{T}_{n+1})/2 = \tilde{T}_n + L_n/2$. Now by (\ref{r:derbounds}), as $||t^-||=L_1/2$, $||t^+||=L_n/2$, as $L_1,L_n \geq c_0 \varpi^2 \frac{|\log \delta |^2}{\delta^2} $, and finally as $\omega_{k(t^-)}=\omega_1=\omega^-$ and $\omega_{k(t^+)}=\omega_n=\omega^+$; we can choose $c_0$ in (\ref{r:smallLk}) to be a a large enough absolute constant such that $|v_t(t^-)-\omega^-| \leq \delta$, $|v_t(t^+)-\omega^+| \leq \delta$. Now as $n \sim \varpi (R \vee \mu) (\omega^+-\omega^-) / \Delta_0$, and by (\ref{r:LLcond}) and (\ref{r:smallLk}),
	\begin{align*}
	|t^+-t^-| & \leq \sum_{k=1}^n L_k \ll  \varpi^2 \frac{|\log \delta |^2}{\delta^2} + n L 
	 \ll \varpi^2 \frac{|\log \delta |^2}{\delta^2} + \varpi^6 \frac{|\log \Delta_1| (R \vee \mu)}{\Delta_0 \Delta_1} (\omega^+-\omega^-) \\
	& \ll \varpi^6 \tilde{c}(\delta) \frac{|\log \Delta_1| (R \vee \mu)}{\Delta_0 \Delta_1} (\omega^+-\omega^-),
	\end{align*}
	where we can set $\tilde{c}(\delta)=|\log \delta |^2/\delta^2$ as $nL \gg \varpi^2$. The proof is completed.
\end{proof}

\begin{proof}[Proof of Theorem \ref{t:main1}] We first complete the proof in the case when the entire $\mathcal{O} \subseteq [\tilde{\omega}^-,\tilde{\omega}^+] \subseteq [\omega^-,\omega^+]$, such that $\tilde{\omega}^+-\tilde{\omega}^- \leq \Delta_0 / (4 c_4 (R \vee \mu) \cdot \varpi)$ (i.e. we can by Proposition \ref{p:hetero} connect any two $\omega$, $\tilde{\omega}$ in $\mathcal{O}$ with a heteroclinic orbit with only one "jump"). The only modification as compared to the proof of Theorem \ref{t:main2} is now the choice of the sequence $\omega_k$. Let $\lbrace \tilde{\omega}_j, \: j \in \mathcal{P} \rbrace$, $\mathcal{P}$ finite or countable, be a dense subset of $\mathcal{O}$. Let $p : \mathbb{N} \rightarrow \mathcal{P}$ be any function such that for each $j \in \mathcal{P}$, $p^{-1}(j)$ is infinite (constructed e.g. by a diagonalization procedure). By keeping the notation as in the proof of Theorem \ref{t:main2}, (i), for a given $\chi$ we define $\omega_j = \tilde{\omega}_{p(k_{j+1}-k_j)}$ (where by definition $k_{j+1}-k_j-1$ is the number of consecutive "ones" between the zeroes at positions $j$ and $j+1$ in a chosen $\chi \in \tilde{\Omega}_0$). The rest of the construction is analogous to the proof of Theorem \ref{t:main1}, by which we obtain $\nu \in \mathcal{M}(\hat{\mathcal{E}})$, which is supported on the closure of the union of $\xi$-invariant sets $\mathcal{B}(L,(\tilde{L}_k)_{k \in \mathbb{Z}},(\omega_k(\chi))_{k \in \mathbb{Z}})$ (considered as subsets of $\hat{\mathcal{X}}$), for $\chi \in \Omega_0$. It is easy to check by construction and (\ref{r:derbounds}) that $\text{supp }\hat{\pi}^*\nu$ intersects $\mathbb{T}_{\omega}$ for every $\omega \in \mathcal{O}$ (which implies the first part of (\ref{r:shadow})), and that for every $\omega \notin \mathcal{O}$, $\text{supp }\hat{\pi}^*\nu \cap \mathbb{T}_{\omega} = \emptyset$.

Consider now the general case, and let $n$ be an integer such that 
$$ n \geq \frac{4 c_4 (R \vee \mu) \cdot \varpi}{\Delta_0} (\omega^+-\omega^-)
$$
(i.e. $n$ is the minimal number of "jumps" required by the Proposition \ref{p:invariant} and (\ref{r:assumec}) to cross the entire $[\omega^-,\omega^+]$). Consider the subshift of finite type $(\Omega_1,\mathcal{F}_1,\mu_1,s)$ ($(\Omega_1,\mathcal{F}_1)$ a subspace of $(\Omega_0,\mathcal{F}_0)$), $\mu_1$ the $s$-invariant probability measure, where we "allow" only sequences with no less than $n$ consecutive zeros. The only change in the construction above is that we associate to each consecutive sequence of $m \geq n$ zeros in a chosen $\chi \in \Omega_1$ a sequence $\omega_1$, $\omega_2$, ..., $\omega_m$ in an arbitrary, $s$-invariant way, such that (\ref{r:assumec}) holds for any $\omega=\omega_k$, $\tilde{\omega}=\omega_{k+1}$, $k \in \mathbb{Z}$ (this is possible by the choice of $n$). The rest of the proof is analogous.	
\end{proof}

\begin{remark} We could have sharpened the definition of the subalgebra $\mathcal{G}$ in the proof of Theorem \ref{t:main1}, to be able to use the notion of the conditional support from Section \ref{s:measure}. More specifically, we could obtain that $\hat{\pi}(\text{supp}(\nu | \mathcal{G}))$ intersects all $\mathbb{T}_{\omega}$, $\omega \in \mathcal{O}$, and that $\cup_{\omega \in \mathbb{R}-\mathcal{O}} \subset \hat{\pi}(\text{supp}^c(\nu | \mathcal{G}))$. This would, however, unnecessarily complicate the proof.	
\end{remark}

\section{Proof of Theorem \ref{t:main3}} \label{s:Theorem3}

In this section we first state an analogue of Lemma \ref{l:subsuper} which will enable us more precise control for small $\mu$, then evaluate the error term when approximating $S_{\omega}$ with $M_{\omega}$, and finally we prove Theorem \ref{t:main3}.

\begin{lemma} \label{l:subsuper2}
	There exist a constant $\tilde{T}>0$ and functions $\tilde{z}^- : [-\tilde{T}, 0] \rightarrow \mathbb{R}$ and $\tilde{z}^+ : [0,\tilde{T}] \rightarrow \mathbb{R}$, depending only on $\ve$, $\mu$, satisfying for all $t$ in the domain of definition:
	
	(i) $\tilde{z}^-$, $\tilde{z}^+$ are continuous and $C^2$ in the interior of the domain,
	
	(ii) $\tilde{z}^-(0)=\tilde{z}^+(0)=\pi$, $\tilde{z}^-(-\tilde{T})=-\sqrt{\ve \mu}$, $\tilde{z}^+(\tilde{T})=2\pi+\sqrt{\ve \mu}$,
	
	(iii) $\tilde{z}^-$ is a strict stationary sub-solution on $(-\tilde{T}, 0)$ of (\ref{r:gradu}), and $\tilde{z}^+$ is a strict stationary super-solution (\ref{r:gradu}) on $(0,\tilde{T})$. Furthermore, for any constant $T \in \mathbb{R}$, $\tilde{z}^-(t+T)$ and $\tilde{z}^+(t+T)$ are strict stationary sub-, resp. super-solutions in the interior of their domain,
	
	(iv) $\tilde{z}^-$, $\tilde{z}^+$ are strictly increasing,
		
	(v) There exists an absolute constant $c_{18} > 0$ such that
	\begin{align}
	|\tilde{z}^-(t)-u^{(\ve)}(t)| \leq c_{18} \sqrt{\ve \mu}, \: t \in [-\tilde{T}, 0], \hspace{10pt} |\tilde{z}^+(t)-u^{(\ve)}(t)| \leq c_{18} \sqrt{\ve \mu}, \: t \in [0,\tilde{T}],   \label{r:z--prec}
	\end{align}
	where $u^{(\ve)}(t) = 4 \arctg e^{\sqrt{\ve}t}$ is the separatrix solution in the case $\mu= 0$.
\end{lemma}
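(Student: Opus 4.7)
The plan is to construct $\tilde z^\pm$ in direct analogy to $z^\mp$ of Lemma \ref{l:subsuper}, but with the roles of sub- and super-solution swapped and the tails extended slightly past the equilibria $0$ and $2\pi$; together with $z^-, z^+$ they then sandwich the separatrix $u^{(\ve)}$ to within $O(\sqrt{\ve\mu})$. The construction uses an explicit $O(\ve\mu)$ forcing added to the pendulum equation, chosen so as to absorb pointwise the worst case of $\partial_u V(z,v,t) - \ve\sin z$ over admissible $(v,t)$.

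Concretely, I would define $\tilde z^-$ as the solution of
\begin{equation*}
\tilde z^-_{tt} \;=\; \ve \sin\tilde z^- + \ve\mu \bigl(3 - \cos\tilde z^-\bigr),
\end{equation*}
with terminal data $\tilde z^-(0)=\pi$ and $\tilde z^-_t(0) = 2\sqrt{\ve(1+c\mu)}$ for an absolute constant $c > 3\pi/2$, so that the first integral
$E = \tfrac{1}{2}(\tilde z^-_t)^2 + \ve\cos\tilde z^- - 3\ve\mu\tilde z^- + \ve\mu\sin\tilde z^-$
of this autonomous ODE strictly exceeds its value at the saddle $\tilde z^- = 0$ by $(2c - 3\pi)\ve\mu > 0$; the number $\tilde T$ is then the unique time at which $\tilde z^-(-\tilde T) = -\sqrt{\ve\mu}$. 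Since the orbit is rotating with $|\tilde z^-_t| \gtrsim \sqrt{\ve\mu}$ throughout, items (i), (ii), (iv) and the finiteness of $\tilde T$ follow immediately. The strict sub-solution property (iii) reduces to a one-line pointwise verification: using $|f|, |f_u| \le 1$ from (A1),
\begin{equation*}
\tilde z^-_{tt} - \partial_u V(\tilde z^-, v, t) \;\ge\; \ve\mu\bigl[(3 - \cos\tilde z^-) - |\sin\tilde z^-| - (1 - \cos\tilde z^-)\bigr] \;=\; \ve\mu\bigl[2 - |\sin\tilde z^-|\bigr] \;\ge\; \ve\mu > 0,
\end{equation*}
which holds pointwise in $(v,t)$ and thus persists for any translate $\tilde z^-(\cdot + T)$. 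The function $\tilde z^+$ is built symmetrically, solving $\tilde z^+_{tt} = \ve\sin\tilde z^+ - \ve\mu(3 - \cos\tilde z^+)$ with the analogous initial velocity; the super-solution margin $-\ve\mu$ follows from the mirrored inequality.

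The main obstacle is the deviation estimate (v) with an \emph{absolute} constant $c_{18}$. A direct Gronwall argument on $w = \tilde z^- - u^{(\ve)}$ against the linearization $w_{tt} - \ve\cos(u^{(\ve)}) w = O(\ve\mu)$ is insufficient, because the unstable fundamental solution of the homogeneous operator grows like $e^{\sqrt{\ve}|t|}/\ve$, while the natural length $\tilde T \sim \log(1/(\ve\mu))/\sqrt{\ve}$ inflates the growth factor to $\sim 1/(\ve\mu)$. The robust approach I would follow instead is a phase-plane comparison: both orbits are strictly monotone with first integrals of the same form, and a direct calculation gives
\begin{equation*}
p_{\tilde z^-}(z)^2 - p_{u^{(\ve)}}(z)^2 \;=\; 2\ve\mu\bigl[(2c - 3\pi) + 3z - \sin z\bigr] \;=\; O(\ve\mu)
\end{equation*}
uniformly for $z \in [-\sqrt{\ve\mu}, \pi]$, so the velocities agree to within $O(\sqrt{\ve\mu})$ at equal $z$. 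Transferring this to an equal-time comparison requires handling the infinite traversal time of the separatrix toward $z = 0$, which can be done by splitting at $z = \sqrt{\ve\mu}$ and using the closed form $u^{(\ve)}(t) = 4\arctan e^{\sqrt{\ve} t}$ on the tail to match boundary values, giving (v) and incidentally fixing $\tilde T \sim \log(1/(\ve\mu))/\sqrt{\ve}$.
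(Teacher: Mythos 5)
Your construction by adding an explicit forcing $\ve\mu(3-\cos z)$ to the pendulum is a genuinely different and in one respect cleaner route than the paper's: it gives the strict sub/super-solution property by a pointwise check on the whole domain, with a conserved first integral, and without the paper's two-step device of a frequency-shifted pendulum $u_{tt}=\tilde\ve\sin u$ (frequency shift $\sim\mu^{1/2}$) glued to a cubic correction near $z=\pi$. The sub-solution verification and items (i)--(iv) are fine.

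The deviation estimate (v) has a genuine gap, and it is quantitative. Your initial velocity $\tilde z^-_t(0)=2\sqrt{\ve(1+c\mu)}$ gives energy excess $E-E_0=(2c-3\pi)\ve\mu\sim\ve\mu$ above the saddle. Consequently the velocity of $\tilde z^-$ at the bottom is $p_2(0)=\sqrt{2(E-E_0)}\sim\sqrt{\ve\mu}$, so near $z=0$ the orbit behaves like $b\sinh(\sqrt\ve(t-\tau_0))$ with $b=p_2(0)/\sqrt\ve\sim\sqrt\mu$. The traversal time from $z=\pi$ down to $z=0$ is therefore $\tilde T\approx\tfrac{1}{\sqrt\ve}\log(1/b)\approx\tfrac{1}{2\sqrt\ve}\log(1/\mu)$, and at that time $u^{(\ve)}(-\tilde T)\approx 4e^{-\sqrt\ve\tilde T}\approx 4\sqrt\mu$. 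The endpoint deviation is thus $|u^{(\ve)}(-\tilde T)-\tilde z^-(-\tilde T)|\sim\sqrt\mu+\sqrt{\ve\mu}\sim\sqrt\mu$, not $\sqrt{\ve\mu}$: since (A2) only guarantees $\ve\le 1$, $\sqrt\mu$ can exceed $\sqrt{\ve\mu}$ by an arbitrarily large factor $1/\sqrt\ve$. The fix is to take the energy excess to scale like $\ve^2\mu$ rather than $\ve\mu$, i.e.\ $\tilde z^-_t(0)=2\sqrt\ve(1+O(\ve\mu))$; then $p_2(0)\sim\ve\sqrt\mu$, $b\sim\sqrt{\ve\mu}$, $\tilde T\approx\tfrac{1}{2\sqrt\ve}\log(1/(\ve\mu))$, and $u^{(\ve)}(-\tilde T)\sim\sqrt{\ve\mu}$, which is the right size. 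One still needs $c$ large enough ($c>2\mu/\ve$, hence an absolute $c$ under (A2)) so that the orbit clears the shifted barrier at $z\approx -2\mu$.

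Related to this, your dismissal of the Gronwall route is misdirected. The paper does run a Gronwall-type argument (Lemma~\ref{l:pendulum}): the decomposition $w^{(2)}=w^{(1)}_t-\sqrt{\tilde\ve}w^{(1)}$, $w^{(3)}=w^{(1)}_t+\sqrt{\tilde\ve}w^{(1)}$ gives first-order Gronwall inequalities, and the amplification $e^{\sqrt{\tilde\ve}t_1}\sim 1/\delta$ is exactly compensated because the initial data are taken of size $\delta^2\sim\tilde\ve\mu$, not $\delta$ — this is the same point about the energy excess. With your choice of initial data the Gronwall bound indeed gives only $\sqrt\mu$, but so does your phase-plane comparison, because both are limited by the same energy mismatch; once you correct the initial velocity, either route should work, though your equal-time transfer near $z=0$ still needs the split at $z\sim\sqrt{\ve\mu}$ carried out carefully.
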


The proof is a relatively straightforward modification of the proof of Lemma \ref{l:subsuper}, thus also done in the Appendix B.

\begin{lemma} \label{l:approximate}
 (i) There exists an absolute constant $c_{19}>0$ such that if $q=(u,v)$ is a two-sided minimizer at $(\omega,t_0,v_0) \in \mathbb{R}^3$, then for all $t \in \mathbb{R}$,
  \begin{equation}
     |u(t)-u^{(\ve)}(t-t_0)| \leq c_{19} \sqrt{\ve \mu}, \label{r:whereu}
  \end{equation}  
   
  (ii) For all $(t_0,v_0), (t_1,v_1) \in \mathbb{R}^2$, and all $\omega \in \mathbb{R}$,
   \begin{equation}
   |S_{\omega}(t_1,v_1)-S_{\omega}(t_0,v_0)-\mu(M_{\omega}(t_1,v_1)-M_{\omega}(t_0,v_0))| = O_f(\ve \mu^{3/2}). \label{r:3over2}
	\end{equation}
\end{lemma}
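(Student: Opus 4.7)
Part (i) can be proved by a pure barrier argument. For $t\in[t_0-\tilde T,t_0]$, the strict stationary sub-solution $\tilde z^-(t-t_0)$ from Lemma \ref{l:subsuper2} satisfies $\tilde z^-(-\tilde T)=-\sqrt{\ve\mu}<0<u(t_0-\tilde T)$ (by Proposition \ref{p:stable}(ii)) and $\tilde z^-(0)=\pi=u(t_0)$, so the sub-solution version of Lemma \ref{l:stationary} yields $u(t)\geq \tilde z^-(t-t_0)\geq u^{(\ve)}(t-t_0)-c_{18}\sqrt{\ve\mu}$ by Lemma \ref{l:subsuper2}(v). The complementary upper bound on $[t_0-\tilde T,t_0]$ uses the cruder super-solution $z^-$ from Proposition \ref{p:stable} together with Lemma \ref{l:subsuper}(vii) in exactly the same way. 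Mirror arguments with $\tilde z^+$ and $z^+$ cover $[t_0,t_0+\tilde T]$. Beyond $|t-t_0|>\tilde T$, both $u(t)$ and $u^{(\ve)}(t-t_0)$ are within $O(\sqrt{\ve\mu})$ of the asymptotic values $0$ or $2\pi$ by the exponential bounds of Proposition \ref{p:stable} and direct computation on $u^{(\ve)}$ respectively, so \eqref{r:whereu} is automatic provided $\tilde T$ has been chosen large enough that $u^{(\ve)}(\pm\tilde T)=O(\sqrt{\ve\mu})$, which is the essence of the construction in Lemma \ref{l:subsuper2}.

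For part (ii) the cleanest approach is to prove the un-differenced statement
\begin{equation*}
S_\omega(t,v)=8\sqrt{\ve}+\mu M_\omega(t,v)+O_f(\ve\mu^{3/2})
\end{equation*}
at every $(t,v)\in\mathbb{R}^2$; the $(t,v)$-independent constant $8\sqrt{\ve}$ then cancels in the claimed difference. The upper bound is immediate: evaluating the perturbed action on the unperturbed trial trajectory $(u^{(\ve)}(s-t), v+\omega(s-t))$ gives exactly $8\sqrt{\ve}+\mu M_\omega(t,v)$, using the pendulum identity $\tfrac12(u^{(\ve)}_s)^2=\ve(1-\cos u^{(\ve)})$ and the definition of $M_\omega$. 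For the lower bound I would evaluate the action on the actual two-sided minimizer $q_\mu=(u_\mu,v_\mu)$ and split it into the $\mu=0$ and the perturbation parts: the $\mu=0$ part is bounded below by $8\sqrt{\ve}$ because $q_\mu$ is an admissible competitor in the unperturbed Peierl problem with the same boundary data, while the perturbation part $-\ve\mu\int(1-\cos u_\mu)f(u_\mu,v_\mu,s)\,ds$ must be compared with $\mu M_\omega(t,v)$.

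The core quantitative step is thus to show
\begin{equation*}
\bigg|\int\!\big[(1-\cos u_\mu)f(u_\mu,v_\mu,s)-(1-\cos u^{(\ve)}(s-t))f(u^{(\ve)}(s-t),v+\omega(s-t),s)\big]\,ds\bigg|\ll_f\sqrt{\mu},
\end{equation*}
after which multiplication by $\ve\mu$ produces the required $O_f(\ve\mu^{3/2})$ error. I would prove this by Taylor-expanding the integrand with remainder, controlling the $u$-variation contribution through part (i) and the $v$-variation contribution through $|v_\mu(s)-v-\omega(s-t)|\ll_f \mu$, which follows from twice-integrating $(v_\mu)_{tt}=\ve\mu(1-\cos u_\mu)f_v$ exactly as in Lemma \ref{l:heteroprop}. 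The main obstacle is that a crude $L^\infty$ estimate on the integrand is not integrable in $s$, so the integration must be split into a near region $|s-t|\lesssim 1/\sqrt{\ve}$, where the integrand is bounded by $\sqrt{\ve\mu}$ and the length-$1/\sqrt{\ve}$ effective support of $(1-\cos u^{(\ve)})$ converts this into $\sqrt\mu$, and a far region $|s-t|\gtrsim 1/\sqrt{\ve}$, where the exponential decay (with rate $\sqrt\ve$) of both $1-\cos u_\mu$ and $1-\cos u^{(\ve)}$ gives an additional $1/\sqrt\ve$ factor upon integration. The $v$-variation contribution is handled analogously using that $(1-\cos u^{(\ve)})$ has total integral $O(1/\sqrt{\ve})$ and $\mu/\sqrt{\ve}\leq\sqrt{\mu}$ by (A2). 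Balancing these two regimes is the only delicate analytic point; once it is carried out, combining the upper and lower bounds and subtracting at $(t_0,v_0)$ and $(t_1,v_1)$ yields \eqref{r:3over2}.
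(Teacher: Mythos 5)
Both parts are correct; part (i) essentially matches the paper's argument and part (ii) takes a genuinely different route, so some comments on each.

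For (i): the barrier pair $\tilde z^{\pm}$ (Lemma~\ref{l:subsuper2}) trapping $u$ against $z^{\pm}$ (Lemma~\ref{l:subsuper}) is exactly the paper's mechanism. One caveat in your write-up: you invoke ``the sub-solution version of Lemma~\ref{l:stationary}'' to derive $u\geq\tilde z^-(\cdot-t_0)$ from the endpoint values alone, but Lemma~\ref{l:stationary} only upgrades an already-known ordering to a strict one; a bare boundary-value barrier argument would require $\partial_u V$ to be nondecreasing in $u$ at an interior touching point, which fails since $\partial_u^2 V\approx\ve\cos u<0$ near $u=\pi$. The paper's phrase ``analogously to the proof of Proposition~\ref{p:stable}(ii)'' refers to the translation/homotopy step of Lemma~\ref{l:minprop}(iv): slide $\tilde z^-(\cdot-t_0-T)$ from $T$ large (where it lies strictly below) toward $T=0$, and at the critical $T$ the sub-solution touches $u$ at a point where the two functions coincide, which is exactly where Lemma~\ref{l:stationary} applies. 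With that repair, your (i) coincides with the paper's.

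For (ii): the paper does \emph{not} identify $S_\omega$ and $\mu M_\omega$ up to a constant; it works directly with the claimed difference. It translates a single two-sided minimizer $q$ at $(\omega,t_0,v_0)$ to an admissible competitor $\tilde q$ at $(t_1,v_1)$, uses $\int L_\omega(\tilde q)\geq S_\omega(t_1,v_1)$, and thereby reduces one inequality of \eqref{r:3over2} to a single difference of $f$-integrals (\ref{r:melcalc}), bounded via the MVT, \eqref{r:whereu}, \eqref{r:uhone}, and the $v$-estimates (\ref{r:vhleft})--(\ref{r:vhright}); the other inequality follows by starting from a minimizer at $(t_1,v_1)$. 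You instead prove the stronger pointwise expansion $S_\omega(t,v)=8\sqrt{\ve}+\mu M_\omega(t,v)+O_f(\ve\mu^{3/2})$: the upper bound is exact by plugging the unperturbed separatrix trajectory into the perturbed action and using the energy identity $\tfrac12(u^{(\ve)}_t)^2=\ve(1-\cos u^{(\ve)})$, and the lower bound uses that $q_\mu$ is admissible for the $\mu=0$ Peierl problem (whose infimum is exactly $8\sqrt{\ve}$) followed by the same comparison of $\int(1-\cos u_\mu)f$ to the Melnikov integral. Your near/far splitting at $|s-t|\sim1/\sqrt{\ve}$ is a valid substitute for the paper's pointwise bound $\ll_f e^{-\frac12\sqrt{\ve}|t-t_0|}\sqrt{\ve\mu}$ on the integrand difference (which follows from \eqref{r:thirdq} and \eqref{r:whereu}); both yield the $1/\sqrt{\ve}$ factor from integration. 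Your version buys a clean un-differenced relation and a one-sided error from the exact upper bound; the paper's version is leaner because the constant $8\sqrt{\ve}$ never has to be identified and no Bogomolny-type computation is needed.
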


\begin{proof}
We first note that analogously to the proof of Proposition \ref{p:stable}, (ii) we can show that for all $t \in [t_0-\tilde{T},t_0]$, we have $u(t) \geq \tilde{z}^-(t-t_0)$, thus by Proposition \ref{p:stable}, (ii), $\tilde{z}^-(t-t_0) \leq u(t) \leq z^-(t-t_0)$. We deduce that 
$$|u(t)-u^{(\ve)}(t-t_0)| \leq |\tilde{z}^-(t-t_0)-u^{(\ve)}(t-t_0)| + |z^-(t-t_0)-u^{(\ve)}(t-t_0)|
$$
which implies (\ref{r:whereu}) by and (\ref{r:z-prec}) and (\ref{r:z--prec}). The case $t < -\tilde{T}+t_0$ follows from $\tilde{z}^-(\tilde{T})=-\sqrt{\ve \mu}$ and $0 \leq u(t) \leq z^-(t-t_0)$, $\tilde{z}^-$ increasing. The proof for $t \geq t_0$ is analogous, which completes (i).

Let $q=(u,v) \in H^1_{\text{loc}}(\mathbb{R})^2$ be a two-sided minimizer at $(\omega,t_0,v_0)$, and let 
$\tilde{q}(t)=(u(t+t_0-t_1),v(t+t_0-t_1)+v_1-v_0)$ (i.e. we translate $q$ in $t,v$ so that $\tilde{q}(t_1)=(\pi,v_1)$). As by definition, $\int_{-\infty}^{\infty}L_{\omega}(\tilde{q},\tilde{q}_t,t)dt \geq S_{\omega}(t_1,v_1)$, we by straightforward calculation have
\begin{align}
S_{\omega}(t_1,v_1)-S_{\omega}(t_0,v_0)-\mu(M_{\omega}(t_1,v_1)-M_{\omega}(t_0,v_0)) & 
\leq 2 \ve \mu \left| \int_{-\infty}^{\infty}(1-\cos u)f(u,v,t)  dt \right. \notag \\ & \hspace{-20pt} \left. - \int_{-\infty}^{\infty} \left( 1-\cos (u^{\ve}(t-t_0))\right) f(u^{\ve}(t-t_0),v_0+\omega(t-t_0),t)dt                                  \right|. \label{r:melcalc}
\end{align}
By the Mean Value Theorem, (\ref{r:uhone}), by the left-sides of (\ref{r:vhleft}), (\ref{r:vhright}) (which also hold for two-sided minimizers which are not necessarily homoclinics) and (\ref{r:whereu}), the absolute value of the difference of the integrands in (\ref{r:melcalc}) is $\ll_f e^{-\frac{1}{2}\sqrt{\ve}|t-t_0|}\sqrt{\ve \mu}$, thus (\ref{r:melcalc}) is $\ll_f \ve \mu^{3/2}$. The other inequality in (\ref{r:melcalc}) is obtained analogously, by starting with a two-sided minimizer at $(\omega,t_1,v_1)$.
\end{proof}

\begin{proof}[Proof of Theorem \ref{t:main3}] This follows from (\ref{r:3over2}), as we choose $\mu_0$ to be small enough, so that for $\mu \leq \mu_0$, the $O_f(\ve \mu^{3/2})$ term is $\leq \mu \tilde{\Delta}_0$. Then for all $0 < \mu \leq \mu_0$, (S1) holds with $\Delta_0 = \mu \tilde{\Delta}_0$.
\end{proof}

\vspace{2ex}

\centerline{IV: APPENDICES}

\vspace{2ex}

\section{Appendix A: The function spaces and existence of solutions}

This section is dedicated to the proof of Theorem \ref{t:exist}. We first recall definition of the required function spaces (see \cite{Gallay:01} and references therein for details), then prove the theorem in four separate lemmas: on local existence and uniqueness, global existence, continuous dependence on initial conditions and regularity of solutions.

Let $\varphi^y(q)(t)=q(t+y)$ be the translation, $y \in \mathbb{R}$. The uniformly local norms and spaces are given with:
\begin{eqnarray}
||q||_{L^2_{\text{ul}}(\mathbb{R})^N}&=& \sup_{y \in \mathbb{R}} \left( \int_{\mathbb{R}} e^{-|t+y|}q(t)^2 \right) ^{1/2}, \nonumber \\
L^2_{\text{ul}}(\mathbb{R})^N &=& \left\lbrace  q \in L^2_{\text{loc}}(\mathbb{R})^N , 
\: ||q||_{L^2_{\text{ul}}(\mathbb{R})^N} < \infty, \: 
\lim_{y \rightarrow 0}||\varphi^yq-q||_{L^2_{\text{ul}}(\mathbb{R})^N}=0  \right\rbrace , \nonumber \\
H^k_{\text{ul}}(\mathbb{R})^N &=& \left\lbrace u\in L^2_{\text{ul}}(\mathbb{R})^N \: | \: \: \partial^j_tu \in L^2_{\text{ul}}(\mathbb{R})^N \text{ for all } j\leq k \right\rbrace ,\nonumber \\
||q||_{H^k_{\text{ul}}(\mathbb{R})^N} &=&\left( \sum_{j=0}^k || \partial^j_t q||^2_{L^2_{\text{ul}}(\mathbb{R})^N} \right)^{1/2}. \nonumber
\end{eqnarray} 
\begin{remark} \label{r:UL}
For our purposes it suffices to note that if $q : \mathbb{R}^N \rightarrow \mathbb{R}$ is Lipschitz continuous and bounded in $L^{\infty}(\mathbb{R})^N$, then it is in $L^2_{\text{ul}}(\mathbb{R})^N$. Specifically, the Lipschitz continuity implies that $ \lim_{y \rightarrow 0}||\varphi^yq-q||_{L^2_{\text{ul}}(\mathbb{R})^N}=0$ holds, i.e. that $\varphi^y(q)$ is continuous in $y \in \mathbb{R}$.
\end{remark}

Denote by $A:H^{2}_{\text{loc}}(\mathbb{R})^N \rightarrow L^{2}_{\text{loc}}(\mathbb{R})^N$ the linear operator $Aq=-q_{tt}$. The system (\ref{r:Grad}) can then be written in a compact form 
\begin{equation}
 q_s=-Aq+F(q), \label{r:compactF0}
\end{equation} 
$F(q)(t)=\partial V(q(t),t)/\partial q$. Fix an initial condition $q^0 \in \mathcal{X}$. We substitute $\tilde{q}=q-q^0$, and consider
	\begin{equation} \label{r:compactF}
	\begin{split}
	\tilde{q}_s&=-\tilde{A}\tilde{q}+\tilde{F}(\tilde{q}), \\
	\tilde{q}(0)&=0, 
	\end{split} 
	\end{equation}
where $\tilde{F}(\tilde{q})=F(\tilde{q}+q^0)-Aq^0$, and $\tilde{A}$ is the restriction of $A$ to $H^2_{\text{ul}}(\mathbb{R})^N$, $\tilde{A}: H^2_{\text{ul}}(\mathbb{R})^N \rightarrow L^2_{\text{ul}}(\mathbb{R})^N$. It is straightforward to check that $\tilde{F}$ is well-defined as $\tilde{F}:H^1_{\text{ul}}(\mathbb{R})^N \rightarrow L^2_{\text{ul}}(\mathbb{R})^N$, and uniformly Lipschitz (for a fixed $q^0$) on the entire domain. Without loss of generality, we assume that the initial condition is given at $s_0=0$, and fix $q^0$ throughout the proofs. 

\begin{lemma} \label{l:local} 
For some $S>0$ small enough, there exists an unique solution $\tilde{q}$ of (\ref{r:compactF}) on $(0,S)$,
$$\tilde{q} \in C^0(\left[ 0,S \right),H^1_{\text{ul}}(\mathbb{R})^N) \cap C^1(\left( 0,S \right) ,H^1_{\text{ul}}(\mathbb{R})^N) \cap C^0(\left( 0,S \right) ,H^2_{\text{ul}}(\mathbb{R})^N).$$ 			
\end{lemma}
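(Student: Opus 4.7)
The plan is to cast (\ref{r:compactF}) as a standard semilinear parabolic problem in the sense of Henry \cite{Henry:81}, and apply a Duhamel/contraction-mapping argument, the only care needed being the fact that we work in uniformly local spaces rather than ordinary $L^2$-based Sobolev spaces.

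First, I would verify that $-\tilde{A}$ generates an analytic semigroup $\{e^{-s\tilde{A}}\}_{s\geq 0}$ on $L^2_{\text{ul}}(\mathbb{R})^N$, given explicitly by convolution with the one-dimensional heat kernel $G_s(t)=(4\pi s)^{-1/2}\exp(-t^2/(4s))$. Since $G_s*\,\cdot\,$ commutes with the translations $\varphi^y$, the uniformly local norm is preserved, and the continuity-of-translation condition built into the definition of $L^2_{\text{ul}}$ is inherited by $e^{-s\tilde{A}}f$, so we actually get a semigroup on $L^2_{\text{ul}}$ itself (and not just on its closure in a larger space). Direct heat-kernel estimates then give the sectoriality and the smoothing bound
\begin{equation*}
\|e^{-s\tilde{A}}f\|_{H^1_{\text{ul}}(\mathbb{R})^N} \leq C\bigl(1+s^{-1/2}\bigr)\,\|f\|_{L^2_{\text{ul}}(\mathbb{R})^N},\qquad s>0,
\end{equation*}
which is what the fixed-point argument requires.

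Second, I would rewrite (\ref{r:compactF}) as the mild formulation
\begin{equation*}
\tilde{q}(s) \;=\; \Phi(\tilde{q})(s) \;:=\; \int_0^s e^{-(s-\sigma)\tilde{A}}\,\tilde{F}(\tilde{q}(\sigma))\,d\sigma,
\end{equation*}
and run Banach's fixed point theorem in the complete metric space
\begin{equation*}
\mathcal{Z}_S \;=\; \bigl\{\tilde{q}\in C^0([0,S],H^1_{\text{ul}}(\mathbb{R})^N)\,:\,\tilde{q}(0)=0,\ \sup_{s\in[0,S]}\|\tilde{q}(s)\|_{H^1_{\text{ul}}}\leq 1\bigr\}.
\end{equation*}
Here one uses that $\tilde{F}:H^1_{\text{ul}}(\mathbb{R})^N\to L^2_{\text{ul}}(\mathbb{R})^N$ is well-defined and uniformly Lipschitz on the ball of radius $1+\|q^0\|$ (this was already noted just before the statement of the lemma). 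Combined with the smoothing estimate, one gets $\|\Phi(\tilde{q}_1)-\Phi(\tilde{q}_2)\|_{\infty,S}\leq C'\,S^{1/2}\|\tilde{q}_1-\tilde{q}_2\|_{\infty,S}$, so for $S$ small enough $\Phi$ is a contraction on $\mathcal{Z}_S$ and its values indeed stay in $\mathcal{Z}_S$ (since $\Phi(0)(s)=\int_0^s e^{-(s-\sigma)\tilde{A}}\tilde{F}(0)\,d\sigma$ is $O(S^{1/2})$ in $H^1_{\text{ul}}$). This yields a unique mild solution $\tilde{q}\in C^0([0,S],H^1_{\text{ul}}(\mathbb{R})^N)$.

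Third, to obtain the stated higher regularity $\tilde{q}\in C^1((0,S),H^1_{\text{ul}})\cap C^0((0,S),H^2_{\text{ul}})$ I would appeal to the standard bootstrap from Henry \cite[Ch.~3]{Henry:81}: continuity of $\sigma\mapsto \tilde{F}(\tilde{q}(\sigma))$ in $L^2_{\text{ul}}$ combined with analyticity of $e^{-s\tilde{A}}$ gives local Hölder continuity of the nonlinearity in time, which lifts the mild solution to a classical one satisfying the equation pointwise for $s\in(0,S)$. Uniqueness already follows from the contraction argument above.

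The main technical obstacle I anticipate is the semigroup step: $L^2_{\text{ul}}(\mathbb{R})^N$ is non-separable and convolution with the heat kernel is not strongly continuous at $s=0$ on the full space. The translation-continuity clause in the definition of $L^2_{\text{ul}}$ precisely singles out the maximal invariant subspace on which strong continuity does hold, and the commutation $\varphi^y\circ e^{-s\tilde{A}}=e^{-s\tilde{A}}\circ\varphi^y$ is what keeps the iterates $\Phi^n(\tilde{q})$ inside this subspace; this point must be checked carefully but causes no real difficulty once noticed. Everything else is routine adaptation of the Henry theory, as already suggested at the end of the section.
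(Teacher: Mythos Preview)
Your proposal is correct and follows essentially the same route as the paper: both verify that $\tilde{A}$ generates an analytic semigroup on $L^2_{\text{ul}}(\mathbb{R})^N$ via the explicit heat kernel (with reference to \cite{Gallay:01} for the uniformly-local subtleties you flag), and then invoke the Henry theory using the Lipschitz property of $\tilde{F}$. The only difference is presentational---the paper cites \cite[Theorem~3.3.3]{Henry:81} directly with $X=L^2_{\text{ul}}$, $\alpha=1/2$, $X^{1/2}=H^1_{\text{ul}}$, whereas you unpack that theorem into its Duhamel/contraction-mapping proof.
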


\begin{proof} We follow \cite[Chap. 3]{Henry:81}. First note that $\tilde{A}$ generates an analytic semigroup $S(s)=\exp(-s\tilde{A})$ of bounded linear operators in $L^2_{\text{ul}}(\mathbb{R})^N$, which can for example be verified by using the explicit expression of the heat kernel (see e.g. \cite{Gallay:01} for details). As $\tilde{F}$ is uniformly, thus locally Lipschitz in $q$, and constant in $s$, the claim follows from \cite[Theorem 3.3.3]{Henry:81} with (using the notation from \cite{Henry:81}) $X=L^2_{\text{ul}}(\mathbb{R})^N$, $D(\tilde{A})=H^2_{\text{ul}}(\mathbb{R})^N$, $\alpha=1/2$,  $X^{1/2}=H^1_{\text{ul}}(\mathbb{R})^N$.
\end{proof}

We now require the well-known fact \cite{Henry:81} that $\tilde{q}$ is a ("classical") solution of (\ref{r:compactF}) on $(0,S)$ if and only if it is a mild solution, i.e. if for any $0<s_1 \leq S$, $\tilde{q}$ satisfies the integral equation
\begin{equation}
\tilde{q}(s_1)  =  \int_0^{s_1} e^{-\tilde{A}(s_1-s)}\tilde{F}(\tilde{q}(s)) ds. \label{r:milder}
\end{equation}

\begin{lemma} \label{l:global}
	The solution $\tilde{q}$ of (\ref{r:compactF}) exists on $(0,\infty)$.
\end{lemma}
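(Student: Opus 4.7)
The plan is to prove global existence by combining an a-priori bound on $\|\tilde{q}(s)\|_{H^1_{\text{ul}}(\mathbb{R})^N}$ with the standard continuation principle for semilinear parabolic equations (e.g.\ \cite[Thm.~3.3.4]{Henry:81}), which states that the local solution of Lemma \ref{l:local} extends to a maximal interval $[0,s^*)$ with either $s^*=\infty$ or $\limsup_{s\to s^*}\|\tilde{q}(s)\|_{H^1_{\text{ul}}(\mathbb{R})^N}=\infty$. Thus it suffices to rule out blow-up of the $H^1_{\text{ul}}$-norm in finite time.

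The key observation is that $\tilde{F}:H^1_{\text{ul}}(\mathbb{R})^N\to L^2_{\text{ul}}(\mathbb{R})^N$ is not just uniformly Lipschitz but also \emph{uniformly bounded}. Indeed, since $V$ is $C^2$ and $2\pi$-periodic in all variables, $\partial V(\cdot,\cdot)/\partial q$ is bounded on $\mathbb{T}^N\times \mathbb{T}$, so $F(\tilde q+q^0)\in L^\infty(\mathbb{R})^N$ with a bound independent of $\tilde{q}$, which in turn bounds its $L^2_{\text{ul}}$-norm. The term $Aq^0=-q^0_{tt}$ lies in $L^2_{\text{ul}}(\mathbb{R})^N$ by the definition of $\mathcal{X}$. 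Therefore there exists a constant $M=M(q^0,V)<\infty$ such that $\|\tilde F(\tilde q)\|_{L^2_{\text{ul}}(\mathbb{R})^N}\leq M$ for every $\tilde q\in H^1_{\text{ul}}(\mathbb{R})^N$.

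Next I would use the mild formulation (\ref{r:milder}) together with the standard analytic-semigroup smoothing estimate $\|e^{-s\tilde{A}}\|_{L^2_{\text{ul}}\to H^1_{\text{ul}}}\leq C\, s^{-1/2}$ for $s>0$, which follows from $\tilde{A}$ generating an analytic semigroup on $L^2_{\text{ul}}(\mathbb{R})^N$ with $X^{1/2}=H^1_{\text{ul}}(\mathbb{R})^N$ (as already invoked in Lemma \ref{l:local}). Combined with the uniform bound on $\tilde F$, this yields
\begin{equation*}
\|\tilde q(s)\|_{H^1_{\text{ul}}(\mathbb{R})^N}\;\leq\;\int_0^s \|e^{-\tilde{A}(s-\sigma)}\|_{L^2_{\text{ul}}\to H^1_{\text{ul}}}\,\|\tilde F(\tilde q(\sigma))\|_{L^2_{\text{ul}}}\,d\sigma\;\leq\; C M\int_0^s(s-\sigma)^{-1/2}d\sigma\;=\;2CM\sqrt{s},
\end{equation*}
which stays finite on every bounded subinterval of $[0,s^*)$. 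Consequently $\|\tilde q(s)\|_{H^1_{\text{ul}}}$ cannot blow up as $s\to s^*$ unless $s^*=\infty$, and the continuation principle forces $s^*=\infty$.

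The only potentially delicate point is justifying the smoothing estimate on the uniformly local space $L^2_{\text{ul}}(\mathbb{R})^N$, rather than on the standard $L^2(\mathbb{R})^N$. This is not an obstacle in substance — one can either cite the construction in \cite{Gallay:01}, where analogous estimates for the heat semigroup on uniformly local spaces are worked out via explicit kernel convolutions, or verify it directly by translating the classical Gaussian bound $\|e^{s\partial_{tt}}\|_{L^2\to H^1}\leq C s^{-1/2}$ to the weighted setting using the exponential weight in the definition of $\|\cdot\|_{L^2_{\text{ul}}}$. Once this is in place, the argument above is essentially routine.
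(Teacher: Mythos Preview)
Your proposal is correct and follows essentially the same route as the paper: both arguments observe that $\tilde{F}$ is uniformly bounded in $L^2_{\text{ul}}(\mathbb{R})^N$ (because $\partial V/\partial q$ is bounded and $q^0_{tt}\in L^2_{\text{ul}}$), feed this into the mild formulation together with analytic-semigroup smoothing, and invoke the blow-up alternative from \cite{Henry:81}. The paper is terser---it quotes \cite[Corollary~3.3.5]{Henry:81} directly and records the resulting bound as $\|\tilde q(s)\|_{H^2_{\text{ul}}}\leq e^S A$ rather than your $H^1_{\text{ul}}$ bound $2CM\sqrt{s}$---but the substance is the same.
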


\begin{proof}
	As $\tilde{F}(\tilde{q})$ is uniformly bounded in $L^2_{\text{ul}}(\mathbb{R})^N$ by some constant $A$, we have that if the solution of (\ref{r:compactF}) exists on $(0,S)$, then for all $s \in (0,S)$, $||\tilde{q}(s)||_{H^2_{\text{ul}}(\mathbb{R})^N}\leq e^S \cdot A$. The claim now follows from \cite[Corollary 3.3.5]{Henry:81}, as "blow-up" is not possible.
\end{proof}

\begin{lemma} \label{l:continuous} The solution of (\ref{r:compactF0}) is continuous with respect to initial conditions in both $\mathcal{X}_{\text{ul}}$ and $\mathcal{X}_{\text{loc}}$.
\end{lemma}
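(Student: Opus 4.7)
The plan is to establish continuity separately in the two topologies, relying on the global existence in $\mathcal{X}_{\text{ul}}$ already granted by Lemmas \ref{l:local} and \ref{l:global}. In both cases the key structural fact is that $F(q)(t)=\partial_q V(q(t),t)$ is uniformly Lipschitz as a Nemytskii map: since $V\in C^2(\mathbb{T}^N\times\mathbb{T})$ has bounded second derivatives on its compact domain, $|F(q_1)(t)-F(q_2)(t)|\leq L\,|q_1(t)-q_2(t)|$ pointwise, whence $\|F(q_1)-F(q_2)\|_{L^2_{\text{ul}}}\leq L\|q_1-q_2\|_{L^2_{\text{ul}}}$ and analogously for any weighted $L^2$ norm.

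For the $\mathcal{X}_{\text{ul}}$ topology, I would take two solutions $q_1,q_2$ and subtract their mild formulations (\ref{r:milder}), producing
\begin{equation*}
(q_1-q_2)(s)=e^{-\tilde A s}(q_1^0-q_2^0)+\int_0^s e^{-\tilde A(s-\tau)}\bigl[F(q_1(\tau))-F(q_2(\tau))\bigr]d\tau.
\end{equation*}
Using the standard analytic-semigroup bound $\|e^{-\tilde A s}\|_{L^2_{\text{ul}}\to H^1_{\text{ul}}}\leq C s^{-1/2}$ (valid on uniformly local spaces, see \cite{Henry:81,Gallay:01}) together with the uniform Lipschitz property, the singular Gronwall lemma \cite[Lemma 7.1.1]{Henry:81} yields $\|q_1(s)-q_2(s)\|_{H^1_{\text{ul}}}\ll e^{Cs}\|q_1^0-q_2^0\|_{H^1_{\text{ul}}}$ on any bounded interval of $s$. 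Since $\|\cdot\|_{\mathcal{X}_{\text{ul}}}$ combines $q(0)$ (controlled by $H^1_{\text{ul}}$ through Sobolev embedding) with $\|q_t\|_{H^1_{\text{ul}}}$ (controlled by differentiating the same mild formula once in $t$), this gives continuity in $\mathcal{X}_{\text{ul}}$.

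For the $\mathcal{X}_{\text{loc}}$ topology, I would work directly with the difference $w:=q_1-q_2$, which satisfies the linear inhomogeneous equation $w_s=w_{tt}-G(t,s)$ with pointwise bound $|G|\leq L|w|$. Testing against $\rho(t)w$ with $\rho(t)=e^{-|t|}$, integrating in $t$, and integrating by parts (boundary terms vanish by exponential decay), one arrives at
\begin{equation*}
\tfrac{1}{2}\frac{d}{ds}\!\int\!\rho w^2\,dt=-\!\int\!\rho w_t^2\,dt-\!\int\!\rho'\,w\,w_t\,dt-\!\int\!\rho\,wG\,dt.
\end{equation*}
The crucial point is that $\rho$ has bounded logarithmic derivative, $|\rho'|/\rho\leq 1$, so Young's inequality on the crossed term absorbs half of $\int\rho w_t^2\,dt$, and the nonlinear term is handled by the Lipschitz bound on $G$, giving
\begin{equation*}
\frac{d}{ds}\!\int\!\rho w^2\,dt+\!\int\!\rho w_t^2\,dt\leq C\!\int\!\rho w^2\,dt.
\end{equation*}
Gronwall then gives the $L^2(\rho)$-continuity of $w$. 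The $w_t$ component of the $\mathcal{X}_{\text{loc}}$-norm is treated analogously, either by differentiating the linearized equation in $t$ and running the same weighted energy estimate (closing on $w$ and $w_t$ simultaneously), or by using the time-integrated $L^2(\rho)$ control of $w_t$ produced by the dissipative term above, to pick a time $\tau\in[s/2,s]$ at which $w_t$ is small in $L^2(\rho)$ and then restart the argument. Either route yields $\|w(s)\|_{\mathcal{X}_{\text{loc}}}\leq C'(s)\|q_1^0-q_2^0\|_{\mathcal{X}_{\text{loc}}}$.

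The main obstacle will be the $\mathcal{X}_{\text{loc}}$-continuity, because the weak topology precludes any a priori $H^1_{\text{ul}}$-closeness of the initial data and because the heat semigroup does not act contractively on $L^2(\rho)$. What saves the argument is precisely the bounded logarithmic derivative of $\rho$ and the pointwise (rather than merely integral) Lipschitz bound on the Nemytskii nonlinearity inherited from the smoothness of $V$ on a compact manifold; the price is an exponential Gronwall factor in $s$, but this is harmless on bounded time intervals, which suffices for continuity of the semiflow.
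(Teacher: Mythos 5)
Your $\mathcal{X}_{\text{ul}}$-argument---subtract the mild formulations, use the analytic-semigroup smoothing $\|e^{-\tilde A s}\|_{L^2_{\text{ul}}\to H^1_{\text{ul}}}\ll s^{-1/2}$, the Lipschitz bound on the Nemytskii map, and singular Gronwall---is exactly the paper's route (which cites \cite[Cor.~3.4.1]{Henry:81} for both topologies), so that half is fine. For $\mathcal{X}_{\text{loc}}$ you depart from the paper and try a direct weighted energy estimate; the $\int\rho\,w^2$ part is correct and the role of $|\rho'|\leq\rho$ is well identified. The gap is in closing the $\int\rho\,w_t^2$ component. Your first route ("differentiate the linearized equation in $t$ and close simultaneously") produces, inside $G_t=\partial_t[F(q_1)-F(q_2)]$, the terms $[\partial_{qq}V(q_1,t)-\partial_{qq}V(q_2,t)]q_{2,t}$ and $\partial_{qt}V(q_1,t)-\partial_{qt}V(q_2,t)$, and the lemma's standing assumption is only $V\in C^2$. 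Thus $\partial^2V$ is bounded and uniformly continuous but not Lipschitz, so the estimate $|G_t|\ll|w|+|w_t|$ that would feed a Gronwall loop of the same type as for $w$ is simply unavailable. Your asserted conclusion $\|w(s)\|_{\mathcal{X}_{\text{loc}}}\leq C'(s)\|w^0\|_{\mathcal{X}_{\text{loc}}}$ is therefore not justified (and is in fact stronger than what $V\in C^2$ can give). One can salvage mere continuity by splitting the integral $\int\rho\,\omega(|w|)^2\,dt$ into $\{|w|<\delta\}$ and $\{|w|\geq\delta\}$ and using the already-proved smallness of $\int\rho\,w^2$; but this extra step is precisely what the argument needs and you have not supplied it. Your second route ("pick a good time $\tau$ with small $\int\rho\,w_t^2(\tau)$ and restart") is underspecified and, as sketched, circular: propagating an $H^1(\rho)$-smallness from $\tau$ to $s$ is the very estimate you are trying to establish.

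This is precisely why the paper keeps the mild-formulation argument for $\mathcal{X}_{\text{loc}}$ as well: in
\[
w(s_1)=e^{-\tilde As_1}w^0+\int_0^{s_1}e^{-\tilde A(s_1-\sigma)}\bigl[F(q_1(\sigma))-F(q_2(\sigma))\bigr]\,d\sigma,
\]
the $L^2(\rho)\to H^1(\rho)$ singular smoothing of $e^{-\tilde A\sigma}$ (which holds because $\rho$ has bounded logarithmic derivative, exactly the feature you highlight) supplies the extra derivative, so one only needs $F$ Lipschitz, i.e.\ $V\in C^2$, never $V\in C^3$. Either replace your energy estimate for the $w_t$ piece by the mild-formulation argument in $H^1(\rho)$, or insert the uniform-continuity splitting sketched above and accept that you obtain continuity rather than a Lipschitz modulus.
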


\begin{proof}
 We substitute back $q$ instead of $\tilde{q}$ in (\ref{r:milder}), and obtain
 \begin{equation}
 q(s_1)  = q^0+ \int_0^{s_1} e^{-\tilde{A}(s_1-s)}\left( F(q(s))-Aq^0\right) ds. \label{r:milder2}
 \end{equation}
Consider a sequence of initial conditions $q^{0,(n)}$ converging in either  $\mathcal{X}_{\text{ul}}$ or $\mathcal{X}_{\text{loc}}$ norm to $q^0$, and consider associated solutions $q^{(n)}(s)$, $q(s)$. Continuity is then shown by bounding the difference of the right-hand sides of (\ref{r:milder2}), for $q^{(n)}(s_1)$ and $q(s_1)$ for $n$ large enough and $s_1>0$ small enough, in either $\mathcal{X}_{\text{ul}}$- or $\mathcal{X}_{\text{loc}}$-norm (see also \cite[Corollary 3.4.1]{Henry:81} for details).
\end{proof}

\begin{lemma} \label{l:smooth}
If $V \in H^k(\mathbb{T}^{N+1})$, $k \geq 2$, then for all $s > 0$, if $q$ is the solution of (\ref{r:compactF0}), then $q(s) \in H^k_{\text{ul}}(\mathbb{R})^N$.
\end{lemma}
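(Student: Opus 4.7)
My plan is to prove this by induction on $k$, using the smoothing property of the analytic semigroup $e^{-s\tilde{A}}$ generated by $\tilde{A}$ on $L^2_{\text{ul}}(\mathbb{R})^N$, which was established in the proof of Lemma \ref{l:local}. The base case $k=2$ is already the content of Lemmas \ref{l:local}--\ref{l:global}, which place $q(s)$ in $H^2_{\text{ul}}(\mathbb{R})^N$ for all $s>0$. Assume inductively that the conclusion holds for $k-1$, and let $V \in H^k(\mathbb{T}^{N+1})$ with $k \geq 3$. Since $H^k \subset H^{k-1}$, the inductive hypothesis gives $q(\tau) \in H^{k-1}_{\text{ul}}(\mathbb{R})^N$ for every $\tau > 0$, uniformly in $\tau$ on compact subsets of $(0,\infty)$.

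Fix $s > 0$ and pick $s_0 \in (0,s)$. Applied to the initial time $s_0$, the mild formulation (\ref{r:milder2}) reads
\[
q(s) = e^{-(s-s_0)\tilde{A}} q(s_0) + \int_{s_0}^{s} e^{-(s-\tau)\tilde{A}}\bigl( F(q(\tau)) - A q(s_0)\bigr)\, d\tau.
\]
Analyticity of the semigroup gives the smoothing estimate $\|e^{-r\tilde{A}} g\|_{H^{m}_{\text{ul}}} \leq C_{m} r^{-(m-\ell)/2} \|g\|_{H^{\ell}_{\text{ul}}}$ for $0 < r \leq 1$ and any integers $\ell \leq m$, with a corresponding bound on larger time intervals. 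Thus the first term and the $A q(s_0)$ contribution, which start in $L^2_{\text{ul}}$, land in $H^{m}_{\text{ul}}$ for every $m$, and in particular in $H^{k}_{\text{ul}}$. For the nonlinear contribution the crux is the composition estimate: if $q(\tau) \in H^{k-1}_{\text{ul}}(\mathbb{R})^N$ uniformly on $[s_0, s]$ and $V \in H^k(\mathbb{T}^{N+1})$, then $F(q(\tau)) = \partial_q V(q(\tau,\cdot),\cdot) \in H^{k-1}_{\text{ul}}(\mathbb{R})^N$ uniformly on $[s_0,s]$. Granting this, the smoothing bound with $\ell = k-1$, $m = k$ gives an integrable singularity $(s-\tau)^{-1/2}$ on $[s_0, s]$, so the Duhamel integral lies in $H^{k}_{\text{ul}}(\mathbb{R})^N$, completing the induction.

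The main obstacle is the composition estimate in the uniformly local setting. I intend to handle it as follows: the one-dimensional Sobolev embedding $H^{k-1}([-1,1])^N \hookrightarrow C^{k-2}([-1,1])^N$ (valid since $k-1 \geq 2$) combined with the definition of $\|\cdot\|_{H^{k-1}_{\text{ul}}}$ shows that $q$ and all its derivatives up to order $k-2$ are uniformly bounded on $\mathbb{R}$. The standard Moser chain-rule/product estimates on the unit interval then bound $\|\partial_q V(q,\cdot)\|_{H^{k-1}([y,y+1])^N}$ in terms of $\|V\|_{H^k(\mathbb{T}^{N+1})}$ and a polynomial in $\|q\|_{H^{k-1}([y,y+1])^N}$; taking the supremum over translations $y \in \mathbb{R}$ and using translation invariance of the uniformly local norm yields the required $H^{k-1}_{\text{ul}}$ bound on $F(q(\tau))$, uniform in $\tau \in [s_0,s]$.

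A secondary subtlety is the continuity of $\tau \mapsto F(q(\tau))$ as an $H^{k-1}_{\text{ul}}$-valued map, needed to make sense of the Bochner integral on the right-hand side. This follows from the continuity of $\tau \mapsto q(\tau)$ in $H^{k-1}_{\text{ul}}$ (an inductive consequence together with the smoothing estimate applied on subintervals) and the Lipschitz continuity of the composition map $q \mapsto \partial_q V(q,\cdot)$ on bounded subsets of $H^{k-1}_{\text{ul}}$, which is itself proved by a minor variant of the same Moser argument.
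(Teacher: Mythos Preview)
Your bootstrap is sound in spirit but is organized differently from the paper's. The paper does not appeal to higher-order semigroup smoothing on the original Duhamel formula; instead it \emph{differentiates the equation} $k-2$ times in $t$, obtaining for $r=\partial_t^{k-2}q$ the linear problem $r_s=-\tilde A r + F^{(k-2)}(s)$ with forcing $F^{(k-2)}=\partial_t^{k-2}\partial_qV(q(\cdot),\cdot)\in L^2_{\text{ul}}$, and then simply reruns the base-case local existence/regularity argument (Lemma~\ref{l:local}) to place $r(s)\in H^2_{\text{ul}}$, hence $q(s)\in H^k_{\text{ul}}$. Your route bypasses this differentiation by using the $H^{k-1}_{\text{ul}}\to H^k_{\text{ul}}$ smoothing of $e^{-r\tilde A}$ together with a Moser composition bound; this is closer to textbook parabolic regularity and is arguably more direct. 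The paper's route, on the other hand, never needs smoothing estimates beyond the $L^2_{\text{ul}}\to H^2_{\text{ul}}$ scale already used in Lemma~\ref{l:local}, and it is more robust to the fact that in this setting $q$ itself need not lie in $L^2_{\text{ul}}$ (only $q_t$ does): by passing to $r=\partial_t^{k-2}q$ one automatically lands in $L^2_{\text{ul}}$.

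One concrete slip: your displayed mild formula is a hybrid that is not correct. Equation~(\ref{r:milder2}) shifted to initial time $s_0$ reads
\[
q(s) \,=\, q(s_0) \,+\, \int_{s_0}^{s} e^{-(s-\tau)\tilde A}\bigl(F(q(\tau)) - A q(s_0)\bigr)\,d\tau,
\]
with $q(s_0)$, not $e^{-(s-s_0)\tilde A}q(s_0)$, as the first term; equivalently, absorbing the $Aq(s_0)$-integral via the identity $\int_{s_0}^{s}e^{-(s-\tau)\tilde A}\tilde A\,g\,d\tau = g - e^{-(s-s_0)\tilde A}g$ gives the standard Duhamel form $q(s)=e^{-(s-s_0)\tilde A}q(s_0)+\int_{s_0}^{s}e^{-(s-\tau)\tilde A}F(q(\tau))\,d\tau$, but then the integrand no longer contains $-Aq(s_0)$. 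Either corrected version makes your argument go through: in the second form, $q(s_0)\in H^{k-1}_{\text{ul}}\subset L^2_{\text{ul}}$ by induction, so the linear term is smoothed to $H^k_{\text{ul}}$, and your Moser bound places $F(q(\tau))\in H^{k-1}_{\text{ul}}$, whence the Duhamel integral lands in $H^k_{\text{ul}}$ with the integrable $(s-\tau)^{-1/2}$ singularity exactly as you describe.
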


\begin{proof} We prove it inductively. In the proofs of Lemmas \ref{l:local}, \ref{l:global}, we already established the case $k=2$. Consider the case $k\geq 3$, and assume the claim holds for a given $k-1$. Let $r = d^{k-2}q/dt^{k-2}$. By the inductive assumption, $r(s)\in H^1_{\text{ul}}(\mathbb{R})^N$ for all $s>0$. For arbitrarily small $\delta > 0$, consider the system of equations
\begin{equation} \label{r:regular}
\begin{split}
r_s &=-\tilde{A}r+F^{(k-2)}(s),  \\
r(\delta) &= \frac{d^{k-2}}{dt^{k-2}}q(\delta) 
\end{split}
\end{equation}	
where 
$
F^{(k-2)}(s)= \frac{d^{k-2}}{dt^{k-2}} \frac{\partial}{\partial q}V(q(s,.),.)
$
is a fixed function. One can verify by using the inductive assumption $q \in H^{k-1}_{\text{ul}}(\mathbb{R})^N$, the assumed regularity of $V$ and the embedding properties of the uniformly local spaces \cite{Gallay:01}, that $F^{(k-2)}(s) \in L^2_{\text{ul}}(\mathbb{R})^N$ for all $s \in (0,\infty)$, and that it is uniformly bounded in $L^2_{\text{ul}}(\mathbb{R})^N$ on $(\delta,S]$ for any $S > \delta$. Now by repeating the argument of existence and uniqueness of solutions as in Lemma \ref{l:local}, we deduce that for any $s \in (\delta,\infty)$, $r(s)$ is a solution of (\ref{r:regular}), thus $r(s) \in H^2_{\text{ul}}(\mathbb{R})^N$ and $q(s) \in H^k_{\text{ul}}(\mathbb{R})^N$. As $\delta >0$ is arbitrarily small, the claim is proved. 
\end{proof}

Theorem \ref{t:exist} now follows from Lemmas \ref{l:local}, \ref{l:global}, \ref{l:continuous} and \ref{l:smooth}. 

We close the section with a frequently required result that the solutions of (\ref{r:EL}) we construct are indeed in $\mathcal{X}$.

\begin{lemma} \label{l:Xcontain}
   If $q$ is a solution of (\ref{r:ELN}) such that either $q_t \in L^{\infty}(\mathbb{R})^N$ or $q_t \in L^2_{\text{ul}}(\mathbb{R})^N$, then $q \in \mathcal{X}$ (and by definition, $q \in \mathcal{E}$).
\end{lemma}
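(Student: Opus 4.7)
The plan is to show that both hypotheses on $q_t$ ultimately reduce to the same thing, namely $q_t \in L^\infty(\mathbb{R})^N$, and then to deduce the required uniformly local $H^1$ bound on $q_t$ directly from the Euler--Lagrange equation (\ref{r:ELN}) together with the periodicity of $V$. The crucial observation is that since $V$ is $C^2$ and $2\pi$-periodic in every variable, $\partial V/\partial q$ is a $C^1$ function on the compact manifold $\mathbb{T}^N \times \mathbb{T}$, hence both $\partial V/\partial q$ and all its first-order partial derivatives are uniformly bounded on $\mathbb{R}^N \times \mathbb{R}$ by some absolute constant depending only on $V$. Inserting this into (\ref{r:ELN}) immediately gives $\|q_{tt}\|_{L^\infty(\mathbb{R})^N} \ll_V 1$.

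First I would dispense with the case $q_t \in L^2_{\text{ul}}(\mathbb{R})^N$ by upgrading it to the $L^\infty$ case. Indeed, the bound $\|q_{tt}\|_{L^\infty(\mathbb{R})^N} \ll_V 1$ makes $q_t$ Lipschitz continuous on $\mathbb{R}$ with a uniform Lipschitz constant. On any unit interval $[y,y+1]$, the mean value theorem yields a point $t^\ast$ where $|q_t(t^\ast)| \leq \|q_t\|_{L^2([y,y+1])^N} \leq \|q_t\|_{L^2_{\text{ul}}(\mathbb{R})^N}$, and then integrating the Lipschitz bound from $t^\ast$ shows $|q_t(t)| \leq \|q_t\|_{L^2_{\text{ul}}(\mathbb{R})^N} + O_V(1)$ uniformly in $t$. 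Hence $q_t \in L^\infty(\mathbb{R})^N$.

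Next, assuming $q_t \in L^\infty(\mathbb{R})^N$, I would differentiate (\ref{r:ELN}) formally in $t$ to obtain
\[
q_{ttt}(t) = D^2_q V(q(t),t)\, q_t(t) + \partial_t \partial_q V(q(t),t),
\]
which, by the same periodicity/compactness argument applied to the second derivatives of $V$ together with the uniform bound on $q_t$, gives $\|q_{ttt}\|_{L^\infty(\mathbb{R})^N} \ll_V 1 + \|q_t\|_{L^\infty(\mathbb{R})^N}$. Thus $q_t$ and $q_{tt}$ are both Lipschitz and uniformly bounded on $\mathbb{R}$. By Remark \ref{r:UL}, both then belong to $L^2_{\text{ul}}(\mathbb{R})^N$, with the translation-continuity condition automatically satisfied by Lipschitzness; consequently $q_t \in H^1_{\text{ul}}(\mathbb{R})^N$. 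Since $q_{tt} \in L^\infty(\mathbb{R})^N \subset L^2_{\text{loc}}(\mathbb{R})^N$, also $q \in H^2_{\text{loc}}(\mathbb{R})^N$, and by construction $\|q\|_{\mathcal{X}_{\text{ul}}}^2 = q(0)^2 + \|q_t\|_{H^1_{\text{ul}}(\mathbb{R})^N}^2 < \infty$, so $q \in \mathcal{X}$.

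There is no significant obstacle here; the only subtlety is the $L^2_{\text{ul}} \to L^\infty$ upgrade, which is a clean consequence of the ODE's second-order structure and the mean value theorem on unit intervals. Everything else is bookkeeping using the regularity and periodicity of $V$.
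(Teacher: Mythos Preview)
Your proof is correct and follows essentially the same route as the paper's: first use periodicity and $C^2$-regularity of $V$ to get $q_{tt}\in L^\infty$, upgrade the $L^2_{\text{ul}}$ hypothesis on $q_t$ to $L^\infty$ via a mean-value argument on unit intervals, then bound the Lipschitz constant of $q_{tt}$ (the paper phrases this via the Mean Value Theorem rather than formally differentiating, but it is the same computation) and invoke Remark~\ref{r:UL} to conclude $q_t,q_{tt}\in L^2_{\text{ul}}$. The only cosmetic difference is that you spell out the $L^2_{\text{ul}}\to L^\infty$ step more explicitly than the paper does.
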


\begin{proof}
First we note that as $\partial V(q,t) / \partial q$ is $C^1$ and periodic, $q_{tt}$ is continuous and in $L^{\infty}(\mathbb{R})^N$. We easily deduce that also in the case $q_t \in L^2_{\text{ul}}(\mathbb{R})^N$, we have $q_t \in L^{\infty}(\mathbb{R})^N$. The Mean Value Theorem shows that the Lipschitz constant for $q_{tt}$ is 
bounded with 
$||\partial^2 V / \partial q^2 ||_{L^{\infty}(\mathbb{R})^N} ||q_t||_{L^{\infty}(\mathbb{R})^N} + ||\partial^2 V / \partial q \partial t ||_{L^{\infty}(\mathbb{R})^N}$, which is finite as $V$ is $C^2$ and periodic in all the variables. By Remark \ref{r:UL}, we now have $q_t \in  L^2_{\text{ul}}(\mathbb{R})^N$, $q_{tt} \in  L^2_{\text{ul}}(\mathbb{R})^N$, thus $q \in \mathcal{X}$.
\end{proof}

\section{Appendix B: A-priori bounds on one-sided minimizers}

This Appendix is dedicated to the proofs of Lemma \ref{l:subsuper} and Proposition \ref{p:stable} in Section \ref{s:five} and Lemma \ref{l:subsuper2} in Section \ref{s:Theorem3}, i.e. the construction of one-sided minimizers in Section \ref{s:five} and calculation of a-priori bounds.

\subsection{Proofs from Section \ref{s:five}}

\begin{lemma} \label{l:construction}
	Define $\tilde{w}(t) = 4 \arctg \exp (\sqrt {\ve(1-2\mu^{1/2})} \: t)$, and
	\begin{equation*}
	w(t) =
	\begin{cases}
	\tilde{w}(t) & t \geq t_1 \\
	\tilde{w}(t) + \ve^{3/2}\mu^{1/2}(t_1-t)^3 & t \leq t_1,
	\end{cases}
	\end{equation*}
	where $t_1$ is chosen so that $\tilde{w}(t)=\pi+2\mu^{1/2}$. Then
	
	(i) $0 < t_1 \leq 2 \sqrt{ \mu / \ve}$,
	
	(ii) $w$ is is $C^2$, for all  $t \in [-1/(2\sqrt{\ve}),\infty)$, $0 < w_t < 2 \sqrt{\ve}$, and for all $t \in [-1/(2\sqrt{\ve}),1/(2\sqrt{\ve})]$,
	\begin{equation}
	\sqrt{\ve}/2  \leq w_t, \label{r:boundder}
	\end{equation}
	
	(iii) There is a unique $t_0$ satisfying $w(t_0)=\pi$ on $[-1/\sqrt{\ve},\infty)$, and it satisfies $0 > t_0 \geq - \mu^{1/2} $,
	
	(iv) $w$ is a strict stationary sub-solution of (\ref{r:gradu}) on $[-1/\sqrt{\ve},\infty)$. Furthermore, for any $T \geq 0$, $w(t-T)$ is a strict stationary sub-solution of (\ref{r:gradu}) on $[-1/\sqrt{\ve}+T,\infty)$.
\end{lemma}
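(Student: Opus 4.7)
The function $\tilde{w}$ is the separatrix of the \emph{slowed} pendulum $\ddot u=\ve(1-2\mu^{1/2})\sin u$, and this slowing is chosen so that $\tilde{w}_{tt}-\partial_u V(\tilde w,v,t)$ carries a built-in positive margin of order $\ve\mu^{1/2}|\sin\tilde w|$ which can absorb the $O(\ve\mu)$ coupling terms produced by $f$ in $V$. The cubic correction $\ve^{3/2}\mu^{1/2}(t_1-t)^3$ plays three distinct roles: (a) it and its first two derivatives vanish at $t_1$, so $w\in C^2$ automatically across the junction; (b) it pushes $w$ above $\pi$ earlier than $\tilde w$ does, which yields $t_0>-\mu^{1/2}$ in (iii); (c) on the region $t\le t_1$ it contributes a nonnegative term $6\ve^{3/2}\mu^{1/2}(t_1-t)$ to $w_{tt}$ that salvages the sub-solution inequality in the range where $\sin\tilde w$ no longer has the useful sign.

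For (i), I would invert $4\arctg\exp(\sqrt{\ve(1-2\mu^{1/2})}\,t_1)=\pi+2\mu^{1/2}$ and Taylor expand $\log\tan(\pi/4+\mu^{1/2}/2)=\mu^{1/2}+O(\mu)$ to get $t_1=(1+o(1))\mu^{1/2}/\sqrt{\ve(1-2\mu^{1/2})}$; the assumption (A2), giving $1-2\mu^{1/2}\ge 1/2$, then provides the required bound. For (ii), $C^2$-smoothness at $t_1$ is automatic from second-order vanishing of the correction; the upper bound $w_t<2\sqrt\ve$ follows from $\tilde w_t=2\sqrt{\ve(1-2\mu^{1/2})}\,\sech(\sqrt{\ve(1-2\mu^{1/2})}\,t)\le 2\sqrt\ve$ together with the nonpositive sign of the correction's derivative; the lower bound $\sqrt\ve/2\le w_t$ on $|t|\le 1/(2\sqrt\ve)$ follows because the argument of $\sech$ stays at most $1/2$ there, so $\sech$ is bounded below by an absolute constant, while the correction contributes at most $O(\sqrt\ve\,\mu^{1/2})$ and is negligible under (A2). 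For (iii), monotonicity (from (ii)) gives uniqueness; $w(0)>\pi$ gives $t_0<0$; and a Taylor expansion of $\tilde w$ at $0$ combined with $\ve^{3/2}\mu^{1/2}(t_1+\mu^{1/2})^3=O(\mu^2)$ (much smaller than $\sqrt\ve\,\mu^{1/2}$ under $16\mu\le\ve$) shows $w(-\mu^{1/2})<\pi$, hence $t_0>-\mu^{1/2}$.

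Claim (iv) is the crux, and I would split it by region. On $[t_1,\infty)$, $w=\tilde w$ satisfies the slowed pendulum equation, and direct substitution yields
\[
w_{tt}-\partial_u V(w,v,t)=\ve\sin\tilde w\bigl(-2\mu^{1/2}+\mu f\bigr)-\ve\mu(1-\cos\tilde w)f_u,
\]
with $\sin\tilde w\le 0$ throughout the region. Using $|f|\le 1$ and (A2) ($\mu\le\mu^{1/2}/4$), the bracket is strictly negative, so the first term is nonnegative and bounded below by $\ve\mu^{1/2}|\sin\tilde w|$. The identity $(1-\cos\tilde w)=2\sin^2(\tilde w/2)$ combined with $|\tilde w-\pi|\ge 2\mu^{1/2}$ permits a direct comparison with the second term, and the uniform bound on $|f_u|$ (finite from $f\in C^{4+\gamma}(\mathbb T^3)$) is absorbed via $16\mu\le\ve$. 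On $[-1/\sqrt\ve,t_1]$, $w_{tt}=\tilde w_{tt}+6\ve^{3/2}\mu^{1/2}(t_1-t)$, and the nonnegative cubic contribution provides a strict positive margin that dominates both the perturbation terms in $\partial_u V$ and the error incurred by replacing $\tilde w$ by $w$ (controlled via smoothness of $V$ and the $L^\infty$-smallness of $w-\tilde w$ on this short interval). The shift invariance asserted at the end of (iv) is immediate since the inequality is verified using only uniform (in $t$) bounds on $f,f_u,f_v$, so translating $t$ does not affect the bookkeeping.

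The main obstacle is the fine balancing on $[t_1,\infty)$ near $\tilde w=\pi+2\mu^{1/2}$, where the gain $\sim\ve\mu$ from the slowed-pendulum modification and the loss $\sim\ve\mu\,|f_u|_\infty$ from the coupling are of the same order; (A2) combined with uniform smoothness of $f$ is exactly what yields the needed safety margin to close strict inequality with constants independent of the particular $v$ being tested.
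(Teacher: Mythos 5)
Your proposal follows the same overall plan as the paper: slow the pendulum by $1-2\mu^{1/2}$ to generate a margin of order $\ve\mu^{1/2}|\sin w|$, append a cubic correction vanishing to second order at $t_1$ (hence automatic $C^2$) to rescue the region where $\sin w$ has the wrong sign, and invert/expand the separatrix to locate $t_1$ and $t_0$. Parts (i)--(iii) and the $[t_1,\infty)$ half of (iv) are essentially the paper's argument (your half-angle bookkeeping $|\sin w|/(1-\cos w)=|\cot(w/2)|\gtrsim\mu^{1/2}$ is equivalent to the paper's inequality $|\sin w|\geq\tfrac12|w-\pi|(1-\cos w)$). The one genuine divergence is in (iv) on $[-1/\sqrt\ve,t_1]$: the paper exploits the monotonicity fact that $\pi/2\leq\tilde w\leq w\leq 3\pi/2$, hence $\sin\tilde w\geq\sin w$, to bound $\ve(1-2\mu^{1/2})\sin\tilde w-\ve\sin w\geq -2\ve\mu^{1/2}\sin w$ with \emph{no} error term, then runs a mean-value argument on the resulting $\mathcal F(w)$ against $\mathcal F(w(t_1))$ (resp.\ $\mathcal F(w(t_0))$) using $w_t\leq 2\sqrt\ve$ from (ii); whereas you propose estimating $|\partial_u V(\tilde w)-\partial_u V(w)|\lesssim\ve|w-\tilde w|$ by Lipschitz continuity. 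Your route does close but is numerically tighter: $|w-\tilde w|$ is only $O(\mu^{1/2})$ at the left endpoint (not smaller), the effective Lipschitz constant is $\|\partial_{uu}V\|_\infty/\ve\approx 1$, and the resulting comparison against the cubic gain $6\ve^{3/2}\mu^{1/2}(t_1-t)$ leaves little slack (ratio roughly $4/3$), so this step would need to be written out carefully with explicit constants rather than asserted as dominance. The paper's ordering trick $\sin\tilde w\geq\sin w$ sidesteps this Lipschitz bookkeeping entirely and is the cleaner route. Also note that both proofs in effect need a uniform bound $|f_u|\lesssim 1$ to absorb the term $\ve\mu(1-\cos w)f_u$, which (A1) does not state explicitly; you are candid about obtaining it from smoothness, while the paper's $\tilde{\mathcal F}$ implicitly takes $|f_u|\leq 1$.
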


\begin{proof}
(i) As $\tilde{w}$ is strictly increasing and $\tilde{w}(0)=\pi$, clearly $t_1$ is unique and $t_1 > 0$. To show $t_1 \leq 2 \sqrt{ \mu / \ve}$, it suffices to show that $\tilde{w}(t^*) > \pi + 2\mu^{1/2}$ for $t^*=  2 \sqrt{ \mu / \ve}$. This is straightforward by the mean-value theorem and by noting that the derivative of $4 \arctg e^t$ is $\geq 9/5$ on $[0,\sqrt{2}/4]$. 

(ii) This follows by elementary calculation, applying (i) and the standing assumption $\sqrt{\mu}\leq 1/4$.
	
(iii) As $w(0)>\pi$, it suffices to show that $\tilde{w}(-\mu^{1/2}/2) > \pi$. For $t^*=-\mu^{1/2}/2$, $|t^*-t_1| \leq \sqrt{\mu} / \sqrt{\ve}$, thus then value of the polynomial in the definition of $w$ is $ \leq 27 \mu^2$. It is elementary to show that $\tilde{w}(-\mu^{1/2}/2) < \pi-27\mu^2$, applying (A2).
	
(iv) It is easy to see that it suffices to show that
$$ 
\tilde{\mathcal{F}}(w):= w_{tt} - \ve \sin w(t) - \ve \mu | \sin w(t) | - \ve \mu (1-\cos w(t)) > 0
$$
for $t \in [-1/\sqrt{\ve},\infty)$. 
Note that $ \tilde{w}_{tt} = (\ve (1 - 2 \mu^{1/2}))\sin \tilde{w}(t)$. For $t \geq t_1$, $\sin \tilde{w}(t)=\sin w(t)$ by definition. For $t \in [-1/\sqrt{\ve},t_1]$, by the strict monotonicity of both $w(t),\tilde{w}(t)$ we see that $ \pi /2 \leq \tilde{w}(t) \leq w(t) \leq 3 \pi /2$, thus $\sin \tilde{w}(t) \geq \sin w(t)$. In both cases we thus get
\begin{align}
\tilde{\mathcal{F}}(w) & = (\ve (1 - 2 \mu^{1/2}))\sin \tilde{w}(t) + 6 \ve^{3/2} \mu^{1/2}((t_1-t) \vee 0)
- \ve \sin w(t) - \ve \mu | \sin w(t) | - \ve \mu (1-\cos w(t)) \notag \\
& \geq 6 \ve^{3/2} \mu^{1/2}((t_1-t) \vee 0) - 2 \ve \mu^{1/2} \sin w(t)- \ve \mu | \sin w(t) | - \ve \mu (1-\cos w(t)). \label{r:deff}
\end{align}
Denote the expression (\ref{r:deff}) by $\mathcal{F}(w(t))$. It suffices to show now that $\mathcal{F}(w(t)) > 0$.

Consider first the case $w(t) \geq \pi + 2 \mu^{1/2}$, which is equivalent to $t \geq t_1$. As
in this case, $\sin w < 0$, and always $\mu < \mu^{1/2}$, we have
\begin{equation}
\mathcal{F}(w) > \ve \mu^{1/2}|\sin w | - \ve \mu (1-\cos w). \label{r:fw1}
\end{equation}
For any $w \in (0,\pi)$, the inequality
\begin{equation}
|\sin w | \geq \frac{1}{2}|w - \pi | ( 1-\cos w ). \label{r:cosin}
\end{equation}
holds. Inserting this and $|w-\pi| \geq 2 \mu^{1/2}$ in (\ref{r:fw1}) we get $\mathcal{F}(w) > 0$.

Let $w \in [\pi,\pi + 2 \mu^{1/2}]$, or equivalently $ t \in [t_0,t_1]$. As $\mathcal{F}(w(t_1))>0$, it suffices to show that $\mathcal{F}(w(t))-\mathcal{F}(w(t_1)) \geq 0$. Calculating we get
\begin{equation}
 \mathcal{F}(w(t)) - \mathcal{F}(w(t_1)) \geq  6 \ve^{3/2} \mu^{1/2}|t-t_1| - ( 2 \ve \mu^{1/2}+ \ve \mu ) |\sin w(t) - \sin w(t_1)| - \ve \mu | \cos w(t) - \cos w(t_1)|. \label{r:temp}
\end{equation}
Using the mean-value theorem, (\ref{r:boundder}) and $\mu^{1/2}\leq 1/2$, we get
\begin{align*}
 ( 2 \ve \mu^{1/2}+ \ve \mu ) |\sin w(t) - \sin w(t_1)| & \leq \frac{5}{2}\ve \mu^{1/2} \cdot 2 \ve^{1/2}|t-t_1| = 5 \ve^{3/2} \mu^{1/2}|t-t_1|, \\
 \ve \mu | \cos w(t) - \cos w(t_1)| & \leq \frac{1}{2}\ve \mu^{1/2} \cdot 2 \ve^{1/2}|t-t_1| \leq  \ve^{3/2} \mu^{1/2}|t-t_1|.
\end{align*}
We combine it with (\ref{r:temp}) to get $\mathcal{F}(w(t))-\mathcal{F}(w(t_1)) \geq 0$.

We now also know that $\mathcal{F}(w(t_0))>0$. In the case $t \in [-1/\sqrt{\ve},t_0]$ which is equivalent to $\sin w(t)\in [\pi/2,\pi]$, it suffices to show that  $\mathcal{F}(w(t))-\mathcal{F}(w(t_0))$. We again obtain that $\mathcal{F}(w(t)) - \mathcal{F}(w(t_0))$ is equal to the right-hand side of (\ref{r:temp}) with $t_0$ instead of $t_1$. The rest of the proof is analogous to the previous case.
\end{proof}

\begin{proof}[Proof of Lemma \ref{l:subsuper}]
	We take $w$ constructed in Lemma \ref{l:construction} and set $z^+ = w(t - t_0)$, $z^-=2\pi - w(-t + t_0)$. The claims (i)-(iv) are now straightforward, (v) can be easily checked by direct calculation, and (vi) follows from the definition and the relations
	$2\pi - 4 \arctg x< 4/x$, $4 \arctg(1/x)=2\pi - 4 \arctg x$, holding for all $ x >0$. By the Mean Value Theorem, by noting that $|d(\arctg e^x)/dt | \leq 4 e^{-|x|}$, the construction and bounds on $t_0$, $t_1$, we easily get that for $t \geq 0$,
	$$ |w(t-t_0) - u^{(\ve)}(t)| \ll e^{-\frac{1}{2}\sqrt{\ve}t}(\ve \sqrt{\mu} \: t + \sqrt{\ve \mu}) \ll \sqrt{\ve \mu},$$
	which completes the proof.
\end{proof}

We now construct one-sided minimizers and prove Proposition \ref{p:stable}. We fix $c,t_0,v_0$ until the end of the section, and construct $q^+$, $q^-$ is analogous. For a positive integer $k$, we consider the functional
\begin{equation}
\mathcal{L}_{\omega,k}(q) = \int_{t_0}^{t_0+k}L_{\omega}(u(t),v(t),t)dt. \label{r:intminimum}
\end{equation}
We construct minimizers $q_k$ of (\ref{r:intminimum}), and then obtain $q^+$ as their limit.

\begin{lemma} The functional $\mathcal{L}_{c,k}$ attains its minimum $q_k$ over all $q=(u,v) \in H^1([t_0,t_0+k])^2$ such that 
\begin{equation}
q(t_0)=(\pi,v_0), u(t_1)=2 \pi. \label{r:relative}
\end{equation}
Then $q_k \in H^2([t_0,t_0+k])^2$, and is a solution of Euler-Lagrange equations on $(t_0,t_0+k)$.
\end{lemma}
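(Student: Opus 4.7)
This is a standard direct-method minimization problem on a compact interval with mixed Dirichlet / free boundary data (here $t_1 = t_0+k$, and $v(t_0+k)$ is left free). The approach is the classical Tonelli scheme: coercivity plus weak lower semi-continuity gives existence, and interior regularity then comes from the Euler--Lagrange equations.

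First I would establish coercivity. Write $L_\omega(q,q_t,t) = \tfrac12 u_t^2 + \tfrac12 (v_t-\omega)^2 + V(u,v,t)$. Since $V$ is $C^2$ and periodic, hence bounded, and since the admissible class is non-empty (take any smooth $q$ with $q(t_0)=(\pi,v_0)$ and $u(t_0+k)=2\pi$, $v$ affine), any minimizing sequence $q_n=(u_n,v_n)$ satisfies $\|(u_n)_t\|_{L^2}^2 + \|(v_n)_t-\omega\|_{L^2}^2 \leq C$. Combined with the fixed boundary values $u_n(t_0)=\pi$, $v_n(t_0)=v_0$, this bounds $q_n$ uniformly in $H^1([t_0,t_0+k])^2$.

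Next I would extract a weakly convergent subsequence $q_n \rightharpoonup q_k$ in $H^1$. By the compact embedding $H^1([t_0,t_0+k]) \hookrightarrow C^0([t_0,t_0+k])$, convergence is uniform, so the Dirichlet boundary conditions $q_k(t_0)=(\pi,v_0)$ and $u_k(t_0+k)=2\pi$ are preserved. The kinetic term $\int (\tfrac12 u_t^2 + \tfrac12(v_t-\omega)^2)\,dt$ is convex in $q_t$ and hence weakly lower semi-continuous, while the potential term $\int V(u,v,t)\,dt$ is continuous under uniform convergence. Therefore $\mathcal{L}_{\omega,k}(q_k) \leq \liminf_n \mathcal{L}_{\omega,k}(q_n)$, so $q_k$ is a minimizer.

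Finally, regularity and the Euler--Lagrange equations follow by a routine variational argument. For any test function $h \in C_c^\infty((t_0,t_0+k))^2$, the first variation $\partial \mathcal{L}_{\omega,k}(q_k)h=0$ yields the weak form $\int (q_k)_t h_t \, dt = -\int \partial_q V(q_k,t) h \, dt$ (the $\omega$-terms drop out after the translation $v_t \mapsto v_t-\omega$ on compactly supported $h$). Since $q_k \in H^1 \hookrightarrow C^0$ and $V \in C^2$, the right-hand side defines a continuous function of $t$, hence $(q_k)_t \in H^1$, i.e.\ $q_k \in H^2([t_0,t_0+k])^2$, and $(q_k)_{tt} = \partial_q V(q_k,t)$ pointwise a.e.\ (in fact classically since both sides are continuous). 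There is no serious obstacle here; the only mild subtleties are keeping track of the free boundary at $t_0+k$ for the $v$-component (which will later produce the transversality condition $v_t(t_0+k)=\omega$ needed when we pass $k\to\infty$) and verifying the admissible class is non-empty, both immediate.
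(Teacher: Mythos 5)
Your proof is correct and gives a self-contained direct-method argument, whereas the paper treats the problem in two stages: it first fixes the free endpoint $v(t_0+k)=v_1$, invokes the Tonelli theorem (citing Mather, Appendix 1) to get a minimizer and its $H^2$/Euler--Lagrange regularity for that constrained problem, and then observes that the reduced function $v_1 \mapsto \min \mathcal{L}_{\omega,k}$ is continuous and need only be minimized over a closed interval of $v_1$, which supplies the minimizer with the $v$-endpoint left free. You minimize over the full admissible class in one pass, handling the free boundary implicitly: your coercivity bound $\|(v_n)_t-\omega\|_{L^2}^2 \le C$ (using $V\ge 0$, which indeed holds here since $1-\cos u \ge 0$ and $1-\mu f \ge 1-1/16 > 0$) together with $v_n(t_0)=v_0$ already controls $v_n(t_0+k)$, so the two-step reduction to a compact range of $v_1$ is unnecessary. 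Both routes are standard and equally valid; yours is slightly more self-contained (no appeal to Tonelli as a black box), while the paper's phrasing is shorter and relies on the cited result doing the coercivity and regularity work. One small remark: your parenthetical about the natural boundary condition $v_t(t_0+k)=\omega$ is a useful aside but is not needed for this lemma, and the paper does not use it here.
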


\begin{proof}
	The Tonelli theorem \cite[Appendix 1]{Mather:91} implies that for a fixed $v_1$, such a minimum is attained over all $q=(u,v)$ such that $v(t_1)=v_1$, as the conditions for the Tonelli theorem to hold in the non-autonomous case are satisfied. It is easy to show that it suffices to consider $v_1$ from a closed interval, which by compactness and continuity of the minimum of (\ref{r:intminimum}) in $v_1$ implies existence of such a minimizing $q_k$. Furthermore, by the Tonelli theorem, $q_k\in H^2([t_0,t_0+k])^2$, and $q_k$ is a solution of the Euler-Lagrange equations on $(t_0,t_0+k)$.
\end{proof}

We denote by $q_k$ the (not necessarily unique) minimizer of (\ref{r:intminimum}) satisfying (\ref{r:relative}). We always set $u_k(t)=2\pi$ for $t \geq t_0+k$. 

\begin{lemma} \label{l:minprop}
	The minimizer $q_k=(u_k,v_k)$ satisfies for all $k \geq k_0$, $k_0$ sufficiently large:
	
	(i) For all $t \in [t_0,t_0+k)$, $0 < u_k(t) < 2\pi$,
	
	(ii) For all $ t > t_0+4\mu / \sqrt{\ve}$, $u_k(t) > \pi$,
	
	(iii) For all $t \geq t_0$, $u_k(t) > \pi - 1/4$,
	
	(iv) For all $t \geq t_0$, $u_k(t) \geq z^+(t-t_0)$. Furthermore, for all $0 \leq T \leq 3/(4 \sqrt{\ve})$,
	$$
	u_k \geq z^+(t-t_0-T).
	$$
\end{lemma}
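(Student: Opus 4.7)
The proof will be inductive through (i)--(iv), with each step leveraging the previous ones. For (i), I would use a standard truncation argument: should $u_k(t^*) \geq 2\pi$ at some interior point, the admissible competitor $\tilde u(t) = \min(u_k(t), 2\pi)$ with $\tilde v = v_k$ has strictly smaller action, since $V \geq 0 = V(2\pi, \cdot, \cdot)$ and the kinetic energy decreases on $\{u_k \geq 2\pi\}$. This contradicts minimality and forces $u_k \leq 2\pi$ everywhere; the strict bound on $[t_0, t_0+k)$ then follows from uniqueness of the Cauchy problem for (\ref{r:ELu}) at the equilibrium $u \equiv 2\pi$ (note $V_u(2\pi, v, t) = 0$): an interior contact point $t_1 < t_0+k$ would be a local maximum, forcing $(u_k)_t(t_1) = 0$ and $u_k \equiv 2\pi$, contradicting $u_k(t_0) = \pi$. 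The lower bound $u_k > 0$ is symmetric via $\max(u_k, 0)$.

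For (iv), the heart of the lemma, my plan is a two-step argument: first obtain the weak inequality $u_k \geq z^+(\cdot - t_0 - T)$ by a variational cut-and-paste, then upgrade to strictness via the sub-solution analog of Lemma \ref{l:stationary}. I would use the competitor $\tilde u = \max(u_k, z^+(\cdot - t_0 - T))$, which is admissible for $k \geq k_0$ large since $z^+(-T) \leq \pi$ (by Lemma \ref{l:subsuper}(ii),(iv)) and $z^+(k-T) < 2\pi$ (by Lemma \ref{l:subsuper}(i),(vi)), so both endpoint values are preserved. On each connected component $(a,b)$ of the violation set $\{u_k < z^+(\cdot - t_0 - T)\}$ (compactly contained in $(t_0, t_0+k)$ by the endpoint inequalities), setting $w = z^+ - u_k \geq 0$ with $w(a) = w(b) = 0$, testing the strict sub-solution inequality $z^{+''} > V_u(z^+, v_k, t)$ against $w$, integrating by parts, and invoking the Euler--Lagrange equation $u_k'' = V_u(u_k, v_k, t)$ yields
\begin{equation*}
\int_a^b w\bigl(V_u(u_k, v_k, t) - V_u(z^+, v_k, t)\bigr)\, dt > \int_a^b (w')^2\, dt,
\end{equation*}
which via the mean value theorem becomes $\int_a^b \bigl(-V_{uu}(\xi, v_k, t)\bigr)\, w^2\, dt > \int_a^b (w')^2\, dt$ for some intermediate $\xi \in [u_k, z^+]$.

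The hard part will be deriving a contradiction from this inequality in the regime where $V_{uu}$ can be negative (near $u \approx \pi$, since $V_{uu} = \ve\cos u + O(\ve\mu)$). I would combine the uniform bound $|V_{uu}| \leq 2\ve$ (from the explicit form of $V$ and (A1)) with the Dirichlet Poincar\'e inequality $\int_a^b (w')^2 \geq \pi^2(b-a)^{-2}\int_a^b w^2$ to force $b - a > \pi/\sqrt{2\ve}$ on any violation component. On such a long component the explicit construction of $z^+$ from Lemma \ref{l:construction} places $z^+$ itself in the range $(3\pi/2, 2\pi)$ on a sub-interval of length at least $\sim (b-a) - O(1/\sqrt{\ve})$; together with $u_k < z^+$ and continuity of $u_k$ this localizes the relevant values of $\xi$ to the region $\cos\xi > 0$ where $V_{uu} > 0$, restoring the required sign and closing the argument by contradiction. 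Once the weak inequality $u_k \geq z^+(\cdot - t_0 - T)$ is in hand, the sub-solution analog of Lemma \ref{l:stationary} upgrades it to strict: an interior contact would make $u_k - z^+$ have a local minimum at zero, hence $V_u(u_k, v_k, t_2) = u_k''(t_2) \geq z^{+''}(t_2) > V_u(z^+, v_k, t_2) = V_u(u_k, v_k, t_2)$, a contradiction.

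Finally, (ii) and (iii) reduce to (iv). For (iii), I would take $T = 3/(4\sqrt{\ve})$ in (iv); Lemma \ref{l:subsuper}(v) gives $z^+(-3/(4\sqrt{\ve})) \geq \pi - 1/4$, and since $z^+$ is strictly increasing by Lemma \ref{l:subsuper}(iv), $u_k(t) \geq z^+(t - t_0 - 3/(4\sqrt{\ve})) \geq \pi - 1/4$, with strictness inherited from the sub-solution comparison. For (ii), I would apply (iv) with $T = 0$ to get $u_k(t) \geq z^+(t - t_0)$, then use the explicit decomposition $z^+(s) = w(s - t_0^w)$ from Lemma \ref{l:construction} (with $t_0^w \in [-\mu^{1/2}, 0)$ the unique point where $w = \pi$ and $w$ strictly increasing with $t_1 - t_0^w \leq 2\sqrt{\mu/\ve} + \mu^{1/2}$) together with (A2) to conclude $z^+(s) > \pi$ for all $s$ exceeding the threshold $4\mu/\sqrt{\ve}$, completing the proof.
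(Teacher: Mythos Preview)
Your overall structure differs from the paper's in a way that creates a real gap. The paper proves (ii) and (iii) \emph{first}, by direct action estimates, and only then proves (iv) by a sliding/continuation argument that explicitly relies on (ii) and (iii) to guarantee strict inequality at the boundary points where Lemma~\ref{l:stationary} is applied. You reverse the order: you attempt (iv) directly, then read off (ii) and (iii) as corollaries. The corollary step is fine, but your direct proof of (iv) does not close.

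The specific gap is in your ``hard part''. From the differential inequality you correctly derive
\[
\int_a^b \bigl(-V_{uu}(\xi(t),v_k,t)\bigr)\,w^2\,dt \;>\; \int_a^b (w')^2\,dt,
\]
with $\xi(t)\in[u_k(t),\,z^+(t-t_0-T)]$, and then Poincar\'e forces $b-a>\pi/\sqrt{2\ve}$. But your next move, ``together with $u_k<z^+$ and continuity of $u_k$ this localizes the relevant values of $\xi$ to the region $\cos\xi>0$'', is not justified. At this stage you only know $0<u_k<2\pi$ from (i); nothing prevents $u_k$ from dipping close to $0$ or lingering near $\pi$ on large portions of $(a,b)$, so $\xi$ can sit in $(\pi/2,3\pi/2)$ where $-V_{uu}(\xi)\approx -\ve\cos\xi>0$, and the inequality above is then perfectly consistent---no contradiction. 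What you would need is precisely a lower bound of the type $u_k>\pi-1/4$, i.e.\ (iii), which you have not yet proved. Also note that your competitor $\tilde u=\max(u_k,z^+)$ is never actually used: the action of $\tilde u$ is not obviously smaller, and the argument you give is purely an ODE comparison, not a variational one.

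The paper sidesteps all of this. For (ii) and (iii) it compares the minimal action $\mathcal L_{c,k}(q_k)\leq 4\sqrt{\ve(1+3\mu/2)}$ with the lower bound $4\sqrt{\ve(1-\mu)}$ for the action beyond any later crossing of $\pi$, leaving at most $6\sqrt\ve\,\mu$ of action on $[t_0,t_0+d]$; a two-line estimate of the Lagrangian near $u=\pi$ then bounds $d$ and the dip $a$. For (iv) it runs a sliding argument: start with $T$ so large that $z^+(\cdot-t_0-T)$ is trivially below $u_k$, and decrease $T$; at the infimum $T^*$, (ii), (iii) together with Lemma~\ref{l:subsuper}(v) give \emph{strict} inequality at the boundary of the comparison domain, and Lemma~\ref{l:stationary} upgrades $\geq$ to $>$ in the interior, forcing $T^*$ all the way down. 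This avoids any need to analyse $V_{uu}$ on violation intervals.
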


\begin{proof}
	(i) Assume that for some $t_0 < t^* < t_0+k$, $u_k(t)=2\pi$. We define
	\begin{align*}
	\tilde{q} & = 
	\begin{cases} 
	\tilde{q}(t)=q_k(t) & \text{for }t \in [t_0,t^*], \\
	\tilde{q}(t)=(2\pi,v^k(t^*)) & \text{for }t \in [t^*,t_0+k].
	\end{cases}
	\end{align*}
	Now it is easy to check by direct calculation that $\mathcal{L}_{c,k}(\tilde{q}) \leq \mathcal{L}_{c,k}(q_k)$, thus $\tilde{q}$ minimizes $\mathcal{L}_{c,k}$ and must be a solution of the Euler-Lagrange equations on $(t_0,t_0+k)$. We deduce that $\tilde{u} \equiv 2\pi$ which is in contradiction to $\tilde{u}(t_0)=u_k(t_0)=\pi$. By continuity we get the right-hand side of (i). Similarly we show $u_k(t)> 0$, otherwise we replace the segment between two intersections of $0$ with $u^k(t)=0$ and get a contradiction.
	
	(ii) By (\ref{s:bound}), we can find $k_0$ large enough so that for any $k \geq k_0$, 
	\begin{equation}
	\mathcal{L}_{c,k}(q_k) \leq 4\sqrt{\ve(1+3\mu/2)}. \label{r:lckup}
	\end{equation}
	Assume $u_k(t_0+d) = \pi$ for some $t_0+d \in (t_0,t_0+k)$. Again by (\ref{s:bound}), as $q^k$ is a minimizer, we see that
	\begin{equation}
	\int_{t_0+d}^{t_0+k}L_{\omega}(q_k,(q_k)_t,t)dt \geq 4\sqrt{\ve(1-\mu)}. \label{r:lckdown}
	\end{equation}
	From (\ref{r:lckup}) and (\ref{r:lckdown}) and the standing assumption $\mu \leq 1/16$ we obtain the upper bound
	\begin{equation}
	\int_{t_0}^{t_0+d}L_{\omega}(q_k,(q_k)_t,t)dt \leq 4\sqrt{\ve(1+3\mu/2)} - 4\sqrt{\ve(1-\mu)} \leq 6 \sqrt{\ve} \mu.
	\label{r:upperLc}
	\end{equation}
	Let $t^*=(d/2) \wedge 1$, and let $\pi -a$ be the minimal value of $u_k$ on $[t_0,t_0+t^*]$, $0 \leq a < \pi$. It is easy to see that
	\begin{align*}
		\int_{t_0}^{t_0+t^*}L_{\omega}(q_k,(q_k)_t,t)dt & \geq 	\int_{t_0}^{t_0+t^*} \left( \frac{1}{2}((u_k)_t)^2 + \ve(1-\mu)(1 - \cos u_k)\right) dt \\
		& \geq \frac{1}{2 t^*}a^2+ \ve(1-\mu)\left(2-\frac{a^2}{2}\right)t^* \\
		& \geq 2 \ve (1-\mu)t^* \geq \frac{3}{2}\ve t^*.
	\end{align*}
	Repeating that over $[t_0+d-t^*,t_0+d]$ we get $\int_{t_0}^{t_0+d}L_{\omega}(q_k,(q_k)_t,t) \geq 3\ve t^*$, thus by (\ref{r:upperLc}), $t^* \leq 2 \mu /\sqrt{\ve} \leq 1$. By definition, $d \leq 4 \mu /\sqrt{\ve} $. The claim follows by continuity of $u_k$.
	
	(iii) Using the same notation as in (ii) and by (\ref{r:upperLc}), we easily see that
	$$
	6 \sqrt{\ve} \mu \geq \int_{t_0}^{t_0+d}L_{\omega}(q^k,q^k_t,t)dt \geq \frac{2}{d}a^2 \geq \frac{\sqrt{\ve}}{2 \mu}a^2,
	$$
	thus $a^2 \leq 12 \mu^2$. By (A2), $a \leq 1/4$.
	
	(iv) We prove it by using Lemma \ref{l:stationary} in two steps. First we show that 
	\begin{equation}
	u_k(t) > z^+(t-t_0-T) \text{ for all } t \geq t_0+T-3/(4\sqrt{\ve}) \label{r:ineq}
	\end{equation}
	and all $T \geq 3/(4\sqrt{\ve})$. By definition and Lemma \ref{l:subsuper}, (i), (\ref{r:ineq}) holds for $T=k+3/(4\sqrt{\ve})$. Assume the contrary and find the infimum $T^*$ of $T \geq 3/(4\sqrt{\ve})$ for which (\ref{r:ineq}) holds. By compactness and continuity, we have that (\ref{r:ineq}) holds for $T=T^*$ with $\geq$ instead of $>$. However, by construction, (ii),(iii) and Lemma \ref{l:subsuper}, (v), the strict inequality in (\ref{r:ineq}) holds for $t \in \lbrace t_0+T-3/(4\sqrt{\ve}),t_0+k \rbrace $, thus by Lemma \ref{l:stationary} we must have strict inequality in (\ref{r:ineq}) and $T^*=3/(4\sqrt{\ve})$.
		
	Now we show that 
	\begin{equation}
	u_k(t) > z^+(t-t_0-T) \text{ for all } t \geq t_0 \label{r:ineq2}
	\end{equation}
	for all $T \in (0, 3/(4\sqrt{\ve})]$. Again we find the infimum $T^*$ for which (\ref{r:ineq2}) holds. We obtain contradiction analogously to the previous step, using $u_k(t) > z^+(t-t_0-T)$ for $t \in \lbrace t_0,t_0+k \rbrace$; unless $T^*=0$ and (\ref{r:ineq2}) holds with $\geq$ instead of $>$, which we need to show.
\end{proof}

\begin{proof}[Proof of Proposition \ref{p:stable}]
	It is easy to check that for $k \geq k_0$, $k_0$ as in Lemma \ref{l:minprop}, $q_k$ is uniformly bounded in $H^2_{\text{loc}}([t_0,\infty))^2$, i.e. that for any $T >k_0$, we have that $q_k$, $k\geq T$, is uniformly bounded in $H^2([t_0,t_0+T])^2$. Indeed, uniform bounds in $k$ on $||(q_k)_t||_{L^2([t_0,t_0+T])^2}$ follow from the fact that $\mathcal{L}_{\omega,k}(q_k)$ is by definition decreasing in $k$, and the uniform bound $|V(q,t)| \leq \ve(1+\mu)$. The uniform bound in $k$ on $||q_k||_{L^2([t_0,t_0+T])^2}$ follows from that and $q(t_0)=(\pi,v_0)$. The uniform bound in $k$ on $||(q_k)_{tt}||_{L^2([t_0,t_0+T])^2}$ is deduced from the fact that $q_k$ is a solution of Euler-Lagrange equations, and the uniform bounds $|V_u| \leq \ve(1+\mu)$, $|V_v|\leq \ve \mu$. 
	
	Now by diagonalization we find a convergent subsequence of $q_k$ in $H^1_{\text{loc}}([t_0,\infty))^2$ converging to some $q^0 \in H^1_{\text{loc}}([t_0,\infty))^2$. It is straightforward to show that $\lim_{k \rightarrow \infty} \mathcal{L}_{\omega,k}(q_k) = S^+_{\omega}(t_0,v_0)$, as $\mathcal{L}_{\omega,k}(q_k)$ is decreasing, bounded by $S^+_{\omega}(t_0,v_0)$ from below, and we can arbitrarily well approximate $S^+_{\omega}(t_0,v_0)$ with $\mathcal{L}_{\omega,k}(q_k)$ for $k$ large enough. 
	
	By construction and Lemma \ref{l:minprop}, (iv), $\lim_{t\rightarrow \infty }u^0(t)= 2\pi$. 
	By the Fatou Lemma, $\int_{t_0}^{\infty}L_{\omega}(q^0,q^0_t,t) \leq S^+_{\omega}(t_0,v_0)$, thus by the definition of $S^+_{\omega}(t_0,v_0)$, the equality must hold, and the existence is proved.
	
	Now, if $q^+$ is any one-sided minimizer at $(c,t_0,v_0)$, it must be by the Tonelli theorem a solution of Euler-Lagrange equations on $(t_0,\infty)$, and $C^4$ because of the regularity of solutions of ordinary differential equations.
	
	The proof of (ii) is analogous to the proof of Lemma \ref{l:minprop}, (iv).
\end{proof}

\subsection{Proof from Section \ref{s:Theorem3}}

Let $\tilde{\ve}=\sqrt{\ve(1+2\mu^{1/2})}$, and let $\delta = 2\sqrt{\tilde{\ve} \mu}$. Denote by $u^{(\tilde{\ve})} = 4 \arctg (e^{\sqrt{\tilde{\ve}}\: t})$ the separatrix solution of the unperturbed pendulum $u_{tt} = \tilde{\ve}\sin u$.

\begin{lemma} \label{l:pendulum}
If $\tilde{u}$ is a solution of $u_{tt} = \tilde{\ve}\sin u$, $u(0)=\pi$, $u_t(0)=2\sqrt{\tilde{\ve}}(1 + \delta^2)$, then for some absolute $c_{30}>0$, we can find $t_1>0$ such that
\begin{equation}
t_1 \leq \frac{c_{30}}{\sqrt{\tilde{\ve}}} |\log \delta |,
\end{equation}
we have $2\pi + \delta /2 \leq \tilde{u}(t_1) \leq 2\pi + 32\delta$, and for all $t \in [0, t_1]$, $|\tilde{u}(t)-u^{(\tilde{\ve})}(t) | \leq 32 \delta$.
\end{lemma}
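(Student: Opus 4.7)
The plan is to exploit the complete integrability of the planar pendulum via energy conservation. First, compute the conserved quantity $E=\tfrac12 u_t^2+\tilde\ve\cos u$ at $t=0$: this gives $E_{\tilde u}=\tilde\ve(1+4\delta^2+2\delta^4)$, strictly above the separatrix energy $\tilde\ve$, and yields the velocity identity
\begin{equation*}
\tilde u_t^2 \;=\; 2\tilde\ve(1-\cos \tilde u)+8\tilde\ve\delta^2(1+\delta^2/2),
\end{equation*}
which is strictly positive everywhere. Hence $\tilde u$ is strictly increasing on $[0,\infty)$ and crosses every value above $\pi$ in finite time.

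Next, define $t_1$ implicitly by $\tilde u(t_1)=2\pi+\alpha$ with the choice $\alpha:=2\delta$ (tuned to the closeness estimate below). Bound $t_1$ via the quadrature
\begin{equation*}
t_1 \;=\; \int_{\pi}^{2\pi+\alpha}\frac{du'}{\sqrt{2\tilde\ve(1-\cos u')+8\tilde\ve\delta^2(1+\delta^2/2)}},
\end{equation*}
splitting into a bulk region $u'\in[\pi,2\pi-1]$, where the $\delta^2$ correction is negligible, the integrand is comparable to $1/(2\sqrt{\tilde\ve}\sin(u'/2))$, and the contribution is $O(1/\sqrt{\tilde\ve})$; and a near-saddle region $u'\in[2\pi-1,2\pi+\alpha]$, where with $w=u'-2\pi$ one has $1-\cos u'=1-\cos w$, the integrand is comparable to $(\sqrt{\tilde\ve}\sqrt{w^2+8\delta^2})^{-1}$, and its antiderivative $\sinh^{-1}(w/(2\sqrt 2\delta))/\sqrt{\tilde\ve}$ gives a contribution of $O(|\log\delta|/\sqrt{\tilde\ve})$. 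Summing yields $t_1\leq c_{30}|\log\delta|/\sqrt{\tilde\ve}$; the same quadrature computed up to $u'=2\pi$ also shows that the crossing time $t_\star$ with $\tilde u(t_\star)=2\pi$ satisfies $t_\star=(1/\sqrt{\tilde\ve})\log(2\sqrt 2/\delta)+O(1/\sqrt{\tilde\ve})$, and $t_1-t_\star=O(1/\sqrt{\tilde\ve})$.

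For the uniform closeness, set $d(t):=\tilde u(t)-u^{(\tilde\ve)}(t)$. Since $d(0)=0$ and $d_t(0)=2\sqrt{\tilde\ve}\delta^2>0$, any hypothetical first zero $t_0>0$ of $d$ would require $d_t(t_0)\leq 0$; however, $\tilde u(t_0)=u^{(\tilde\ve)}(t_0)$ combined with the two energy identities and $u_t>0$ forces $\tilde u_t(t_0)>u^{(\tilde\ve)}_t(t_0)$, a contradiction. Hence $d\geq 0$ on $[0,t_1]$. For the upper bound I would split $[0,t_1]$ into three subintervals. (i) For $t\in[0,T_1]$ with $T_1:=(1/\sqrt{\tilde\ve})\log(1/(8\delta))$, use $|d_{tt}|\leq\tilde\ve|d|$ (from the Lipschitz bound on $\sin$) and an energy argument on $d_t^2+\tilde\ve d^2$ to get $|d(t)|\leq 2\delta^2 e^{\sqrt{\tilde\ve}t}\leq\delta/4$. (ii) For $t\in[T_1,t_\star]$, use $\tilde u(t)\leq 2\pi$ together with $u^{(\tilde\ve)}(t)=2\pi-4\arctg(e^{-\sqrt{\tilde\ve}t})$ to get $d(t)\leq 2\pi-u^{(\tilde\ve)}(t)\leq 4e^{-\sqrt{\tilde\ve}t}\leq 4e^{-\sqrt{\tilde\ve}T_1}=32\delta$. (iii) For $t\in[t_\star,t_1]$, monotonicity gives $\tilde u(t)-2\pi\leq\alpha=2\delta$ and $2\pi-u^{(\tilde\ve)}(t)\leq 2\pi-u^{(\tilde\ve)}(t_\star)=\sqrt 2\delta$, so $d(t)\leq(2+\sqrt 2)\delta$.

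The main obstacle is the sharpness required in step (i): the exponential energy estimate barely survives to $T_1$, where step (ii) just takes over at the value $32\delta$. A generic perturbative comparison valid on all of $[0,t_1]$ would blow up near the saddle; the argument succeeds only because the explicit formula for $u^{(\tilde\ve)}$ and the quadrature for $\tilde u$ can be invoked on disjoint regimes that together cover $[0,t_1]$.
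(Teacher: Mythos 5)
Your argument is correct, and it takes a genuinely different route from the paper's proof, though the two agree on several ingredients. The shared ingredients are: the energy identity for $\tilde u_t$, the positivity $d=\tilde u-u^{(\tilde\ve)}\geq 0$ via the first-crossing/energy contradiction (the paper states this as ``clearly'' but your argument is what is meant), and the Gronwall-type bound $|d(t)|\leq 2\delta^2 e^{\sqrt{\tilde\ve}t}$; the paper arrives at it through the auxiliary functions $w^{(3)}=d_t+\sqrt{\tilde\ve}\,d$ (and $w^{(2)}=d_t-\sqrt{\tilde\ve}\,d$) rather than through the quadratic form $d_t^2+\tilde\ve d^2$, but those are algebraically equivalent. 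Where you diverge is in the definition of $t_1$ and the handling of the regime where the Gronwall estimate has exhausted its usefulness. The paper does not define $t_1$ by a condition on $\tilde u$; it sets $e^{\sqrt{\tilde\ve}t_1}=16/\delta$ so that the Gronwall bound evaluated at $t_1$ is exactly $32\delta$ (which trivially yields the time estimate), and then uses the lower Gronwall bound from $w^{(2)}$ together with $\tilde u_t\geq 2\sqrt{\tilde\ve}\delta$ and $2\pi-u^{(\tilde\ve)}(t_1)\leq\delta/4$ to locate $\tilde u(t_1)$ inside $[2\pi+\delta/2,\,2\pi+32\delta]$. You instead fix $\tilde u(t_1)=2\pi+2\delta$ (so the location bound is automatic), bound $t_1$ by the explicit pendulum quadrature, and then cover $[0,t_1]$ by three regimes because the Gronwall bound alone is not uniformly $\leq 32\delta$ up to your $t_1$. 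Both routes close: the paper's choice of $t_1$ makes the closeness bound automatic at the price of having to prove the two-sided location estimate for $\tilde u(t_1)$ via $w^{(2)}$; your choice makes the location automatic at the price of the quadrature and the three-interval decomposition. One quantitative caveat in your step (iii): the claim $2\pi-u^{(\tilde\ve)}(t_\star)=\sqrt 2\,\delta$ overstates the precision available, since the $O(1/\sqrt{\tilde\ve})$ error in your asymptotic for $t_\star$ turns into an uncontrolled multiplicative factor after exponentiation. However, a crude quadrature lower bound such as $t_\star\geq\frac{1}{\sqrt{\tilde\ve}}\log\frac{\pi}{2\delta}$ (using $4\sin^2(y/2)\leq y^2$) gives $2\pi-u^{(\tilde\ve)}(t_\star)\leq 4e^{-\sqrt{\tilde\ve}t_\star}\leq 8\delta/\pi<3\delta$, so $d(t)\leq 2\delta+3\delta\leq 32\delta$ on $[t_\star,t_1]$ and your decomposition still goes through; and if $t_\star<T_1$ (a case you do not discuss explicitly), step (i) already forces $d(t_\star)\leq\delta/4$ and the same conclusion follows. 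So there is no gap, only an inessential imprecision in a constant.
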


\begin{proof} Let
$w^{(1)}=\tilde{u}-u^{(\tilde{\ve})}$, let $t \geq 0$, and let $t_1 > 0$ be the unique (by monotocity) $t$ such that $\tilde{u}(t_1)=2\pi + \delta$.
Clearly $\tilde{u}(t)\geq u^{(\tilde{\ve})}(t)$, thus $w^{(1)}(t) \geq 0$. By definition, $w^{(1)}$ is solves the differential inequality $w^{(1)}_{tt}(t) \leq \ve |\tilde{u}(t)-u^{(\tilde{\ve})}(t)| = \ve w^{(1)}(t)$. Now if
$w^{(2)}= w^{(1)}_t - \sqrt{\tilde{\ve}}w^{(1)}$, $w^{(3)}= w^{(1)}_t + \sqrt{\tilde{\ve}}w^{(1)}$, we have $w^{(2)}(0)=w^{(3)}(0)=2\sqrt{\tilde{\ve}} \cdot \delta^2$, and they are solutions of $w^{(2)}_t \leq - \sqrt{\tilde{\ve}}w^{(2)}$, $w^{(3)}_t \leq  \sqrt{\tilde{\ve}}w^{(3)}$. Thus by the Gronwall Lemma,
\begin{align}
w^{(2)}(t) \leq 2 \sqrt{\tilde{\ve}} \: \delta^2 \: \ e^{-\sqrt{\tilde{\ve}}\: t}, \hspace{5ex} w^{(3)}(t) \leq 2  \sqrt{\tilde{\ve}} \: \delta^2 \: e^{\sqrt{\tilde{\ve}}\: t}. \label{r:w2w3}
\end{align}
By the conservation of energy $H(u)=u_t^2/2 - (1-\cos u)$, and by $\tilde{u}(t) \geq u^{(\tilde{\ve})}(t)$ and $\tilde{u}_t, u^{(\tilde{\ve})}_t \geq 0$ we easily obtain that $w^{(1)}_t \geq 0$, thus the right-hand side of (\ref{r:w2w3}) implies for all $t \in [0, t_1]$,
\begin{equation}
   w^{(1)}(t) \leq 2\delta^2 \: e^{\sqrt{\tilde{\ve}}\: t_1}. \label{r:w2a}
\end{equation}
By the conservation of energy, it is easy to show that for all $t \geq 0$, $\tilde{u}_t(t)\geq 2 \sqrt{\tilde{\ve}} \: \delta$. Inserting that in the left-and side of (\ref{r:w2w3}) we obtain
\begin{equation}
w^{(1)}(t_1) \geq 2\delta - \frac{1}{\sqrt{\tilde{\ve}}}u_t^{(\tilde{\ve})}(t_1) - 2 \delta^2 \: e^{-\sqrt{\tilde{\ve}}\: t_1}. \label{r:w3a}
\end{equation}
Now for all $t \geq 0 $, as $2 \pi - 4 \arctg x < 4 /x$, we have  $2\pi - u^{(\tilde{\ve})}(t) \leq 4 e^{-\sqrt{\tilde{\ve}}t}$. By the conservation of energy and $(1-\cos(2\pi-x))\leq x^2/2$, we now get that $u^{(\tilde{\ve})}_t(t) < 4 e^{-\sqrt{\tilde{\ve}}t}$. Thus choosing $t_1$ so that $4 e^{-\sqrt{\tilde{\ve}}t}=\delta / 4$, we get $2\pi - \delta/4 < u^{(\tilde{\ve})}(t_1) < 2\pi$, (\ref{r:w2a}) becomes $w^{(1)}(t) \leq 32 \delta$, and as $\delta^2 \leq 1$, (\ref{r:w3a}) becomes $w^{(1)}(t) \geq \delta $, which implies the claim.  
\end{proof}

\begin{proof}[Proof of Lemma \ref{l:subsuper2}]
Take $\tilde{u}$ from Lemma \ref{l:pendulum}, and set
\begin{equation*}
w(t) =
\begin{cases}
\tilde{u}(t) & t \geq t_1 \\
\tilde{u}(t) - \ve^{3/2}\mu^{1/2}(t_1-t)^3 & t \leq t_1,
\end{cases}
\end{equation*}
where $t_1$ is chosen so that $\tilde{w}(t)=\pi+2\mu^{1/2}$. Then as in Lemma \ref{l:construction}, we can find $0 < t_0 < \mu^{1/2}$ such that $w(t_0)=\pi$. We set $\tilde{z}^+(t)=w(t-t_0)$, $\tilde{T}=t_1-t_0$, and $\tilde{z}^-=2\pi-w(-t+t_0)$. The rest of the proof is analogous to the proof of Lemma \ref{l:subsuper}, using also the relation $|u^{(\ve)}-u^{(\tilde{\ve})}|\ll \sqrt{\ve \mu}$ to obtain (v).
\end{proof}

\section{Appendix C: Proofs of bounds on derivatives}

This Appendix is dedicated to the proof of Proposition \ref{p:Bthree}. 
In the first subsection we obtain weighted a-priori bounds on $q^0$ and the potential $V$ and its derivatives. In the second subsection we by careful differentiation we obtain a differential inequality which proves the 
invariance of sets.

\subsection{A-priori bounds on weighted integrals}
Throughout the subsection we assume $q \in \mathcal{B}_1$. 

\begin{lemma} \label{l:weights}
	Let $j,m \in \lbrace 1,2,3,4,5,6 \rbrace$. Then for some absolute constant $c_{31} >0$,
	\begin{align}
	\ve^{(j+1)/2} \int_{\mathbb{R}}  e^{-\lambda(\tau)|t-\tau|} e^{-\frac{m}{2}\sqrt{\ve}\dt}dt & \leq c_{31}\lambda(\tau)^j, \label{r:boundi} \\
	\int_{\mathbb{R}}\frac{1}{L_{k(t)}^{j+1}} e^{-\lambda(\tau)|t-\tau|} dt & \leq c_{31} \lambda(\tau)^j. \label{r:boundLk}
	\end{align}
\end{lemma}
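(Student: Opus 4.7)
The plan is to decompose both integrals into contributions from the cells $I_k := [(\tilde{T}_{k-1}+\tilde{T}_k)/2, (\tilde{T}_k+\tilde{T}_{k+1})/2]$, on each of which $\dt = |t-\tilde{T}_k|$, and then estimate the resulting sums by exploiting the separation $\tilde{T}_{k+1}-\tilde{T}_k \geq 4L$ together with the bound $\lambda(\tau) \leq \sqrt{\ve}/4$. For (\ref{r:boundi}), on each $I_k$ I apply the triangle inequality $e^{-\lambda(\tau)|t-\tau|} \leq e^{-\lambda(\tau)|\tilde{T}_k-\tau|}e^{\lambda(\tau)|t-\tilde{T}_k|}$ and, since $m \geq 1$ forces $\lambda(\tau) \leq (m/4)\sqrt{\ve}$, the integrand is dominated by $e^{-\lambda(\tau)|\tilde{T}_k-\tau|}\exp(-(m/4)\sqrt{\ve}|t-\tilde{T}_k|)$; integrating this over $\mathbb{R}$ gives at most $(8/(m\sqrt{\ve}))\,e^{-\lambda(\tau)|\tilde{T}_k-\tau|}$. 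After multiplying by $\ve^{(j+1)/2}$, the inequality (\ref{r:boundi}) reduces to showing $\ve^{j/2}\sum_k e^{-\lambda(\tau)|\tilde{T}_k-\tau|} \ll \lambda(\tau)^j$.

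To estimate this sum, let $k_0$ realize $|\tilde{T}_{k_0}-\tau|=\dtau$. The separation and minimality of $k_0$ give $|\tilde{T}_{k_0+n}-\tau| \geq \max(\dtau,\, 4L|n|-\dtau)$ for $n \neq 0$; splitting according to whether $|n| \leq \dtau/(2L)$ or not and summing geometric series yields
\[
\sum_k e^{-\lambda(\tau)|\tilde{T}_k-\tau|} \ll \frac{\dtau}{L}e^{-\lambda(\tau)\dtau} + \frac{e^{-\lambda(\tau)\dtau}}{1-e^{-2L\lambda(\tau)}}.
\]
I then verify the required bound in the two regimes of $\lambda$. When $\lambda(\tau) = \sqrt{\ve}/4$, one has $e^{-\lambda(\tau)\dtau} \leq 1$ and, using the implicit lower bound $L\sqrt{\ve} \gg 1$ coming from the standing assumptions on $L$, the sum is $O(1)$; since $\lambda(\tau)^j = \ve^{j/2}/4^j$, the bound holds with $c_{31}$ absorbing $4^j$ over the finite range $j \leq 6$. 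When $\lambda(\tau) = 8\log\dtau/\dtau$, the defining inequality $\dtau > 32\log\dtau/\sqrt{\ve}$ forces $\dtau$ large (in terms of $\ve$), so that $e^{-\lambda(\tau)\dtau} = \dtau^{-8}$ satisfies $\ve^{j/2}\dtau^{-8} \ll (\log\dtau/\dtau)^j$ for $j \leq 6$ by a direct calculation factoring out $(\dtau/\log\dtau)^{8-j} \geq (32/\sqrt{\ve})^{8-j}$, and the tail term is handled analogously.

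For (\ref{r:boundLk}) the argument is parallel: I write $\int_\mathbb{R} L_{k(t)}^{-j-1}e^{-\lambda(\tau)|t-\tau|}dt = \sum_k L_k^{-j-1}\int_{\text{cell}_k} e^{-\lambda(\tau)|t-\tau|}dt$, bound each inner integral by $\min(L_k,\, 2/\lambda(\tau))\,e^{-\lambda(\tau)d_k}$ where $d_k$ is the distance from $\tau$ to that cell, and use $L_k \geq 4L$ to reduce to the same geometric sum controlled as above. The main obstacle in both parts is the small-$\lambda$ regime: the exponent $8$ in the definition of $\lambda(\tau)$ is calibrated precisely so that $\ve^{j/2}\dtau^{-8}$ is absorbed by $(\log\dtau/\dtau)^j$ for all $j \leq 6$, and keeping track of that relation uniformly is the delicate point on which the proof rests.
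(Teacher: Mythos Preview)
Your approach is essentially the same as the paper's: decompose into cells around each $\tilde{T}_k$, use the triangle inequality $e^{-\lambda(\tau)|t-\tau|}\le e^{-\lambda(\tau)|\tilde T_k-\tau|}e^{\lambda(\tau)|t-\tilde T_k|}$ together with $\lambda(\tau)\le\sqrt\ve/4$ to reduce to a geometric sum $\sum_k e^{-\lambda(\tau)|\tilde T_k-\tau|}$, and then treat the two regimes of $\lambda(\tau)$ separately; for (\ref{r:boundLk}) the paper likewise isolates the cell containing $\tau$ and bounds the rest via $L_{k(t)}\ge 4L\ge 1/\sqrt\ve$. The only cosmetic difference is that the paper uses the slightly sharper lower bound $|\tilde T_{k_0+n}-\tau|\ge\|\tau\|+4L(|n|-1)_+$, which avoids your extra $(\|\tau\|/L)e^{-\lambda(\tau)\|\tau\|}$ term, but your estimate is also sufficient.
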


\begin{proof} To show (\ref{r:boundi}), without loss of generality let 
	\begin{equation}
	\tau \in [\tilde{T}_k-2L_{k-1},\tilde{T}_{k+1}+2L_k] \label{r:wheretau}
	\end{equation}
	for a fixed $k \in \mathbb{Z}$, i.e. $||\tau||=|\tau-\tilde{T}_k|.$ By applying the definition of $\dt$ and substituting $t -T_j+\tilde{T}_k$ instead of $t$ in the second line below, we get
	\begin{align}
	 \int_{\mathbb{R}}  e^{-\lambda(\tau)|t-\tau|} e^{-\frac{m}{2}\sqrt{\ve}\dt}dt & = 
	 \sum_{j=-\infty}^{\infty}\int_{T_j-2L_{j-1}}^{T_j+2L_j} e^{-\lambda(\tau)|t-\tau|} e^{-\frac{m}{2}\sqrt{\ve}|t-T_j|}dt \notag \\
	 & = \sum_{j=-\infty}^{\infty}\int_{\tilde{T}_k-2L_{j-1}}^{\tilde{T}_k+2L_j} e^{-\lambda(\tau)|t-\tilde{T}_k+T_j-\tau|} e^{-\frac{m}{2}\sqrt{\ve}|t-\tilde{T}_k|}dt. \label{r:exponent}
	\end{align}
Now by the assumption (\ref{r:wheretau}), for $j=k-1,k,k+1$ we have $|T_j-\tau| \geq |\tilde{T}_k-\tau|$. For $j \geq k+1$, $|T_j-\tau|\geq |T_j-\tilde{T}_{k+1}|+|\tilde{T}_k-\tau|\geq 4L(|j-k|-1)+|\tilde{T}_k-\tau|$. By analogous consideration for $j \leq k-1$, we conclude that for all $j \in \mathbb{Z}$,
\begin{equation}
|T_j-\tau| \geq |\tilde{T}_k-\tau| + 4L(|j-k|-1) = \dtau + 4L((|j-k|-1)\vee 0). \label{r:Ttau1}
\end{equation}
Now by definition of $\lambda(\tau)$, we have $\sqrt{\ve} \: m/2 \geq \sqrt{\ve}/2 \geq \lambda(\tau)+ \sqrt{\ve}/4$. Combining it with (\ref{r:Ttau1}), we can estimate the exponent in (\ref{r:exponent}):
\begin{align*}
\lambda(\tau)|t-\tilde{T}_k+T_j-\tau|+\frac{m}{2}\sqrt{\ve}|t-\tilde{T}_k| & \geq \lambda(\tau)|T_j-\tau|-\lambda(\tau)|t-\tilde{T}_k|+\frac{m}{2}\sqrt{\ve}|t-\tilde{T}_k| \\
& \geq \lambda(\tau)\dtau + \lambda(\tau)4L((|j-k|-1)\vee 0) + \frac{1}{4}\sqrt{\ve}|t-\tilde{T}_k|.
\end{align*}
Thus (\ref{r:exponent}) is less or equal than
\begin{equation}
e^{-\lambda(\tau)\dtau} \left( 2+\sum_{j=-\infty}^{\infty}e^{-\lambda(\tau)4L\cdot j}\right) \int_{-\infty}^{\infty}e^{-\frac{1}{4}\sqrt{\ve}|t-\tilde{T}_k|} = \frac{4}{\sqrt{\ve}}e^{-\lambda(\tau)\dtau}  \frac{1}{1-e^{-4\lambda(\tau)L}}. \label{r:exp2}
\end{equation}
If $\lambda(\tau)=\sqrt{\ve}/4$, then $4L\lambda(\tau)\geq 1$, thus right-hand side of (\ref{r:exp2}) is $\ll \ve^{-1/2}$. This implies (\ref{r:boundi}). Otherwise $\lambda(\tau)=8\log \dtau / \dtau$, so the right-hand side of (\ref{r:exp2}) becomes
\begin{equation}
\frac{4}{\sqrt{\ve}} \frac{||\tau||^{-8}}{1 - \dtau^{-4L/\dtau}}. \label{r:denom}
\end{equation}
Now by substitution $z=(1/||\tau||)\log(1/||\tau||)$, noting that by all the assumptions, $z \in (-1/2,0)$, we easily show by Taylor expansion that the denominator in (\ref{r:denom}) is $\ll ||\tau||\log (1/||\tau||)| \leq ||\tau||^2$. Thus (\ref{r:denom}) is $\ll \ve^{-1/2} ||\tau||^{-6} \leq  \ve^{-1/2} \lambda(\tau)^6$, which implies (\ref{r:boundi}).
	
To show (\ref{r:boundLk}), let $\tau \in [\tilde{T}_k,\tilde{T}_{k+1}]$ for fixed $k \in \mathbb{Z}$. As $L_k \geq 4L \geq 1/\sqrt{\ve}$,  
	\begin{align}
	\int_{\mathbb{R}}\frac{1}{L_{k(t)}^{j+1}} e^{-\lambda(\tau)|t-\tau|} dt  & \leq \frac{1}{L_k^{j+1}} \int_{\tilde{T}_k}^{\tilde{T}_{k+1}}e^{-\lambda(\tau)|t-\tau|}dt +
	\ve^{\frac{j+1}{2}} e^{-\lambda(\tau)|\tau-\tilde{T}_k|}\int_{-\infty}^{\tilde{T}_k} e^{-\lambda{\tau}|t-\tilde{T}_k|}dt \notag \\ 
	&  \quad + 	\ve^{\frac{j+1}{2}} e^{-\lambda(\tau)|\tau-\tilde{T}_{k+1}|}\int_{\tilde{T}_{k+1}}^{\infty} e^{-\lambda(\tau)|t-\tilde{T}_{k+1}|}dt \notag \\
	& \leq \frac{2}{\lambda(\tau) \: L_k^{j+1}} + \frac{2 \ve^{\frac{j+1}{2}}}{\lambda(\tau)}e^{-\lambda(\tau)||\tau||}. \label{r:sumLk}
	\end{align}
	As $L_k \geq 1/\lambda(\tau)$, 
	the first summand in (\ref{r:sumLk}) is always $\ll \lambda(\tau)^j$. If $\lambda(\tau)=\sqrt{\ve}/8$, the second summand is $\ll \ve^{j/2} \ll \lambda(\tau)^j$. Otherwise, as $\ve \leq 1$, and as $||\tau|| \geq 1/ (8 \lambda(\tau))$,
	\begin{align*}
	\frac{\ve^{2\frac{j+1}{2}}}{\lambda(\tau)}e^{-\lambda(\tau)||\tau||} \leq \frac{1}{\lambda(\tau)}e^{-8 \log ||\tau||} 
	& \leq \frac{1}{||\tau||^8 \lambda(\tau)} \ll \lambda(\tau)^6 \leq \lambda(\tau)^j,
	\end{align*}
	which completes (\ref{r:boundLk}).
\end{proof}	

\begin{lemma} \label{l:Cinitial} 
	There exists an absolute constant $c_{32} > 0$ so that
	\begin{equation}
	||V_u||^2_{L^2_{\tau}(\mathbb{R})} \leq c_{32} \lambda(\tau)^3, \quad \quad ||V_v||^2_{L^2_{\tau}(\mathbb{R})}  \leq c_{32} \lambda(\tau)^3. \label{r:Vuupper}
	\end{equation}
\end{lemma}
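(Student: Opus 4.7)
The plan is a direct pointwise estimate of $V_u$ and $V_v$ using the exponential $L^\infty$-decay bound (\ref{r:main1}) available for any $q\in\mathcal{B}_1$, followed by a single application of the weighted-integral bound (\ref{r:boundi}) from Lemma \ref{l:weights}.

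First, differentiating $V(u,v,t)=\varepsilon(1-\cos u)(1-\mu f(u,v,t))$ gives
\begin{align*}
V_u &= \varepsilon \sin u\,(1-\mu f(u,v,t)) - \varepsilon\mu(1-\cos u) f_u(u,v,t), \\
V_v &= -\varepsilon\mu(1-\cos u) f_v(u,v,t).
\end{align*}
Using the elementary inequalities $|\sin u|\le |u-2k(t)\pi|$ and $1-\cos u \le \tfrac12 |u-2k(t)\pi|^2$, together with the standing bounds $\mu\le 1/16$, $|f|\le 1$, and the uniform bound on $f_u, f_v$ coming from $f\in C^{4+\gamma}$ in (A1), the $L^\infty$ estimate (\ref{r:main1}) yields
\begin{equation*}
|V_u(t)| \;\le\; c\, \varepsilon\, e^{-\tfrac12 \sqrt{\varepsilon}\,\dt},
\qquad
|V_v(t)| \;\le\; c\, \varepsilon\mu\, e^{-\sqrt{\varepsilon}\,\dt},
\end{equation*}
for some constant $c$ depending only on the $C^1$ norm of $f$ (which the paper treats as an absolute constant for the purposes of stating Lemma \ref{l:Cinitial}).

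Squaring and noting that $\mu\le 1$, we obtain $V_u^2 \ll \varepsilon^2\, e^{-\sqrt{\varepsilon}\,\dt}$ and $V_v^2\ll \varepsilon^2\, e^{-2\sqrt{\varepsilon}\,\dt}$. Integrating against the weight $e^{-\lambda(\tau)|t-\tau|}$ and applying (\ref{r:boundi}) with $j=3$, $m=2$ for the first estimate and $j=3$, $m=4$ for the second gives
\begin{equation*}
\|V_u\|^2_{L^2_\tau(\mathbb{R})} \;\ll\; \varepsilon^2 \int_{\mathbb{R}} e^{-\lambda(\tau)|t-\tau|} e^{-\sqrt{\varepsilon}\,\dt}\,dt \;\ll\; \lambda(\tau)^3,
\end{equation*}
and similarly $\|V_v\|^2_{L^2_\tau(\mathbb{R})} \ll \lambda(\tau)^3$, which is the claim.

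There is essentially no obstacle here: the only substantive ingredient is the $L^\infty$ bound (\ref{r:main1}) already secured in Section \ref{s:seven}, and the weight-compatibility estimate (\ref{r:boundi}) was precisely engineered so that $\varepsilon^{(j+1)/2}$ times $e^{-\tfrac{m}{2}\sqrt{\varepsilon}\dt}$ integrates against the $\lambda(\tau)$-weight to $\lambda(\tau)^j$. The only mild care required is to extract a clean factor of $\varepsilon^2$ (i.e.\ match $j=3$ to the $\varepsilon^{(j+1)/2}=\varepsilon^2$ prefactor in (\ref{r:boundi})), which is automatic from the pointwise bounds above.
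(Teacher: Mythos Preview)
Your proof is correct and is essentially identical to the paper's own argument: both derive the pointwise bounds $|V_u|\ll \ve\, e^{-\frac{1}{2}\sqrt{\ve}\dt}$ and $|V_v|\ll \ve\mu\, e^{-\sqrt{\ve}\dt}$ from (\ref{r:main1}) via the elementary inequalities on $\sin$ and $1-\cos$, and then invoke (\ref{r:boundi}). You are simply more explicit than the paper about the choice of indices $j=3$, $m=2,4$ in (\ref{r:boundi}).
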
	

\begin{proof} As $q \in \mathcal{B}_1$, by (\ref{r:main1}), we get
\begin{align}
|\sin u(t)| & \leq |u-2k(t)\pi| \ll e^{-\frac{1}{2}\sqrt{\ve}\dt }, \label{r:sinrel} \\
|1-\cos u(t)| & \leq \frac{1}{2}|u-2k(t)\pi|^2 \ll e^{-\sqrt{\ve}\dt }. \label{r:cosrel}
\end{align}
By definition and uniform bounds on $f$ and its derivatives, we have 
\begin{align}
|V(u,v,t) & \leq 2 \ve (1-\cos u(t)) \ll \ve \: e^{-\sqrt{\ve}\dt }, \notag \\
|V_u(u,v,t)| & \leq 2 \ve (1-\cos u(t)+|\sin u(t)|) \ll \ve \: e^{-\frac{1}{2}\sqrt{\ve}\dt }, \label{r:Vubound} \\
|V_v(u,v,t)| & \leq \ve \mu (1- \cos u(t)) \ll \ve \mu \: e^{-\sqrt{\ve}\dt }. \label{r:Vvbound}
\end{align}
It suffices now to apply (\ref{r:boundi}).
\end{proof}

\begin{lemma} 
There exists an absolute constant $c_{33}>0$ so that
\begin{align}
||u^0_t||_{\Ltau}^2 & \leq c_{33} \lambda(\tau), & ||v^0_t-\omega_{k(.)}||_{\Ltau}^2 & \leq c_{33} \lambda(\tau), \label{r:qlamone} \\
||u^0_{tt}||_{\Ltau}^2 & \leq c_{33} \lambda(\tau)^3, & ||v^0_{tt}||_{\Ltau}^2 & \leq c_{33} \lambda(\tau)^3, \label{r:qlamtwo} \\
||u^0_{ttt}||_{\Ltau}^2 & \leq c_{33} (\varpi^2+1)\lambda(\tau)^3, & ||v^0_{ttt}||_{\Ltau}^2 & \leq c_{33} (\varpi^2+1) \lambda(\tau)^3. \label{r:qlamthree}
\end{align}
\end{lemma}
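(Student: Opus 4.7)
The proof is a direct computation: we square the pointwise bounds on $q^0$ established in Lemma \ref{l:q*bounds}, multiply by the exponential weight $e^{-\lambda(\tau)|t-\tau|}$, integrate over $\mathbb{R}$, and invoke Lemma \ref{l:weights} with suitable choices of the parameters $j,m$.

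Concretely, for the first-derivative bounds, squaring the estimates in (\ref{r:One}) yields
\[
(u^0_t)^2 \ll \ve\, e^{-\sqrt{\ve}\,\dt} + \frac{1}{L_{k(t)}^2},\qquad (v^0_t-\omega_{k(t)})^2 \ll \ve\mu^2\, e^{-\sqrt{\ve}\,\dt}+\frac{1}{L_{k(t)}^2}.
\]
Applying (\ref{r:boundi}) with $m=2$, $j=1$ to the first summand gives a contribution of order $\lambda(\tau)$, and (\ref{r:boundLk}) with $j=1$ gives the same order for the second summand; using $\mu\leq 1$, this proves (\ref{r:qlamone}). For the second-derivative bounds, (\ref{r:Two}) produces
\[
(u^0_{tt})^2 \ll \ve^2\, e^{-\sqrt{\ve}\,\dt} + \frac{1}{L_{k(t)}^4},
\]
and analogously for $v^0_{tt}$; applying (\ref{r:boundi}) with $m=2$, $j=3$ and (\ref{r:boundLk}) with $j=3$ yields the $\lambda(\tau)^3$ bound in (\ref{r:qlamtwo}).

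For the third derivatives, squaring (\ref{r:Three}) gives an extra factor $\varpi^2$ in front of both a term $\ve^2 e^{-\sqrt{\ve}\,\dt}$ and a term $1/L_{k(t)}^6$. The exponentially decaying part is treated exactly as in the second-derivative case, producing $\varpi^2 \lambda(\tau)^3$. For the $1/L_{k(t)}^6$ part we apply (\ref{r:boundLk}) with $j=5$, obtaining a bound of order $\lambda(\tau)^5$; since by (\ref{r:lambda}) and the standing assumption $\ve\leq 1$ we have $\lambda(\tau)\leq \sqrt{\ve}/4\leq 1$, this is absorbed into $\lambda(\tau)^3$. Combining these pieces gives (\ref{r:qlamthree}), with the factor $(\varpi^2+1)$ absorbing both the $\varpi^2$ from the exponential part and the purely numerical bound from the $1/L_{k(t)}^6$ part.

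There is no real obstacle here beyond bookkeeping of exponents; the key observation used implicitly throughout is the inequality $\lambda(\tau)\leq \sqrt{\ve}/4\leq 1$, which lets us freely replace higher powers of $\lambda(\tau)$ by lower ones whenever needed, and the fact that $\mu\leq 1$, which lets us drop factors of $\mu$ when stating the final bounds. The absolute constant $c_{33}$ is then taken to be a suitable multiple of $c_6^2 c_{31}$.
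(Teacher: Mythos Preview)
Your proof is correct and follows exactly the approach indicated in the paper, which simply states that the result ``follows by a straightforward calculation from Lemmas \ref{l:q*bounds} and \ref{l:weights}''; you have merely written out that calculation in detail. One cosmetic slip: in the third-derivative case the $1/L_{k(t)}^6$ term also carries the factor $\varpi^2$ from (\ref{r:Three}), so the contribution is $\varpi^2\lambda(\tau)^5\leq \varpi^2\lambda(\tau)^3$ rather than a ``purely numerical'' $\lambda(\tau)^3$, but this only changes the implicit constant and does not affect the conclusion.
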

\begin{proof}
This follows by a straightforward calculation from Lemmas \ref{l:q*bounds} and \ref{l:weights}. 
\end{proof}

\subsection{Proof of invariance of $\mathcal{B}_2$, $\mathcal{B}_3$, $\mathcal{B}_4$}

By assumption that $q \in \mathcal{A}$, we have that $q \in H^3_{\text{loc}}(\mathbb{R})^2$. Whenever we require higher derivatives in the proofs to follow, we assume that we evaluate all on a dense, sufficiently smooth subset and then extend the claims of the Lemmas by continuity to the entire set as required.

\begin{lemma} \label{l:poincare1}
	If $q=(u,v) \in \mathcal{B}_2$, then
	\begin{align}
	|| u_t-u^0_t||^4_{\Ltau} &\leq c_{34} \left( \lambda(\tau)^2 + \lambda(\tau)^{-1}|| u_{tt}-u^0_{tt}||^2_{\Ltau}\right),    \label{r:PoinU} \\
	|| v_t-v^0_t||^4_{\Ltau} &\leq c_{35}  \left( \lambda(\tau)^2M^4 + \lambda(\tau)^{-1}M^2|| v_{tt}-v^0_{tt}||^2_{\Ltau}\right),         \label{r:PoinV} 
	\end{align}
	for some absolute constants $c_{34},c_{35} \geq 1$.
\end{lemma}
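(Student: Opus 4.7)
The plan is to prove a weighted Poincar\'e-type inequality for the difference $w := u - u^0$ (respectively $w := v - v^0$), by integrating $\|w_t\|^2_{L^2_\tau}$ by parts against the weight $g(t) := e^{-\lambda(\tau)|t-\tau|}$, and then combining it with the $L^\infty$ control inherited from $\mathcal{B}_1 \supset \mathcal{B}_2$.

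First I would verify that the integration by parts
\[
\|w_t\|^2_{L^2_\tau} \;=\; -\int g_t\, w\, w_t\, dt \;-\; \int g\, w_{tt}\, w\, dt
\]
is legitimate with no boundary terms: $q^0$ is smooth, $q \in \mathcal{A}$ gives $q_t \in H^2_{\text{ul}}(\mathbb{R})^2 \hookrightarrow L^\infty(\mathbb{R})^2$, so $w$ and $w_t$ are bounded, while $g$ decays exponentially. Since $|g_t| \le \lambda(\tau)\, g$, Cauchy--Schwarz in the weighted $L^2_\tau$ inner product yields
\[
\|w_t\|^2_{L^2_\tau} \;\le\; \lambda(\tau)\, \|w_t\|_{L^2_\tau}\, \|w\|_{L^2_\tau} \;+\; \|w_{tt}\|_{L^2_\tau}\, \|w\|_{L^2_\tau}.
\]
Young's inequality $\lambda\|w_t\|\|w\| \le \tfrac12\|w_t\|^2 + \tfrac12\lambda^2\|w\|^2$ absorbs the first $\|w_t\|$ into the left-hand side, leaving
\[
\|w_t\|^2_{L^2_\tau} \;\le\; \lambda(\tau)^2\|w\|^2_{L^2_\tau} \;+\; 2\|w_{tt}\|_{L^2_\tau}\|w\|_{L^2_\tau}.
\]
Squaring and using $(a+b)^2 \le 2a^2 + 2b^2$ gives
\[
\|w_t\|^4_{L^2_\tau} \;\le\; 2\lambda(\tau)^4\|w\|^4_{L^2_\tau} \;+\; 8\|w_{tt}\|^2_{L^2_\tau}\|w\|^2_{L^2_\tau}.
\]

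The final step inserts the $L^\infty$ bound from $\mathcal{B}_1$ via the trivial identity $\int g\, dt = 2/\lambda(\tau)$. For the $u$-estimate, (\ref{r:main1}) and (\ref{r:Uzero}) of Lemma \ref{l:q*bounds} give $|u - u^0| \le c_6 + c_7$ pointwise, hence $\|u - u^0\|^2_{L^2_\tau} \le 2(c_6+c_7)^2/\lambda(\tau)$; substituting this yields (\ref{r:PoinU}) with an absolute constant $c_{34}$. For the $v$-estimate, (\ref{r:main2}) gives $|v - v^0| \le c_8 M$, hence $\|v - v^0\|^2_{L^2_\tau} \le 2c_8^2 M^2/\lambda(\tau)$, and the same substitution produces (\ref{r:PoinV}) with an absolute constant $c_{35}$. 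Note the $M^4$ in the first term of (\ref{r:PoinV}) and the $M^2$ in the second are precisely the two powers of $\|v - v^0\|^2_{L^2_\tau}$ appearing on the right-hand side of the squared inequality.

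No step here is particularly delicate; the only mild obstacle is checking that the integration by parts involves no boundary contribution, which is guaranteed by the exponential decay of $g$ against the uniform $L^\infty$ bounds on $w$ and $w_t$. The argument uses only $\mathcal{B}_1$-information, so the same derivation could in principle be applied to $w = u_t - u^0_t$ in the inductive step for $\mathcal{B}_3$ or $\mathcal{B}_4$, provided analogous $L^\infty$ bounds on $w$ (and hence on higher derivatives) have already been established at the previous level.
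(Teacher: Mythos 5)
Your proof is correct and follows essentially the same route as the paper: both start from the identity $\|w_t\|^2_{L^2_\tau} = -\int g_t\,w\,w_t\,dt - \int g\,w\,w_{tt}\,dt$, apply weighted Cauchy--Schwarz, and then feed in the $L^\infty$ bounds $\|u-u^0\|_\infty \leq c_6+c_7$, $\|v-v^0\|_\infty \leq c_8 M$ together with $\int g\,dt = 2/\lambda(\tau)$. The only cosmetic difference is how the self-referential $\|w_t\|$ term is removed: you absorb it by Young's inequality before squaring, whereas the paper uses the equivalent dichotomy ``$X\leq a+b$ implies $X\leq 2a$ or $X\leq 2b$'' to derive the same quadratic bound in $X$ directly.
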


\begin{proof} Denote by $X=|| u_t-u^0_t||^2_{\Ltau}$ and by $Y=|| u_{tt}-u^0_{tt}||^2_{\Ltau}$. By partial integration and the Cauchy-Schwartz inequality, we get
\begin{align}
X & =\int_{\mathbb{R}}e^{-\lambda(\tau)|t-\tau|} (u_t(t)-u^0_t(t))^2 dt  \notag \\ & \leq \lambda(t)\int_{\mathbb{R}}e^{-\lambda(\tau)|t-\tau|}|u(t)-u^0(t)||u_t(t)-u^0(t)|dt + \int_{\mathbb{R}}e^{-\lambda(\tau)|t-\tau|}|u(t)-u^0(t)||u_{tt}(t)-u^0_{tt}(t)|dt \notag \\
& \ll \lambda(\tau)^{1/2} ||u-u^0||_{L^{\infty}(\mathbb{R})} \: X^{1/2} + \lambda(\tau)^{-1/2} ||u-u^0||_{L^{\infty}(\mathbb{R})} \: Y^{1/2}. \label{r:poinA}
\end{align}
Now either $X \ll \lambda(\tau) ||u-u^0||^2_{L^{\infty}(\mathbb{R})}$ or $X \ll \lambda(\tau)^{-1/2} ||u-u^0||_{L^{\infty}(\mathbb{R})}$, thus
\begin{equation}
 X^2 \ll \lambda(\tau)^2 ||u-u^0||^4_{L^{\infty}(\mathbb{R})} + \lambda(\tau)^{-1} ||u-u^0||^2_{L^{\infty}(\mathbb{R})}.
\end{equation}
From (\ref{r:Uzero}) and (\ref{r:main1}) we have $||u-u^0||_{L^{\infty}(\mathbb{R})} \leq c_6+c_7$, which gives (\ref{r:PoinU}).

To show (\ref{r:PoinV}), it suffices to note that by (\ref{r:main2}), $||v-v^0||_{L^{\infty}(\mathbb{R})} \leq c_8M$. The rest of the proof is analogous. 
\end{proof}

\begin{lemma} \label{l:b2one} 
	The set of all $q \in \mathcal{B}_1$ such that $||u_t-u^0_t||^2_{\Ltau} \leq c_9 \lambda(\tau)$ is $\mathcal{A}$-relatively $\xi$-invariant for some absolute constant $c_9>0$.
\end{lemma}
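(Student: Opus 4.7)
The plan is a standard energy-estimate argument for the weighted quantity $X_\tau(s) := \|u_t - u^0_t\|^2_{\Ltau}$: differentiate under the integral in $s$, substitute (\ref{r:gradu}) for $u_s$, integrate by parts against the weight $e^{-\lambda(\tau)|t-\tau|}$, and derive a Riccati-type differential inequality which forces $X_\tau(s)$ to stay below $c_9\lambda(\tau)$ once it starts there. Throughout, write $w := u - u^0$, so that $w_s = w_{tt} + F$ with $F := u^0_{tt} - V_u(u,v,t)$; Theorem \ref{t:exist}, (iv), provides the regularity needed to justify the computations for $s > s_0$.

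First I would compute
\begin{equation*}
\frac{d}{ds} X_\tau \;=\; 2\int_\mathbb{R} e^{-\lambda(\tau)|t-\tau|}\, w_t\, (w_s)_t\, dt
\end{equation*}
and integrate by parts twice, carefully tracking that $\frac{d}{dt}\,\mathrm{sgn}(t-\tau) = 2\delta_{t=\tau}$ in the distributional sense. The second integration by parts applied to $\int e^{-\lambda|t-\tau|}\mathrm{sgn}(t-\tau)\,w_t w_{tt}\,dt$ yields a boundary correction $-2\lambda(\tau) w_t(\tau)^2 \leq 0$, which can simply be dropped. Applying Cauchy--Schwartz and Young's inequality to the remaining cross terms involving $F$ then produces the clean differential inequality
\begin{equation*}
\frac{d}{ds} X_\tau(s) \;\leq\; -\|w_{tt}\|^2_{\Ltau} + 2\lambda(\tau)^2\, X_\tau + 2\|F\|^2_{\Ltau}.
\end{equation*}
The forcing is bounded via Lemma \ref{l:Cinitial} (which gives $\|V_u\|^2_{\Ltau} \leq c_{32}\lambda(\tau)^3$) and the a-priori estimate (\ref{r:qlamtwo}) on $\|u^0_{tt}\|^2_{\Ltau}$, yielding $\|F\|^2_{\Ltau} \leq C_1 \lambda(\tau)^3$ for an absolute constant $C_1$.

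Next I would invoke the Poincar\'e-type bound (\ref{r:PoinU}), rearranged as $\|w_{tt}\|^2_{\Ltau} \geq \lambda(\tau) X_\tau^2/c_{34} - \lambda(\tau)^3$; this is precisely where the $L^\infty$ control $\|u - u^0\|_{L^\infty(\mathbb R)} \leq c_6+c_7$ for $q \in \mathcal{B}_1$ enters essentially, which is also why the invariance can only be $\mathcal{A}$-relative. Substituting yields the Riccati-type inequality
\begin{equation*}
\frac{d}{ds} X_\tau \;\leq\; -\frac{\lambda(\tau)}{c_{34}}\, X_\tau^2 + 2\lambda(\tau)^2\, X_\tau + (1+2C_1)\,\lambda(\tau)^3.
\end{equation*}
Viewed as a quadratic in $X_\tau$, the right-hand side is strictly negative whenever $X_\tau/\lambda(\tau)$ exceeds the larger root $K := c_{34} + \sqrt{c_{34}^2 + c_{34}(1+2C_1)}$, an absolute constant. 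Choosing $c_9 > K$, a pointwise-in-$\tau$ ODE comparison argument then shows that the condition $X_\tau(s) \leq c_9\lambda(\tau)$ is preserved on any interval $[s_0,s_1]$ on which the trajectory remains in $\mathcal{A}$ (and hence in $\mathcal{B}_1$, by Lemma \ref{l:B1}).

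The main technical subtlety is the integration by parts against the non-smooth weight: the $\delta$-contribution at $t = \tau$ happens to produce a non-positive term, which is what keeps the coefficient of the linear-in-$X_\tau$ term at $2\lambda(\tau)^2$ rather than something of order $\lambda(\tau)$ that would break the Riccati comparison for small $\lambda(\tau)$. Once this cancellation is pinned down and the clean differential inequality is in hand, the remaining scalar ODE comparison step is routine, and the same scheme can be reused almost verbatim to treat the analogous bound for $v_t - v^0_t$ (with $M^2$ factors in the appropriate places).
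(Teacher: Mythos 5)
Your argument is correct, but it takes a route that differs from the paper's in two technical respects, both of which are worth noting. The paper performs a \emph{single} integration by parts, writing $\tfrac{d}{ds}X_\tau = 2\int e^{-\lambda|t-\tau|}(u_t-u^0_t)u_{ts}\,dt$ and moving $\partial_t$ off $u_{ts}$ once; the cross term $2\lambda\bigl|\int \operatorname{sgn}(t-\tau)e^{-\lambda|t-\tau|}(u_t-u^0_t)u_s\,dt\bigr|$ is then absorbed by Young's inequality into $\lambda^2 X_\tau + \|u_s\|^2_{\Ltau}$, and the $\|u_s\|^2$ cancels against the dissipation produced by the squaring trick (\ref{r:uu1}). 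No distributional derivative of $\operatorname{sgn}(t-\tau)$ ever appears, so the sign of a $\delta$-contribution at $t=\tau$ is a concern specific to your two-integrations-by-parts variant, not an intrinsic feature of the estimate. Your handling of it is correct (the boundary term $-2\lambda(\tau)w_t(\tau)^2$ is indeed non-positive and can be dropped), and the trade is a factor of $2$ in the linear coefficient, which is harmless. The second difference is in closing the argument: you invoke (\ref{r:PoinU}) to pass directly to a Riccati inequality and run a pointwise-in-$\tau$ ODE comparison past the uniform larger root $K\lambda(\tau)$, whereas the paper first uses the elementary completing-the-square bound $0\le \tfrac{\lambda}{2c_{34}}X^2 - 2\lambda^2 X + 2c_{34}\lambda^3$ to linearize the inequality into $\tfrac{d}{ds}X_\tau \le -\lambda(\tau)^2 X_\tau + c_9\lambda(\tau)^3$ and applies Gronwall. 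Both finish the job; yours is a bit more compact, the paper's is slightly more mechanical and sets up a form of the inequality (\ref{r:b1ineq}) that is reused with the retained $-\tfrac12\|u_{tt}-u^0_{tt}\|^2_{\Ltau}$ term when proving the $\mathcal{B}_3$ invariance. One minor point of hygiene: Lemma \ref{l:poincare1} is stated for $q\in\mathcal{B}_2$, but as you correctly observe its proof uses only the $L^\infty$ bound $\|u-u^0\|_{L^\infty}\le c_6+c_7$ coming from $\mathcal{B}_1$, so there is no circularity in applying it here to prove $\mathcal{B}_2$-invariance -- that observation should be made explicit to preempt the objection.
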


\begin{proof}
	First note that
	\[
	 u_s-(u_{tt} - u^0_{tt}) = -V_u + u^0_{tt}, 
	\]
	thus by squaring it we get
	\begin{equation}
	- 2 u_s(u_{tt} - u^0_{tt}) \leq - u_s^2 - (u_{tt} - u^0_{tt})^2 + 2V_u^2 + 2(u^0_{tt})^2, \label{r:uu1}
	\end{equation}
	where we write $V_u = \partial_u V(u,v,t)$.
	Differentiating with respect to $s$, by partial integration and taking into account that $u^0$ is constant , we obtain
	\begin{align}
	 \frac{d}{ds}||u_t-u^0_t||^2_{\Ltau} & = 2 \int_{\mathbb{R}} e^{-\lambda(\tau)|t-\tau|}(u_t-u^0_t)u_{ts} dt \notag \\
	 & \leq -  2\int_{\mathbb{R}} e^{-\lambda(\tau)|t-\tau|}(u_{tt}-u^0_{tt})u_{ts}dt  + 2 \lambda(\tau)\left| \int_{\mathbb{R}} e^{-\lambda(\tau)|t-\tau|}(u_{t}-u^0_{t})u_{s} dt \right|	\label{r:uu2}
	\end{align}
	The first summand is by inserting (\ref{r:uu1}),
	\begin{equation}
	 - 2\int_{\mathbb{R}} e^{-\lambda(\tau)|t-\tau|}(u_{tt}-u^0_{tt})u_{ts}dt \leq -||u_s||^2_{\Ltau}-||u_{tt}-u_{tt}^*||^2_{\Ltau}+2||V_u||^2_{\Ltau}+2||u^0_{tt}||^2_{\Ltau}. \label{r:uu3}
	\end{equation}
	By Young's inequality, we have that
	\begin{equation}
	2 \lambda(\tau)\left| \int_{\mathbb{R}} e^{-\lambda(\tau)|t-\tau|}(u_{t}-u^0_{t})u_{s} dt \right| \leq \lambda(\tau)^2||u_{t}-u^0_{t}||^2_{\Ltau}+ ||u_s||^2_{\Ltau}. \label{r:uu4}
	\end{equation}
	Inserting (\ref{r:uu3}) and (\ref{r:uu4}) into (\ref{r:uu2}), we see that
	\begin{equation}
	\frac{d}{ds}||u_t-u^0_t||^2_{\Ltau} \leq - ||u_{tt}-u_{tt}^0||^2_{\Ltau} + \lambda^2(\tau)||u_t-u^0_t||^2_{\Ltau} + 2||V_u||^2_{\Ltau}+2||u^0_{tt}||^2_{\Ltau}. \label{r:uu5}
	\end{equation}
	By substituting $X=||u_t-u_t^0||^2_{\Ltau}$, $Y=||u_{tt}-u_{tt}^0||^2_{\Ltau}$, and using (\ref{r:Vuupper}) and (\ref{r:qlamtwo}), we obtain from (\ref{r:uu5})
	\begin{equation}
	  \frac{d}{ds}X\leq -Y + \lambda^2(\tau) X + 2(c_{32}+c_{33})\lambda(\tau)^3. \label{r:uu5a}
	\end{equation}
	(\ref{r:PoinU}) can be written as
	\begin{equation}
	- \frac{1}{2}Y \leq -\frac{\lambda(\tau)}{2c_{34}} X^2 + \frac{1}{2}\lambda(\tau)^3. \label{r:YX1}
	\end{equation}
	Clearly 
	\begin{equation}
	0 \leq \frac{\lambda(\tau)}{2c_{34}} X^2  - 2\lambda(\tau)^2X + 2c_{34}\lambda(\tau)^3, \label{r:YX2}
	\end{equation}
	thus by summing (\ref{r:YX1}) and (\ref{r:YX2}) we see that
	\[
		- \frac{1}{2}Y +\lambda(\tau)^2 X \leq -\lambda(\tau)^2 X +(2c_{34}+1/2) \lambda(\tau)^3.
	\]
	Summing it with (\ref{r:uu5a}) and substituting back $X$ and $Y$, we obtain a differential inequality
	\begin{align}
	\frac{d}{ds}||u_t-u^0_t||^2_{\Ltau} & \leq - \frac{1}{2} ||u_{tt}-u_{tt}^*||^2_{\Ltau}   -  \lambda(\tau)^2||u_t-u^0_t||^2_{\Ltau} + c_9 \lambda(\tau)^3 \label{r:b1ineq} \\
	& \leq - \lambda(\tau)^2||u_t-u^0_t||^2_{\Ltau} + c_9 \lambda(\tau)^3, \label{r:b1ineq2}
	\end{align}
	where $c_9=2(c_{32}+c_{33}+c_{34})+1/2$. The claim follows from the Gronwall's lemma and Lemma \ref{l:B1}, i.e. $\mathcal{A}$-relative invariance of $\mathcal{B}_1$.	
\end{proof}

\begin{lemma} \label{l:b2two}
	The set of all $q \in \mathcal{B}_1$ such that $||v_t-v^0_t||^2_{\Ltau} \leq c_9(M^2+1) \: \lambda(\tau)$ is $\mathcal{A}$-relatively $\xi$-invariant for some absolute constant $c_9>0$.
\end{lemma}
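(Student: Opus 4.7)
The plan is to mimic the proof of Lemma \ref{l:b2one} line by line, with the variable $v$ in place of $u$, and carefully track the $M$-factors introduced through the Poincar\'{e}-type inequality (\ref{r:PoinV}). Since $v^0$ does not depend on $s$, equation (\ref{r:gradv}) gives
\begin{equation*}
v_s - (v_{tt} - v^0_{tt}) = -V_v + v^0_{tt},
\end{equation*}
which, after squaring and applying Young's inequality, yields the pointwise bound
\begin{equation*}
- 2 v_s(v_{tt} - v^0_{tt}) \leq - v_s^2 - (v_{tt} - v^0_{tt})^2 + 2V_v^2 + 2(v^0_{tt})^2.
\end{equation*}

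Next, I would differentiate $\|v_t - v^0_t\|^2_{\Ltau}$ in $s$, partially integrate (using the smoothness provided by Theorem \ref{t:exist}, (iv), and the fact that $v^0_{ts} \equiv 0$), and apply the Cauchy--Schwartz inequality to the $\lambda(\tau)$-boundary term exactly as in (\ref{r:uu2})--(\ref{r:uu4}). Inserting the pointwise bound above and the Young's inequality term, this produces an analogue of (\ref{r:uu5}):
\begin{equation*}
\frac{d}{ds}\|v_t-v^0_t\|^2_{\Ltau} \leq - \|v_{tt}-v_{tt}^0\|^2_{\Ltau} + \lambda(\tau)^2\|v_t-v^0_t\|^2_{\Ltau} + 2\|V_v\|^2_{\Ltau} + 2\|v^0_{tt}\|^2_{\Ltau}.
\end{equation*}
The last two terms are already controlled by $2(c_{32}+c_{33})\lambda(\tau)^3$ via (\ref{r:Vuupper}) and (\ref{r:qlamtwo}).

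The only real difference with the case of $u$ arises when closing the inequality via the Poincar\'{e}-type estimate: Lemma \ref{l:poincare1} gives
\begin{equation*}
\|v_t-v^0_t\|^4_{\Ltau} \leq c_{35}\left(\lambda(\tau)^2 M^4 + \lambda(\tau)^{-1}M^2\|v_{tt}-v^0_{tt}\|^2_{\Ltau}\right),
\end{equation*}
so, setting $X = \|v_t - v^0_t\|^2_{\Ltau}$ and $Y = \|v_{tt}-v^0_{tt}\|^2_{\Ltau}$, I rewrite this as $-\tfrac{1}{2}Y \leq -\tfrac{\lambda(\tau)}{2c_{35}M^2}X^2 + \tfrac{1}{2}\lambda(\tau)^3 M^2$. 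Combining this with the trivial estimate $0 \leq \tfrac{\lambda(\tau)}{2c_{35}M^2}X^2 - 2\lambda(\tau)^2 X + 2c_{35}M^2\lambda(\tau)^3$ absorbs the linear term $\lambda(\tau)^2 X$ in the differential inequality, at the cost of an extra factor of $M^2$ on the right-hand side. The resulting differential inequality has the form
\begin{equation*}
\frac{d}{ds}X \leq -\lambda(\tau)^2 X + c_9 (M^2+1)\lambda(\tau)^3
\end{equation*}
for a suitable absolute constant $c_9$ (enlarging the one from Lemma \ref{l:b2one} if necessary). Gronwall's lemma then propagates the bound $X \leq c_9(M^2+1)\lambda(\tau)$, and since $\mathcal{B}_1$ is $\mathcal{A}$-relatively $\xi$-invariant by Lemma \ref{l:B1}, this gives the conclusion.

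The step I expect to require the most care is tracking the $M$-dependence when absorbing $\lambda(\tau)^2 X$ via the Poincar\'{e} inequality: unlike the $u$-case, the $L^\infty$-bound $|v - v^0| \leq c_8 M$ is not uniformly $O(1)$, so one must scale the auxiliary quadratic identity in $X$ by $M^{-2}$ to avoid losing the sign in front of $\lambda(\tau)^2 X$. Everything else is mechanical and parallels Lemma \ref{l:b2one}.
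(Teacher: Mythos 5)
Your proof is correct and matches the paper's own argument: the paper likewise repeats the Lemma~\ref{l:b2one} derivation with $v$ in place of $u$ and replaces the two auxiliary inequalities (\ref{r:YX1}), (\ref{r:YX2}) by their $M^{-2}$-scaled analogues coming from (\ref{r:PoinV}), yielding the same final differential inequality and Gronwall step.
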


\begin{proof} Denote by $\tilde{X}=||v_t-v^0_t||^2_{\Ltau}$, $\tilde{Y}=||v_{tt}-v^0_{tt}||^2_{\Ltau}$. All the steps in the proof are analogous to the proof of Lemma \ref{l:b2one} up to the equations (\ref{r:YX1}), (\ref{r:YX2}), instead of which we have
	\begin{align*}
	- \frac{1}{2}\tilde{Y} & \leq -\frac{\lambda(\tau)}{2c_{35} M^2} \tilde{X}^2 + \frac{1}{2}\lambda(\tau)^3M^2, \\
	0 & \leq \frac{\lambda(\tau)}{2c_{35}M^2} \tilde{X}^2  - 2\lambda(\tau)^2 \tilde{X} + 2c_{35}\lambda(\tau)^3 M^2.
	\end{align*}
	We thus obtain the differential inequality
		\begin{align}
		\frac{d}{ds}||v_t-v^0_t||^2_{\Ltau} & \leq - \frac{1}{2}||v_{tt}-v^0_{tt}||^2_{\Ltau}- \lambda(\tau)^2||v_t-v^0_t||^2_{\Ltau} + c_9 (M^2+1)\lambda(\tau)^3 \\
		& \leq - \lambda(\tau)^2||v_t-v^0_t||^2_{\Ltau} + c_9 (M^2+1)\lambda(\tau)^3.
		\end{align}
	which completes the proof analogously as in Lemma \ref{l:b2one}.
\end{proof}

Lemmas \ref{l:b2one} and \ref{l:b2two} compete the proof of $\mathcal{A}$-relative $\xi$-invariance of $\mathcal{B}_2$. 

\begin{lemma} There exists a constant $c_{10}>0$ such that the set of all $q \in \mathcal{B}_2$ satisfying (\ref{r:defB3u}), (\ref{r:defB3v}) is an $\mathcal{A}$-relative $\xi$-invariant set.	
\end{lemma}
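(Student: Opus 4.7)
The plan is to parallel the proofs of Lemmas \ref{l:b2one} and \ref{l:b2two}, tracking the combined quantity $Y_u(\tau,s) = \ve\|u_t-u^0_t\|^2_{\Ltau} + \|u_{tt}\|^2_{\Ltau}$ and its analogue $Y_v$. First I would multiply the differential inequality (\ref{r:b1ineq}) by $\ve$, obtaining
\begin{equation*}
\tfrac{d}{ds}\ve\|u_t-u^0_t\|^2_{\Ltau} \le -\tfrac{\ve}{2}\|u_{tt}-u^0_{tt}\|^2_{\Ltau} - \ve\lambda(\tau)^2\|u_t-u^0_t\|^2_{\Ltau} + c_9\ve\lambda(\tau)^3.
\end{equation*}
This handles the $\|u_t-u^0_t\|^2$-contribution to $Y_u$, leaving the negative term $-\tfrac{\ve}{2}\|u_{tt}-u^0_{tt}\|^2_{\Ltau}$ available for later absorption.

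Next I would derive a companion differential inequality for $\|u_{tt}\|^2_{\Ltau}$. Differentiating in $s$ and using $(u_{tt})_s=(u_s)_{tt}$ together with $u_s=u_{tt}-V_u$ (so that $u_{ts}=u_{ttt}-(V_u)_t$), two integrations by parts against the weight $\psi(t)=e^{-\lambda(\tau)|t-\tau|}$ (whose first derivative satisfies $|\psi_t|=\lambda(\tau)\psi$), followed by Young's inequality, produce
\begin{equation*}
\tfrac{d}{ds}\|u_{tt}\|^2_{\Ltau} \le -\|u_{ttt}\|^2_{\Ltau} + 3\lambda(\tau)^2\|u_{tt}\|^2_{\Ltau} + 3\|(V_u)_t\|^2_{\Ltau}.
\end{equation*}
The source term $\|(V_u)_t\|^2_{\Ltau}$ is then bounded by expanding $(V_u)_t = V_{uu}u_t + V_{uv}v_t + V_{ut}$ and using the pointwise estimates $|V_{uu}|\ll\ve$, $|V_{uv}|,|V_{ut}|\ll\ve\mu\, e^{-\sqrt{\ve}\dt/2}$ together with the $\mathcal{B}_2$ bounds on $\|u_t\|^2_{\Ltau}$ and $\|v_t\|^2_{\Ltau}$, invoking Lemma \ref{l:weights} for the weighted integrals of the exponentially decaying cross-terms.

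Summing the two inequalities, the decisive step is to invoke the numerical constraint $\lambda(\tau)^2\le\ve/16$ built into the definition (\ref{r:lambda}) of $\lambda$. Writing $\|u_{tt}\|^2 \le 2\|u_{tt}-u^0_{tt}\|^2+2\|u^0_{tt}\|^2$ and using (\ref{r:qlamtwo}) to estimate $\|u^0_{tt}\|^2_{\Ltau}\le c_{33}\lambda(\tau)^3$, the positive term $3\lambda(\tau)^2\|u_{tt}\|^2_{\Ltau}$ is absorbed into the negative $\tfrac{\ve}{2}\|u_{tt}-u^0_{tt}\|^2_{\Ltau}$, at the cost of a harmless contribution of order $\lambda^5\ll\ve\lambda^3$. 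Absorbing in the same way once more to produce a $-\lambda(\tau)^2\|u_{tt}\|^2_{\Ltau}$ term on the left, the inequality takes the form
\begin{equation*}
\tfrac{d}{ds}Y_u + \lambda(\tau)^2 Y_u \le c_{10}(M^2+\varpi^2)\ve\lambda(\tau)^3
\end{equation*}
for an absolute constant $c_{10}$ chosen large enough to subsume the bookkeeping. Combined with the initial-condition check $Y_u(q^0,\tau)\ll\ve\lambda(\tau)$ from Lemma \ref{l:q*bounds}, Gronwall's inequality yields the defining bound (\ref{r:defB3u}). The $v$-inequality (\ref{r:defB3v}) is obtained identically, with the weaker Poincar\'e inequality (\ref{r:PoinV}) bringing in the extra factor of $M^2$ already present in the prefactor $(M^2+\varpi^2)$.

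The main obstacle is the sharp estimation of $\|(V_u)_t\|^2_{\Ltau}$. Because $V_{uu}$ is pointwise of order $\ve$ throughout (no extra smallness, since $\cos u$ is bounded away from zero near both $u=2k\pi$ and $u=(2k+1)\pi$), the natural estimate $\|V_{uu}u_t\|^2_{\Ltau}\ll\ve^2\|u_t\|^2_{\Ltau}\ll\ve^2\lambda(\tau)$, when divided by $\lambda(\tau)^2$ after Gronwall, returns $\ve^2/\lambda(\tau)$. This matches the target $(M^2+\varpi^2)\ve\lambda(\tau)$ only via the sharp constraint $\lambda^2\le\ve/16$, so a careful accounting of all constants and of the weighted decay contributions of $V_{uv}v_t$ and $V_{ut}$ through Lemma \ref{l:weights} is essential; this is what makes the argument in the large-$\dtau$ regime technical rather than immediate, but it does not require any new idea beyond those already exploited in the proofs of Lemmas \ref{l:b2one} and \ref{l:b2two}.
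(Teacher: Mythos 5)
Your high-level plan is the right one — track the combined quantity $Z=\ve\|u_t-u^0_t\|^2_{\Ltau}+\|u_{tt}\|^2_{\Ltau}$, derive a differential inequality for each piece, sum, and close with Gronwall — and this is indeed what the paper does. But there is a genuine gap in the middle step that your own final paragraph points at without resolving.

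After you absorb the parabolic boundary terms and the $3\lambda^2\|u_{tt}\|^2$ contribution, you arrive at the form
$\frac{d}{ds}Y_u+\lambda(\tau)^2\,Y_u \le c\,(M^2+\varpi^2)\ve\lambda(\tau)^3$.
This cannot be correct. The dominant source term on the right is $\|(V_u)_t\|^2_{\Ltau}$, and — as you acknowledge — the $V_{uu}u_t$ piece gives $\ve^2\|u_t\|^2_{\Ltau}\ll\ve^2\lambda(\tau)$, not $\ve\lambda(\tau)^3$. Since $\lambda^2\le\ve/16$, one has $\ve\lambda^3\le\ve^2\lambda/16$, so $\ve^2\lambda$ is the \emph{larger} quantity and cannot be hidden in an $O(\ve\lambda^3)$ bound. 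With the decay rate $\lambda^2$ that you carry on the left, Gronwall then produces $Y_u\lesssim\ve^2/\lambda$, and by the very constraint $\ve\ge16\lambda^2$ this is \emph{at least} $16\,\ve\lambda$, i.e. strictly worse than the target $Y_u\lesssim\ve\lambda$ — not equal to it, as the last sentence of your proposal suggests. The inequality you invoke runs the wrong way.

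The fix, and this is what the paper does, is to extract a decay rate proportional to $\ve$ rather than to $\lambda^2$. Multiplying (\ref{r:b1ineq}) by $\ve$ yields a reserve of $-\tfrac{\ve}{4}\|u_{tt}\|^2_{\Ltau}$ (after writing $\|u_{tt}-u^0_{tt}\|^2\ge\tfrac12\|u_{tt}\|^2-\|u^0_{tt}\|^2$ and absorbing the $O(\ve\lambda^3)$ error); separately, adding $\ve^2$ times the tautology $0\le -\|u_t-u^0_t\|^2_{\Ltau}+c_9\lambda(\tau)$, valid since $q\in\mathcal{B}_2$, supplies $-\ve^2\|u_t-u^0_t\|^2_{\Ltau}$. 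These two together give $-\tfrac{\ve}{4}Z$ on the right of the summed inequality (after absorbing the $\lambda^2\|u_{tt}\|^2$ term, which is harmless since $\lambda^2\le\ve/16$). With decay rate $\tfrac{\ve}{4}$ against a source of size $\ve^2(M^2+\varpi^2)\lambda$, Gronwall now delivers $Z\ll\ve(M^2+\varpi^2)\lambda$, as required. So the missing idea is simply not to throw away the bulk of the Poincar\'e-derived negative term $-\tfrac{\ve}{4}\|u_{tt}\|^2$: a $\lambda^2$ decay rate is too weak by the factor $\ve/\lambda^2\ge 16$, and that is precisely the factor by which your attempted Gronwall bound overshoots.
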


\begin{proof} The constant $c_{36}$ may change from line to line in the proof.
As $u_{ts}-u_{ttt}=D_tV_u$, we get
	\[
	- 2u_{ttt}u_{ts} = - u_{ttt}^2-u_{ts}^2 +(D_tV_u)^2,
	\]
thus
\[
	\frac{d}{ds}u^2_{tt} =  2 u_{tt}u_{tts}= 2(u_{tt}u_{ts})_t - 2u_{ttt}u_{ts} = 2(u_{tt}u_{ts})_t - u_{ttt}^2-u_{ts}^2 +(D_tV_u)^2.
\]
Calculating the weighted integral, we get by partial integration and the Young's inequality:
\begin{align}
\frac{d}{ds}||u_{tt}||^2_{\Ltau} & = -||u_{ttt}||^2_{\Ltau}-||u_{ts}||^2_{\Ltau}+ 2\int_{\mathbb{R}}e^{-\lambda(\tau)|t-\tau|}(u_{tt}u_{ts})_t dt + ||D_tV_u||^2_{\Ltau} \notag \\
& \leq -||u_{ttt}||^2_{\Ltau}-||u_{ts}||^2_{\Ltau}+ \lambda(\tau)^2||u_{tt}||^2_{\Ltau}+||u_{ts}||^2_{\Ltau}
+ ||D_tV_u||^2_{\Ltau} \notag \\
& \leq -||u_{ttt}||^2_{\Ltau} + \lambda(\tau)^2||u_{tt}||^2_{\Ltau}
+ ||D_tV_u||^2_{\Ltau}. \label{r:dutt}
\end{align}

By careful differentiation, while using uniform bounds on $f$ and its derivatives, and as $\mu \leq 1$ and $\varpi \geq 1$ by definition, we get
\begin{align*}
|D_tV_u| & \ll \ve \mu |u_t| + \ve |u_t| + \ve\mu (1-\cos u + |\sin u| )|v_t| + \ve \mu (1-\cos u + |\sin u|)) \\
& \ll \ve |u_t-u^0_t| + \ve|u^0_t|+\ve |v_t - v^0_t| + \ve |v^0_t - \omega_{k(t)}| + \ve (1-\cos u + |\sin u| )\varpi.
\end{align*}
Now by weighted integration and applying (\ref{r:sinrel}), (\ref{r:cosrel}), (\ref{r:boundi}), (\ref{r:defB2u}) and (\ref{r:defB2v}) we obtain
\begin{equation}
||D_tV_u||^2_{\Ltau} \leq c_{36} \: \ve^2 (M^2+\varpi^2) \lambda(\tau). \quad \label{r:DtVu}
\end{equation}
From (\ref{r:qlamtwo}), (\ref{r:b1ineq}) and $\lambda(\tau)\ll \sqrt{\ve}$ we get
\begin{align}
	\frac{d}{ds}||u_t-u^0_t||^2_{\Ltau}  \leq - \frac{1}{2} ||u_{tt}-u_{tt}^0||^2_{\Ltau}  + c_{36}\ve \lambda(\tau) 
	\leq - \frac{1}{4} ||u_{tt}||^2_{\Ltau}  + c_{36}\ve \lambda(\tau). \label{r:b1ineq2a} 
\end{align}

Now, as $q \in \mathcal{B}_2$, we have that $ 0 \leq -||u_t-u^0_t||_{\Ltau}+c_9 \lambda(\tau)$. Multipying it with $\ve^2$ and (\ref{r:b1ineq2a}) with $\ve$ and summing it, we obtain
\begin{equation}
\frac{d}{ds}\ve||u_t-u^0_t||^2_{\Ltau} \leq - \frac{1}{4}\ve ||u_{tt}||^2_{\Ltau} - \ve^2||u_t-u^0_t||_{\Ltau} + c_{36}\ve^2 \lambda(\tau). \label{r:b1ineq3} 
\end{equation}
Denote by $Z=||u_{tt}||^2_{\Ltau}+\ve||u_t-u^0_t||^2_{\Ltau}$. Summing (\ref{r:dutt}), (\ref{r:DtVu}) and (\ref{r:b1ineq3}) and using $\lambda(\tau)^2 \leq \ve /8$, we obtain
\begin{align}
\frac{d}{ds}Z  & \leq -||u_{ttt}||^2_{\Ltau} - \frac{1}{4}\ve Z + c_{36}(M^2+\varpi^2) \ve^2 \lambda(\tau)  \label{r:secondu} \\
 & \leq - \frac{1}{4}\ve Z + c_{36} \ve^2 (M^2+\varpi^2) \lambda(\tau),   \notag
\end{align}
which by the Gronwall's lemma and $\mathcal{A}$-relative $\xi$-invariance of $\mathcal{B}_2$ proves the first part of the claim.

Analogously we obtain
\begin{align}
\frac{d}{ds}||v_{tt}||^2_{\Ltau} & \ll -||v_{ttt}||^2_{\Ltau} + \lambda(\tau)^2||v_{tt}||^2_{\Ltau}
+ ||D_tV_v||^2_{\Ltau}, \label{r:dvtt} \\
|D_tV_v| & \ll \ve \mu \left( |u_t-u^0_t| + |u^0_t|+ |v_t - v^0_t| + |v^0_t - \omega_{k(t)}| + (1-\cos u)\varpi \right), \notag
\end{align}
thus as $\mu \leq 1$,
\begin{equation}
||D_tV_v||^2_{\Ltau} \leq c_{36} \: \ve^2 (M^2+\varpi^2) \lambda(\tau).
\end{equation}
If $\tilde{Z}=||v_{tt}||^2_{\Ltau}+\ve||v_t-v^0_t||^2_{\Ltau}$, we analogously to (\ref{r:secondu}) obtain
\begin{align}
\frac{d}{ds}\tilde{Z} & \leq -||v_{ttt}||^2_{\Ltau} - \frac{1}{4}\ve \tilde{Z} + c_{36} \ve^2 (M^2+\varpi^2) \lambda(\tau) \label{r:secondv} \\
& \leq - \frac{1}{4}\ve \tilde{Z} + c_{36} \ve^2 (M^2+\varpi^2) \lambda(\tau),  \notag
\end{align}
which completes the proof analogously as for $Z$.
\end{proof}

\begin{lemma} There exists a constant $c_{11}>0$ such that the set of all $q \in \mathcal{B}_3$ satisfying (\ref{r:defB4u}), (\ref{r:defB4v}) is an $\mathcal{A}$-relative $\xi$-invariant set.
\end{lemma}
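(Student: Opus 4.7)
The plan is to mimic the argument used for $\mathcal{B}_3$, one derivative higher. The key observation is that the differential inequality~(\ref{r:secondu}) proved in the preceding lemma already contains a dissipation term $-\|u_{ttt}\|^2_{L^2_\tau}$ that has not yet been exploited; this term is exactly what will absorb the new growth when I differentiate $\|u_{ttt}\|^2_{L^2_\tau}$ in $s$.

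\textbf{Step 1.} I would first derive a differential inequality for $\|u_{ttt}\|^2_{L^2_\tau}$ in complete analogy with (\ref{r:dutt}). From $u_s = u_{tt}-V_u$ one gets $u_{ts} = u_{ttt}-D_tV_u$ and $u_{tts}=u_{tttt}-D^2_tV_u$. Writing $\tfrac{d}{ds}u_{ttt}^2 = 2(u_{ttt}u_{tts})_t - 2u_{tttt}u_{tts}$ together with $-2u_{tttt}u_{tts}\leq -u_{tttt}^2-u_{tts}^2+(D^2_tV_u)^2$, then integrating against $e^{-\lambda(\tau)|t-\tau|}$, partial integration on the total derivative and Young's inequality give
\begin{equation*}
\tfrac{d}{ds}\|u_{ttt}\|^2_{L^2_\tau} \leq -\|u_{tttt}\|^2_{L^2_\tau} + \lambda(\tau)^2\|u_{ttt}\|^2_{L^2_\tau} + \|D^2_tV_u\|^2_{L^2_\tau}.
\end{equation*}

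\textbf{Step 2.} Next I would bound the forcing $\|D^2_tV_u\|^2_{L^2_\tau}$. Expanding
\begin{equation*}
D^2_tV_u = V_{uuu}u_t^2+2V_{uuv}u_tv_t+V_{uvv}v_t^2+2V_{uut}u_t+2V_{uvt}v_t+V_{uu}u_{tt}+V_{uv}v_{tt}+V_{utt},
\end{equation*}
each coefficient $V_{\cdots}$ enjoys the same structural bound as $V_u$ in~(\ref{r:Vubound})--(\ref{r:Vvbound}): an $O(\varepsilon)$ or $O(\varepsilon\mu)$ prefactor times a $\sin u$ or $1-\cos u$ factor, yielding the exponential decay $\ll e^{-\sqrt{\varepsilon}\dt/2}$, except for $V_{uu}$ which is merely $O(\varepsilon)$. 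After splitting $u_t = (u_t-u^0_t)+u^0_t$ and $v_t = (v_t-v^0_t)+(v^0_t-\omega_{k(t)})+\omega_{k(t)}$, extracting $L^\infty$ control on $u_t-u^0_t$ and $v_t-v^0_t$ on unit intervals via Sobolev embedding of the $\mathcal{B}_3$ estimates (\ref{r:defB3u}), (\ref{r:defB3v}), and applying Lemma~\ref{l:weights}, I expect
\begin{equation*}
\|D^2_tV_u\|^2_{L^2_\tau} \leq c\,(M^4+\varpi^4)\,\varepsilon^3\,\lambda(\tau).
\end{equation*}
The new $M^4+\varpi^4$ factor (instead of $M^2+\varpi^2$ as in~(\ref{r:DtVu})) is forced by the quadratic terms $u_t^2$, $u_tv_t$, $v_t^2$.

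\textbf{Step 3.} I would then define the Lyapunov-type quantity
\begin{equation*}
W := \|u_{ttt}\|^2_{L^2_\tau} + \varepsilon Z,\qquad Z = \|u_{tt}\|^2_{L^2_\tau}+\varepsilon\,\|u_t-u^0_t\|^2_{L^2_\tau},
\end{equation*}
and add $\varepsilon$ times~(\ref{r:secondu}) to the inequality of Step 1. The dissipation $-\varepsilon\|u_{ttt}\|^2$ from (\ref{r:secondu}) dominates the growth $\lambda(\tau)^2\|u_{ttt}\|^2 \leq (\varepsilon/16)\|u_{ttt}\|^2$ from Step 1, so the resulting differential inequality takes the form
\begin{equation*}
\tfrac{d}{ds}W \leq -\tfrac{\varepsilon}{4}W + c'(M^4+\varpi^4)\,\varepsilon^2\,\lambda(\tau).
\end{equation*}
Gronwall together with the $\mathcal{A}$-relative $\xi$-invariance of $\mathcal{B}_3$ then yields $W\leq c_{11}(M^4+\varpi^4)\varepsilon\lambda(\tau)$; combined with the $\mathcal{B}_3$ bounds on $\|u_{tt}\|^2_{L^2_\tau}$ and $\varepsilon\|u_t-u^0_t\|^2_{L^2_\tau}$ (which are themselves $O((M^2+\varpi^2)\varepsilon\lambda(\tau))$), this is~(\ref{r:defB4u}). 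The bound (\ref{r:defB4v}) follows identically using (\ref{r:secondv}) in place of (\ref{r:secondu}).

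\textbf{Main obstacle.} The delicate part will be Step 2: the bookkeeping of the eight cross-terms in $D^2_tV_u$, where each of the quadratic terms $V_{uuu}u_t^2,V_{uuv}u_tv_t,V_{uvv}v_t^2$ must be split and controlled via a combination of $L^\infty$ bounds (obtained from Sobolev embedding applied to the $\mathcal{B}_3$ estimates on unit intervals) and $L^2_\tau$ bounds, while keeping the final estimate of order $\varepsilon^3(M^4+\varpi^4)\lambda(\tau)$ so that after dividing by $\varepsilon$ it matches the target scaling in the definition of $\mathcal{B}_4$. Once this bound is established, the remainder of the argument is a verbatim lift of the proof for $\mathcal{B}_3$ by one derivative.
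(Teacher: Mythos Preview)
Your proposal is correct and follows essentially the same route as the paper: derive the analogue of (\ref{r:dutt}) one order higher, bound the new forcing $\|D^2_tV_u\|^2_{L^2_\tau}$ using the $L^\infty$ control on $u_t,v_t$ coming from Sobolev embedding of the $\mathcal{B}_3$ estimates, and close via Gronwall by combining with (\ref{r:secondu}).

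Two minor bookkeeping differences are worth noting. First, the paper takes $W=\|u_{ttt}\|^2_{L^2_\tau}+Z$ (not $\|u_{ttt}\|^2_{L^2_\tau}+\ve Z$) and adds (\ref{r:secondu}) itself rather than $\ve$ times it; since $W$ then coincides with the left side of (\ref{r:defB4u}), the Gronwall conclusion is (\ref{r:defB4u}) directly, without the extra step of adding back the $\mathcal{B}_3$ bound on $Z$. Second, the paper states the forcing bound as $\|D_{tt}V_u\|^2_{L^2_\tau}\ll (M^4+\varpi^4)\ve^2\lambda(\tau)$ rather than your $\ve^3$; in fact the terms $V_{uvv}v_t^2$ with $|v_t|\sim\varpi$ and $V_{utt}$ only yield $\ve^2$ (via (\ref{r:boundi})), so your $\ve^3$ is slightly optimistic, but since $\ve^2$ already suffices for the final differential inequality you wrote down, this does not affect the argument.
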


\begin{proof}	
Analogously as in (\ref{r:dutt}), (\ref{r:dvtt}), and as $\lambda(\tau)$ we easily get
\begin{align}
\frac{d}{ds}||u_{ttt}||^2_{\Ltau} &\leq -||u_{tttt}||^2_{\Ltau} + \frac{\ve}{16}||u_{ttt}||^2_{\Ltau}
+ ||D_{tt}V_u||^2_{\Ltau}, \label{r:duttt}  \\
\frac{d}{ds}||v_{ttt}||^2_{\Ltau} & \leq -||v_{tttt}||^2_{\Ltau} + \frac{\ve}{16}||v_{ttt}||^2_{\Ltau}
+ ||D_{tt}V_v||^2_{\Ltau}.  \label{r:dvttt}
\end{align}
By the Sobolev inequalities and as $q \in \mathcal{B}_2$ and $\lambda(\tau)\leq \sqrt{\ve}$, we easily deduce that for $q \in \mathcal{B}_3$,  
\begin{align*}
||u||_{L^{\infty}(\mathbb{R})}& \ll  (M+\varpi) \sqrt{\ve}, \\
||v-\omega_{k(.)}||_{L^{\infty}(\mathbb{R})}& \ll (M+\varpi) \sqrt{\ve}.
\end{align*}
Using that and analogously as in the Proof of Lemma 14.7 (the calculation is analogous and routine, thus omitted), we see that
\begin{align}
||D_{tt}V_u||^2_{\Ltau} & \ll (M^4 + \varpi^4 ) \ve^2 \lambda(\tau), \label{r:du2} \\
||D_{tt}V_v||^2_{\Ltau} & \ll (M^4 + \varpi^4) \ve^2 \lambda(\tau).  \label{r:dv2}
\end{align}
Setting 
\begin{align*}
W & = ||u_{ttt}||^2_{\Ltau} + ||u_{tt}||^2_{\Ltau}+\ve||u_t-u^0_t||^2_{\Ltau} = ||u_{ttt}||^2_{\Ltau}+Z, \\
\tilde{W} & = ||v_{ttt}||^2_{\Ltau} + ||v_{tt}||^2_{\Ltau}+\ve||v_t-v^0_t||^2_{\Ltau} = ||v_{ttt}||^2_{\Ltau}+\tilde{Z},
\end{align*}
and summing  (\ref{r:dutt}),(\ref{r:duttt}) and (\ref{r:du2});  respectively (\ref{r:dvtt}), (\ref{r:dvttt}) and (\ref{r:dv2}), we obtain
\begin{align*}
\frac{d}{ds}W  &\leq - \frac{1}{4}\ve W + c_{37} \ve^2 (M^4+\varpi^4) \lambda(\tau), \\
\frac{d}{ds}\tilde{W} & \leq - \frac{1}{4}\ve \tilde{W} + c_{37} \ve^2 (M^4+\varpi^4) \lambda(\tau).
\end{align*}
which by the Gronwall's lemma and $\mathcal{A}$-relative $\xi$-invariance of $\mathcal{B}_3$ completes the proof.
\end{proof}

\section{Appendix D: The parity lemma}

This Appendix is dedicated to the proof of Lemma \ref{l:intersect} in Section \ref{s:invariant}. For clarity of the argument, we give definitions and prove a generalized claim in an abstract setting.

Assume $U$ is an open, bounded subset of $\mathbb{R}^2$. Let $\lu,\lv : [s_0,s_1] \times [t_0,t_1]$ be continuous functions satisfying the following properties:

(i) If $\lu (t,s)=0$, then $(t,\lv (s,t)) \notin \partial U)$,

(ii) For all $s \in [s_0,s_1]$, $u(s_0,t_0) <0$ and $u(s_0,t_1) >0$.

We say that $\lu,\lv$ intersect $U$ for some $s \in [s_0,s_1]$, if there exists $t \in [t_0,t_1]$ such that $\lu (s,t) =0$ and $(t,\lv(s,t)) \in U$ (clearly by (i), $(t,\lv(s,t))$ is then in the interior of $U$). We can count the number of times $\lu,\lv$ intersect $U$ in the following sense. For a fixed $s$, let $\varUpsilon(s)=\lbrace t_0,t_1 \rbrace \cup \lbrace t\in [t_0,t_1], \: (t,v(s,t)) \in \partial U) \rbrace$. By assumptions, $\lu(t,s) \neq 0$ for all $t \in \varUpsilon(s)$. We define the relation of equivalence $\sim$ on $\varUpsilon(s)$ with $t_1 \sim t_2$ whenever for all $t_3 \in \varUpsilon(s)$ such that $t_1 \leq t_3 \leq t_2$, we have that $\lu(t_1,s)$, $\lu(t_2,s)$ and $\lu(t_3,s)$ have the same sign. Let $\tilde{\varUpsilon}(s) = \varUpsilon(s) / \sim$ with the induced topology. As by assumptions $\varUpsilon(s)$ is a closed subset of a compact set, $\tilde{\varUpsilon}(s)$ is compact. By (i) and the compactness of $\partial U$, we see that $\tilde{\varUpsilon}(s)$ is totally disconnected, thus $\tilde{\varUpsilon}(s)$ is finite. By (ii), $|\tilde{\varUpsilon}(s)| \geq 2$.

Consider $|\tilde{\varUpsilon}(s)|-1$ segments $(t_k,t_{k+1}) \subset \varUpsilon(s)^c$, where $t_k,t_{k+1} \in \varUpsilon(s)$ and $t_k \not\sim t_{k+1}$. Then there exists at least one zero $t \in (t_k,t_{k+1})$ (i.e. $\lu(t,s)=0$), and all the zeroes $t \in (t_k,t_{k+1})$ are either all in $U$ or all in $\bar{U}^c$ (i.e. for all $t \in (t_k,t_{k+1})$ such that $\lu(t,s)=0$, we have either that for all such $t$, $(t,\lv(s,t)) \in U$, or for all such $t$,  $(t,\lv(s,t)) \in \bar{U}^c$). 

\begin{defn} We say that $\lu$, $\lv$ as above for a given $s \in [s_0,s_1]$ intersect $U$ exactly $n(s)$ times, if $n(s)$ is the number of segments $(t_k,t_{k+1}) \subset \varUpsilon(s)^c$ for which $t_k,t_{k+1} \in \varUpsilon(s)$ and $t_k \not\sim t_{k+1}$, such that all the zeroes $t \in (t_k,t_{k+1})$ are in $U$.
\end{defn}

Now we have:

\begin{proposition} {\bf The Parity Lemma.} \label{p:parity}
	The parity of $n(s)$ is constant for all $s \in [s_0,s_1]$.
\end{proposition}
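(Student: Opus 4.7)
I would attack the Parity Lemma by recasting $n(s)$ as a mod-$2$ topological intersection number and then invoking homotopy invariance.

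Fix any continuous $\sigma:\mathbb{R}^2\to\mathbb{R}$ with $\sigma>0$ on $U$, $\sigma<0$ on $\mathbb{R}^2\setminus\overline U$ and $\sigma=0$ on $\partial U$ (e.g.\ the signed Euclidean distance to $\partial U$), and set
\[
  g:[s_0,s_1]\times[t_0,t_1]\to\mathbb{R}^2,\qquad g(s,t)=\bigl(\lu(s,t),\,\sigma(t,\lv(s,t))\bigr).
\]
Hypothesis (i) forbids the simultaneous vanishing of both coordinates, so $g$ is non-vanishing on the compact rectangle. For each $s$ let $\gamma_s=g(s,\cdot):[t_0,t_1]\to\mathbb{R}^2\setminus\{0\}$; under the evident corrected reading of (ii), $\gamma_s(t_0)$ lies in the open left half-plane $\{\mathrm{Re}<0\}$ and $\gamma_s(t_1)$ in the open right half-plane $\{\mathrm{Re}>0\}$ for every $s$.

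For a continuous arc $\gamma:[a,b]\to\mathbb{R}^2\setminus\{0\}$ whose endpoints avoid the positive imaginary ray $A:=\{(0,y):y>0\}$, let $\mathrm{cr}(\gamma)\in\mathbb{Z}/2$ denote the mod-$2$ intersection number with $A$, i.e.\ the parity of intersections of any uniformly close piecewise-linear approximant transverse to $A$. Since the open left and right half-planes are simply connected and disjoint from $A$, $\mathrm{cr}$ is invariant under continuous homotopies of $\gamma$ in $\mathbb{R}^2\setminus\{0\}$ along which $\gamma(a)$ stays in the left half-plane and $\gamma(b)$ in the right. The map $s\mapsto\gamma_s$ is continuous in sup-norm (by uniform continuity of $g$ on the compact rectangle) and its endpoints remain in their respective half-planes, hence $\mathrm{cr}(\gamma_s)$ is constant on $[s_0,s_1]$.

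It remains to check $\mathrm{cr}(\gamma_s)\equiv n(s)\pmod 2$. Partition $[t_0,t_1]$ by $\varUpsilon(s)$: on each open $\varUpsilon^c$-subinterval the graph $t\mapsto(t,\lv(s,t))$ avoids $\partial U$, so $\sigma$ has a definite sign, and by continuity up to the closed endpoints the sub-curve $\gamma_s|_{[t_k,t_{k+1}]}$ lies in the closed upper half-plane on a $U$-subinterval and in the closed lower half-plane on an $E$-subinterval. Lower-half-plane sub-curves contribute $0$ to $\mathrm{cr}$. On a $U$-subinterval, $\lu(s,t_k),\lu(s,t_{k+1})\neq 0$ (by (i) at a $\partial U$-crossing, by (ii) at $t_0$ or $t_1$), so the endpoints lie in the two components of $\overline{\{\mathrm{Im}\geq 0\}}\setminus(\{0\}\cup A)$ distinguished by the sign of $\mathrm{Re}$; standard topology gives odd intersection with $A$ iff those signs differ---precisely the condition $t_k\not\sim t_{k+1}$. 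Summing, $\mathrm{cr}(\gamma_s)$ modulo $2$ counts the sign-flip $U$-segments, which is $n(s)$.

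The principal obstacle is this last identification, carried out with no transversality or smoothness on $\lu,\lv$ or $\partial U$: one must show that every continuous arc in $\overline{\{\mathrm{Im}\geq 0\}}\setminus\{0\}$ with endpoints off $A$ meets $A$ an odd number of times (mod $2$) iff its endpoints are separated by the imaginary axis. This reduces, via a uniform piecewise-linear approximation in the same space (which deformation-retracts to a half-line), to the transverse case; the $\varUpsilon$-decomposition must then be threaded carefully through tangential touches of the graph to $\partial U$ (which split or merge $\varUpsilon^c$-intervals as $s$ varies) and through non-sign-changing zeros of $\lu$ that can proliferate or annihilate.
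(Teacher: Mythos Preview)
Your approach is correct and genuinely different from the paper's. The paper proceeds by \emph{thickening}: it replaces $\partial U$ by a closed $\delta$-neighbourhood $U(\delta)$, defines $n(s,\delta)$ analogously, and shows that for $\delta$ away from finitely many exceptional values the integer $n(s,\delta)$ is locally constant in $s$, while at the exceptional $\delta$ the jump $n(s,\delta)-n(s,\delta^-)$ is always even; the parity of $n(s)=n(s,0)$ is then constant. Your argument instead packages $n(s)\bmod 2$ as the mod-$2$ intersection number of a single continuous arc $\gamma_s$ in $\mathbb{R}^2\setminus\{0\}$ with a fixed ray $A$, and reads off constancy from homotopy invariance in one stroke. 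This is more conceptual and avoids the combinatorics of splitting and merging intervals; the paper's proof is more elementary but correspondingly more hands-on.

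One point in your identification $\mathrm{cr}(\gamma_s)\equiv n(s)\pmod 2$ deserves tightening. You partition $[t_0,t_1]$ at all points of $\varUpsilon(s)$, but $\varUpsilon(s)$ can be a Cantor set, so there may be infinitely many $\varUpsilon^c$-components, and mod-$2$ additivity of $\mathrm{cr}$ under concatenation is only immediate for finitely many pieces. The clean fix is to partition instead at the finitely many class boundaries: with $C_1<\cdots<C_m$ the equivalence classes (each closed in $\varUpsilon(s)$), set $b_k=\max C_k$, $a_{k+1}=\min C_{k+1}$. On each gap $[b_k,a_{k+1}]$ your half-plane argument applies verbatim and gives $\mathrm{cr}=1$ exactly on the $U$-gaps. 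On each within-class piece $[a_k,b_k]$ one shows $\mathrm{cr}=0$ directly by an angle lift: every point of $\varUpsilon(s)\cap[a_k,b_k]$ lies in $C_k$, hence $\gamma_s$ there sits on the \emph{same} half of the punctured real axis, so the continuous lift $\tilde\theta$ of $\arg\gamma_s$ never meets $\pi+2\pi\mathbb{Z}$ (if $\epsilon_k=+1$) or $2\pi\mathbb{Z}$ (if $\epsilon_k=-1$) and is therefore trapped in a single interval of length $2\pi$; both endpoint values of $\tilde\theta$ then lie on the same side of the unique $A$-value $\pi/2+2\pi k$ in that interval. (The boundary cases $a_1=t_0$, $b_m=t_1$ are handled the same way since $\mathrm{Re}\,\gamma_s$ is nonzero there.) Your closing paragraph flags related subtleties, but this finite decomposition at class boundaries is the precise repair.
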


In particular, we have that if $n(s_0)$ is odd, then $\lu,\lv$ intersect $U$ for all $s \in [s_0,s_1]$.

\begin{proof}
	Consider for some $\delta \geq 0$ the closed $\delta$-neighborhood of $\partial U$, denoted by $U(\delta)$ (clearly $U(0)= \partial U$). By the assumptions, $U(\delta)$ is compact; and by compactness, there exists $\delta_0 > 0$ so that there are no zeroes in $U(\delta_0)$ (i.e. for all $(s,t) \in [s_0,s_1] \times [t_0,t_1]$, if $(t,\lv(s,t)) \in U(\delta_0)$, then $\lu(s,t) \neq 0$).
	
	If $0 \leq \delta \leq \delta_0$, let $\varUpsilon(s,\delta)$ be all $t$ such that $(t,v(s,t)) \in U(\delta)$. Analogously as in the case $\delta=0$, $\varUpsilon(s,0)=\varUpsilon(s)$, we define $\tilde{\varUpsilon}(s,\delta)$ which is again finite with cardinality $\geq 2$, and $n(s,\delta) \leq |\tilde{\varUpsilon}(s,\delta)|-1$.
	
	Fix $s \in [s_0,s_1]$ for now. Note that $\delta \mapsto |\tilde{\varUpsilon}(s,\delta)|$, $\delta \mapsto n(s,\delta)$ are non-decreasing for $\delta \in [0,\delta_0]$. This follows from the construction and the natural continuous embedding $\varUpsilon(s,\delta) \rightarrow \varUpsilon(s,\delta')$ for $\delta < \delta'$. Thus $|\tilde{\varUpsilon}(s,\delta)|$, $n(s,\delta)$ strictly increase for at most finitely many times $0 < \delta_1(s)<...<\delta_{k(s)}(s) \in [0,\delta_0]$. Again by construction it is easy to see that these $\delta_j(s)$, $j=1,...,k(s)$ are characterized as these $\delta \in [0,\delta_0]$ for which at least one entire equivalence class in $\varUpsilon(s,\delta)$ lies on the boundary of $U(\delta)$. 
	
	Furthermore, for some $\delta_j(s)> 0$, we have that $n(s,\delta_j(s))-n(s,\delta_j(s)^-)$ is even. Indeed, by considering the equivalence classes which are in $\tilde{\varUpsilon}(s,\delta_j(s))$, but are not in $\tilde{\varUpsilon}(s,\delta_j(s)^-)$, we see that the number of equivalence classes must increase by an even number (as the signs of $u(s,t)$ alternate on equivalence classes), and also $n(s,\delta_j(s))-n(s,\delta_j(s)^-)$ must be an even number (as the zeroes "appear" in pairs of zeroes in $U$, respectively $\bar{U}^c$).
	
	Now, if $\delta \not\in \lbrace \delta_1(s),...\delta_{k(s)}(s) \rbrace $, it is easy to see that for some small neighborhood $(s-\nu,s+\nu)$, $\nu >0$, we have that for all $s^* \in  (s-\nu,s+\nu)$, $|\tilde{\varUpsilon}(s^*,\delta)|$ and $n(s^*,\delta)$ are constant. This is clearly true, as then no equivalence classes in $\varUpsilon(s,\delta)$ are entirely on the boundary of $U(\delta)$, thus persist for sufficiently small (uniformly by finiteness) perturbation. Also $|\tilde{\varUpsilon}(s,\delta)|=|\tilde{\varUpsilon}(s,\delta')|$ for sufficiently small $\delta'-\delta >0$, so there can be no new equivalence classes in $\varUpsilon(s^*,\delta)$ for sufficiently small $\nu$.
	
	We deduce from all that that the parity of $n(s)$ does not change on some small interval $(s-\nu,s+\nu)$. We see that by choosing some $\delta \neq \delta_j(s)$, $0 <\delta < \delta_0$. Then the parity of $n(s)=n(s,0)$ and $n(s,\delta)$ is the same, and $n(s,\delta)$ does not change for sufficiently small $\nu > 0$. The claim follows easily by contradiction.	
\end{proof}

\begin{proof}[Proof of Lemma \ref{l:intersect}]
	We set $\tilde{u}(s,t)=u(s,t)-(2k+1)\pi$, $\tilde{v}(s,t)=v(s,t)$, $t_0 = \tilde{T}_{k-1}$, $t_1 = \tilde{T}_{k+1}$, and $U = \tilde{\mathcal{N}}_k$. Now the property (i) follows from the definition of $\mathcal{A}$, and (ii) by Lemma \ref{l:B6two}. Proposition \ref{p:parity} thus implies Lemma \ref{l:intersect}.
\end{proof}

{\small
}

{\small
{\em Authors' addresses}:
{\em Sini\v{s}a Slijep\v{c}evi\'{c}}, Department of Mathematics, Bijeni\v{c}ka 30, University of Zagreb, Croatia
 e-mail: \texttt{slijepce@\allowbreak math.hr}.

}

\end{document}